\newcommand\reallywidehat[1]{%
\savestack{\tmpbox}{\stretchto{%
  \scaleto{%
    \scalerel*[\widthof{\ensuremath{#1}}]{\kern-.6pt\bigwedge\kern-.6pt}%
    {\rule[-\textheight/2]{1ex}{\textheight}}
  }{\textheight}%
}{0.5ex}}%
\stackon[1pt]{#1}{\tmpbox}%
}
\newcommand{\N}{\mathbb N}
\newcommand{\Z}{\mathbb Z}
\newcommand{\R}{\mathbb R}
\def\E{\mathbb E}
\newcommand{\linf}{L^{\infty}}
\newcommand{\sF}{\mathcal{F}}
\newcommand{\bP}{\mathbb{P}}
\newcommand{\sL}{\mathcal{L}}
\newcommand{\sA}{\mathcal{A}}
\newcommand{\sP}{\mathcal{P}}
\def\XXint#1#2#3{{\setbox0=\hbox{$#1{#2#3}{\int}$}
\vcenter{\hbox{$#2#3$}}\kern-.5\wd0}}
\newcommand{\T}{\mathbb{T}}
\numberwithin{equation}{section}
\newtheorem{thm}{Theorem}[section]
\newtheorem{lem}[thm]{Lemma}
\newtheorem{cor}[thm]{Corollary}
\newtheorem{prop}[thm]{Proposition}
\newtheorem{assumption}[thm]{Assumption}
\theoremstyle{definition}
\newtheorem{rmk}[thm]{Remark}
\newtheorem{convention}[thm]{Convention}
\def\smallnegint{\mathop{\int\mkern-13mu
        \raise.5ex\hbox{${\scriptscriptstyle\diagup}$}}\nolimits}
\def\ds{\displaystyle}
\def\div{\operatorname{div}}
\def\tr{\operatorname{tr}}
\def\bx{{\bm x}}
\def\by{{\bm y}}
\newcommand{\bX}{\bm{X}}
\def\ssetminus{\,\raise.4ex\hbox{$\scriptstyle\setminus$}\,}
\newcommand{\be}{\begin{equation}}
\newcommand{\ee}{\end{equation}}
\newcommand{\bc}{\begin{case}}
\newcommand{\ec}{\end{cases}}
\newcommand{\bs}{\begin{split}}
\newcommand{\es}{\end{split}}
\newcommand{\norm}[1]{\left\Vert#1\right\Vert}
\renewcommand{\bar}{\overline}
\renewcommand{\tilde}{\widetilde}
\renewcommand{\hat}{\widehat}
\def \be {\begin{equation}}
\def \ee {\end{equation}}
\def \E {\mathbb{E}}
\def \R {\mathbb{R}}
\renewcommand{\tilde}{\widetilde}
\newcommand{\cC}{\mathcal{C}}
\newcommand{\leb}{\mathrm{Leb}}
\newcommand{\bbF}{\mathbb{F}}
\newcommand{\trip}[1]{{\left\vert\kern-0.25ex\left\vert\kern-0.25ex\left\vert #1 
    \right\vert\kern-0.25ex\right\vert\kern-0.25ex\right\vert}}
\newcommand{\cP}{\mathcal{P}}
\newcommand{\lip}{C_{\text{Lip}}}
\newcommand{\semi}{C_{\text{S}}}
\newcommand{\tv}{\text{TV}}
\newcommand{\ov}{\overline}
\newcommand{\bd}{\bm{d}}
\newcommand{\cL}{\mathcal{L}}
\newcommand{\bW}{\mathbb{W}}
\newcommand{\com}{\Delta_{\text{com}}}
\newcommand{\eps}{\epsilon}
\title[Quantitative convergence with common noise and degenerate idiosyncratic noise]{Quantitative convergence for mean field control with common noise and degenerate idiosyncratic noise}
\begin{document}

\author[A. Cecchin]{Alekos Cecchin
\address{(A. Cecchin) Universit\`a di Padova, Dipartimento di Matematica ``Tullio Levi-Civita'', 35121 Padova, Italy}\email{alekos.cecchin@unipd.it}}

 \author[S. Daudin]{Samuel Daudin 
\address{(S. Daudin) Universit\'e Paris Cité, Laboratoire Jacques-Louis-Lions, Paris, France
}\email{samuel.daudin@u-paris.fr
}}

\author[J. Jackson]{Joe Jackson
\address{(J. Jackson) Department of Mathematics, The University of Chicago, Chicago, Illinois 60637 USA}\email{jsjackson@uchicago.edu}}

\author[M. Martini]{Mattia Martini
\address{(M. Martini) Universit\'e C\^ote d'Azur, Laboratoire J. A. Dieudonné, 06108 Nice, France
}\email{mattia.martini@univ-cotedazur.fr
}}

\begin{abstract}
We consider the convergence problem in the setting of mean field control with common noise and degenerate idiosyncratic noise. Our main results establish a rate of convergence of the finite-dimensional value functions $V^N$ towards the mean field value function $U$. In the case that the idiosyncratic noise is constant (but possibly degenerate), we obtain the rate $N^{-1/(d+7)}$, which is close to the conjectured optimal rate $N^{-1/d}$, and improves on the existing literature even in the non-degenerate setting. In the case that the idiosyncratic noise can be both non-constant and degenerate, the argument is more complicated, and we instead find the rate $N^{-1/(3d + 19)}$. Our proof strategy builds on the one initiated in \cite{ddj2023} in the case of non-degenerate idiosyncratic noise and zero common noise, which consists of approximating $U$ by more regular functions which are almost subsolutions of the infinite-dimensional Hamilton-Jacobi equation solved by $U$. Because of the different noise structure, several new steps are necessary in order to produce an appropriate mollification scheme. In addition to our main convergence results, we investigate the case of zero idiosyncratic noise, and show that sharper results can be obtained there by purely control-theoretic arguments. We also provide examples to demonstrate that the value function is sensitive to the choice of admissible controls in the zero noise setting. 
\end{abstract}

\setcounter{tocdepth}{1}

\thanks{
A. Cecchin is partially supported by INdAM-GNAMPA Project 2023 ``Stochastic
Mean Field Models: Analysis and Applications'', the PRIN 2022 Project 2022BEMMLZ ``Stochastic control and games and the role of information'', the PRIN 2022 PNRR Project P20224TM7Z  ``Probabilistic methods for energy transition'', and the project MeCoGa ``Mean field control and games'' of the University of Padova through the program STARS@UNIPD - NextGenerationEU. 
S. Daudin and M. Martini 
acknowledge the financial support of the European Research Council (ERC) under the European Union’s Horizon Europe research and innovation program (AdG
ELISA project, Grant Agreement No. 101054746). Views and opinions expressed are however
those of the author(s) only and do not necessarily reflect those of the European Union or the
European Research Council Executive Agency. Neither the European Union nor the granting
authority can be held responsible for them.
 J. Jackson is supported by the NSF under Grant No. DMS2302703. Any opinions, findings and conclusions or recommendations expressed in this material are those of the authors and do not necessarily reflect the views of the NSF}

\maketitle

\tableofcontents


\section{Introduction} \label{sec: intro} 

\subsection{The $N$-particle and mean field control problems} Mean field control theory is concerned with the large population limits of certain optimal control problems. In the setting addressed in the present paper, we have, for each $N \in \N$, a stochastic control problem set on the state space $(\T^d)^N$, where $\T^d = \R^d/ \Z^d$ is the $d$-dimensional flat torus. This problem is efficiently described by a value function $V^N : [0,T] \times (\T^d)^N \to \R$, which is defined informally as follows:
\begin{align} \label{vnintrodef}
    V^N(t_0,\bx_0) = \inf_{\bm \alpha} \E\bigg[\int_{t_0}^T \Big(\frac{1}{N} \sum_{i = 1}^N L(X_t^i, \alpha_t^i) + F(m_{\bX_t}^N) \Big)dt + G(m_{\bX_T}^N) \bigg], 
\end{align}
where we write $\bx_0 = (x_0^1,...,x_0^N)$, and the $(\T^d)^N$-valued state process $\bm X = (X^1,...,X^N)$ is determined from the control $\bm \alpha = (\alpha^1,...,\alpha^N)$ via the dynamics 
\begin{align*}
    dX_t^i = \alpha_t^i dt + \sigma(X_t^i) dW_t^i + \sigma^0 dW_t^0, \quad t_0 \leq t \leq T, \quad X^i_{t_0} = x_0^i.
\end{align*}
The problem is set on a filtered probability space hosting the independent $d$-dimensional Brownian motions $(W^i)_{i \in \N \cup \{0\}}$, and the controls $\alpha^i$ are adapted $\R^d$-valued processes satisfying appropriate integrability conditions. For $\bx = (x^1,...,x^N) \in (\T^d)^N$, we are using the notation $m_{\bx}^N = \frac{1}{N} \sum_{i = 1}^N \delta_{x^i} \in \cP(\T^d)$ for the empirical measure associated to $\bx$. The data for our problem consists of 
\begin{align} \label{introdata}
    L : \T^d \times \R^d \to \R, \quad F,G : \cP(\T^d) \to \R,  \quad \sigma : \T^d \to \R^{d \times d}, \quad \sigma^0 \geq 0.
\end{align}
The Brownian motions $(W^i)_{i = 1,...,N}$ are \textit{individual} or \textit{idiosyncratic} noises, while the Brownian motion $W^0$ is a \textit{common} noise which impacts all players. We can interpret this control problem by imagining that there is a team of $N$ agents. For $i = 1,...,N$, agent $i$ has state process $X^i$, which they can control by choosing $\alpha^i$, and the players cooperate to minimize the cost function appearing in \eqref{vnintrodef}.

We recall (see \cite{flemingsoner} for instance) that under mild assumptions on the data, $V^N$ is the unique viscosity solution of the Hamilton-Jacobi-Bellman equation 
\begin{align} \tag{$\text{HJB}_N$} \label{hjbn}
    \begin{cases} 
   \ds  - \partial_t V^N - \sum_{i = 1}^N \tr\big( A(x^i) D^2_{x^ix^i} V^N\big) - A_0 \sum_{i,j = 1}^N \tr(D^2_{x^ix^j} V^N) + \frac{1}{N} \sum_{i = 1}^N H(x^i, ND_{x^i} V^N) = F(m_{\bx}^N), \vspace{.1cm} \\
   \hspace{9cm} (t,\bx) \in [0,T) \times (\T^d)^N, \vspace{.1cm} \\
   \ds V^N(T,\bx) = G(m_{\bx}^N), \quad \bx \in (\T^d)^N,
    \end{cases}
\end{align}
where we have defined 
\begin{align} \label{hadef}
    H(x,p) = \sup_{a \in \R^d} \Big(- L(x,a) - a \cdot p \Big), \quad A(x) = \frac{1}{2} \sigma(x) \sigma^T(x), \quad A_0 = \frac{1}{2} |\sigma^0|^2.
\end{align}

As $N \to \infty$, we expect that these control problems should converge in some sense to the corresponding mean field control problem, which is described by a value function $U : [0,T] \times \cP(\T^d) \to \R$ defined informally by
\begin{align}
    U(t_0,m_0) = \inf_{\alpha} \E\bigg[\int_{t_0}^T \Big( L(X_t, \alpha_t) + F \big( \sL(X_t | W^0) \big) \Big) dt + G\big( \sL(X_T | W^0) \big) \bigg],
\end{align}
where the state process $X$ is determined form the control $\alpha$ via the dynamics 
\begin{align*}
    dX_t = \alpha_t dt + \sigma(X_t)dW_t + \sigma^0 dW_t^0, \quad X_{t_0} \sim m_0.
\end{align*}
Above, the $\R^d$-valued Brownian motions $W$ and $W^0$ are independent, 
the initial condition $X_{t_0} \sim m_0$ indicates that $X_{t_0}$ has law $m_0$ and is independent of $W^0$, and $\sL(X_t | W^0)$ denotes the law of $X_t$ conditional on the $\sigma$-algebra generated by $W^0$. 

By the dynamic programming principle (see e.g. \cite{djete2019mckean}), we expect that under appropriate conditions on the data, the function $U$ solves the infinite-dimensional Hamilton-Jacobi equation

\begin{align} \tag{$\text{HJB}_{\infty}$} \label{hjbinf}
    \begin{cases}
        \ds - \partial_t U - \int_{\T^d} \tr \big(A(x) D_{xm} U(t,m,x) \big)m(dx) - A_0 \com U  \vspace{.1cm} \\ \ds \qquad \qquad + \int_{\T^d} H\big(x, D_m U(t,m,x)\big) m(dx) = F(m), \quad (t,m) \in [0,T) \times \cP(\T^d), \vspace{.1cm} \\
        \ds U(T,m) = G(m), \quad m \in \cP(\T^d),
    \end{cases}
\end{align}
where $\com$ is the ``common noise operator"
\begin{align*}
    \com U(t,m) = \int_{\T^d} \tr\big(D_{xm} U(t,m,x) \big) m(dx) + \int_{\T^d}\int_{\T^d} \tr\big(D^2_{mm}U(t,m,x,y) \big)m(dx)m(dy),
\end{align*}
and $D_mU, D_{xm}U, D^2_{mm}U$ are appropriate derivatives of $U$ with respect to the measure variable (see Section \ref{sec: notation} for the precise definitions).

Without convexity conditions on $F$ and $G$, the value function $U$ may fail to be $\cC^1$, even if all the data is smooth. This is related to the non-uniqueness of optimal controls for the mean field control problem, and an example can be found in \cite{BrianiCardaliaguet} (see also \cite{daudinseeger} Section 2.4). Thus \eqref{hjbinf} must be interpreted in an appropriate viscosity sense. The comparison principle (and hence uniqueness of viscosity solutions) for the equation \eqref{hjbinf} has received significant attention in the literature recently, see e.g. \cite{GangboTudorascu, conforti, CossoGozziKharroubiPham, soner2022viscosity, Soner2023, daudinseeger, bayraktar2023, shao2023, talbitouzizhang, bertucci2023, touzizhangzhou, djs2023}. In the present paper, we shall not in fact use the uniqueness of viscosity solutions, but we will use the fact that $U$ is a viscosity subsolution of \eqref{hjbinf}.

\subsection{Existing results on the convergence problem}

There are a number of ways to formalize the convergence of the $N$-particle control problems towards their mean field counterpart. For example, one can consider the convergence of the optimal controls or of the optimally controlled state processes, in the spirit of propagation of chaos. In the present paper, we work under conditions that do not guarantee uniqueness of optimizers for the limiting problem, which makes convergence of the optimal controls or states quite subtle. We thus focus on the convergence of the value functions, i.e. we aim to show that $V^N(t,\bx) \approx U(t,m_{\bx}^N)$ when $N$ is large.

The qualitative convergence of $V^N$ to $U$ was first demonstrated via probabilistic compactness methods in \cite{budhiraja2012}, for a class of models with purely quadratic Hamiltonians arising from the study of large deviations of weakly interacting particle systems. This result was greatly generalized by Lacker in \cite{Lacker2017}, and then Lacker's result was further generalized in \cite{DjetePossamaiTan} and \cite{djete2022extended} to allow common noise and interaction through the controls, respectively. In \cite{GangboMayorgaSwiech, mayorgaswiech}, qualitative convergence of $V^N$ to $U$ is obtained for models with purely common noise via PDE methods, by leveraging a comparison principle for the limiting Hamilton-Jacobi equation \eqref{hjbinf}. We mention also \cite{CavagnariLisiniOrrieriSavare,FornasierLisiniOrrieriSavare} for a treatment of deterministic mean field control problems via $\Gamma$-convergence techniques, \cite{talbitouzizhang} for the convergence of the value functions of certain mean field optimal stopping problems, and \cite{daudin2023} for an extension of the probabilistic compactness methods from \cite{Lacker2017} to problems with state constraints.

The references in the previous paragraph all rely in some way on compactness arguments, and so do not provide a rate of convergence of $V^N$ to $U$. On the other hand, if $U$ is smooth enough, then an explicit computation shows that $U^N(t,\bx) := U(t,m_{\bx}^N) : [0,T] \times (\T^d)^N \to \R$ solves \eqref{hjbn} but with an additional error term on the right-hand side of the form
\begin{align} \label{errorterm}
    E^N(t,\bx) = - \frac{1}{N^2} \sum_{i = 1}^N \tr\big( A(x^i) D^2_{mm} U(t,m_{\bx}^N,x^i,x^i) \big).
\end{align}
In particular, by the comparison principle $|U - V^N| = O(1/N)$ if $D^2_{mm} U$ is bounded. This is in fact a simpler version of the argument introduced in \cite{CardaliaguetDelarueLasryLions} to show that a similar regularity estimate for the master equation implies convergence of the so-called Nash system, in the context of mean field games.

Unfortunately, smoothness of $U$ is only expected under certain restrictive convexity conditions, and establishing a rate of convergence in the non-convex regime has proved much more subtle. In \cite{CecchinFinite}, a sharp rate was provided for mean field control problems set on a finite state space, in which case the limiting value function $U$ is defined on a finite-dimensional simplex. Similar techniques were used in \cite{bayraktarcecchinchakrabory} to obtain rates of convergence with a continuous state space, but with costs and dynamics which depend on $m$ through finitely many of its moments. The first quantitative result for a general class of non-convex diffusion models came in \cite{cdjs2023}, where the estimate 
\begin{align} \label{cdjsest}
   |V^N(t,\bx) - U(t,m_{\bx}^N)| \leq CN^{-\beta_d}
\end{align} was established\footnote{In fact this is the transposition to the periodic setting of the main result of \cite{cdjs2023}.}, with $\beta_d$ a non-explicit constant depending only on the dimension $d$. This estimate was substantially sharpened in \cite{ddj2023}, where it was explained that the optimal rate of convergence depends on the metric with respect to which $U$ is Lipschitz. In particular, it was shown in \cite[Proposition 3.2]{ddj2023} that if $\sigma(x) = \sqrt{2} I_{d \times d}$ and $\sigma^0 = 0$, and if 
and if $H$, $F$, and $G$ are ``sufficiently smooth", then $U$ is Lipschitz and semiconcave with respect to the negative order Sobolev space $H^{-s}$ (see the notation section below), with $s > d/2 + 2$. In this introduction, we will informally use the term ``$H^{-s}$-regular case" to refer to the case that the data is ``smooth enough" to prove that $U$ is Lipschitz and semiconcave with respect to the Hilbert structure of $H^{-s}$ with $s > d/2 + 2$. Theorem 2.7 of \cite{ddj2023} states that the estimate
\begin{align} \label{ddjhminus}
   |V^N(t,\bx) - U(t,m_{\bx}^N)| \leq CN^{-1/2}
\end{align}
holds in the $H^{-s}$-regular case, and \cite[Example 2]{ddj2023} shows that this is optimal (at least if the degenerate Hamiltonian $H = 0$ is permitted). 

On the other hand, it was shown in \cite[Theorem 2.6]{ddj2023} that if the data is less smooth, so that $U$ is only Lipschitz (and semiconcave in an appropriate sense) with respect to the $1$-Wasserstein metric $\bd_1$, then for each $\eta > 0$ there is a constant $C(\eta)$ such that 
\begin{align} \label{ddjd1}
     |V^N(t,\bx) - U(t,m_{\bx}^N)| \leq C(\eta) N^{- \frac{2}{3d+ 6 + \eta}}.
\end{align}
We informally use the term ``$\bd_1$-regular case" to refer to the case that the data is smooth enough only to guarantee that $U$ is Lipschitz and semiconcave with respect to $\bd_1$. It was conjectured in \cite{ddj2023} that in fact the optimal rate in the $\bd_1$-regular case should be $N^{-1/d}$, and \cite[Example 1]{ddj2023} provides support for this conjecture. We mention also that while \cite{ddj2023} clarifies the optimal \textit{global} rate of convergence, the recent work \cite{cjms2023} shows that in fact the rate $1/N$ holds locally uniformly over an open and dense set $\mathcal{O}$ constructed in \cite{cardaliaguet-souganidis:2}, which means that the global convergence rate is in general different than the rate inside $\mathcal{O}$.

\subsection{Our contribution}

The results of \cite{ddj2023} indicate that the optimal rate of convergence is somewhere between $N^{-1/d}$ and $N^{-1/2}$, depending on the metric with respect to which $U$ is Lipschitz. However, the results of \cite{ddj2023} require a constant, non-degenerate idiosyncratic noise and zero common noise. Thus, in the present paper we aim to investigate the following question:
\begin{align*}
    \text{How is the rate of convergence impacted by the structure of the noise?}
\end{align*}
In particular, our goal is to obtain sharp estimates on $|V^N - U|$ in the case that $\sigma(x)$ is degenerate and $\sigma^0$ is non-zero. 

Let us first make an important point regarding the role of the idiosyncratic noise. As explained above, the rate $N^{-1/2}$ cannot be expected unless $U$ is Lipschitz with respect to a ``sufficiently weak" metric, as in the $H^{-s}$-regular case. On the other hand, if $A_0 = 0$, $F = 0$, and $ G(m) = \int g(x) m(dx)$ for some $g : \T^d \to \R$, then we have $U(t,m) = \int u(t,x)m(dx)$, where $u$ solves 
\begin{align} \label{degenhb}
    - \partial_t u - \tr\big( A(x) D^2 u) + H(x,Du) = 0, \quad (t,x) \in [0,T) \times \T^d, \quad u(T,x) = g(x), \quad x \in \T^d.
\end{align}
It is well known that if $\sigma$ is degenerate (for example when $\sigma = 0$), the solution $u$ of \eqref{degenhb} is at best Lipschitz, even when $g$, $H$ and $A$ are all smooth. This means that $U = \int u \, dm$ is at best $\bd_1$-Lipschitz, and so we cannot expect an anologue of \cite[Theorem 2.7]{ddj2023} when $\sigma$ is degenerate, because in this case $U$ does not ``inherit" $H^{-s}$-Lipschitz regularity from the data. On the other hand, because we can still get $\bd_1$-Lipschitz bounds for degenerate $\sigma$, we can hope for an analogue of Theorem 2.6 of \cite{ddj2023}, i.e. a dimension-dependent rate which is close to $N^{-1/d}$. This is exactly what we obtain in Theorem \ref{thm.main} below, where we show that
\begin{align} \label{mainestintro}
    |V^N(t,\bx) - U(t,m_{\bx}^N)| \leq C N^{- \gamma_d},
\end{align}
where 
\begin{align} \label{gammadef}
    \gamma_d \coloneqq \begin{cases}
        1/ (3d + 19) & \text{if $d$ is even}, \\
        1/ (3d + 16) & \text{if $d$ is odd}.
    \end{cases}
\end{align}
under general conditions on the data, which in particular allow for both degenerate idiosyncratic noise and non-zero common noise. In fact, if either (i) $\sigma$ is constant or (ii) $U$ is globally Lipschitz with respect to the metric inherited from $W^{-2,\infty}$, then \eqref{mainestintro} can be improved to
\begin{align} \label{mainestintro2}
    |V^N(t,\bx) - U(t,m_{\bx}^N)| \leq C N^{- \gamma'_d}, 
\end{align}
where 
\begin{align}
    \label{gammaprimedef}
    \gamma'_d = \begin{cases}
        1/(d + 7) & \text{if $d$ is even}, \\
        1/(d + 6) & \text{if $d$ is odd},
    \end{cases}
\end{align}
which is very close to the conjectured optimal rate of $N^{-1/d}$, and obviously sharper than the estimate \eqref{ddjd1}. We note that a $W^{-2,\infty}$-Lipschitz bound on $U$ can be obtained when the data is smooth enough and $\sigma$ is non-degenerate via the arguments in Section 3 of \cite{djs2023}. Thus while our main motivation is the case of degenerate $\sigma$ and non-zero $\sigma^0$, even in the case $\sigma = \sqrt{2}I_{d \times d}$, $\sigma^0 = 0$, our main result improves on \cite[Theorem 2.6]{ddj2023} and provides an almost optimal result in this setting, with the only possible improvement being to replace $N^{-1/(d+7)}$ (or $N^{-1/(d+6)}$ if $d$ is odd) with $N^{-1/d}$. We note that the improved rate in the case of constant $\sigma$ comes from an improvement in a key technical step of the subsolution bound for the regularized $U^{\delta, \eps}$ (see Paragraph \ref{sec:strategyofproof} of this introduction). In particular, Lemma \ref{lem.linfbound} below is a refinement of \cite[Proposition 4.4]{ddj2023} with a sharper dependence on $\delta$, which is achieved by taking advantage of an explicit representation of the duality map $H^{s} \ni \phi \mapsto \phi^* \in H^{-s}$ for integer $s$.

In addition to the general results discussed above, we study in Section \ref{sec: zeronoise} the special case of zero idiosyncratic noise, i.e. $\sigma = 0$. The mean-field limit in this setting was already addressed in the literature in the presence of a pure common noise in \cite{GangboMayorgaSwiech, mayorgaswiech} and without any noise in  \cite{Fornasier2014,CavagnariLisiniOrrieriSavare,FornasierLisiniOrrieriSavare}. From a PDE point of view,  this case is intriguing since the Hamilton-Jacobi equations describing the $N$-particle problem and the limit one have precisely the \textit{same} structure. To wit, if $U$ is a smooth (enough) solution to the HJB equation \eqref{hjbinf} then $U^N(t,\bx) = U(t, m_{\bx}^N) : [0,T] \times (\T^d)^N \rightarrow \R$ solves exactly the finite dimensional equation \eqref{hjbn} and so if $U$ is smooth, then for all $N \in \mathbb{N}$ and $(t,\bx) \in [0,T] \times (\T^d)^N$ we have $V^N(t,\bx) = U(t, m_{\bx}^N)$. This point was addressed in the very recent preprint \cite{SwiechWessels}, where convexity conditions were used to establish sufficient regularity on $U$ to guarantee that $U^N = V^N$. 

We observe in Proposition \ref{prop.zeronoise} that if $\sigma = 0$, then $U(t,m_{\bx}^N) \leq V^N(t,\bx)$, and this does not require any additional regularity on $U$. Combining this with Proposition \ref{prop.easy}, we get 
\begin{align} \label{introboundnonoise}
    0 \leq V^N(t,\bx) - U(t,m_{\bx}^N) \leq C r_{N,d}, 
\end{align}
where
\begin{align} \label{rnddef}
 r_{N,d} = \begin{cases} 
N^{-1/d} & d \geq 3, \\
N^{-1/2} \log N & d = 2, \\
N^{-1/2} & d = 1.
 \end{cases}
\end{align}
We then turn our attention to the special case of no noise ($\sigma = \sigma^0 = 0$) and purely quadratic Hamiltonian $(F = 0, H = \frac{1}{2} |p|^2$). In this setting a Hopf-Lax formula for both $V^N$ and $U$ is available, which we use to construct an example where 
\begin{align*}
    \sup_{t,\bx} \Big( V^N(t,\bx) - U(t,m_{\bx}^N) \Big) > cN^{-1/d}
\end{align*}
for some $c > 0$ (see Proposition \ref{prop:opt:bound:N}). This confirms that when $\sigma = 0$ and $U$ is not smooth, there can indeed be a gap between $U$ and $V^N$, but only in one direction. In addition, this example implies that \eqref{introboundnonoise} is optimal for $d \geq 3$. Finally, we highlight two other interesting phenomena in the special case of zero noise and quadratic Hamiltonian. First, we show that in this case the value function $U$ is sensitive to the choice of admissible controls, and in particular the value over Lipschitz feedback controls can be strictly larger than the value over less regular controls (see Corollary \ref{cor.feedback}). This is in contrast to the case of non-degenerate idiosyncratic noise, where it is possible to approximate an arbitrary open-loop control in an appropriate sense by regular feedbacks, as explained in \cite[Theorem 2.14 and Proposition 3.12]{Djete:COCV}. Second, we show in Proposition \ref{prop:equalityifconvexity} that if $G$ satisfies a weak form of convexity (which is implied by displacement convexity) we have $V^N = U$, i.e. this weak convexity is enough to guarantee that $V^N = U$ without smoothness of $U$. 

To the best of our knowledge, these results in the zero idiosyncratic noise regime are new and improve in some ways on the existing literature \cite{GangboMayorgaSwiech, mayorgaswiech,CavagnariLisiniOrrieriSavare,FornasierLisiniOrrieriSavare}. In particular, the observation that the value of the mean-field control problem is always lesser than the value of the $N$-particle problem seems to have been overlooked in previous contributions on the subject. This being said, we rely there heavily on the control formulation, and, in some places, on an explicit Hopf-Lax formula, available for purely quadratic Hamiltonians. We also believe that, by contrast, these findings shed some light on the role of the idiosyncratic noise with respect to the comparison of the value functions. 

To summarize, the main contribution of the present paper is to establish the bound \eqref{mainestintro} in a setting where $\sigma(x)$ can be degenerate and $\sigma^0$ can be non-zero. This result extends Theorem 2.6 of \cite{ddj2023} (but with a worse exponent) to include common noise and degenerate, non-constant idiosyncratic noise. In the case that $\sigma$ is constant or $U$ is Lipschitz with respect to the weaker metric inherited from $W^{-2,\infty}$, we get the better estimate \eqref{mainestintro2}, which is almost optimal. Finally, when $\sigma = 0$, we show that the problem greatly simplifies because $U \leq V^N$, and this allows to obtain the bound \eqref{introboundnonoise}, which we confirm through an example is optimal when $d \geq 3$.

\subsection{Strategy of the proof} 

\label{sec:strategyofproof}

The basic strategy of the proof is similar to that of Theorem 2.6 in \cite{ddj2023}. In particular, as in \cite{ddj2023} we obtain the bound $V^N \leq U + C r_{N,d}$, with $r_{N,d}$ as in \eqref{rnddef}, by a control-theoretic argument which has already appeared several times in the literature. For the more difficult inequality $U \leq V^N + CN^{- \gamma_d}$, we use a more analytical approach, and attempt to approximate $U$ by a function which is almost a subsolution to \eqref{hjbinf}, and which is smooth enough that the ``error term" appearing \eqref{errorterm} can be controlled. In \cite{ddj2023}, this approximation is achieved in three steps $U \rightarrow U^{\delta} \rightarrow U^{\delta, \eps} \rightarrow U^{\delta, \eps, \lambda}$,
defined successively by 
\begin{align} \label{regularizationscheme}
    &U^{\delta}(t,m) = U(t,m * \rho_{\delta}), \quad 
    U^{\delta, \eps}(t,m) = \sup_{m' \in \cP(\T^d)} \Big\{U^{\delta}(t,m') - \frac{1}{2\eps} \|m -m'\|_{-s}^2 \Big\}, 
    \nonumber \\
    &U^{\delta, \eps, \lambda}(t,m) = U^{\delta, \eps}\big(t, (1-\lambda) m + \lambda \leb \big), 
\end{align} 
where $\rho_{\delta}$ is a standard smooth approximation to the identity, $s$ is a large enough constant, $\leb$ is the Lebesgue measure on $\T^d$, and $\delta > 0$, $\eps > 0$, $\lambda \in (0,1)$ are small parameters. Roughly speaking, $U \to U^{\delta}$ ``upgrades" from $\bd_1$-Lipschitz to $H^{-s}$-Lipschitz, $U^{\delta} \to U^{\delta, \eps}$ creates $C^{1,1}$-regularity with respect to the Hilbert space $H^{-s}$ (and hence bounds on $D^2_{mm} U^{\delta, \eps}$, at least formally, see \eqref{dmmbound} below), and the final transformation is included because it is difficult to obtain an efficient estimate on the subsolution property of $U^{\delta, \eps}$ at measures which do not have full support. The main technical difficulty in the proof of Theorem 2.6 in \cite{ddj2023} is to obtain quantitative error estimates which show that 
\begin{enumerate}
    \item $D^2_{mm} U^{\delta, \eps, \lambda}$ blows up at a certain rate as $\delta, \eps, \lambda \to 0$,
    \item $U^{\delta, \eps, \lambda}$ converges to $U$ as $\delta, \eps, \lambda \to 0$,
    \item $U^{\delta, \eps, \lambda}$ is a subsolution of \eqref{hjbinf} up to an error term which vanishes as $\delta, \eps, \lambda \to 0$.
\end{enumerate}
These bounds show that $(t,\bx) \mapsto U^{\delta, \eps, \lambda}(t,m_{\bx}^N)$ solves the equation for \eqref{hjbn} up to an error which can be estimated in terms of $\delta$, $\eps$, $\lambda$, and $N$, which leads to an estimate of the form
\begin{align*}
    U \leq V^N + C (\delta, \eps, \lambda,N).
\end{align*}
The final estimate is obtained by choosing $\delta, \eps, \lambda$ as functions of $N$ (see the proof of Proposition 5.12 in \cite{ddj2023} for details).

There are various challenges we have to face when attempting to adapt this regularization scheme to the case of common noise and degenerate idiosyncratic noise. Here we explain the main issues caused by the the degenerate idiosyncratic noise and the common noise, respectively, as well as the techniques we use to overcome these challenges.

\subsubsection{The degenerate and non-constant idiosyncratic noise} Let us set $\sigma^0 = 0$ for the moment, and discuss only the effect of idiosyncratic noise. In this case, the main difference with the case treated in \cite{ddj2023} comes in the step $U \to U^{\delta}$. In particular, the proof that $U^{\delta}$ is almost a subsolution of \eqref{hjbinf} (see Lemma \ref{lem.udeltahatsubsol}) is quite involved when $\sigma$ depends on $x$, and in order to conclude we have to establish a sort of commutator estimate, see Lemma \ref{lem.commutator}. Moreover, in order to complete the proof of Lemma \ref{lem.udeltahatsubsol}, we have to treat the ``full Hamiltonian" 
\begin{align} \label{fullham}
  \R^{d \times d} \times \R^d \times \T^d \ni  (q,p,x) \mapsto \tr\big(A(x) q \big) + H(x,p)
\end{align}
as globally Lipschitz. Notice that while $H$ is generally just locally Lipschitz, we can show that $U$ is $\bd_1$-Lipschitz, hence we have (at least formally) an a-priori estimate on $\|D_{m} U\|_{\infty}$, which allows us to treat the map $(x,p) \mapsto H(x,p)$ as Lipschitz. When $A$ is constant, this is enough, but when $A$ is non-constant, the full Hamiltonian in \eqref{fullham} will be Lipschitz only when $q = D_{xm} U$ is bounded, i.e. when $D_m U$ is Lipschitz in $x$, which formally holds when $U$ is $W^{-2,\infty}$-Lipschitz. In other words, when $A$ is non-constant we need an a-priori estimate on the $W^{-2,\infty}$-Lipschitz constant of $U$ in order to conclude that $U^{\delta}$ is almost a subsolution of \eqref{hjbinf}, even formally. But as discussed above, when $\sigma$ is degenerate we cannot expect $U$ to be $W^{-2,\infty}$-Lipschitz, even if the data is smooth. This creates a significant complication when the data is both non-constant and degenerate.

In order to circumvent this first difficulty, we use a vanishing viscosity procedure, i.e. we begin by replacing $U$ by $U^{\eta}$, where $U^{\eta}$ is defined exactly like $U$ but with an additional idiosyncratic noise of strength $\eta$ in the dynamics of $X$. For technical reasons, we also mollify $F$ and $G$ in an appropriate way when defining $U^{\eta}$. We define $V^{N,\eta}$ similarly, see Subsection \ref{subsec:uepsdef} for details. In Subsection \ref{subsec:uepsestimates}, we show that under appropriate assumptions, we have
\begin{align} \label{uetaintro}
   |V^N - V^{N,\eta}| \leq C\sqrt{\eta}, \quad  |U - U^{\eta}| \leq C \sqrt{\eta}, \quad \text{Lip}(U^{\eta}; W^{-2,\infty}) \leq C/\eta, 
\end{align}
where $\text{Lip}(U^{\eta}; W^{-2,\infty})$ denotes the Lipschitz constant of $U^{\eta}$ in $m$ with respect to the $W^{-2,\infty}$-metric. The bounds in \eqref{uetaintro} allow us to reduce to the case that $U$ is $W^{-2,\infty}$-Lipschitz, at least provided that we can track the dependence of the estimates on the $W^{-2,\infty}$-Lipschitz constant. Indeed, in Proposition \ref{prop.mainextrareg}, we obtain the bound 
\begin{align*}
    U \leq V^N + C \Big(1 + \lip(U; W^{-2,\infty}) \Big)^{1 -1/(2s^* +1)} N^{-1 / (2s^* + 1)},
\end{align*}
with $s^*$ given by \eqref{sdef} below,
under the additional assumption that $U$ is $W^{-2,\infty}$-Lipschitz, which, when applied to each $\eta > 0$, leads to 
\begin{align*}
    U &\leq C\sqrt{\eta} + U^{\eta} \leq   V^{N,\eta} + C\sqrt{\eta}+ C \eta^{-1 + 1/(2s^* + 1)} N^{-1 / (2s^* + 1)} 
    \\
    &\leq V^N + C \sqrt{\eta} + C\eta^{-1 + 1/(2s^* + 1)} N^{-1 / (2s^* + 1)}, 
\end{align*}
and it is optimizing in $\eta$ and using the definition of $s^*$ which leads to the estimate \eqref{mainestintro}. When $A$ is constant or $U$ is already known to be $W^{-2,\infty}$-Lipschitz, this vanishing viscosity step turns out to be unnecessary, which explains why we get a sharper estimate in these cases. 

We emphasize that while the estimate on $|U - U^{\eta}|$ is standard, the estimate on $\text{Lip}(U^{\eta}; W^{-2,\infty})$ is not, because the natural control-theoretic technique for obtaining Lipschitz bounds on $U^{\eta}$, as employed in Section 3 of \cite{ddj2023}, leads to bounds on $\text{Lip}(U^{\eta}; W^{-2,\infty})$ which have an exponential dependence on $\eta$. In order to get a more efficient bound, we instead use a stochastic-analytic argument which resembles the nonlinear adjoint method to get a bound of the form $|D^2_{x^ix^i} V^{N,\eta}| \leq C/(N\eta)$, where $V^{N,\eta}$ is defined like $V^N$ but with an additional idiosyncratic noise of strength $\eta$. By an argument introduced in \cite{djs2023}, this is enough to conclude the desired bound on $\text{Lip}(U; W^{-2,\infty})$.

\subsubsection{The common noise} The vanishing viscosity procedure discussed above allows us to deal with the degeneracy of the idiosyncratic noise, so we now assume that the noise is non-degenerate and $U$ is $W^{-2,\infty}$-Lipschitz, so we can safely ignore the issues pointed out above. Nevertheless, we still have to incorporate the common noise into the regularization scheme. The issue here is not with the step $U \to U^{\delta}$ (which works fine with common noise) but instead with the step $U^{\delta} \to U^{\delta, \eps}$. Indeed, when $\sigma^0 > 0$, we are unable to show that the function $U^{\delta, \eps}$ defined in \eqref{regularizationscheme} is almost a subsolution to \eqref{hjbinf}. Roughly speaking, the problem seems to be that the common noise operator $\com$ is simply not compatible with the sup-convolution operation appearing in the definition of $U^{\delta, \eps}$. We found a way around this issue by employing a change of variables which has recently been used in \cite{bayraktar2023, djs2023} to treat the comparison principle for equations like \eqref{hjbinf}. In particular, we first define $\hat{U} : [0,T] \times \R^d \times \cP(\T^d) \to \R$ and $\hat{V}^N : [0,T] \times \R^d \times (\T^d)^N \to \R$ by
\begin{align*}
    \hat{U}(t,z,m) = U\big(t, (\text{Id}+z)_{\#} m\big), \quad \hat{V}^N(t,z,\bx) = \hat{V}^N(t,z,x^1,...,x^N) = V^N(t,x^1 + z,...,x^N + z).
\end{align*}
It turns out that $\hat{U}$ solves (formally at least) the PDE 
\begin{align} \label{hjbz}
     \begin{cases} \ds 
          - \partial_t \hat{U}(t,z,m) - A_0 \Delta_z \hat{U}(t,z,m) - \int_{\T^d}\tr\big( \hat{A}(x,z) D_{xm} \hat{U}(t,z,m,x) \big) m(dx)  \\ \ds 
        \qquad \qquad + \int_{\T^d} \hat{H}\big(x,z,D_m \hat{U}(t,z,m,x)\big) = \hat{F}(z,m), \quad (t,z,m) \in [0,T) \times \R^d \times \cP(\T^d), \vspace{.1cm} \\ \ds
        \hat{U}(t,z,m) = \hat{G}(z,m), 
     \end{cases} 
\end{align}
in a viscosity sense, where we have set
\begin{align} \label{hatdefs}
        \hat{A}(x,z) =A(x+z), \quad \hat{H}(x,z,p) = H(x + z, p), \quad \hat{F}(z,m) = F(m^z), \quad  \hat{G}(z,m) = G(m^z), 
\end{align} 
and we have used the notation $m^z = (\text{Id} + z)_{\#} m$ for simplicity. For $x \in \T^d = \R^d / \Z^d$ and $z \in \R^d$, we are defining the sum $x + z$ in the obvious way, i.e. if $x = [y]$, i.e. $x$ is the equivalence class of $y \in \R^d$, then $x + z = [y + z]$. Obviously, $\hat{U}$ is periodic in $z$, so could instead be viewed as a function on $[0,T] \times \T^d \times \cP(\T^d)$, but it is convenient for technical reasons to work with $z \in \R^d$. Similarly, $\hat{V}^N$ satisfies 
\begin{align} \tag{$\widehat{\text{HJB}}_N$} \label{hjbhatn}
    \begin{cases} 
   \ds  - \partial_t \hat{V}^N - \sum_{i = 1}^N \tr\big( \hat{A}(x^i,z) D^2_{x^ix^i} \hat{V}^N\big) - A_0 \Delta_z \hat{V}^N + \frac{1}{N} \sum_{i = 1}^N \hat{H}(x^i,z,ND_{x^i} \hat{V}^N) = \hat{F}(z,m_{\bx}^N), \vspace{.1cm} \\
   \hspace{9cm} (t,z,\bx) \in [0,T) \times \R^d \times (\T^d)^N, \vspace{.1cm} \\
   \ds \hat{V}^N(T,\bx) = \hat{G}(z,m_{\bx}^N), \quad (z,\bx) \in \R^d \times (\T^d)^N.
    \end{cases}
\end{align}
In other words, this change of variables exchanges the common noise operator $\com$ for a finite-dimensional Laplacian in the variable $z$, and so we focus instead on the equivalent problem of estimating $\hat{U} - \hat{V}^N$. Remarkably, if we define $\hat{U}^{\delta}$, $\hat{U}^{\delta, \eps}$, $\hat{U}^{\delta,\eps,\lambda}$ by 
\begin{align*}
       &\hat{U}^{\delta}(t,m) = \hat{U}(t,z,m * \rho_{\delta}), \\ 
    &\hat{U}^{\delta, \eps}(t,z,m) = \sup_{z' \in \T^d,  m' \in \cP(\T^d)} \Big\{\hat{U}^{\delta}(t,z',m') - \frac{1}{2\eps} \Big( \|m -m'\|_{-s}^2 + |z-z'|^2 \Big) \Big\}, 
    \nonumber \\
    &\hat{U}^{\delta, \eps, \lambda}(t,m) = \hat{U}^{\delta, \eps}\big(t, (1-\lambda) m + \lambda \leb \big), 
\end{align*}
then we can in fact get all the relevant error estimates, and so once again complete the proof by choosing $\delta, \eps, \lambda$ as functions of $N$. To make this argument rigorous, we have to argue that $\hat{U}^{\delta, \eps, \lambda}$ is almost a subsolution to \eqref{hjbz} and satisfies certain regularity estimates, and then infer that the function $(t,z,\bx) \mapsto \hat{U}^{\delta, \eps, \lambda}(t,z,m_{\bx}^N)$ is almost a subsolution to \eqref{hjbhatn}. In \cite{ddj2023}, this point is less subtle because there are no second-order terms in the equation, so that the subsolution estimates can be interpreted in a pointwise fashion (see \cite[Lemma 5.13]{ddj2023}). Here, we instead use an argument based on Jensen's lemma to verify that $(t,z,\bx) \mapsto \hat{U}^{\delta, \eps, \lambda}(t,z,m_{\bx}^N)$ satisfies the relevant estimate in a viscosity, rather than pointwise, sense; see the proof of Proposition \ref{prop.comparison} below.

\subsubsection{Summary of the regularization argument} We have discussed how to overcome the difficulties posed by the degenerate idiosyncratic noise and the non-zero common noise separately. In order to treat them at the same time, we have to put together (i) the vanishing viscosity procedure necessary to create $W^{-2,\infty}$-Lipschitz bounds because of the degenerate and non-constant idiosyncratic noise, (ii) the change of variables necessary to exchange the common noise operator with a finite-dimensional Laplacian, and (iii) the three transformations already used in \cite{ddj2023}. This leads to the sequence of transformations
\begin{align*}
    U \to U^{\eta} \to \hat{U}^{\eta} \to \hat{U}^{\eta, \delta} \to \hat{U}^{\eta, \delta, \eps} \to \hat{U}^{\eta, \delta, \eps, \lambda}.
\end{align*}
The function $U^{\eta}$ is defined just like $U$ but with an additional vanishing viscosity term (see \eqref{uepsdefcontrol} below), and then the remaining transformation are defined successively by 
\begin{align*}
    &\hat{U}^{\eta}(t,z,m) = U^{\eta}\big(t, (\text{Id}+z)_{\#} m \big), \quad \hat{U}^{\eta, \delta}(t,z,m) = U(t,z, m * \rho_{\delta}), \\
    &\hat{U}^{\eta, \delta, \eps}(t,z,m) = \sup_{z' \in \T^d, m' \in \cP(\T^d)} \Big\{\hat{U}^{\eta, \delta}(t,z',m') - \frac{1}{2\eps} \Big( \|m -m'\|_{-s}^2 + |z-z'|^2 \Big) \Big\}, \\
    &\hat{U}^{\eta, \delta, \eps, \lambda }(t,z,m) = \hat{U}^{\eta, \delta, \eps, \lambda}\big(t,z, (1 -\lambda) m + \lambda \leb  \big).
\end{align*}
As in \cite{ddj2023}, the key technical step is then to obtain quantitative error bounds showing that 
\begin{enumerate}
    \item $D^2_{mm} \hat{U}^{\eta, \delta, \eps, \lambda}$ blows up at a certain rate as $\delta, \eps, \lambda \to 0$,
    \item $\hat{U}^{\eta, \delta, \eps, \lambda}$ converges to $U$ as $\delta, \eps, \lambda \to 0$,
    \item $\hat{U}^{\eta,\delta, \eps, \lambda}$ is a subsolution of \eqref{hjbinf} up to an error term which is vanishes as $\delta, \eps, \lambda \to 0$.
\end{enumerate}
Finally, using these bounds we must argue that $(t,z,\bx) \mapsto \hat{U}^{\eta, \delta, \eps, \lambda}(t,z,m_{\bx}^N)$ is a subsolution of \eqref{hjbhatn} up to some explicit error bounds, which leads to
\begin{align*}
    U \leq V^N + C(\eta, \delta, \eps, \lambda,N)
\end{align*}
for some explicit function $C$. Finally, we choose $\eta, \delta, \eps, \lambda$ as functions of $N$ to conclude.

\subsection{Possible extensions and limitations of the method} \label{subsec:ext}

Our main assumptions (see Assumption \ref{assump.maindegen}) are fairly minimal in terms of regularity. We require only that $F$ and $G$ are $\bd_1$-Lipschitz and $\tv$-semiconcave (explained below), $H$ is $C^2$ and satisfies natural convexity and local Lipschitz properties, and that $\sigma$ is $C^2$. These represent a simple set of conditions under which we can verify that $U$ is $\bd_1$-Lipschitz and $\tv$-semiconcave, which is the basic starting point of our regularization scheme.

Regarding the Hamiltonian, we could relax the $C^2$ assumption at the expense of an additional mollification step, and also we could replace $H(x,p) - F(m)$ with $\mathcal{H}(x,p,m)$, provided that $\mathcal{H}$ satisfies appropriate regularity conditions. We choose to work with separable Hamiltonians, because there are several steps (e.g. when we mollify the data or when we differentiate $V^{N,\eta}$ twice in order to prove that $|D^2_{x^ix^i} V^{N,\eta}| \leq C/(N\eta)$) where the separability of the Hamiltonian leads to shorter and cleaner arguments. 

We could also relax somewhat the dynamics of the state process. For example, we could replace the linear drift control \eqref{dynamicsinf} with 
\begin{align*}
    dX_t = b(X_t,\alpha_t) dt + \sigma(X_t) dW_t + \sigma^0 dW_t^0
\end{align*}
without much difficulty, at least provided that $H(x,p) = \sup_{a} \Big(- L(x,a) - b(x,a) \cdot p \Big)$ satisfies appropriate regularity conditions. If we included $m$-dependent dynamics of the form 
\begin{align*}
    dX_t = b(X_t,\alpha_t,\cL(X_t | W^0)) dt + \sigma(X_t, \cL(X_t | W^0)) dW_t + \sigma^0 dW_t^0, 
\end{align*}
most of the proofs go through but there is a technical problem with the $\tv$-semiconcavity bound on $U$. In particular, it is not clear to us whether an analogue of Lemma \ref{lemma: stability_mubar} holds in this case, the issue being that linear Fokker-Planck equations always generate contractions in the total variation norm (even if they have irregular data), but the same is not true for non-linear Fokker-Planck equations we would get if we had $m$-dependent dynamics. But when $\sigma$ is non-degnerate, $\tv$-semiconcavity can be recovered by another argument, and so we expect in this case including $m$-dependent dynamics should not be a major problem. We do not pursue this direction because we prefer to focus on degenerate idiosyncratic noise. 

Finally, we mention that we certainly cannot allow $\sigma^0$ to depend on $x$ (or $m$) with our current methods, because then we are unable to perform the same change of variables to handle the corresponding common noise operator. This is one of the primary limitations of our approach.

\subsection{Organization of the paper} In Section \ref{sec: notation}, we fix notation, define precisely the value functions $V^N$ and $U$, and state our main results. In Section \ref{sec: reg_values}, we approximate $V^N$ and $U$ via a vanishing viscosity procedure. Section \ref{sec:easy} contains a proof for the easier of the two inequalities appearing in our main result, Theorem \ref{thm.main}, while Section \ref{sec:hardinequalityconditional} contains a proof of the harder inequality. Finally, in Section \ref{sec: zeronoise} we analyze the special case of zero idiosyncratic noise.

\section{Notation, preliminaries, and main results} \label{sec: notation}

\noindent \textbf{Spaces of functions and distributions on $\T^d$:} Given $d \in \N$, we define $\T^d \coloneqq \R^d/ \Z^d$ to be the  $d$-dimensional flat torus. We write $x = (x_1,...,x_d)$ for the general element $\T^d$, and abuse notation slightly by writing $|x - y|$ for the distance between elements $x$ and $y$ of $\T^d$. We endow $\T^d$ with the usual Lebesgue measure. For $\phi : \T^d \to \R$, we write $D\phi = (D_{x_1} \phi, ..., D_{x_d} \phi)$ for the gradient of $\phi$ and $D^2 \phi = (D^2_{x_ix_j} \phi)_{i,j = 1,...,d}$ for the Hessian of $\phi$. For higher derivatives we use the following multi-index notation: for $k \in \N$ and $\alpha = (\alpha_1,...,\alpha_k) \in \{1,...,d\}^k$, we write $D^{\alpha} \phi = D_{x_{\alpha_1}}...D_{x_{\alpha_k}} \phi$. We write $|\alpha| = k$ to indicate that $\alpha$ is a multi-index of length $k$. We write $C^k(\T^d)$ for the space of functions $\phi : \T^d \to \R$ whose partial derivatives up to order $k$ are continuous, and we endow $C^k(\T^d)$ with the norm 
\begin{align*}
    \|\phi\|_{C^k} = \|\phi\|_{\infty} + \sum_{1 \leq |\alpha| \leq k} \|D^{\alpha} \phi\|_{\infty},
\end{align*}
with $\|\cdot\|_{\infty}$ being the usual $L^{\infty}$ (essential supremum) norm.
Similarly, we write $W^{1,\infty} = W^{1,\infty}(\T^d)$ or $\text{Lip}(\T^d)$ for the space of functions $\phi : \T^d \to \R$ which are Lipschitz continuous. Likewise, we define $W^{2,\infty} = W^{2,\infty}(\T^d)$ to be the space of function $\phi \in C^1(\T^d)$ with Lipschitz first derivatives, and endow these spaces with the norms 
\begin{align*}
   \|\phi\|_{1,\infty} \coloneqq  \| \phi \|_{\infty} + \|D \phi\|_{\infty}, \quad  \|\phi\|_{2,\infty} \coloneqq  \| \phi \|_{\infty} + \|D \phi\|_{\infty} + \|D^2 \phi\|_{\infty}.
\end{align*}
We define $L^2 = L^2(\T^d)$ to be the usual space of square-integrable functions on $\T^d$, with the inner product 
\begin{align*}
    \langle \phi, \psi \rangle_{L^2} = \int_{\T^d} \phi \psi dx.
\end{align*}
For $s \in \N$, we define $H^s = H^s(\T^d)$ to be the Hilbert space of functions whose derivatives up to order $s$ are in $L^2$, and we endow $H^s$ with the usual inner product 
\begin{align*}
    \langle \phi, \psi \rangle_s = \sum_{0 \leq |\alpha| \leq s} \langle D^{\alpha} \phi, D^{\alpha} \psi \rangle_{L^2}, 
\end{align*}
where if $|\alpha| = 0$, we set $D^{\alpha} \phi = \phi$. We denote by $H^{-s}$ the topological dual of $\phi$. We recall that $H^{-s}$ inherits an inner product from $H^s$, which we denote $\langle \cdot, \cdot \rangle_{-s}$. For $\phi \in H^s$, we denote by $\phi^*$ the dual element of $H^{-s}$, defined by the formula $\phi^*(\psi) = \langle \phi, \psi\rangle_s$. Likewise, given $q \in H^{-s}$, we denote by $q^*$ the dual element of $H^s$, which satisfies $\langle q^*, \phi \rangle_s = q(\phi)$. We note that if $\psi$ is smooth enough, then we can integrate by parts to get
\begin{align*}
    \phi^*(\psi) = \langle \phi, \psi \rangle_s = \sum_{0 \leq |\alpha| \leq s} \langle D^{\alpha} \phi, D^{\alpha} \psi \rangle_{L^2} = \sum_{0 \leq |\alpha| \leq s} (-1)^{|\alpha|} \langle \phi, D^{\alpha} D^{\alpha} \psi \rangle_{L^2}. 
\end{align*}
In particular, this means that
\begin{align} \label{dualidentity}
    \phi^* = \sum_{0 \leq |\alpha| \leq s} (-1)^{|\alpha|} D^{\alpha} D^{\alpha} \phi  
\end{align}
in a distributional sense, and in particular if $\phi \in C^{2s} \subset H^s$, then we have the bound
\begin{align}  \label{c2sbound}
    \|\phi^*\|_{\infty} \leq C \|\phi\|_{C^{2s}}, \quad C = C(d,s).
\end{align}

\noindent
\textbf{Functions on $(\T^d)^N$:} We write $\bx = (x^1,..,x^N)$ for the general element of $(\T^d)^N$. For $i = 1,...,N$, $x^i \in \T^d$ can be further expanded as $x^i = (x^i_1,...,x^i_d)$. Given a smooth function $V : (\T^d)^N \to \R$, we write $D_{x^i} V$ for the gradient of $V$ in the direction $x^i$, i.e. $D_{x^i} V : (\T^d)^N \to \R^d$ with $D_{x^i} V = (D_{x^i_1} V,...,D_{x^i_d} V)$. Similarly, for $i,j \in \{1,...,N\}$, we write $D^2_{x^ix^j} V$ for $d \times d$ matrix of partial derivatives given by $ D^2_{x^ix^j} V = (D^2_{x^i_k x^j_l} V)_{k,l = 1,...,d}$. 
\newline \newline 
\textbf{Analysis on $\cP(\T^d)$:} We denote by $\cP(\T^d)$ the space of Borel probability measures on $\T^d$. We denote by $\bd_1$ the 1-Wasserstein metric on $\cP(\T^d)$, defined as 
\begin{align*}
    \bd_1(m,m') = \sup_{\phi \text{ $1$-Lipschitz}} \int_{\T^d} \phi \,d(m - m').
\end{align*}
At times we will also endow $\cP(\T^d)$ with the metric inherited from duality with $W^{2,\infty}$, and make use of the notation 
\begin{align*}
    \|m - m'\|_{-2,\infty} = \sup_{\|\phi\|_{2,\infty} \leq 1} \int_{\T^d} \phi \,d(m -m').
\end{align*}
We say that a function $\Phi(m) : \cP(\T^d) \to \R$ admits a linear derivative if there exists a continuous function $\frac{\delta \Phi}{\delta m}(m,x) : \cP(\T^d) \times \T^d \to \R$ such that
\begin{align} \label{linderivative}
    \Phi(m') - \Phi(m) = \int_0^1 \int_{\T^d} \frac{\delta \Phi}{\delta m}\big(tm + (1-t)m',x \big)  (m-m')(dx)
\end{align}
for each $m,m' \in \cP(\T^d)$, and
\begin{align} \label{normalization}
    \int_{\T^d} \frac{\delta \Phi}{\delta m}(m,x) dx = 0, \quad m \in \cP(\T^d).
\end{align}
We call $\frac{\delta \Phi}{\delta m}$ the linear derivative of $\Phi$. The linear derivative is uniquely defined when it exists, thanks to the normalization convention \eqref{normalization}. If $\Phi$ admits a linear derivative such that $x \mapsto \frac{\delta \Phi}{\delta m}(m,x)$ is $C^1$ for each fixed $x$, we define
\begin{align*}
    D_m \Phi(m,x) = D_x \frac{\delta \Phi}{\delta m}(m,x).
\end{align*}
We say that $\Phi$ is $\cC^1$ if $D_m \Phi : \cP(\T^d) \times \T^d \to \R^d$ exists and is jointly continuous. Higher order derivatives can be defined by iteration, e.g. $D^2_{mm} \Phi(m,x,x') = D_m \big[ D_m\Phi(\cdot, x) \big](m,x')$, $D_{xm} \Phi(m,x) = D_x \big[D_m \Phi(m,\cdot) \big](x)$. We say that $\Phi$ is $\cC^2$ if $D_m \Phi$, $D_{xm} \Phi, D^2_{mm} \Phi$ exist and are continuous. We refer to \cite{CarmonaDelarue_book_I} for more details about calculus on spaces of probability measures.

We note that, as in \cite{ddj2023}, we will at time work with functions $\Phi : H^{-s} \to \R$, and we will make use of Lemma 2.11 and Proposition 2.12 of \cite{ddj2023}, which point out some connections between analysis on $H^{-s}$ and analysis on $\cP(\T^d)$. In particular, Lemma 2.11 therein states that if $s > d/2 + 1$ and a function $\Phi : H^{-s} \to \R$ is $\cC^1$ with respect to the Hilbertian structure on $H^{-s}$, then in fact the restriction of $\Phi$ to $\cP(\T^d)$ is $\cC^1$ in the sense described above, and we have the formulas 
\begin{align} \label{linderivformula}
    &\frac{\delta \Phi}{\delta m}(m,x) = D_{-s} \Phi(m)(x) - \int_{\T^d} D_{-s} \Phi(m)(y) m(dy), \\ \label{wassderivformula}
    &D_m \Phi(m,x) = D_x D_{-s} \Phi(m)(x) 
\end{align}
for $m \in \cP(\T^d)$ and $x \in \T^d$, where $D_{-s} \Phi : H^{-s} \to H^s$ denotes the Frechet derivative of $\Phi$. The same arguments show that if $\Phi$ is $\cC^1$ with respect to $H^{-s}$ for $s > d/2 + 2$, then in fact 
\begin{align*}
    (m,x) \ni \cP(\T^d) \times \T^d \to \frac{\delta  \Phi}{\delta m}(m,\cdot) \in C^2(\T^d)
\end{align*}
is continuous, and in particular $D_{xm} \Phi(m,\cdot)$ exists and 
\begin{align} \label{dxmcont}
    \Phi \in \cC^1(H^{-s}) \text{ for } s > d/2 + 2 \implies \cP(\T^d) \times \T^d \ni (m,x) \mapsto D_{xm} \Phi(m,x) \text{ is continuous.} 
\end{align}Proposition 2.12 of \cite{ddj2023}, meanwhile, states that if $s > d/2 + 1$ and $\Phi : H^{-s} \to \R$ is $\cC^1$ with respect to $H^{-s}$, then there is a constant $C$ depending only on $d$ and $s$ such that
\begin{align} \label{dmmbound}
    |D_m \Phi(m,x) - D_m \Phi(m',x)| \leq C \big[ \Phi \big]_{C^{1,1}(H^{-s})}\bd_1(m,m')
\end{align}
for each $m,m' \in \cP(\T^d)$ and each $x \in \T^d$, where $\big[ \Phi \big]_{C^{1,1}(H^{-s})}$ denotes the seminorm 
\begin{align*}
    \big[ \Phi \big]_{C^{1,1}(H^{-s})} = \sup_{q,q' \in H^{-s}, q \neq q'} \frac{|\nabla_{-s} \Phi(q) - \nabla_{-s} \Phi(q')|}{\|q - q'\|_{-s}},
\end{align*}
and $\nabla_{-s}\Phi = (D_{-s} \Phi)^*$ is the Hilbertian gradient of $\Phi$.

We close this section with a useful technical lemma. Its proof is an easy generalization of the argument leading to equation (5.29) in \cite{ddj2023}, and so we omit it.

\begin{lem} \label{lem.regfromtouching}
    Let $\Phi, \Psi : \cP(\T^d) \to \R$, and assume that that $\Psi \in \cC^1(\cP(T^d))$. Assume moreover that for some $m_0 \in \cP(\T^d)$ satisfying $m_0 \geq c\leb$ for some $c > 0$, we have 
    \begin{align*}
        \Phi(m_0) - \Psi(m_0) = \sup_{m \in \cP(\T^d)} \Big( \Phi(m) - \Psi(m) \Big).
    \end{align*}
    Then,
    \begin{enumerate}
        \item If $\Phi$ is Lipschitz with respect to $\bd_1$, with Lipschitz constant $\lip(\Phi; \bd_1)$, then \begin{align*}
        \|D_m \Psi(m_0) \|_{\infty} \leq \lip(\Phi; \bd_1).
    \end{align*}
        \item If $\Phi$ is Lipschitz with respect to $W^{-2,\infty}$, with Lipschitz constant $\lip(\Phi; W^{-2,\infty})$, then 
        \begin{align*}
            \| \frac{\delta \Psi}{\delta m}(m_0,\cdot)\|_{2,\infty} \leq \lip(\Phi; W^{-2,\infty}).
        \end{align*}
    \end{enumerate}
    As a consequence, if $\Psi \in \cC^1(\cP(T^d))$ is Lipschitz with respect to $W^{-2,\infty}$, then 
    \begin{align*}
        \sup_{m \in \cP(\T^d)} \| \frac{\delta \Psi}{\delta m}(m,\cdot) \|_{2,\infty} \leq \lip(\Psi; W^{-2,\infty}).
    \end{align*}
\end{lem}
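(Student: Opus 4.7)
The plan is to exploit the interior-maximum condition at $m_0$ via first-order perturbations, using the lower bound $m_0 \geq c\leb$ to guarantee that admissible small perturbations of $m_0$ stay in $\cP(\T^d)$. For any signed measure $\nu$ on $\T^d$ with $\nu(\T^d) = 0$ and $\|d\nu/d\leb\|_\infty < \infty$, the curve $m_t := m_0 + t\nu$ lies in $\cP(\T^d)$ for $t \in [0, c/\|d\nu/d\leb\|_\infty]$. Rearranging the optimality condition $\Phi(m_t) - \Psi(m_t) \leq \Phi(m_0) - \Psi(m_0)$ gives $\Psi(m_t) - \Psi(m_0) \geq \Phi(m_t) - \Phi(m_0)$, and the $\rho$-Lipschitz hypothesis on $\Phi$ (for $\rho \in \{\bd_1, \|\cdot\|_{-2,\infty}\}$, both of which come from norms on the signed-measure space, so $\rho(m_t, m_0) = t\|\nu\|_\rho$) produces the lower bound $\Psi(m_t) - \Psi(m_0) \geq -t\,\lip(\Phi;\rho)\,\|\nu\|_\rho$. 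Expanding the left-hand side to first order via the $\cC^1$ regularity of $\Psi$, dividing by $t > 0$, letting $t \to 0^+$, and then substituting $-\nu$ for $\nu$, yields the master inequality
\begin{equation*}
\Big|\int_{\T^d} \frac{\delta \Psi}{\delta m}(m_0, x)\,d\nu(x)\Big| \;\leq\; \lip(\Phi;\rho)\,\|\nu\|_\rho, \qquad (\ast)
\end{equation*}
valid for every admissible $\nu$.

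For (1), I would apply $(\ast)$ with $\rho = \bd_1$ to mollified Dirac differences $\nu_\eps := \rho_\eps(\cdot - y)\leb - \rho_\eps(\cdot - z)\leb$ for points $y,z \in \T^d$. As $\eps \to 0$, continuity of $f := \frac{\delta \Psi}{\delta m}(m_0, \cdot)$ gives $\int f\,d\nu_\eps \to f(y) - f(z)$, while a direct computation yields $\|\nu_\eps\|_{\bd_1} \to |y - z|$. This produces $|f(y) - f(z)| \leq \lip(\Phi;\bd_1)|y-z|$, and since $D_m \Psi(m_0, \cdot) = D_x f$, one concludes $\|D_m \Psi(m_0, \cdot)\|_\infty \leq \lip(\Phi;\bd_1)$.

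For (2), I would apply $(\ast)$ with $\rho = \|\cdot\|_{-2,\infty}$ to bounded-density approximants of well-chosen singular measures. The key test is $\nu_h := \delta_{y+h\eta} - 2\delta_y + \delta_{y-h\eta}$ for a unit vector $\eta$: Taylor expansion gives $\int f\,d\nu_h = h^2 D^2 f(y)(\eta,\eta) + o(h^2)$, while the integral-remainder identity $\phi(y+h\eta) - 2\phi(y) + \phi(y-h\eta) = \int_{-h}^h (h-|t|)\,D^2\phi(y + t\eta)(\eta,\eta)\,dt$ together with $\|D^2\phi\|_\infty \leq \|\phi\|_{2,\infty} \leq 1$ bounds $\|\nu_h\|_{-2,\infty} \leq h^2$. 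Dividing by $h^2$ and sending $h \to 0$ yields $\|D^2 f\|_\infty \leq \lip(\Phi; W^{-2,\infty})$. A parallel first-order test with $\nu_h = \delta_{y+h\eta} - \delta_y$ and $\|\nu_h\|_{-2,\infty} \leq h$ gives $\|Df\|_\infty \leq \lip(\Phi; W^{-2,\infty})$; finally $\|f\|_\infty$ is controlled via a Poincar\'e-type inequality on $\T^d$ using the normalization $\int f\,dx = 0$ built into the linear derivative. Summing the three contributions yields (2).

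The closing supremum-type statement is obtained by applying (2) with $\Phi := \Psi$, in which case the maximality condition is vacuous: the bound holds at every $m_0 \geq c\leb$, and one extends to arbitrary $m \in \cP(\T^d)$ via the approximation $m^\eps := (1-\eps)m + \eps\leb$ and the continuity of $(m,x) \mapsto \frac{\delta\Psi}{\delta m}(m,x)$ coming from $\Psi \in \cC^1$. The main technical point to handle carefully is the passage from the bounded-density approximants $\nu_h^\eps$ to the singular test measures $\nu_h$ inside $(\ast)$; this follows from continuity of $\nu \mapsto \int f\,d\nu$ against the continuous $f$ together with uniform convergence of $\phi * \rho_\eps$ to $\phi$ for $\phi$ in the unit ball of $W^{2,\infty}$, which controls $\|\nu_h^\eps - \nu_h\|_{-2,\infty}$ as $\eps \to 0$.
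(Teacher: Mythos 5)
Your overall route is the intended one (the paper omits this proof, deferring to the argument for equation (5.29) of \cite{ddj2023}): the hypothesis $m_0 \geq c\leb$ is exactly what makes the vertical perturbations $m_0 + t\nu$ admissible, the master inequality $(\ast)$ is derived correctly, and part (1) comes out with the exact constant since the Lipschitz seminorm of $f := \frac{\delta \Psi}{\delta m}(m_0,\cdot)$ is precisely $\|D_m\Psi(m_0,\cdot)\|_\infty$. Two points in part (2) need attention. First, you cannot Taylor-expand $f$ to second order: a priori $f$ is only $C^1$, so the statement ``$\int f\,d\nu_h = h^2 D^2f(y)(\eta,\eta) + o(h^2)$'' is not available. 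What $(\ast)$ actually gives is the uniform second-difference bound $|f(y+h\eta) - 2f(y) + f(y-h\eta)| \leq h^2\,\lip(\Phi;W^{-2,\infty})$ for all $y,\eta,h$, and you should then invoke the standard fact that uniformly bounded second differences of a continuous function force $Df$ to be Lipschitz with the same constant; since $W^{2,\infty}$ is defined in the paper as $C^1$ with Lipschitz gradient, this is exactly the conclusion you want, so the fix is routine but the step as written is not justified.

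The second point is a genuine discrepancy with the literal statement of (2). Your three tests control each summand of $\|f\|_{2,\infty} = \|f\|_\infty + \|Df\|_\infty + \|D^2f\|_\infty$ separately by $L := \lip(\Phi;W^{-2,\infty})$ (and the zeroth-order term only by $\tfrac{\sqrt d}{2}L$ via Poincar\'e), so summing yields $\|f\|_{2,\infty} \leq (2 + \tfrac{\sqrt d}{2})L$ rather than $\leq L$. Because $\|\cdot\|_{2,\infty}$ is an $\ell^1$-sum of sup-norms while each test measure is dual to only one of the three pieces, there is no single admissible $\nu$ with $\|\nu\|_{-2,\infty}\leq 1$ detecting the full sum, and I do not see how to reach the constant $1$ by this method. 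This is harmless for every application in the paper (all uses of the lemma absorb dimensional constants), but strictly speaking your argument proves (2), and hence the closing supremum statement, only up to a factor $C(d)$.
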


\subsection{Problem statement} \label{susec: prob_statement}

We fix in this paper numbers $d \in \N$ and $T > 0$, and we work on a filtered probability space $\big(\Omega, \sF, \bbF = (\sF_t)_{0 \leq t \leq T}, \bP\big)$ hosting independent $d$-dimensional Brownian motions $(W^i)_{i \in \N \cup \{0\}}$. The data for our mean field and $N$-particle control problems consist of the functions $\sigma$, $F$, $G$, $L$ and the constant $\sigma^0$ introduced in \eqref{introdata}. For notational convenience, we define $A : \T^d \to \R^{d \times d}$, $A_0 \geq 0$, $H(x,p) : \T^d \times \R^d \to \R$ by \eqref{hadef}.
\newline 
\newline 
\textbf{The definition of $V^N$:} For the $N$-particle problem, the $(\T^d)^N$-valued state process $\bX = (X^1,...,X^N)$ evolves according to the controlled dynamics
\begin{align} \label{nplayerdynamics}
    dX_t^i = \alpha_t^i dt + \sigma(X_t^i) dW_t^i + \sigma_0 dW_t^0, \,\, t_0 \leq t \leq T, \quad \bX_{t_0} = \bx_0,
\end{align}
where $\bm \alpha = (\alpha^1,...,\alpha^N)$ is the control and $\bx_0 = (x_0^1,...,x_0^N)$ is the initial condition. The value function $V^N(t,\bx) : [0,T] \times (\T^d)^N \to \R$ is defined by 
\begin{align} \label{vndef}
    &V^N(t_0,\bx_0) \coloneqq \inf_{\bm \alpha = (\alpha^1,...,\alpha^N) \in \sA^N_{t_0}} J^N(t_0,\bx_0,\bm \alpha), \text{ where } \nonumber 
    \\
    &J^N(t_0,\bx_0,\bm \alpha)  \coloneqq \E\bigg[\int_{t_0}^T \Big( \frac{1}{N} \sum_{i = 1}^N L\big(X_t^i, \alpha_t^i\big) + F\big(m_{\bX_t}^N\big) \Big) dt + G\big(m_{\bX_T}^N \big)\bigg],
\end{align}
with the infimum being taken over the set of $\sA^N_{t_0}$ of square-integrable, $(\R^d)^N$-valued controls $\bm \alpha = (\bm \alpha_t)_{t_0 \leq t \leq T}$ which are progressively measurable with respect to $\bbF$, and where in definition of $J^N$, $\bX$ is determined from $\bm \alpha$ by the dynamics \eqref{nplayerdynamics}. 
\newline \newline 
\noindent 
\textbf{The definition of $U$:} In defining the value function for the limiting problem, we take care to use a formulation which is compatible with the dynamic programming result in \cite{djete2019mckean}. In particular, for $t_0 \in [0,T)$ we define canonical space
\begin{align*}
    \Omega^{\infty,t_0} \coloneqq \T^d \times \cC^{t_0} \times \cC^{t_0}, \text{ with } \cC^{t_0} \coloneqq C([t_0,T] ; \R^d). 
\end{align*}
For $m_0 \in \cP(\T^d)$, we define a probability measure $\bP^{\infty, t_0,m_0}$ on $\Omega^{\infty,t_0}$ via the formula $\bP^{\infty, t_0, m_0} \coloneqq m_0 \otimes \bW^{t_0} \otimes \bW^{t_0}$, where $\bW^{t_0}$ is the Wiener measure on $\cC^{t_0}$. We write $(x, \omega, \omega^0)$ for the general element of $\Omega^{\infty,t_0}$, and denote by $(\xi, W, W^0)$ the canonical random variable and processes on $\Omega^{\infty,t_0}$, i.e. 
\begin{align*}
\xi(x,\omega, \omega^0) = x, \quad W_t(x,\omega, \omega^0) = \omega(t), \quad  W^0_t(x,\omega, \omega^0) = \omega^0(t), \quad t_0 \leq t \leq T.
\end{align*}
Let $\bbF^{\infty,t_0,m_0} = (\sF_t^{\infty,t_0,m_0})_{t_0 \leq 
 t \leq T}$ denote the right-continuous and $\bP^{\infty, t_0, m_0}$-complete augmentation of the filtration generated by $\xi$, $W$, and $W^0$, and let $\bbF^{0,t_0,m_0} = (\sF_t^{0,t_0,m_0})_{t_0 \leq t \leq T}$ denote the right-continuous and $\bP^{\infty, t_0, m_0}$-complete augmentation of the filtration generated by $W^0$. We denote by $\sA_{t_0,m_0}$ the set of all $\R^d$-valued processes $\alpha = (\alpha_t)_{t_0 \leq t \leq T}$ defined on $\Omega^{\infty, t_0,m_0}$ progressively measurable with respect to $\bbF^{\infty, t_0,m_0}$ and square-integrable with respect to $\bP^{\infty, t_0,m_0}$. Then we define 
\begin{align} \label{udefcontrol}
    &U(t_0,m_0) \coloneqq \inf_{\alpha \in \sA_{t_0,m_0}} J^{\infty}(t_0,m_0,\alpha), \text{ with } \nonumber 
    \\
    &J^{\infty}(t_0,m_0,\alpha) \coloneqq \E^{\bP^{\infty, t_0,m_0}}\bigg[ \int_{t_0}^T \Big( L\big(X_t, \alpha_t\big) + F\big( \sL^{0,m_0}(X_t) \big) \Big) dt + G\big(\sL^{0,m_0}(X_{T})\big)\bigg],
\end{align}
where in the definition of $J^{\infty}$, $X$ is determined from $\alpha$ by the dynamics
\begin{align} \label{dynamicsinf}
    dX_t = \alpha_t dt + \sigma(X_t) dW_t + \sigma_0 dW_t^0, \quad t_0 \leq t \leq T, \quad X_{t_0} = \xi,
\end{align}
and where for a random variable $X$, $\sL^{0,m_0}$ denotes the law of $X$ conditional on $\sF^{0,t_0,m_0}_T$ and with respect to $\bP^{\infty, t_0, m_0}$.

\subsection{Assumptions and main results} \label{subsec:mainresults}


\begin{assumption} \label{assump.maindegen}
  There are constants $C_G$ and $C_F$ such that $G$ satisfies
  \begin{align}
         |G(m) - G(m')| &\leq C_G \bd_1(m,m'),  \\ \label{gsc}
         G(\lambda m + (1-\lambda)m')  &\geq \lambda G(m) + (1-\lambda)G(m') - \frac{C_G}{2} \lambda(1-\lambda) \|m - m'\|_{\tv}^2
  \end{align}
  for all $m,m' \in \cP(\T^d)$ and all $\lambda \in (0,1)$, and $F$ satisfies the same estimates but with $C_F$ replacing $C_G$. There is a constant $C_{\sigma}$ such that
   \begin{align}
      \norm{\sigma}_{C^2(\T^d)} + |\sigma_0| \leq C_{\sigma}.
   \end{align}
   The Hamiltonian $H$ is $C^2$, and there is a constant $C_H > 0$ such that
   \begin{align} \label{hlocallip}
      \frac{1}{C_H} I_{d \times d} \leq D^2_{pp} H(x,p) \leq C_H I_{d \times d}, \quad |D_pH(x,p)| \leq C_H (1 + |p|)
      ,\quad 
    |D_x H(x,p)| \leq C_H (1 + |p|)
   \end{align}
    for all $(x,p) \in \T^d \times \R^d$.
\end{assumption}

\begin{rmk}

    We note that in the case $\sigma = \sqrt{2} I_{d \times d}$ and $\sigma^0 = 0$, Assumption \ref{assump.maindegen} matches exactly Assumption 2.1 in \cite{ddj2023}, except the ``$\bd_1$-semiconcavity" of $F$ and $G$ in \cite{ddj2023} is replaced with the weaker ``$\tv$-semiconcavity" property \eqref{gsc}. In particular, Assumption \ref{assump.maindegen} represents a natural set of conditions under which we can prove that the limiting value function $U$ is $\bd_1$-Lipschitz and $\tv$-semiconcave. The need to work with $\tv$-semiconcavity rather than $\bd_1$-semiconcavity is discussed below in Remark \ref{rmk.semiconcavity}. We refer to Subsection \ref{subsec:ext} above for a discussion of possible relaxations of these assumptions.

\end{rmk}

\begin{convention} \label{conv.data}
    Throughout the paper we will say that a constant $C$ \textbf{depends on the data} if it depends only on $d$, $T$ and the constants $C_G$, $C_F$, $C_{\sigma}$, $C_H$
    appearing in Assumption \ref{assump.maindegen}.
\end{convention}

Our main result is as follows.

\begin{thm} \label{thm.main}
Suppose that Assumption \ref{assump.maindegen} is in force. Then there is a constant $C$ such that
\begin{align*}
  -C r_{N,d} \leq  U(t,m_{\bx}^N) - V^N(t,\bx) \leq C N^{-\gamma(d)}, 
\end{align*}
for all $N \in \N$, $(t,\bx) \in [0,T] \times (\T^d)^N$,
where $\gamma_d$ is as in \eqref{gammadef} and $r_{N,d}$ is as in \eqref{rnddef}. If we assume in addition that either $\sigma$ is constant, or there is a constant $C$ such that 
\begin{align*}
|U(t,m) - U(t',m')| \leq C\Big(|t-t'| + \|m - m'\|_{-2,\infty} \Big),
\end{align*} then the upper bound can be improved to
\begin{align*}
    U(t,m_{\bx}^N) - V^N(t,\bx) \leq C N^{-\gamma'(d)},
\end{align*}
where $\gamma'$ is given by \eqref{gammaprimedef}.
\end{thm}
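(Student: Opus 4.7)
The plan is to prove the two inequalities separately. The lower bound $V^N \leq U + C r_{N,d}$ is essentially a control-theoretic exercise: starting from a near-optimal control for the limit problem, I will build an admissible $N$-particle strategy by coupling via empirical sampling from i.i.d.\ copies of the optimal McKean--Vlasov diffusion, with all players using the same open-loop control. The rate $r_{N,d}$ then emerges by combining the $\bd_1$-Lipschitz regularity of $U$ (inherited from the $\bd_1$-Lipschitz property of $F$ and $G$ via a straightforward coupling) with the standard empirical-measure convergence rates for i.i.d.\ samples on $\T^d$. This closely follows the torus adaptation of the arguments in \cite{cdjs2023,ddj2023}.

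\textbf{Regularization scheme for the hard inequality.} For $U \leq V^N + C N^{-\gamma_d}$ I will follow the five-step chain $U \to U^\eta \to \hat U^\eta \to \hat U^{\eta,\delta} \to \hat U^{\eta,\delta,\eps} \to \hat U^{\eta,\delta,\eps,\lambda}$ sketched in the introduction. The vanishing viscosity step $U\to U^\eta$ adds an idiosyncratic noise of strength $\eta$ (and suitably mollifies $F,G$); here I will prove the quantitative estimates $|U-U^\eta|\le C\sqrt\eta$ by Girsanov/coupling, and $\text{Lip}(U^\eta;W^{-2,\infty})\le C/\eta$ by a stochastic-analytic argument in the spirit of the nonlinear adjoint method, yielding an $N$-uniform bound $|D^2_{x^ix^i}V^{N,\eta}|\le C/(N\eta)$ which I transfer to $U^\eta$ as in \cite{djs2023}. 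The change of variables $\hat U^\eta(t,z,m)=U^\eta(t,(\text{Id}+z)_{\#}m)$ replaces the common-noise operator $\com$ by the finite-dimensional Laplacian $A_0\Delta_z$. The remaining three transformations mirror \cite{ddj2023}: mollification by $\rho_\delta$ to upgrade $\bd_1$-Lipschitz into $H^{-s}$-Lipschitz regularity, sup-convolution on $H^{-s}\times\R^d$ to create $C^{1,1}$ regularity in the Hilbertian sense (providing $D^2_{mm}$ bounds via \eqref{dmmbound}), and a convex shift toward $\leb$ to stay in the interior of $\cP(\T^d)$ so that Lemma \ref{lem.regfromtouching} applies.

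\textbf{Subsolution estimate and comparison.} The core technical step is to prove that $\hat U^{\eta,\delta,\eps,\lambda}$ is a viscosity subsolution of the transformed equation \eqref{hjbz} up to an explicit defect $E(\eta,\delta,\eps,\lambda)$, while simultaneously controlling the blow-up of $\|D^2_{mm}\hat U^{\eta,\delta,\eps,\lambda}\|_\infty$. For the mollification step I will establish a commutator-type estimate controlling, pointwise in $(x,m)$, the quantity $H(x,D_mU^{\eta,\delta}(m,x))-\int H(y,D_mU^\eta(m*\rho_\delta,y))\rho_\delta(x-y)\,dy$ together with its analogue for the $\tr(A(x)D_{xm}U)$ term; here the $W^{-2,\infty}$-Lipschitz bound secured by the vanishing viscosity step is used precisely to make $D_mU^\eta$ Lipschitz in $x$ and thus to handle the non-constant $A$. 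For the sup-convolution and convex shift I will control the subsolution defect through the Hilbertian $C^{1,1}$ seminorm of the penalization. Since the resulting estimate involves the second-order operator $A_0\Delta_z$, a pointwise verification as in \cite[Lemma 5.13]{ddj2023} is no longer enough; instead I will lift $\hat U^{\eta,\delta,\eps,\lambda}$ to the $N$-particle level and verify the viscosity subsolution property of $(t,z,\bx)\mapsto\hat U^{\eta,\delta,\eps,\lambda}(t,z,m^N_{\bx})$ for \eqref{hjbhatn} via Jensen's lemma. The finite-dimensional comparison principle against $\hat V^N$, together with the $O(1/N)$-term $E^N$ from \eqref{errorterm} now of size $C\|D^2_{mm}\hat U^{\eta,\delta,\eps,\lambda}\|_\infty/N$, then delivers $U\le V^N+C(\eta,\delta,\eps,\lambda,N)$.

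\textbf{Main obstacle and optimization.} The hardest point is the quantitative subsolution estimate through the mollification step: I need the sharpest possible dependence of the $D^2_{mm}$-bounds on $\delta$, which I expect to obtain by refining \cite[Proposition 4.4]{ddj2023} using the explicit duality representation \eqref{dualidentity} for integer $s$, at the cost of losing a power whenever $d$ is even and $s=d/2+3$ is the minimal admissible integer for \eqref{dxmcont} to hold. When $\sigma$ is constant or $U$ is a priori $W^{-2,\infty}$-Lipschitz, the vanishing viscosity step is superfluous and the final error has the schematic form $C(\delta^{a_1}+\eps^{a_2}+\lambda^{a_3}+\delta^{-b_1}\eps^{-b_2}\lambda^{-b_3}N^{-1})$, and optimizing in $(\delta,\eps,\lambda)$ gives $\gamma'_d$. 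In the general case I must first insert the bounds $|U-U^\eta|\le C\sqrt\eta$ and $\text{Lip}(U^\eta;W^{-2,\infty})\le C/\eta$, obtaining an estimate of the form $\sqrt\eta+C(1+1/\eta)^{1-1/(2s^*+1)}N^{-1/(2s^*+1)}$, and optimizing $\eta$ as a power of $N$ yields $\gamma_d$ as in \eqref{gammadef}. The parity split in both $\gamma_d$ and $\gamma'_d$ is dictated by the smallest integer $s^*>d/2+2$ at which \eqref{dxmcont} and \eqref{dmmbound} apply.
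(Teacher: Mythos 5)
Your plan follows essentially the same route as the paper: the five-step regularization chain $U \to U^{\eta} \to \hat U^{\eta} \to \hat U^{\eta,\delta} \to \hat U^{\eta,\delta,\eps} \to \hat U^{\eta,\delta,\eps,\lambda}$, the commutator estimate in the mollification step, the integer-$s^*$ duality refinement of the sup-convolution bounds, the Jensen's-lemma verification of the projected subsolution property, and the final optimization in $(\eta,\delta,\eps,\lambda)$ (with the parity split dictated by the smallest integer $s^*>d/2+2$) all match the paper's argument. The only, immaterial, divergence is in the easy inequality, where you propose the classical conditionally-i.i.d. sampling construction while the paper instead passes through the integrated value function $\hat V^{N,\eta}(t,m)=\int V^{N,\eta}(t,\cdot)\,dm^{\otimes N}$ and a verification argument; both reduce to the Fournier--Guillin rate $r_{N,d}$.
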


We also state the following result for the case $\sigma = 0$, which is obtained by combining Propositions \ref{prop.easy} and \ref{prop.zeronoise}. 

\begin{thm} \label{thm.zeroidio}
    Suppose that Assumption \ref{assump.maindegen} is in force, and in addition $\sigma = 0$. Then there is a constant $C$ depending only on the data such that
    \begin{align*}
        0 \leq V^N(t,\bx) - U(t,m_{\bx}^N) \leq C r_{N,d}
    \end{align*}
    for all $N \in \N$, $(t,\bx) \in [0,T] \times (\T^d)^N$,
where $r_{N,d}$ is as in \eqref{rnddef}.
\end{thm}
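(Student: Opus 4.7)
The plan is to establish the two inequalities separately; together they give Propositions \ref{prop.easy} and \ref{prop.zeronoise}, which combine to yield Theorem \ref{thm.zeroidio}. The upper bound $V^N(t,\bx) - U(t,m_{\bx}^N) \leq C r_{N,d}$ does not require $\sigma=0$ and follows from a standard McKean--Vlasov propagation-of-chaos argument. The lower bound $V^N(t,\bx) \geq U(t,m_{\bx}^N)$ is the interesting half and uses crucially the absence of idiosyncratic noise in the state equation.

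For the upper bound, I would fix $(t_0,\bx_0)$, pick an $\varepsilon$-optimal control $\alpha^{*}$ for the mean-field problem with initial distribution $m_0 := m_{\bx_0}^N$ on the canonical space, and let $X^{*}$ denote the associated state process. I would then transfer $\alpha^{*}$ to the $N$-particle problem by letting particle $i$ run with the ``same strategy'' driven by $W^i, W^0$ but starting from $x_0^i$ rather than $\xi$. Because the resulting $X^i$ are conditionally i.i.d.\ given $W^0$ with conditional law $\cL^{0,m_0}(X^{*}_t)$, the standard Fournier--Guillin empirical-measure estimate gives
\begin{equation*}
\E\bigl[\bd_1\bigl(m^N_{\bX_t}, \cL^{0,m_0}(X^{*}_t)\bigr)\bigr] \leq C r_{N,d}.
\end{equation*}
Combined with the $\bd_1$-Lipschitz continuity of $F$ and $G$ (Assumption \ref{assump.maindegen}) and the exchange-of-average identity $\E[\frac{1}{N}\sum_i L(X^i_t, \alpha^i_t)] = \E[L(X^{*}_t,\alpha^{*}_t)]$, this yields $J^N(t_0,\bx_0,\bm{\alpha}) \leq J^\infty(t_0,m_0,\alpha^{*}) + C r_{N,d}$. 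Optimizing over $\varepsilon$ and $\alpha^{*}$ gives the conclusion.

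For the lower bound, fix an admissible $\bm{\alpha}=(\alpha^1,\dots,\alpha^N)$ for the $N$-particle problem with state $\bX$. I would enlarge the underlying probability space to also carry a uniform random index $\iota \in \{1,\dots,N\}$ independent of everything else, and define $\xi := x_0^{\iota}$, $X_t := X_t^{\iota}$, $\alpha_t := \alpha_t^{\iota}$. Two crucial observations then apply. First, $\xi$ has law $m_{\bx_0}^N$ (counting multiplicities correctly). Second, and this is where $\sigma=0$ is used, the dynamics reduce to $X_t = \xi + \int_{t_0}^t \alpha_s\,ds + \sigma^0(W^0_t - W^0_{t_0})$, with no idiosyncratic Brownian term appearing in the state, so $(\alpha, \xi, W^0)$ is a valid configuration for the mean-field dynamics \eqref{dynamicsinf}. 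Because $\iota$ is uniform and independent of $W^0$ and of $\bX$, the conditional law of $X_t$ given $W^0$ is exactly the empirical measure $m^N_{\bX_t}$. Averaging $L(X^{\iota}_t,\alpha^{\iota}_t)$ over $\iota$ produces $\frac{1}{N}\sum_i L(X^i_t,\alpha^i_t)$, and $F(\cL^{0,m_0}(X_t)) = F(m^N_{\bX_t})$ (similarly for $G$). Thus the mean-field cost of $(\alpha,\xi,W^0)$ equals $J^N(t_0,\bx_0,\bm{\alpha})$, and taking the infimum over $\bm{\alpha}$ gives $U(t_0,m_0) \leq V^N(t_0,\bx_0)$.

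The main technical obstacle is that $U$ is defined as an infimum over controls on a specific canonical space $\Omega^{\infty,t_0,m_0}$, while the construction above lives on an enlarged space. To reconcile this, I would invoke the equivalence between strong and weak formulations of McKean--Vlasov control (see e.g. \cite{djete2019mckean, Djete:COCV}): the infimum of $J^\infty$ over admissible configurations on any filtered probability space supporting $\xi \sim m_0$ independent of independent Brownian motions $(W,W^0)$ coincides with $U(t_0,m_0)$. Once this equivalence is granted, the enlarged-space configuration is legitimate and the argument concludes. I do not expect the upper bound to present any essential difficulty; it is entirely routine modulo the empirical-measure rate $r_{N,d}$, which is where the dimension-dependent logarithmic correction at $d=2$ and the rate $N^{-1/d}$ for $d \geq 3$ both enter.
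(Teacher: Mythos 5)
Your overall architecture (prove the two inequalities separately) matches the paper's, which combines Proposition \ref{prop.easy} and Proposition \ref{prop.zeronoise}. But there is a genuine gap in your lower bound. You assert that, because $\iota$ is uniform and independent of everything, $\cL(X^{\iota}_t\mid W^0)=m^N_{\bX_t}$. What independence of $\iota$ actually gives is $\cL(X^{\iota}_t\mid W^0)=\frac1N\sum_{i}\cL(X^i_t\mid W^0)$, and this coincides with the empirical measure only when each $X^i_t$ is $\sigma(W^0)$-measurable. The admissible controls in $\sA^N_{t_0}$ are progressively measurable with respect to the \emph{full} filtration $\bbF$, so $\alpha^i$, and hence $X^i$, may depend on the idiosyncratic Brownian motions $W^1,\dots,W^N$ even though $\sigma=0$ removes them from the dynamics. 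For such controls $F(\cL(X^{\iota}_t\mid W^0))\neq F(m^N_{\bX_t})$, and since $F$ and $G$ are not assumed convex the cost identity breaks in an uncontrolled direction. The paper's proof of Proposition \ref{prop.zeronoise} inserts exactly the missing step: because $\sigma=0$, the infimum defining $V^N$ may first be restricted to controls adapted to the filtration generated by $W^0$ alone (the idiosyncratic noises then act only as a randomization of the control, which cannot decrease an infimum), and only then is the control transferred to the mean-field problem. With that reduction your construction goes through; indeed your independent index $\iota$ is a clean way to handle repeated initial positions, where the paper instead assumes the $x_0^i$ distinct and concludes by density, and your appeal to the strong/weak equivalence to legitimize the enlarged space is the right move.

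For the upper bound your route differs from the paper's. The paper regularizes to $V^{N,\eta}$, shows that the integrated value function $\hat{V}^{N,\eta}(t,m)=\int V^{N,\eta}(t,\cdot)\,dm^{\otimes N}$ is an approximate subsolution of the limiting HJB equation (Lemma \ref{lem.vhatsubol}), deduces $\hat{V}^{N,\eta}\leq U^{\eta}+Cr_{N,d}$ by verification, compares $\hat{V}^{N,\eta}(t,m^N_{\bx})$ with $V^{N,\eta}(t,\bx)$ via the uniform Lipschitz estimate \eqref{Lip:VNeps}, and finally sends $\eta\to0$. Your direct transfer of an $\eps$-optimal mean-field control is a legitimate alternative that avoids the regularization entirely, but note one inaccuracy: your particles are conditionally independent given $W^0$ and \emph{not} identically distributed, since particle $i$ starts from the deterministic point $x_0^i$ and so has conditional law $\cL(X^*_t\mid \xi=x_0^i,W^0)$. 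The i.i.d. Fournier--Guillin bound therefore does not apply verbatim; you need its (true, but to be justified) extension to independent samples whose \emph{average} conditional law is $\cL^{0,m_0}(X^*_t)$. The integrated value function in the paper's argument exists precisely to restore genuine i.i.d. sampling and sidestep this issue.
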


\section{The vanishing viscosity procedure} \label{sec: reg_values}

Let us fix $\eta \in [0,1]$. In this section, we introduce a regularized version of the problems presented in Subsection \ref{susec: prob_statement}. 

\subsection{Definition of $V^{N,\eta}$}
We assume in what follows that in addition to the Brownian motions $(W^i)_{i \in \N \cup \{0\}}$, the filtered probability space $\big(\Omega, \sF, \bbF, \bP\big)$ also hosts Brownian motions $(B^i)_{i \in \N}$ which are independent of each other and of $(W^i)_{i \in \N \cup \{0\}}$. We also fix a standard smooth mollifier $(\rho_{\eta})_{\eta > 0}$ on $\T^d$, and define 
\begin{align*}
    F^{\eta}(m) = F(m * \rho_{\eta}), \quad G^{\eta}(m) = G(m * \rho_{\eta}).
\end{align*}
The value function $V^{N,\eta}$ of the regularized problem will be defined exactly as $V^N$, except with the dynamics \eqref{nplayerdynamics} replaced by
\begin{align} \label{nplayerdynamics_eps}
    dX_t^{i} = \alpha_t^i dt + \sqrt{2\eta}dB_t^i + \sigma(X_t^{i}) dW_t^i + \sigma_0 dW_t^0, \,\, t_0 \leq t \leq T, \quad \bX_{t_0} = \bx_0,
\end{align}
and with $F,G$ replaced by $F^{\eta}, G^{\eta}$. 
More precisely, we define $V^{N,\eta}(t,\bx) : [0,T] \times (\T^d)^N \to \R$ via
\begin{align} \label{vnepsdef}
    &V^{N,\eta}(t_0,\bx_0) \coloneqq \inf_{\bm \alpha = (\alpha^1,...,\alpha^N) \in \sA^N_{t_0}} J^{N,\eta}(t_0,\bx_0,\bm \alpha), \text{ with } \nonumber 
    \\
    &J^{N,\eta}(t_0,\bx_0,\bm \alpha)  \coloneqq \E\bigg[\int_{t_0}^T \Big( \frac{1}{N} \sum_{i = 1}^N L\big(X_t^i, \alpha_t^i\big) + F^{\eta} \big( m_{\bX_t}^N ) \Big) dt + G^{\eta} \big(m_{\bX_T}^N \big)\bigg],
\end{align}
where in definition of $J^{N,\eta}$, $\bX$ is determined from $\bm \alpha$ by the dynamics \eqref{nplayerdynamics_eps}. Under Assumption \ref{assump.maindegen}, $V^{N,\eta}$ is the unique classical solution to the equation
\begin{align} \tag{$\text{HJB}_{N,\eta}$} \label{hjbn_eps}
    \begin{cases} 
   \ds  - \partial_t V^{N,\eta} - \eta \sum_{i = 1}^N \Delta_{x^i} V^{N,\eta} - \sum_{i = 1}^N \tr\big( A(x^i) D^2_{x^ix^i} V^{N,\eta}\big) - A_0 \sum_{i,j = 1}^N \tr(D^2_{x^ix^j} V^{N,\eta})
  \vspace{.1cm} \\ \ds \qquad + \frac{1}{N} \sum_{i = 1}^N H(x^i, ND_{x^i} V^{N,\eta} ) = F^{\eta}(m_{\bx}^N),  \qquad (t,\bx) \in [0,T) \times (\T^d)^N, \vspace{.1cm} \\
   \ds V^{N,\eta}(T,\bx) = G^{\eta}(m_{\bx}^N), \quad \bx \in (\T^d)^N.
    \end{cases}
\end{align}

\subsection{Definition of $U^{\eta}$} \label{subsec:uepsdef}

We again work on a canonical space to define $U^{\eta}$. In particular, for $t_0 \in [0,T)$, we set $\ov{\Omega}^{\infty,t_0} = \Omega^{\infty,t_0} \times \cC^{t_0}$, i.e. $\ov{\Omega}^{\infty,t_0} = \T^d \times (\cC^{t_0})^{\otimes 3}$. For $m_0 \in \cP(\T^d)$, we define $\ov{\bP}^{\infty, t_0, m_0}$ to be the the measure on $\ov{\Omega}^{\infty,t_0}$ given by $\ov{\bP}^{\infty, t_0, m_0} = m_0 \otimes (\bW^{t_0})^{\otimes 3}$, where $\bW^{t_0}$ again denotes the Wiener measure on $\cC^{t_0}$. We write $(x, \omega, \omega^0, \beta)$ for the general element of $\ov{\Omega}^{\infty, t_0}$, and write $(\xi, W, W^0, B)$ for the canonical random variable and processes on $\ov{\Omega}^{\infty, t_0}$. We let $\ov{\bbF}^{\infty, t_0, m_0} = (\ov{\sF}_t^{\infty,t_0, m_0})_{t_0 \leq t \leq T}$ denote the right-continuous and $\ov{\bP}^{\infty, t_0, m_0}$-complete augmentation of the filtration generated by $\xi$, $W$, $W^0$, and $B$ and let $\ov{\bbF}^{0,t_0,m_0} = (\ov{\sF}_t^{0,t_0,m_0})_{t_0 \leq t \leq T}$ denote the right-continuous and $\ov{\bP}^{\infty, t_0, m_0}$-complete augmentation of the filtration generated by $W^0$. We define $\ov{\sA}_{t_0,m_0}$ analogously to $\sA_{t_0,m_0}$. Then, as in the definition of $U$, we define 
\begin{align} \label{uepsdefcontrol}
    &U^{\eta}(t_0,m_0) \coloneqq \inf_{\alpha \in \ov{\sA}_{t_0,m_0}} J^{\infty, \eps}(t_0,m_0,\alpha), \text{ where } \nonumber 
    \\
    &J^{\infty,\eta}(t_0,m_0,\alpha) \coloneqq \E^{\ov{\bP}^{\infty, t_0, m_0}}\bigg[ \int_{t_0}^T \Big( L\big(X_t, \alpha_t \big) + F(\sL^{0,m_0}(X_{t})) \Big) dt + G\big(\sL^{0,m_0}(X_{T})\big)\bigg],
\end{align}
where in the definition of $J^{\infty,\eta}$, $X$ is determined from $\alpha$ by the dynamics
\begin{equation}
\label{dyn:Xeps:alpha}
    dX_t = \alpha_t dt + \sigma(X_t) dW_t + \sigma_0 dW_t^0 + \sqrt{2\eta} dB_t, \quad t_0 \leq t \leq T, \quad X_{t_0} = \xi,
\end{equation}
and where for a random variable $X$, $\sL^{0,m_0}$ denotes the law of $X$ conditional on $\ov{\sF}^{0,t_0,m_0}_T$ and with respect to $\ov{\bP}^{\infty, t_0, m_0}$. 

For $\eta \in (0,1]$, we are going to use the fact that $U^{\eta}$ can be equivalently defined in terms of regular feedback controls, which means that we can view the mean field control problem as the optimal control of a corresponding stochastic Fokker-Planck equation. To make this precise, we define the canonical space $\Omega^{0} = \cC = C([0,T] ; \R^d)$, and denote by $\bP^{0}$ the Wiener measure on $\Omega^{0}$. Furthermore we write $\omega^0$ for the general element of $\Omega^{0}$, $W^0$ for the canonical process on $\Omega^{0}$ and define $\bbF^{0,t_0} = (\sF^{0}_t)_{t_0 \leq t \leq T}$ to be the $\bP^{0}$-complete and right-continuous augmentation of the filtration generated by $(W^0_t - W^0_{t_0})_{t_0 \leq t \leq T}$. For $t_0 \in [0,T]$, we define $\sA_{t_0}^f$ to be the set of random fields
\begin{align*}
    \alpha_t(x) : [t_0,T] \times \T^d \times \Omega^{0,t_0} \to \R^d
\end{align*}
such that $\alpha$ is progressively measurable with respect to $\bbF^{0,t_0}$, and is bounded and Lipschitz in $x$, uniformly in $(t,\omega^0)$. By \cite{COGHI2019259} or \cite{CardaliaguetDelarueLasryLions} Theorem 4.3.1, for each $(t_0,m_0) \in [t_0,T] \times \cP(\T^d)$ and $\alpha 
 \in \sA_{t_0}^f$, the stochastic Fokker-Planck equation
\begin{align} \label{feedbackdynamics}
    &dm_t = \Big(  \sum_{i,j = 1}^d D^2_{x_i x_j}(A_{ij}(x) m_t) + (A_0 + \eta) \Delta m_t - \text{div}(m_t \alpha_t) \Big) dt \nonumber  \\
    & \qquad \qquad \qquad  - \sigma^0 \text{div}(m_t  dW_t^0), \quad t_0 \leq t \leq T, \quad m_{t_0} = m_0.
\end{align}
has a unique strong solution (defined on the space $\Omega^{0}$ and progressively measurable with respect to $\bbF^{0,t_0}$). By a strong solution, we mean a continuous,  $\bbF^{0,t_0}$-adapted, $\cP(\T^d)$-valued process $m = (m_t)_{t_0\leq t \leq T}$ such that for each $\phi \in C^{\infty}([t_0,T] \times \T^d)$, the following equation holds for each $t \in (t_0,T)$, with probability one:
\begin{align*}
    &\int \phi(t,x) m_t(dx) - \int \phi(t_0,x) m_0(dx) =
    \\
    & \qquad \qquad  \int_{t_0}^t \int_{\R^d} \Big( \partial_t \phi +  \tr\bigr( A(x) D^2 \phi(s,x) \big) + (A_0 + \eta) \Delta \phi(s,x) + \alpha_s(x) \cdot D\phi(s,x) \Big)  m_s(dx) ds
    \\
    &\qquad \qquad + \sigma^0 \int_{t_0}^t \int_{\R^d} D \phi(s,x) m_s(dx) \cdot dW_s^0.
\end{align*}
We define the feedback value function $U^{\eta, f} : [t_0,T] \times \cP(\T^d) \to \R$ via
\begin{align} \label{uepsfbdef}
    &U^{\eta,f}(t_0,m_0) \coloneqq \inf_{\alpha \in \sA_{t_0}^f} J^{\infty, \eta,f}(t_0,m_0,\alpha), \text{ with } \nonumber 
    \\
    &J^{\infty,\eta,f}(t_0,m_0,\alpha) \coloneqq \E^{\bP^{0}}\bigg[ \int_{t_0}^T \Big(\int_{\T^d} L\big(x, \alpha_t(x)\big) m_t(dx) + F^{\eta}(m_t) \Big) dt + G^{\eta}\big(m_T \big) \bigg],
\end{align}
where in the definition of $J^{\infty,\eta,f}$, $m$ is determined from $\alpha$ by the dynamics \eqref{feedbackdynamics}. Formally, we can expect $U^{\eta} = U^{\eta, f}$ because if $\alpha \in \ov{\sA}_{t_0,m_0}$ happens to take the form $\alpha_t = \alpha_t(X_t)$ for some $\ov{\bbF}^{0,t_0,m_0}$-adapted random field $\alpha$ with sufficient regularity properties, then the process $m_t = \sL^{0,m_0}(X_t)$ satisfies \eqref{feedbackdynamics} (on the space $\ov{\Omega}^{\infty,t_0}$ rather than $\Omega^0$). We will confirm in Lemma \ref{prop.closedloopequiv} that this is true for $\eta > 0$ using some technical results from \cite{Djete:COCV}, but we emphasize that Corollary \ref{cor.feedback} below shows that an analogous result cannot be expected for $\eta = 0$.

\subsection{Estimates on $V^{N,\eta}$}

The aim of this subsection is to prove some regularity properties of $V^{N,\eta}$, which are summarized in the following Proposition.
\begin{prop}\label{prop: bound_der_Veps}
	Let Assumption \ref{assump.maindegen} hold, and suppose in addition that $F$ and $G$ are $\cC^2$. Then there exists a constant $C$ depending only on the data, and a constant $C'$ depending on the data as well as $\eta$, $\|D^2_{mm} F\|_{\infty}$ and $\|D^2_{mm} G\|_{\infty}$, such that for each $\eta \in (0,1]$ and each $N \in \N$, we have
	\begin{align}
	    \max_{i = 1,...,N} \|D_{x^i} V^{N,\eta}\|_{\infty} &\leq C/N, 
     \label{eq:decay:der:1:N}
     \\
       \max_{i,j = 1,...,N} \|D^2_{x^ix^j} V^{N,\eta}\|_{\infty} &\leq \frac{C}{N\eta} + \frac{C'}{N^2}, \label{eq:decay:der:2:eps:N} \\
       \|\partial_t V^{N,\eta}\|_{\infty} &\leq C/\eta + C'/N.
       \label{eq:decay:Lipt:VNeps}
	\end{align}
 As a consequence, 
 for every $\bx,\by\in(\T^d)^N$ and $0 \leq s < t \leq T$ we have 
 \begin{align}
		\lvert V^{N,\eta}(t,\bx) - V^{N,\eta}(t,\by) \rvert &\leq\frac{C}{N}\sum_{i = 1}^N\lvert x^i - y^i\rvert,
  \label{Lip:VNeps}\\
     |V^{N,\eta}(t,\bx) - V^{N,\eta}(s,\bx)| &\leq C \sqrt{t-s}. 
     \label{Holder:time:VNeps}
 \end{align}
\end{prop}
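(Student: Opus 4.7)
Since $\eta > 0$, equation \eqref{hjbn_eps} is uniformly parabolic in the full $Nd$-dimensional variable, so standard theory yields a classical $C^{1,2}$ solution $V^{N,\eta}$ and a well-defined optimal Markovian feedback $\alpha^{*,i}(t,\bx) = -D_pH\bigl(x^i, ND_{x^i}V^{N,\eta}(t,\bx)\bigr)$. I establish the spatial bounds \eqref{eq:decay:der:1:N}, \eqref{Lip:VNeps}, and \eqref{eq:decay:der:2:eps:N} first, then deduce the time-regularity estimates \eqref{eq:decay:Lipt:VNeps} and \eqref{Holder:time:VNeps} from them.

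\textbf{First derivative and Lipschitz bound.} For \eqref{eq:decay:der:1:N} and \eqref{Lip:VNeps} I use a direct coupling argument. Given $\bx_0, \by_0 \in (\T^d)^N$, run an (approximately) optimal control for $V^{N,\eta}(t_0,\bx_0)$ and apply the same control, with the same Brownian paths, starting from $\by_0$. With fixed drift and Lipschitz $\sigma$, Gronwall yields $\E|X^i_t - Y^i_t| \leq C|x_0^i-y_0^i|$. Combining this with the $\bd_1$-Lipschitz property of $F^\eta$ and $G^\eta$ and the local Lipschitz property of $L(\cdot, a)$ at the a priori bounded optimal controls (inferred from \eqref{hlocallip} via Fenchel duality), the $1/N$ weighting of the cost delivers $|V^{N,\eta}(t,\bx) - V^{N,\eta}(t,\by)| \leq \frac{C}{N}\sum_i |x^i - y^i|$, which yields both \eqref{Lip:VNeps} and \eqref{eq:decay:der:1:N}.

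\textbf{Main obstacle: the second derivative bound \eqref{eq:decay:der:2:eps:N}.} This is where the non-degenerate noise $\sqrt{2\eta}\,dB^i$ is essential, and I adapt a stochastic-analytic argument in the spirit of the nonlinear adjoint method. Writing
\begin{equation*}
V^{N,\eta}(t_0,\bx_0) = \E^{\bx_0}\!\left[\int_{t_0}^T\!\Lambda^*(t,\bX_t)\,dt + G^\eta(m^N_{\bX_T})\right], \quad \Lambda^*(t,\bx) = \frac{1}{N}\sum_i L\bigl(x^i,\alpha^{*,i}(t,\bx)\bigr) + F^\eta(m^N_{\bx}),
\end{equation*}
with $\bX$ driven by the optimal feedback, the envelope theorem produces a representation of $D_{x_0^i}V^{N,\eta}$ involving only derivatives of the cost (not of $\alpha^*$); the symmetric structure of $\Lambda^*$ and $G^\eta$ forces these cost-derivatives to be of size $O(1/N)$, modulo a $D^2_{mm}$ contribution of order $O(C'/N^2)$. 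A Bismut--Elworthy--Li integration by parts against $\sqrt{2\eta}\,dB^j$ then converts the second derivative in $x_0^j$ into an expectation of the first-derivative integrand against a stochastic weight $M^j$ with $\|M^j\|_{L^2} \leq C/\sqrt{\eta}$, and Cauchy--Schwarz yields the bound $C/(N\eta) + C'/N^2$. The crucial subtlety, and the main obstacle, is retaining the $1/N$ factor throughout: a BEL applied directly to $V^{N,\eta}$ without first extracting the envelope representation would only give $C/\eta$, missing the gain in $N$.

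\textbf{Time regularity.} The Hölder estimate \eqref{Holder:time:VNeps} follows from the dynamic programming principle combined with \eqref{Lip:VNeps}: using the (bounded) optimal feedback on $[s,t]$,
\begin{equation*}
|V^{N,\eta}(s,\bx) - V^{N,\eta}(t,\bx)| \leq C(t-s) + \frac{C}{N}\sum_i\E|X^i_t - x^i| \leq C\sqrt{t-s},
\end{equation*}
by standard SDE increment bounds (uniformly in $\eta \leq 1$). For \eqref{eq:decay:Lipt:VNeps} I solve \eqref{hjbn_eps} for $\partial_t V^{N,\eta}$ and bound term by term: the $\eta\Delta_{x^i}$ and $\tr(A(x^i)D^2_{x^ix^i})$ contributions yield $C/\eta + C'/N$ via \eqref{eq:decay:der:2:eps:N}, the Hamiltonian term is $O(1)$ since $ND_{x^i}V^{N,\eta}$ is bounded by \eqref{eq:decay:der:1:N}, and the common-noise sum $A_0\sum_{i,j}\tr(D^2_{x^ix^j}V^{N,\eta})$---which at first glance could scale like $N/\eta$---is tamed by the permutation symmetry of $V^{N,\eta}$: writing $V^{N,\eta}(\bx) = U^N(m^N_{\bx})$, this double sum rewrites as a bounded measure-space object controlled by $\|D_{xm}U^N\|_\infty$ plus an $O(C'/\eta)$ contribution from $\|D^2_{mm}U^N\|_\infty$.
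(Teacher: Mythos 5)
Your arguments for \eqref{eq:decay:der:1:N}, \eqref{Lip:VNeps} and \eqref{Holder:time:VNeps} are fine (the coupling argument for the Lipschitz bound and the DPP argument for the time regularity are essentially what the paper does, or a valid substitute). The genuine gap is in \eqref{eq:decay:der:2:eps:N}, which is the heart of the proposition. Your envelope-plus-Bismut--Elworthy--Li strategy is circular, and even granting the conclusion a posteriori it does not produce a polynomial dependence on $\eta$. Concretely: the envelope representation of $D_{x_0^i}V^{N,\eta}$ is a Feynman--Kac formula along the \emph{optimally controlled} diffusion, whose drift is $-D_pH(x^k,ND_{x^k}V^{N,\eta})$. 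The BEL weight $M^j$ (and the differentiation of the running integrand) involves the first-variation process $D_{x_0^j}\bX_s$ of that flow, and its moments are controlled by Gr\"onwall with the spatial Lipschitz constant of the drift, namely $C + CN\max_{k,l}\|D^2_{x^kx^l}V^{N,\eta}\|_\infty$ --- exactly the quantity you are trying to bound. Even inserting the target bound $C/(N\eta)$, Gr\"onwall gives $\E|D_{x_0^j}\bX_s|^2 \leq e^{C(1+1/\eta)(s-t_0)}$, so $\|M^j\|_{L^2}\lesssim \eta^{-1/2}e^{C/\eta}$ rather than $C/\sqrt{\eta}$; the exponential in $1/\eta$ destroys the estimate and, downstream, the optimization in $\eta$ used to prove Theorem \ref{thm.main}. (There is also the standard envelope-theorem issue that the first-order representation is valid only pointwise; differentiating it again in $x_0^j$ brings back the derivative of the optimal feedback unless one settles for a one-sided semiconcavity bound.)

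The paper's route avoids this entirely: it differentiates \eqref{hjbn_eps} twice and runs an energy (It\^o-plus-Gr\"onwall) estimate on $Z^{i,j}_t = D^2_{x^ix^j}V^{N,\eta}(t,\bX_t)$ along the optimal trajectory. The dangerous quadratic term $N\sum_k D^2_{pp}H\,Z^{i,k}_tZ^{j,k}_t$ is \emph{not} Gr\"onwalled; it is absorbed by Cauchy--Schwarz against the integrated bound $\eta\,\E\int_{t_0}^T\sum_k|Z^{i,k}_t|^2\,dt\leq C/N^2$, which falls out as a byproduct of the It\^o energy estimate used to prove the \emph{first}-derivative bound. This is precisely the quantitative input your coupling proof of \eqref{eq:decay:der:1:N} does not produce, so if you keep the coupling argument you must still derive that integrated estimate separately before attacking the second derivatives. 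Finally, your treatment of the common-noise sum $A_0\sum_{i,j}\tr(D^2_{x^ix^j}V^{N,\eta})$ in \eqref{eq:decay:Lipt:VNeps} via a function $U^N$ of the empirical measure with bounded $D_{xm}U^N$ and $D^2_{mm}U^N$ presupposes regularity on measure space that is not available at this stage (it is essentially the content of the whole paper), so that step also needs to be replaced.
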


\begin{proof}
We will write the proof in the case $d = 1$ for notational simplicity (since we will need to differentiate twice the equation \eqref{hjbn_eps}) but the argument is the same when $d > 1$. We also assume in the proof that $V^{N,\eta}$ is $C^{\infty}$. This assumption is easily removed at the end by a mollification procedure which we omit. 
\newline \newline \noindent 
\underline{Proof of \eqref{eq:decay:der:1:N}.} We fix an $N \in \N$ and $\eta \in (0,1]$, and for each $i = 1,...,N$, we set $y^i = D_{x^i} V^{N,\eta}$. By explicit computation, we have 
\begin{align}
 \label{eq:der:VNeps}
    - \partial_t y^i - \cL^{N,\eta} y^i - D_x A(x^i) D_{x^i} y^i + F^{N,i} = 0, \quad y^i(T,\bx) = G^{N,i}(\bx), 
\end{align}
where $\cL^{N,\eta}$ is the differential operator which acts (recall that we are assuming $d = 1$ for simplicity) on $\phi : \T^N \to \R$ via
\begin{align*}
    \cL^{N,\eta} \phi = \sum_{k = 1}^N (\eta + A(x^k)) D_{x^kx^k} \phi + A_0 \sum_{k,l = 1}^N D_{x^kx^l} \phi - \sum_{k = 1}^N D_pH(x^k,ND_{x^k}V^{N, \eta}) D_{x^k} \phi,
\end{align*}
and
\begin{align*}
    &F^{N,i}(t,\bx) = \frac{1}{N} D_{x} H(x^i,N D_{x^i} V^{N,\eta}) - \frac{1}{N} D_m F^{\eta} (m_{\bx}^N,x^i), \quad G^{N, i}(t,\bx) = \frac{1}{N} D_m G^{\eta}(m_{\bx}^N,x^i)
\end{align*}
satisfy 
\begin{align} \label{fnibound}
| F^{N, i}(t,\bx) | \leq C/N + C |D_{x^i} V^{N,\eta}|, \quad  \|G^{N,i}\|_{\infty} \leq C/N
\end{align}
with $C$ depending only on the data,
thanks to Assumption \ref{assump.maindegen}. To get the bound on $D_{x^i} V^{N,\eta}$, we fix $(t_0,\bx_0) \in [0,T] \times (\T^d)^N$, and we define a $\T^N$ valued process $\bX = (X^1,...,X^N)$ on $[t_0,T]$ by the dynamics 
\begin{align}
\label{eq:optimal:X}
d X^i_t = - D_p H( X^i_t, N D_{x_i} V^{N,\eta})dt +\sqrt{2\eta}dB^i_t 
+ \sigma(X^i_t) dW^i_t + \sigma_0 dW^0_t, \quad t_0 \leq t \leq T, \quad \bX_{t_0} = \bx_0.
\end{align}
We then define $Y_t^i = y^i(t,\bX_t)$ and $Z_t^{i,j} = D_{x^j} y^i(t,\bX_t) = D^2_{x^ix^j} V^{N,\eta}(t,\bX_t)$.
By It\^o's formula, we find that
\begin{align*}
    dY^i_t &= - \Big(\sigma(X^i_t) D_x \sigma(X^i_t) Z^{i,i}_t + F^{N,i}(t, \bm{X}_t) \Big) dt
     + \sum_{j=1}^N Z^{i,j}_t \Big( \sqrt{2\eta}dB^j_t 
+ \sigma(X^i_t) dW^j_t + \sigma_0 dW^0_t \Big),
\end{align*}
where we used that $A=\tfrac12 \sigma^2$. 
Computing the square we obtain 
\begin{align*}
    d |Y^i_t|^2 = 
    \bigg[ - 2Y^i_t (\sigma(X^i_t) D_x \sigma(X^i_t) Z^{i,i}_t + F^{N,i}(t, \bm{X}_t) ) 
    + \sum_{j=1}^N |Z^{i,j}_t|^2 ( \sigma^2 (X^j_t)+2\eta) 
    + \sigma_0^2 \Big(\sum_{j=1}^N Z^{i,j}_t \Big)^2 \bigg]dt +dM^i_t,
\end{align*}
where $M^i_t$ is a martingale. Computing the integral from $t_0$ to $T$, taking expectation, and erasing the positive terms, we have
\begin{align*}
    |Y^i_{t_0}|^2 &+ 2\eta \E \int_{t_0}^T \sum_{j=1}^N |Z^{i,j}_t|^2 dt
    + \E \int_{t_0}^T \sigma^2(X^i_t) |Z^{i,i}_t|^2 dt \\
    &\leq \E|G^{N, i}(\bm{X}_T)|^2 + 2 \E \int_{t_0}^T  Y^i_t \big(\sigma(X^i_t) D_x \sigma(X^i_t) Z^{i,i}_t +F^{N,i}(t, \bm{X}_t)\big) dt \\
    &\leq \frac{C}{N^2} 
    + \E \int_{t_0}^T \sigma^2(X^i_t) |Z^{i,i}_t|^2 dt + C\E \int_{t_0}^T |Y^i_t|^2 dt, \\
\end{align*}
where we applied Young's inequality and the bounds on $F^{N,i}, G^{N,i}$. Rearranging and using Gronwall's lemma, we get
\be 
\label{eq:energy}
\|Y_{t_0}^i\|_{\infty}^2 + \eta \E \int_{t_0}^T \sum_{j=1}^N |Z^{i,j}_t|^2 dt \leq \frac{C}{ N^2},
\ee 
with $C$ depending only on the data. In particular, recalling the definition of $Y^i$, we see that by taking a supremum over $i$ and over initial conditions $(t_0,\bx_0)$ we get \eqref{eq:decay:der:1:N}.
\newline \newline \noindent 
\underline{Proof of \eqref{eq:decay:der:2:eps:N}.}
We continue to assume that $F$ and $G$ are $\cC^2$. To derive \eqref{eq:decay:der:2:eps:N}, we differentiate again \eqref{eq:der:VNeps} with respect to $x^i$: denoting $z^{i,j}(t,\bm{x}) = D^2_{x^ix^j} V^{N,\eta}(t,\bm{x})$, we have
\begin{align*}
&-\partial_t z^{i,j} -\mathcal{L}^{N,\eta} z^{i,j}  
- \sigma(x^i) D_x\sigma(x^i) D_{x^i} z^{i,j} - \sigma(x^j) D_x \sigma(x^j) D_{x^j} z^{i,j}
+ K^{N,i,j}(t,\bm{x}) z^{i,j}  \ds \\ \ds
&\qquad \qquad \qquad + F_2^{N,i,j}(t, \bm{x}) + N\sum_j D^2_{pp} H (x^j, N V^{N,j})  |D_{x^j} V^{N,i}|^2
 =0, 
 \end{align*}
and $z^{i,j}(T,\bm{x}) = G^{N,i,j}_2(\bm{x})$,
where we set
\begin{align*}
    K^{N,i,j}(t,\bm{x}) &:= - D^2_{xx} A(x^i) 1_{i = j}
    + \big(D_{xp} H (x^i, N V^{N,i}(t,\bm{x})) + D_{xp} H (x^j, N V^{N,j}(t,\bm{x})) \big)  ,\\
    F^{N,i,j}_2(t,\bm{x}) &:= 
    \frac1N D^2_{xx} H (x^i, N V^{N,i}(t,\bm{x})1_{i = j} - \frac{1}{N} D_{xm} F^{\eta}(m_{\bx}^N, x^i)1_{i = j} - \frac{1}{N^2} D^2_{mm} F^{\eta}(m_{\bx}^N,x^i,x^j), \\
    G^{N,i,j}_2(\bm{x})&:= 
    \frac1N D_{xm} G^{\eta}( m^N_{\bm{x}}, x^i)1_{i = j} + \frac{1}{N^2} D^2_{mm} G^{\eta} (m^N_{\bm{x}}, x^i,x^j) .
\end{align*}
Notice that $G^{\eta}$ and $F^{\eta}$ are $C/\eta$-Lipschitz with respect to $W^{-2,\infty}$, and so by Lemma \ref{lem.regfromtouching}, $\|D_{xm} F^{\eta}\|_{\infty} + \|D_{xm} G^{\eta}\|_{\infty} \leq C/\eta$.  Using this and the gradient bound already established, we get
\begin{align*}
   \| K^{N,i} \|_{\infty} \leq C, \quad \| G^{N,i}_2 \|_{\infty} \leq C/(N\eta) + C\|D^2_{mm} G^{\eta}\|{\infty}/N^2, \quad \| 
   F^{N,i}_2 \|_{\infty} \leq C/(N\eta) + C\|D^2_{mm} F^{\eta}\|_{\infty}/N^2, 
\end{align*} with $C$ depending only on the data. We define $Q^{i,j,k}_t = D_{x^k} z^{i,j}(t,\bm{X}_t) = D^3_{x^ix^jx^k} V^{N,\eta}(t,\bm{X}_t)$,
and apply It\^o's formula to $z^{i,j}(t,\bm{X}_t) = Z^{i,j}_t$ to get 
\begin{align*}
    dZ^{i,j}_t &= \Big\{- \sigma(X^i_t) D_x\sigma(X^i_t) Q^{i,j,i}_t - \sigma(X^i_t) D_x\sigma(X^j_t) Q^{i,j,j}_t 
+ K^{N,i}(t,\bm{X}_t) Z^{i,j}_t  \\
&\qquad + F_2^{N,i,j}(t, \bm{X}_t) + N\sum_k D^2_{pp} H (X^k_t, N Y^k_t)  Z^{i,k}_t Z^{j,k}_t
 \Big\} dt \\
& \qquad +\sum_{k} Q^{i,j,k}_t \Big( \sqrt{2\eta}dB^k_t 
+ \sigma(X^k_t) dW^k_t + \sigma_0 dW^0_t \Big).
\end{align*}
Computing the square we find
\begin{align*}
    d|Z^{i,j}_t|^2 &= 2Z^{i,i}_t \big\{- \sigma(X^i_t) D_x\sigma(X^i_t) Q^{i,j,i}_t - \sigma(X^j_t) D_x\sigma(X^j_t) Q^{i,j,j}_t  
+ K^{N,i,j}(t,\bm{X}_t) Z^{i,j}_t + F_2^{N,i,j}(t, \bm{X}_t) \\
& + N\sum_j D^2_{pp} H (X^k_t, N Y^k_t)  Z_t^{i,k} Z_t^{j,k}
\Big\} dt \\
& + \sum_{k=1}^N |Q^{i,j,k}_t|^2 ( \sigma^2 (X^k_t)+2\eta) 
    + \sigma_0^2 \Big(\sum_{k=1}^N Q^{i,j,k}_t \Big)^2 \Big]dt +dM^i_t.
\end{align*}
Setting $\|z^{i,j}(t)\|_{\infty} = \|z^{i,j}(t,\cdot)\|_{\infty}$, and recalling that $Z_t^{i,j} = z^{i,j}(t,\bX_t)$, we obtain (we recall that $C$ denotes any constant independent of $\eta, N$ and $t_0, \bm{x}$, which is allowed to change from line to line, and that we fix a deterministic initial condition $Z^{i,i}_{t_0} = z^i(t_0,\bm{x}_0)$ ) 
\begin{align*}
    |Z^{i,j}_{t_0}|^2 &+ 2\eta \E \int_{t_0}^T \sum_{k=1}^N |Q^{i,j,k}_t|^2 dt
    + \E \int_{t_0}^T \sum_{k = 1}^N \sigma^2(X^k_t) |Q^{i,j,k}_t|^2 dt \\
    & \leq  \|G_2^{N,i,j}\|_{\infty}^2  + 
    C\E\int_{t_0}^T |Z^{i,j}_t|^2dt
    + \E \int_{t_0}^T \sigma^2(X^i_t) |Q^{i,j,i}_t|^2 dt  +  \E \int_{t_0}^T \sigma^2(X^j_t) |Q^{i,j,j}_t|^2 dt\\
    &\qquad + CN  \sup_{t\in[t_0,T]} ||z^{i,j}(t)||_\infty
    \Big(\E \Big[ \int_{t_0}^T  \sum_k  |Z^{i,k}_t|^2 dt \Big]\Big)^{1/2}\Big(\E \Big[ \int_{t_0}^T  \sum_k  |Z^{j,k}_t|^2 dt \Big]\Big)^{1/2} + C \|F^{N,i,j}_2\|^2_{\infty}  \\
    &\leq \frac{C}{N^2\eta } + C \Big(\frac{\|D^2_{mm} F^{\eta}\|^2 + \|D^2_{mm} G^{\eta}\|^2}{N^4}\Big)+ C\E\int_{t_0}^T ||z^{i,j}(t)||_\infty^2dt 
    \\
    &\qquad 
    + \E \int_{t_0}^T \sigma^2(X^i_t) |Q^{i,j,i}_t|^2 dt  +  \E \int_{t_0}^T \sigma^2(X^j_t) |Q^{i,j,j}_t|^2 dt
    + \frac{C}{N\eta} \sup_{t\in[0,T]} ||z^{i,j}(t)||_\infty,
    \end{align*}
where we employed the estimate \eqref{eq:energy} and the bounds on $F_2^{N,i,j}$ and $G_2^{N,i,j}$.
Therefore taking the supremum over the initial conditions $(t_0,\bx_0)$, we find 
\[
||z^{i,j}(t_0)||^2_\infty \leq  C\E\int_{t_0}^T ||z^{i,j}(t)||_\infty^2dt
+ \frac{C}{N^2 \eta } + C \Big(\frac{\|D^2_{mm} F^{\eta}\|^2 + \|D^2_{mm} G^{\eta}\|^2}{N^4}\Big) + \frac{C}{N\eta}  \sup_{t\in[0,T]} ||z^{i,j}(t)||_\infty 
\]
and thus Gronwall's lemma yields 
\[
\sup_{t\in[0,T]} ||z^{i,j}(t)||^2_\infty \leq  \frac{C}{N^2 \eta } + C \Big(\frac{\|D^2_{mm} F^{\eta}\|^2 + \|D^2_{mm} G^{\eta}\|^2}{N^4}\Big) + \frac{C}{N\eta}  \sup_{t\in[0,T]} ||z^{i,j}(t)||_\infty 
\]
for another value of $C$. Hence Young's inequality gives 
\[
\sup_{t\in[0,T]} ||z^{i,j}(t)||^2_\infty \leq  \frac{C}{N^2\eta^2} + C \Big(\frac{\|D^2_{mm} F^{\eta}\|^2 + \|D^2_{mm} G^{\eta}\|^2}{N^4}\Big) + \frac12 \sup_{t\in[0,T]} ||z^{i,j}(t)||^2_\infty,
\]
which establishes 
\begin{align*}
    \|D^2_{x^ix^j} V^{N,\eta}\|_{\infty} \leq \frac{C}{N\eta} + C \Big(\frac{\|D^2_{mm} F^{\eta}\| + \|D^2_{mm} G^{\eta}\|}{N^2}\Big),
\end{align*}
i.e. \eqref{eq:decay:der:2:eps:N} holds with $C' = C ( \|D^2_{mm} F^{\eta}\| + \|D^2_{mm} G^{\eta}\| )$.
\newline \newline \noindent 
\underline{Proofs of \eqref{eq:decay:Lipt:VNeps} and \eqref{Lip:VNeps}.} The bound on $\partial_t V^{N,\eta}$ is a simple consequence of the PDE and the bounds already obtained for $D_{x^i} V^{N,\eta}$ and $D^2_{x^ix^j} V^{N,\eta}$, and \eqref{Lip:VNeps} is a simple consequence of the bounds on $D_{x^i} V^{N,\eta}$ and the symmetry of $V^{N,\eta}$. 
\newline \newline \noindent 
\underline{Proof of \eqref{Holder:time:VNeps}}. It follows as usual from the dynamic programming principle and the Lipschitz estimate \eqref{Lip:VNeps}. Fix an initial time $t\in[0,T)$, and initial position (deterministic) $\bm{x}$, and another time $s\in(t,T]$, and let $\bm{X}$ be the optimal trajectory, given by \eqref{eq:optimal:X} above, starting at $\bm{X}_t=\bm{x}$. We write
\begin{align*}
    V^{N,\eta}(s,\bm{x}) - V^{N,\eta}(t,\bm{x}) 
    = \E \big[ V^{N,\eta}(s, \bm{X}_s)\big] - V^{N,\eta}(t,\bm{x}) 
    + V^{N,\eta}(s,\bm{x}) - \E \big[ V^{N,\eta}(s, \bm{X}_s)\big].     
\end{align*}
The second term is estimated by using the estimate $\E |X^i_s- x^i|^2 \leq C(s-t)$
which follows from the boundedness of $\sigma$ and the drift, as $N D_{x_i}V^{N,\eta}$ is bounded by \eqref{eq:decay:der:1:N}, so that the
Lipschitz continuity \eqref{Lip:VNeps} gives 
\[
\E \big| V^{N,\eta}(s,\bm{x}) -  V^{N,\eta}(s, \bm{X}_s) \big| \leq 
\frac{C}{N} \sum_{i=1}^N \E |X^i_s- x^i| \leq C\sqrt{s-t}.
\]
 The first term is instead bounded by the dynamic programming principle: 
\begin{align*}
    \E \big[ V^{N,\eta}(s, \bm{X}_s)\big] - V^{N,\eta}(t,\bm{x}) 
    = \E \bigg[\int_{t}^s \frac{1}{N} \sum_{i = 1}^N \Big(L\big(X_r^i, \alpha_r^i\big) + F\big(m_{\bX_r}^N \big) \Big) dr \bigg] \leq C(s-t),
\end{align*}
 since $L$ is bounded, as it is continuous and the optimal control is bounded. 

\end{proof}

\subsection{Some technical lemmas}

We now give some technical lemmas about the control problems under consideration. We start with explaining how the bounds in Proposition \ref{prop: bound_der_Veps} imply that we can restrict our attention to bounded controls for the limiting problem.

\begin{lem} \label{lem.boundedcontrols}
    Let Assumption \ref{assump.maindegen} hold. Then there is a constant $R$ depending only on the data such that for any $\eta \in [0,1]$, any $(t_0,m_0) \in [0,T] \times \cP(\T^d)$, and any $\delta > 0$ there exists a $\delta$-optimal control for the problem defining $U^{\eta}(t_0,m_0)$ which is uniformly bounded by $R$.
\end{lem}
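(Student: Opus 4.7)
The plan is to exploit the uniform bound $\|N D_{x^i}V^{N,\eta}\|_\infty \leq C$ from Proposition \ref{prop: bound_der_Veps} to produce optimal feedback controls at the $N$-particle level that are automatically bounded by a data-dependent constant, and then to construct a $\delta$-optimal control for $U^\eta$ via a mean-field limit argument. The uniform bound on the feedback will come from the first-order condition $\alpha = -D_pH(x,\cdot)$ and the linear growth $|D_pH(x,p)| \leq C_H(1+|p|)$ from Assumption \ref{assump.maindegen}, which together give $R := C_H(1+C)$ depending only on the data.

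First, I would reduce to the case $F, G \in \cC^2$ by a uniform mollification argument: for smooth approximations $F^\varepsilon, G^\varepsilon$ with $\|F^\varepsilon - F\|_\infty + \|G^\varepsilon - G\|_\infty \to 0$, the corresponding value functions converge uniformly to $U^\eta$, so any bounded $\delta$-optimal control for the mollified problem (with $\varepsilon$ small) is $(2\delta)$-optimal for the original one. Assuming now $F, G \in \cC^2$ and $\eta \in (0,1]$, $V^{N,\eta}$ is a classical solution of \eqref{hjbn_eps}, and a standard verification argument shows that the feedback
\begin{equation*}
\alpha^{N,\eta,*}_i(t,\bx) := -D_pH\bigl(x^i,\, N D_{x^i}V^{N,\eta}(t,\bx)\bigr)
\end{equation*}
is optimal for the $N$-particle problem, with $|\alpha^{N,\eta,*}_i| \leq R$ uniformly in $N, \eta, (t,\bx)$.

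Next, to transfer this into a control in $\ov{\sA}_{t_0,m_0}$, fix $(t_0, m_0)$ and enlarge $\ov{\Omega}^{\infty,t_0}$ to host additional i.i.d.\ random variables $X_0^2, \ldots, X_0^N$ with law $m_0$, together with additional independent Brownian motions of the three relevant types. Drive the $N$-particle system $\bX$ on this enlarged space with initial condition $(\xi, X_0^2, \ldots, X_0^N)$ and feedback $\alpha^{N,\eta,*}$, and set $\alpha_t := \alpha^{N,\eta,*}_1(t, \bX_t)$, which is progressively measurable and bounded by $R$. By exchangeability, the $\bd_1$-Lipschitz property of $F$ and $G$, and classical propagation-of-chaos estimates, the McKean--Vlasov cost $J^{\infty,\eta}(t_0, m_0, \alpha)$ differs from $\E[V^{N,\eta}(t_0, \bX_0)]$ by a quantity $o_N(1)$. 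Combined with the qualitative convergence $\E[V^{N,\eta}(t_0, \bX_0)] \to U^\eta(t_0, m_0)$ as $N \to \infty$ (established in \cite{Lacker2017, DjetePossamaiTan}), this produces a $\delta$-optimal control bounded by $R$ once $N$ is taken large enough. The case $\eta = 0$ is then handled by taking $\eta \to 0^+$: using weak-$\star$ compactness in $L^\infty$ one extracts a limiting control bounded by $R$, and the standard continuity $\eta \mapsto U^\eta$ (easily obtained from the control formulation by coupling the SDEs) transfers $\delta$-optimality.

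The main obstacle will be the probabilistic bookkeeping in the passage from the $N$-particle to the mean-field setting: one must carefully enlarge the canonical space, verify that the resulting process is admissible in the sense of Subsection \ref{susec: prob_statement}, and identify the relevant cost functionals across the two formulations. These manipulations are technically involved but standard in the mean-field control literature.
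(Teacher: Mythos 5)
Your first step coincides with the paper's: both arguments hinge on the bound $\|ND_{x^i}V^{N,\eta}\|_\infty \leq C$ from Proposition \ref{prop: bound_der_Veps} together with $|D_pH(x,p)|\leq C_H(1+|p|)$ to conclude that the optimal $N$-particle feedback is bounded by a constant $R$ depending only on the data. The divergence is in how the boundedness is transferred to the mean-field level, and it is there that your argument has a genuine gap. You set $\alpha_t = \alpha_1^{N,\eta,*}(t,\bX_t)$ and claim that $J^{\infty,\eta}(t_0,m_0,\alpha)$ differs from $\E[V^{N,\eta}(t_0,\bX_0)]$ by $o_N(1)$ via ``classical propagation-of-chaos estimates.'' Writing out the two costs, this requires $\E\big[\bd_1\big(m^N_{\bX_t},\sL(X^1_t\,|\,W^0)\big)\big]\to 0$, i.e.\ conditional asymptotic independence of the particles under the $N$-dependent optimal feedback. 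Exchangeability alone does not give this, and the available regularity does not either: estimate \eqref{eq:decay:der:2:eps:N} only yields $|D_{x^j}\alpha_i^{N,\eta,*}|\leq C_H\,N\|D^2_{x^ix^j}V^{N,\eta}\|_\infty \leq C/\eta + C'/N$ for \emph{every} pair $(i,j)$, including $j\neq i$. A standard coupling/propagation-of-chaos argument needs the off-diagonal dependence to be $O(1/N)$ so that the interaction is of mean-field type; with only an $O(1/\eta)$ bound the total influence of the other coordinates on $\alpha^{N,\eta,*}_i$ is $O(N/\eta)$ and the coupling argument does not close. (This is precisely why the qualitative convergence in \cite{Lacker2017,DjetePossamaiTan} is proved by compactness rather than by coupling.) There is also the unresolved admissibility issue you flag: the constructed control lives on an enlargement of $\ov{\Omega}^{\infty,t_0}$ and is not an element of $\ov{\sA}_{t_0,m_0}$; repairing this requires the weak/strong equivalence of formulations, another black box.

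The paper sidesteps both problems. It introduces truncated value functions $U^{\eta,R}$ and $V^{N,\eta,R}$ obtained by restricting the respective infima to controls valued in $B_R$. The verification argument gives $V^{N,\eta,R}=V^{N,\eta}$ for all $R\geq R_0$ with $R_0$ depending only on the data (this is your Step 2), and the identity $U^{\eta,R}=U^\eta$ is then obtained at the level of value functions by mimicking Lemma 5.2 of \cite{cdjs2023} (equivalently, by applying the qualitative convergence theorem to the truncated and untruncated problems and using that their $N$-particle values coincide). The existence of a bounded $\delta$-optimal control is then immediate from the definition of $U^{\eta,R}$ as an infimum over bounded controls; no explicit control is ever constructed, no propagation of chaos is needed, and the case $\eta=0$ is covered uniformly since $R_0$ does not depend on $\eta$. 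If you want to keep your constructive route, you would need to either establish the conditional propagation of chaos under the weak off-diagonal bounds (which is not available here) or retreat to the compactness machinery, at which point the abstract truncation argument is both shorter and cleaner.
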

\begin{proof}
    Let us fix $R>0$ and define $U^{\eta,R}$ exactly like $U^{\eta}$, but with the infimum in \eqref{uepsdefcontrol} taken only over those $\alpha \in \ov{\sA}_{t_0,m_0}$ which take values in $B_R \subset \R^d$. Similarly we define $V^{N,\eta,R}$ like $V^{N,\eta}$, but with the infimum in \eqref{vnepsdef} restricted to those $\bm \alpha \in \sA_{t_0}^N$ with $\alpha_t^i$ taking values in $B_R$ for each $i = 1,...,N$. Thanks to the estimates obtained in Proposition \ref{prop: bound_der_Veps}, the optimal feedback for $V^{N,\eta}$ remains uniformly bounded, independently of $\eta$. Hence, there exists a $R_0>0$, depending only on the data, such that $V^{N,\eta,R} = V^{N,\eta}$ for all $R\geq R_0$. To conclude, we can can mimic the proof of Lemma 5.2 in \cite{cdjs2023}, and obtain that there exists a $R_0>0$, depending only on the data, such that $U^{\eta,R} = U^{\eta}$ for all $R\geq R_0$.
\end{proof}
\label{rem:bounded_controls}

The following lemma states that $V^{N,\eta} \to U^{\eta}$ qualitatively as $N \to \infty$, and is due to \cite[Thm. 3.6]{DjetePossamaiTan}.

\begin{lem} \label{lem.qualitativeconv}
    Let Assumption \ref{assump.maindegen} hold. For each $\eta \in [0,1]$, $V^{N,\eta} \to U^{\eta}$ 
    and also $V^{N} \to U$ qualitatively, in the sense that for each sequence $(t_N,\bx_N) \in [0,T] \times (\T^d)_N$ and $(t,m)  \in [0,T] \times \cP(\T^d)$ such that $(t_N,m_{\bx_N}^N) \to (t,m)$ as $N \to \infty$, we have 
    \begin{align}
        V^{N,\eta}(t_N,m_{\bx}^N) \to U^{\eta}(t,m), \quad 
        V^{N}(t_N,m_{\bx}^N) \to U(t,m).
    \end{align}
\end{lem}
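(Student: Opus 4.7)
The plan is to invoke the general qualitative convergence theorem of Djete--Possamai--Tan \cite{DjetePossamaiTan} for mean field control with common noise. That result is stated under a set of structural hypotheses (continuity and appropriate growth of the cost and the coefficients, together with a non-degeneracy-free formulation that allows for our degenerate $\sigma$) which I would first check are satisfied under Assumption \ref{assump.maindegen}. Since we work on the torus, $\cP(\T^d)$ is compact for the weak topology, so all tightness questions in the abstract framework reduce to checking continuity; in particular, the convergence $m_{\bx_N}^N \to m$ in $\cP(\T^d)$ is automatically enough to pass to the limit in the initial condition.

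For $V^N \to U$, the data $(L,F,G,\sigma,\sigma^0)$ satisfies the hypotheses of the DPT framework directly under Assumption \ref{assump.maindegen}: $F,G$ are continuous on $\cP(\T^d)$, $L$ has compact-in-$a$ superlinear coercivity coming from the quadratic lower bound on $D^2_{pp}H$, and $\sigma$ is continuous (even $C^2$). By Lemma \ref{lem.boundedcontrols}, one can restrict to uniformly bounded controls at both the $N$-particle and the limiting level at the cost of an arbitrarily small error, which makes the compactness arguments entirely standard. Applying \cite[Thm.~3.6]{DjetePossamaiTan} then yields the convergence $V^N(t_N, \bx_N) \to U(t,m)$.

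For $V^{N,\eta} \to U^\eta$, I would observe that the regularized problem is obtained from the unregularized one by (i) replacing the common data $(F,G)$ by their mollifications $(F^\eta, G^\eta)$, which remain continuous (indeed, smooth) on $\cP(\T^d)$ with the same Lipschitz constants, and (ii) adding an independent idiosyncratic Brownian motion $\sqrt{2\eta} B^i$ to each particle. Both modifications fall squarely within the DPT framework; the new noise just corresponds to an enlarged filtration and a modified $\sigma$. Applying the same theorem to the regularized problem gives $V^{N,\eta} \to U^\eta$.

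The main obstacle is making sure that the upper bound $\limsup_N V^{N,\eta}(t_N, m^N_{\bx_N}) \leq U^\eta(t,m)$ is compatible with the weak formulations used to define $U$ and $U^\eta$ on the canonical space. The clean way I would do this is to take a near-optimal control for the limit problem, approximate it by a bounded feedback control (using Lemma \ref{lem.boundedcontrols} and the feedback equivalence $U^{\eta,f} = U^\eta$ for $\eta > 0$ alluded to in the paper), lift this feedback to the $N$-particle system, and invoke conditional propagation of chaos to conclude; the lower bound $\liminf_N V^{N,\eta}(t_N, m^N_{\bx_N}) \geq U^\eta(t,m)$ comes from the compactness-based construction in \cite{DjetePossamaiTan}, where tightness of empirical measures of controlled trajectories on the compact state space $\T^d$ is immediate, and any limit point of the associated relaxed controls is admissible for the limit problem by the lower semicontinuity of the cost functional.
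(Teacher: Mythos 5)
Your proposal is correct and takes essentially the same route as the paper: the paper gives no proof at all, simply attributing the lemma to \cite[Thm.~3.6]{DjetePossamaiTan}, which is exactly the theorem you invoke. Your additional verification of the hypotheses (compactness of $\T^d$, continuity of the mollified data, restriction to bounded controls via Lemma \ref{lem.boundedcontrols}) is sound and merely fills in details the paper leaves implicit.
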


\begin{lem} \label{lem.epstozero}
Let Assumption \ref{assump.maindegen} hold. There is a constant $C$ depending on the data such that for each $N \in \N$, $\eps \in [0,1]$, we have 
\begin{align} \label{conv:rate:eps}
   \|U^{\eta} - U\|_{\infty} \leq C\sqrt{\eta}, \quad  \|V^{N,\eta} - V^N\|_{\infty} \leq C \sqrt{\eta}. 
\end{align}
\end{lem}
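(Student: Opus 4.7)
The plan is to fix an admissible control and write the corresponding state processes for both problems on a single probability space, driven by the same Brownian motions, and then compare the associated cost functionals. Two preliminary reductions are needed. First, by Lemma \ref{lem.boundedcontrols}, for each fixed initial condition we may restrict the infima in the definitions of $V^N, V^{N,\eta}, U, U^\eta$ to controls essentially bounded by a deterministic constant $R$ depending only on the data. Second, since $L$ is the Legendre dual of $H$, and $H$ is $C^2$ and satisfies \eqref{hlocallip}, one checks that $L$ is $C^1$ on $\T^d \times \R^d$ and Lipschitz in $x$ uniformly over $|a| \leq R$, with a constant depending only on the data. Finally, a standard application of It\^o's formula, the Lipschitz regularity of $\sigma$, and Gronwall's inequality gives
\begin{align*}
\sup_{t \in [t_0, T]} \E \lvert X^{i, \eta}_t - X^i_t \rvert^2 \leq C\eta, \qquad \sup_{t \in [t_0, T]} \E \lvert X^\eta_t - X_t \rvert^2 \leq C\eta,
\end{align*}
where $(X^i, X^{i,\eta})$ and $(X, X^\eta)$ denote the pairs of state processes for $(V^N, V^{N,\eta})$ and for $(U, U^\eta)$ respectively, both driven by the same bounded control.

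For the $N$-particle bound, I would compare $J^N$ and $J^{N,\eta}$ evaluated at a common control. The Lagrangian contribution is controlled by the uniform Lipschitz bound on $L(\cdot, a)$ above, giving $\E \lvert L(X^{i,\eta}_t, \alpha^i_t) - L(X^i_t, \alpha^i_t) \rvert \leq C \sqrt{\eta}$. For the running cost, I would split the difference as $F^\eta(m^N_{\bX^\eta_t}) - F(m^N_{\bX_t}) = [F^\eta(m^N_{\bX^\eta_t}) - F(m^N_{\bX^\eta_t})] + [F(m^N_{\bX^\eta_t}) - F(m^N_{\bX_t})]$. The first bracket is handled by the elementary mollification estimate $|F^\eta(m) - F(m)| \leq C_F \bd_1(m \ast \rho_\eta, m) \leq C\eta$; the second by the $\bd_1$-Lipschitz continuity of $F$ together with $\bd_1(m^N_{\bX^\eta_t}, m^N_{\bX_t}) \leq \tfrac{1}{N} \sum_i |X^{i,\eta}_t - X^i_t|$. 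The terminal cost is treated analogously. Summing contributions and taking expectations produces $\lvert J^N(t_0, \bx_0, \bm\alpha) - J^{N,\eta}(t_0, \bx_0, \bm\alpha) \rvert \leq C\sqrt{\eta}$, uniformly in the control, and taking the infimum yields $\|V^N - V^{N,\eta}\|_\infty \leq C\sqrt{\eta}$.

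For the mean field bound, the only additional subtlety is handling the conditional law arguments of $F$ and $G$, together with the fact that $U$ and $U^\eta$ live on different canonical spaces. I would address the latter by identifying $U$ with its version on the enriched space $\ov{\Omega}^{\infty, t_0}$, since allowing additional independent randomization does not change the value of a stochastic control problem of this type; then both $X$ and $X^\eta$ can be defined on the same space, driven by the same control $\alpha \in \ov{\sA}_{t_0, m_0}$. Setting $m_t = \sL^{0, m_0}(X_t)$ and $m^\eta_t = \sL^{0, m_0}(X^\eta_t)$, the key estimate is a conditional Kantorovich-Rubinstein bound: for any $1$-Lipschitz $\phi$,
\begin{align*}
\int \phi \, d(m^\eta_t - m_t) = \E \bigl[ \phi(X^\eta_t) - \phi(X_t) \,\big|\, \ov{\sF}^{0, t_0, m_0}_T \bigr] \leq \E \bigl[ |X^\eta_t - X_t| \,\big|\, \ov{\sF}^{0, t_0, m_0}_T \bigr],
\end{align*}
which upon taking the supremum over $\phi$ and then the expectation yields $\E \bd_1(m^\eta_t, m_t) \leq C \sqrt{\eta}$. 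Combining this with the Lipschitz regularity of $L$, $F$, and $G$ gives $\lvert J^\infty(t_0, m_0, \alpha) - J^{\infty, \eta}(t_0, m_0, \alpha) \rvert \leq C \sqrt{\eta}$, and taking the infimum concludes. The main technical nuisance will be establishing the $x$-Lipschitz estimate on $L$ over $\T^d \times \overline{B}_R$ with a constant depending only on the data, which requires unpacking \eqref{hlocallip} in conjunction with the convex-duality relationship between $H$ and $L$; everything else is a clean Gronwall-plus-Lipschitz estimate.
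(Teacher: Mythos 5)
Your proof is correct, and the $N$-particle half is essentially the paper's argument (couple the two state processes through the same control and the same Brownian motions, get $\E|X^{i,\eta}_t-X^i_t|^2\leq C\eta$ by Gr\"onwall, then use the $\bd_1$-Lipschitz continuity of $F,G$ and the mollification bound $\bd_1(m,m*\rho_\eta)\leq C\eta$). You are in fact more careful than the paper on one point: the paper's proof silently passes over the running-cost term $\frac1N\sum_i L(X^i_t,\alpha^i_t)$, which also changes when the state process changes, and your reduction to controls bounded by $R$ via Lemma \ref{lem.boundedcontrols} (plus the resulting uniform $x$-Lipschitz bound on $L(\cdot,a)$ for $|a|\leq R$, extracted from \eqref{hlocallip} by convex duality) is exactly what is needed to make that step honest. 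Where you genuinely diverge is the mean field half: the paper does \emph{not} run a second coupling argument for $U$ versus $U^\eta$; it simply observes that $V^{N,0}=V^N$, establishes $\|V^{N,\eta}-V^N\|_\infty\leq C\sqrt\eta$ uniformly in $N$, and then lets $N\to\infty$ using the qualitative convergence $V^{N,\eta}\to U^\eta$, $V^N\to U$ from Lemma \ref{lem.qualitativeconv}. That route buys you the $U$-estimate for free and, in particular, completely avoids the issue you flag of $U$ and $U^\eta$ living on different canonical spaces.

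The one soft spot in your version is precisely that point: the assertion that enriching the canonical space with the extra independent Brownian motion $B$ (and enlarging the admissible filtration accordingly) does not change the value $U$ is true but not free; it is a statement about insensitivity of the value to the probabilistic setup, of the type established in \cite{DjetePossamaiTan} (Theorem 3.1 there) or \cite{Lacker2017}, and should be cited rather than asserted. If you want to keep a self-contained direct argument, the cleaner fix is to adopt the paper's shortcut: prove only the $N$-particle estimate and invoke Lemma \ref{lem.qualitativeconv} to transfer it to $U^\eta-U$ in the limit. Everything else in your write-up (the conditional Kantorovich--Rubinstein step, the splitting of $F^\eta-F$ into a mollification error plus a Lipschitz term) is sound.
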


\begin{proof}
We prove the estimate on $|V^{N,\eta} - V^N|$, from which \eqref{conv:rate:eps} follows by the qualitative convergence Lemma \ref{lem.qualitativeconv}. Since it is clear that $V^{N,0} = V^N$, we can instead estimate $|V^{N,0} - V^{N,\eta}|$. Fix $(t_0,\bx_0) \in [0,T] \times (\T^d)^N$, and an open-loop control $\bm \alpha = (\alpha^1,...,\alpha^N) \in \sA_{t_0}^N$. Let $\bX = (X^1,...,X^N)$ be defined by \eqref{nplayerdynamics} and $\bX^{\eta} = (X^{1,\eta},...,X^{N,\eta})$ be defined by \eqref{nplayerdynamics_eps}. It is clear that 
\begin{align*}
    \E\Big[\sup_{t_0 \leq t \leq T} |X_t^{\eta,i} - X_t^i|^2 \Big] \leq C \eta,  \text{  hence  }
    \E\Big[\sup_{t_0\leq t \leq T} \bd_1(m_{\bX_t}^N,m_{\bX_t^{\eta}}) \Big] \leq C \eta.
\end{align*}
Since $F$ and $G$ are $\bd_1$-Lipschitz, this implies that 
\begin{align*}
    |J^{N,\eta}(t_0,\bx_0,\bm \alpha) - J^{N}(t_0,\bx_0, \bm \alpha) | \leq C \sqrt{\eta}
\end{align*}
for a constant $C$ which depends only on the data. This implies \eqref{conv:rate:eps}, and completes the proof.
\end{proof}

\begin{lem} \label{prop.closedloopequiv}
    Let Assumption \ref{assump.maindegen} hold. Then for $\eta \in (0,1]$, $U^{\eta} = U^{\eta,f}$. Moreover, there is a constant $R$ depending only on the data such that for any $\eta \in (0,1]$, any $(t_0,m_0) \in (0,T] \times \cP(\T^d)$, and any $\delta > 0$ there exists a $\delta$-optimal control for the problem defining $U^{\eta,f}(t_0,m_0)$ which is uniformly bounded by $R$.
\end{lem}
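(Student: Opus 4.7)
The plan is to establish the two inclusions of $U^{\eta} = U^{\eta,f}$ separately, and then bootstrap Lemma \ref{lem.boundedcontrols} to obtain uniformly bounded $\delta$-optimal feedback controls.

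For the easier inequality $U^{\eta} \leq U^{\eta,f}$, I would embed any feedback control into the open-loop framework. Given $\alpha \in \sA_{t_0}^f$, the Lipschitz regularity of $\alpha$ in $x$ (uniformly in $(t,\omega^0)$) guarantees that the closed-loop SDE
\[ dX_t = \alpha_t(X_t)\,dt + \sigma(X_t)\, dW_t + \sigma^0\, dW_t^0 + \sqrt{2\eta}\, dB_t, \quad X_{t_0} = \xi, \]
on $\ov{\Omega}^{\infty,t_0}$ has a unique strong solution. Setting $\tilde{\alpha}_t := \alpha_t(X_t)$ produces an element of $\ov{\sA}_{t_0,m_0}$, and by strong uniqueness for the stochastic Fokker--Planck equation \eqref{feedbackdynamics}, the $W^0$-conditional law of $X_t$ under $\ov{\bP}^{\infty,t_0,m_0}$ coincides with the solution of \eqref{feedbackdynamics} associated to $\alpha$. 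Consequently $J^{\infty,\eta}(t_0,m_0,\tilde{\alpha}) = J^{\infty,\eta,f}(t_0,m_0,\alpha)$, and the inequality follows upon taking the infimum.

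The reverse inequality $U^{\eta,f} \leq U^{\eta}$ is the main obstacle; its proof will depend crucially on the non-degeneracy $\eta > 0$. The plan here is to invoke the approximation results of \cite{Djete:COCV} (Theorem 2.14 and Proposition 3.12, as highlighted in the introduction): given an arbitrary $\alpha \in \ov{\sA}_{t_0,m_0}$, one constructs a sequence of bounded Lipschitz feedbacks $(\alpha^n)_{n \geq 1} \subset \sA_{t_0}^f$ such that the associated $W^0$-conditional marginals, and hence the costs, converge: $J^{\infty,\eta,f}(t_0,m_0,\alpha^n) \to J^{\infty,\eta}(t_0,m_0,\alpha)$ as $n \to \infty$. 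The regularizing effect of the additional Brownian motion $B$ with intensity $\sqrt{2\eta}$ is essential for this mimicking-type construction, which is why the argument breaks down at $\eta = 0$ (as indeed anticipated by Corollary \ref{cor.feedback}). Passing to the infimum over $\alpha$ yields $U^{\eta,f} \leq U^{\eta}$, and thereby $U^{\eta} = U^{\eta,f}$.

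For the uniform boundedness assertion, I would combine these equivalences with Lemma \ref{lem.boundedcontrols}. Fix $(t_0,m_0)$ and $\delta > 0$, and select $\alpha \in \ov{\sA}_{t_0,m_0}$ with $|\alpha| \leq R$ and $J^{\infty,\eta}(t_0,m_0,\alpha) \leq U^{\eta}(t_0,m_0) + \delta/2$, as furnished by Lemma \ref{lem.boundedcontrols}. Applying the Djete approximation to this bounded $\alpha$ and post-composing with a smooth projection $\pi_R : \R^d \to \ov{B}_R$ (which acts as the identity on $\alpha$ itself, since $|\alpha| \leq R$) produces feedbacks $\tilde{\alpha}^n := \pi_R \circ \alpha^n \in \sA_{t_0}^f$ with $|\tilde{\alpha}^n| \leq R$ and $J^{\infty,\eta,f}(t_0,m_0,\tilde{\alpha}^n) \to J^{\infty,\eta}(t_0,m_0,\alpha)$ (the projection does not spoil convergence because $\pi_R$ is $1$-Lipschitz and coincides with the identity on the image of $\alpha$). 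For $n$ sufficiently large, $\tilde{\alpha}^n$ is the desired uniformly bounded $\delta$-optimal feedback for $U^{\eta,f}(t_0,m_0) = U^{\eta}(t_0,m_0)$.
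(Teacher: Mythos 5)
Your proof of the equality $U^{\eta}=U^{\eta,f}$ is essentially the paper's: both inequalities are obtained the same way (embedding Lipschitz feedbacks as open-loop controls for one direction, and invoking \cite[Theorem 2.14 and Proposition 3.12]{Djete:COCV} for the other, with the non-degeneracy $\eta>0$ doing the work). The divergence is in the boundedness claim. The paper introduces the truncated value functions $U^{\eta,R}$ and $U^{\eta,f,R}$ and closes the loop of inequalities $U^\eta = U^{\eta,R}= U^{\eta,f,R}\geq U^{\eta,f}\geq U^\eta$, which yields $U^{\eta,f}=U^{\eta,f,R}$ and hence bounded $\delta$-optimal feedbacks for free; the point is that the approximation in \cite{Djete:COCV} of a $B_R$-valued open-loop control can be carried out by $B_R$-valued feedbacks, so no truncation of the approximants is ever needed. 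Your alternative — post-composing the approximating feedbacks $\alpha^n$ with the projection $\pi_R$ — is not justified as written: the fact that $\pi_R$ is $1$-Lipschitz and fixes the image of $\alpha$ does not by itself imply $J^{\infty,\eta,f}(t_0,m_0,\pi_R\circ\alpha^n)\to J^{\infty,\eta}(t_0,m_0,\alpha)$, because replacing $\alpha^n$ by $\pi_R\circ\alpha^n$ changes the controlled Fokker--Planck dynamics, and you would need a separate stability estimate to control the resulting change in cost. The gap is repairable, but the cleanest repair is precisely the paper's device of working with the $R$-truncated value functions throughout.
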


\begin{proof} 
     
     For $R > 0$, define $U^{\eta,R}$ as in the proof of Lemma \ref{lem.boundedcontrols} and, similarly, define $U^{\eta,f,R}$ exactly like $U^{\eps,f}$ but with the infimum in \eqref{uepsfbdef} restricted to those $\alpha \in \sA_{t_0}^f$ which take values in $B_R \subset \R^d$. Thanks to Lemma \ref{lem.boundedcontrols}, we have $U^\eta= U^{\eta,R}$, for some fixed $R$ which depends only on the data. Moreover, we clearly have $U^\eta\leq U^{\eta,f}$ and $U^{\eta,R} \leq U^{\eta,f,R}$ by inclusion of the sets of controls, since every feedback control clearly gives rise to an open-loop control. 
     Next, \cite[Theorem 2.14 and Proposition 3.12]{Djete:COCV} can be used to deduce that $U^{\eta,f,R} \leq U^{\eta, R}$, since it explains that any  $\alpha \in \sA_{t_0,m_0}$ with values in $B_R$ can be approximated in an appropriate sense by a sequence of feedback controls. Note that $\sigma_0$ is assumed to be invertible in \cite{Djete:COCV}, but it is not needed in the first part of \cite[Proposition 3.12]{Djete:COCV}, in particular in Proposition A.5 therein, when a general open-loop control is approximated by the kind of feedback controls that we consider; only the non-degeneracy of the idiosyncratic noise is required, and $\sigma_0$ is required to be invertible, in that paper, for the approximation with controls which are functions of $(t,x,\mu)$. Finally, we conclude that $U^\eta = U^{\eta,R} = U^{\eta,f,R} \geq U^{\eta,f} \geq U^\eta$,
 which implies that $U^\eta=U^{\eta,f}$ and also $U^{\eta,f}=U^{\eta,f,R}$.
\end{proof}

\subsection{Estimates on $U^{\eta}$} \label{subsec:uepsestimates}

\begin{prop}\label{prop: d1_lip}
	Let  Assumption \ref{assump.maindegen} hold. Then we have
 \begin{align*}
    \lip(U; \bd_1) \leq C, \quad  \lip(U^{\eta}; \bd_1) \leq C, \quad \lip(U^{\eta}; W^{-2,\infty}) \leq C/\eta,
 \end{align*}
 for each $\eta \in (0,1]$ and a constant $C$ depending only on the data.
\end{prop}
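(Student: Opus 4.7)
For the two $\bd_1$-Lipschitz bounds, I would use a standard open-loop coupling argument of the type that appears repeatedly in the mean-field literature (see, e.g., \cite[Sec. 3]{ddj2023}). Fix $m_0, m_0' \in \cP(\T^d)$ and, on an enlarged canonical space, couple the initial conditions $(\xi, \xi')$ via an optimal coupling with $\E|\xi - \xi'| = \bd_1(m_0, m_0')$. Take a $\delta$-optimal bounded open-loop control $\alpha$ for $U^\eta(t_0, m_0)$ (available by Lemma \ref{lem.boundedcontrols}) and transfer it to the $m_0'$ problem by relabeling through the coupling. Driving both state processes with the same Brownian motions and conditioning on the initial pair, a standard It\^o-and-Gr\"onwall estimate for deterministic initial conditions gives $\E|X_t^x - X_t^y|^2 \leq e^{CT}|x-y|^2$, and averaging over $(\xi, \xi')$ via the tower property yields $\E|X_t - X_t'| \leq e^{CT/2} \E|\xi - \xi'| = C \bd_1(m_0, m_0')$; by the conditional-law structure this also controls $\E \bd_1(\sL^{0,m_0}(X_t), \sL^{0,m_0'}(X_t'))$. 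Combined with the Lipschitz properties of $L$ in $x$ (uniformly over bounded controls) and the $\bd_1$-Lipschitz-ness of $F, G$, this produces $U^\eta(t_0, m_0') \leq U^\eta(t_0, m_0) + C\bd_1(m_0, m_0') + \delta$ with $C$ independent of $\eta$; symmetry and $\delta \to 0$ conclude, and the argument for $U$ is identical.

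For the $W^{-2, \infty}$-Lipschitz bound on $U^\eta$, the plan is to \emph{lift} the sharp second-derivative estimate of Proposition \ref{prop: bound_der_Veps} to the limit through an averaging procedure. After first replacing $F, G$ by $\cC^2$ mollifications $F_\kappa, G_\kappa$ (with $\bd_1$-Lipschitz constants uniformly bounded in $\kappa$) so that Proposition \ref{prop: bound_der_Veps} applies to the associated finite-particle value $V^{N, \eta}_\kappa$, define
\begin{equation*}
    U^{N, \eta}_\kappa(t, m) \coloneqq \int_{(\T^d)^N} V^{N, \eta}_\kappa(t, \bx) \, m^{\otimes N}(d\bx).
\end{equation*}
Symmetry of $V^{N, \eta}_\kappa$ under permutation of particles gives
\begin{equation*}
    D_x \frac{\delta U^{N, \eta}_\kappa}{\delta m}(t, m, x) = N \int D_{x^1} V^{N, \eta}_\kappa(t, x, x^2, \ldots, x^N) \, m^{\otimes(N-1)}(dx^2 \cdots dx^N),
\end{equation*}
and an identical expression for $D^2_{xx}$; plugging in the estimates of Proposition \ref{prop: bound_der_Veps} yields
\begin{equation*}
    \bigg\|\frac{\delta U^{N, \eta}_\kappa}{\delta m}(t, m, \cdot)\bigg\|_{2, \infty} \leq \frac{C}{\eta} + \frac{C_\kappa}{N},
\end{equation*}
with $C$ depending only on the data (the $C^0$ part being controlled by the normalization \eqref{normalization} combined with the gradient bound and the diameter of the torus). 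The linear-derivative representation \eqref{linderivative} then gives $|U^{N,\eta}_\kappa(t, m) - U^{N,\eta}_\kappa(t, m')| \leq (C/\eta + C_\kappa/N) \|m - m'\|_{-2, \infty}$. I then pass to the limit: for $\bx \sim m^{\otimes N}$, the empirical measure $m_{\bx}^N$ converges to $m$ in $\bd_1$ almost surely by the law of large numbers, so by Lemma \ref{lem.qualitativeconv} and dominated convergence $U^{N, \eta}_\kappa(t, m) \to U^\eta_\kappa(t, m)$ pointwise, and the Lipschitz estimate transfers. Sending $\kappa \to 0$ using continuity of the control problem in the data closes the argument.

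The hard part is the $W^{-2, \infty}$ bound, and it rests on the favorable $1/(N\eta)$ scaling of the Hessian estimate in Proposition \ref{prop: bound_der_Veps}. A direct differentiation of a feedback representation for $U^\eta$ closed by Gr\"onwall would produce an exponential factor $e^{CT/\eta}$ rather than the desired polynomial $C/\eta$; it is precisely the stochastic-adjoint structure underlying Proposition \ref{prop: bound_der_Veps} that avoids this blow-up. A minor technical point is that Proposition \ref{prop: bound_der_Veps} requires $\cC^2$ data, which is handled by the $\kappa$-regularization above and the fact that the final constant is $\kappa$-independent.
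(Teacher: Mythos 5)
Your proposal is correct, and for the key $W^{-2,\infty}$ estimate it is essentially the paper's argument: both lift the bound $|D^2_{x^ix^i}V^{N,\eta}| \leq C/(N\eta) + C'/N^2$ of Proposition \ref{prop: bound_der_Veps} through the integrated value function $\hat{V}^{N,\eta}(t,m) = \int_{(\T^d)^N} V^{N,\eta}(t,\bx)\, m^{\otimes N}(d\bx)$, whose linear derivative is the sum over $i=1,\dots,N$ of the single-particle derivatives, then pass to the limit via the qualitative convergence $\hat{V}^{N,\eta}\to U^{\eta}$ (sending $N\to\infty$ first so the $\kappa$-dependent term $C_\kappa/N$ disappears), and finally remove the $\cC^2$ hypothesis on $F,G$ by mollification. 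The only divergence is in the $\bd_1$ bounds: you run a direct open-loop coupling argument (optimal coupling of the initial conditions, identical driving noises, It\^o--Gr\"onwall, $\bd_1$ of the conditional laws dominated by $\E|X_t-X_t'|$), whereas the paper reads the $\bd_1$-Lipschitz constant off the \emph{same} integrated value function from $|D_m\hat{V}^{N,\eta}| \leq \sum_i \|D_{x^i}V^{N,\eta}\|_\infty \leq C$. Both routes are valid and give $\eta$-independent constants; the paper's is more economical because the single object $\hat{V}^{N,\eta}$ delivers all three estimates at once, while your coupling argument has the merit of not routing the (elementary) $\bd_1$ bound through the finite-particle regularity theory --- at the modest cost of some bookkeeping when transferring the open-loop control across the coupling on the canonical space.
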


The proof uses the mapping 
$\hat{V}^{N,\eta} \colon[0,T]\times \sP(\T^d)\to\R $ defined by
\begin{equation} \label{hatvndef}
	\hat{V}^{N,\eta}(t,m):=\int_{(\T^d)^N}V^{N,\eta}(t,x^1,\dots,x^N)\prod_{i=1}^N m(dx^i).
\end{equation}

\begin{proof}
   First, suppose in addition that $F$ and $G$ are $\cC^2$. The function $\hat{V}^{N,\eta}$ is smooth and its linear derivative is given by 
    \[
\frac{\delta \hat{V}^{N,\eta}}{\delta m} (t,m, y) = \sum_{i=1}^N\int_{(\T^d)^{N-1}}
V^{N,\eta}(t,(\bm{x}^{-i}, y))  m^{\otimes (N-1)} (d\bm{x}^{-i}).
    \]
    Thus the bounds \eqref{eq:decay:der:1:N} and \eqref{eq:decay:der:2:eps:N} give 
\begin{align*}
|D_m \hat{V}^{N,\eta}(t,m, y)| \leq  \sum_{i=1}^N \|D_{x_i} V^{N,\eta}\|_\infty \leq C, \quad 
|D_{xm} \hat{V}^{N,\eta}(t,m; y)| \leq  \sum_{i=1}^N \|D^2_{x_ix_i} V^{N,\eta}\|_\infty \leq \frac{C}{\eta} + C'/N,
\end{align*}
where $C$ depends only on the data but $C'$ can depend also on $\|D^2_{mm} F\|_{\infty}$ and $\|D^2_{mm} G\|_{\infty}$. 
This yields
\[
\sup_{t,m} \norm{\frac{\delta \hat{V}^{N,\eta}}{\delta m} (t,m, \cdot) }_{2,\infty} \leq \frac{C}{\eta} + C'/N. 
\]
The definition of the linear derivative \eqref{linderivative} implies first that 
\begin{align*}
|\hat{V}^{N,\eta}(t,m) - \hat{V}^{N,\eta}(t,m')| &=  \int_0^1 \int_{\T^d} \frac{\delta}{\delta m} \hat{V}^{N,\eta}\big(t, rm + (1-r)m',x \big)  (m-m')(dx) dr \\
& \leq \|D_m \hat{V}^{N,\eta}\|_\infty \bd_1(m',m) \leq C \bd_1(m',m),
\end{align*} 
which provides the $\bd_1$-Lipschitz bound since by Lemma \ref{lem.pointwisevnhat} below, $\hat{V}^{N,\eta}$ converges pointwise to $U^{\eta}$. Similarly, we have
\begin{align*}
|\hat{V}^{N,\eta}(t,m) - \hat{V}^{N,\eta}(t,m')| 
& \leq \sup_{m\in\mathcal{P}(\mathbb{T}^d)} \norm{\frac{\delta \hat{V}^{N,\eta}}{\delta m} (t,m, \cdot) }_{2,\infty} \|m'-m\|_{-2,\infty} 
\\
&\leq \Big(\frac{C}{\eta} + C'/N\Big) \|m'-m\|_{-2,\infty} 
\end{align*}
which gives the $W^{-2,\infty}$-Lipschitz bound. To complete the proof, we can remove the assumption that $F$ and $G$ are $\cC^2$ using a mollification procedure as in Lemma 4.1 of \cite{ddj2023}.
\end{proof}
The next lemma states that $\hat{V}^{N,\eta}$ converges to $U^{\eta}$ pointwise, and is a consequence of the qualitative convergence of $V^{N,\eta}$ to $U$ together with a bound on $|V^{N,\eta}(t,\bx) - \hat{V}^{N,\eta}(t,m_{\bx}^N)|$ which will be established below.
\begin{lem}
    \label{lem.pointwisevnhat}
    We have $\hat{V}^{N,\eta}(t,m) \to U^{\eta}(t,m)$ as $N \to \infty$ for each fixed $(t,m)$.
\end{lem}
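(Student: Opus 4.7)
The plan is to interpret $\hat V^{N,\eta}(t,m)$ as an expectation, apply the qualitative convergence of $V^{N,\eta}$ to $U^{\eta}$ along empirical measures of i.i.d.\ samples, and conclude by dominated convergence. Concretely, let $(X^i)_{i\in\N}$ be a sequence of i.i.d.\ $\T^d$-valued random variables with common law $m$, defined on some auxiliary probability space. By Fubini applied to the definition \eqref{hatvndef},
\[
\hat V^{N,\eta}(t,m) \;=\; \E\bigl[\, V^{N,\eta}(t,X^1,\dots,X^N)\,\bigr],
\]
and the problem reduces to passing to the limit inside this expectation.

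First, I would recall that the empirical measure $m^N_{\bX}:=\frac{1}{N}\sum_{i=1}^N \delta_{X^i}$ converges weakly to $m$ almost surely, and, since $\T^d$ is compact, this is equivalent to $\bd_1(m^N_{\bX},m)\to 0$ almost surely (the Glivenko--Cantelli-type statement for empirical measures on a compact Polish space). Along each realization, the deterministic sequence $(t, m^N_{\bX})$ converges to $(t,m)$ in $[0,T]\times\cP(\T^d)$, so Lemma \ref{lem.qualitativeconv} applied pointwise yields
\[
V^{N,\eta}(t,X^1,\dots,X^N)\;\xrightarrow[N\to\infty]{}\; U^{\eta}(t,m)\qquad \text{almost surely.}
\]

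Next I would establish a uniform bound $\|V^{N,\eta}\|_{\infty}\leq M$ for some $M$ depending only on the data. Indeed, by Lemma \ref{lem.boundedcontrols} there exists a radius $R$, independent of $N$ and $\eta$, such that the infimum in \eqref{vnepsdef} is attained up to an arbitrarily small error by controls bounded by $R$; since $L$ is continuous on $\T^d\times\bar B_R$ and $F,G$ are bounded on the compact set $\cP(\T^d)$ (being $\bd_1$-Lipschitz), the cost functional $J^{N,\eta}$ is uniformly bounded on this class, giving the desired uniform bound on $V^{N,\eta}$.

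With the almost sure convergence and the uniform bound in hand, the dominated convergence theorem gives
\[
\hat V^{N,\eta}(t,m)\;=\;\E\bigl[V^{N,\eta}(t,X^1,\dots,X^N)\bigr]\;\xrightarrow[N\to\infty]{}\;U^{\eta}(t,m),
\]
which is the claim. I do not expect any serious obstacle here: the only point requiring mild care is the uniform $L^\infty$ bound on $V^{N,\eta}$, and, if one prefers to avoid invoking Lemma \ref{lem.boundedcontrols}, one could instead test the infimum \eqref{vnepsdef} against the zero control, which immediately gives an $N$-independent upper bound, while a lower bound follows from the coercivity built into Assumption \ref{assump.maindegen} (the quadratic growth of $H$ in $p$ forces $L\geq -C$).
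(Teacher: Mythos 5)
Your proof is correct, but it takes a different route from the paper's. The paper proves this by combining Lemma \ref{lem.qualitativeconv} with the quantitative coupling estimate of Lemma \ref{lemma: approx v_vhat}, namely $|\hat{V}^{N,\eta}(t,m^N_{\bx}) - V^{N,\eta}(t,\bx)| \leq C r_{N,d}$ (which in turn rests on the uniform Lipschitz bound \eqref{Lip:VNeps} and a Fournier--Guillin-type argument): one picks $\bx_N$ with $\bd_1(m^N_{\bx_N},m)\to 0$, uses the uniform $\bd_1$-Lipschitz continuity of $\hat{V}^{N,\eta}$ to replace $m$ by $m^N_{\bx_N}$, then Lemma \ref{lemma: approx v_vhat} and Lemma \ref{lem.qualitativeconv}. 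You instead write $\hat{V}^{N,\eta}(t,m)$ as $\E[V^{N,\eta}(t,X^1,\dots,X^N)]$ for i.i.d.\ samples, apply Lemma \ref{lem.qualitativeconv} realization by realization (using almost sure $\bd_1$-convergence of empirical measures on the compact torus), and conclude by dominated convergence. Your argument is softer: it needs only a uniform $L^\infty$ bound on $V^{N,\eta}$ (which, as you note, follows from testing the zero control and the lower bound $L(x,a)\geq -H(x,0)$), and requires neither the gradient estimates of Proposition \ref{prop: bound_der_Veps} nor any rate for empirical measures. The paper's route is no more expensive in context, since Lemma \ref{lemma: approx v_vhat} is needed anyway for the easy inequality, and it additionally records that $\hat{V}^{N,\eta}$ and $V^{N,\eta}$ agree up to $Cr_{N,d}$ along empirical measures; but as a self-contained proof of this qualitative statement, yours is the more economical one. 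Both arguments avoid circularity with Proposition \ref{prop: d1_lip}, since only the Lipschitz bound on $\hat{V}^{N,\eta}$ (not its limit) enters the paper's proof, and yours does not use that bound at all.
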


\begin{proof}
   This follows from Lemma \ref{lem.qualitativeconv} and Lemma \ref{lemma: approx v_vhat} below.
\end{proof}

The next property of $U^\eta$ we are interested in is the semiconcavity with respect to the $\tv$-norm.
\begin{prop}\label{prop: TV_semiconc}
Let Assumption \ref{assump.maindegen} hold. Then, $U^\eta$ is $\tv$-semiconcave with a constant depending only on the data, i.e. there is a constant $C$ depending on the data such that 
 \begin{align*}
     U^\eta(t,\lambda m + (1-\lambda)m') \geq \lambda U^\eta(t,m) + (1-\lambda) U^\eta(t,m') - C \lambda(1-\lambda) \|m -m'\|^2_{\tv} 
 \end{align*}
 for each $t \in [0,T]$, $m,m' \in \cP(\T^d)$, $\lambda \in (0,1)$. As a consequence, the same bound holds also for $U$.
\end{prop}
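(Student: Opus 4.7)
The plan is to exploit the equivalence $U^\eta = U^{\eta,f}$ from Lemma \ref{prop.closedloopequiv} in order to work in a setting where the state flow is linear in the initial measure. Fix $t_0 \in [0,T]$, $m_1, m_2 \in \sP(\T^d)$, and $\lambda \in (0,1)$, and set $m_\lambda = \lambda m_1 + (1-\lambda) m_2$. For $\delta > 0$, Lemma \ref{prop.closedloopequiv} provides a uniformly bounded feedback control $\alpha \in \sA_{t_0}^f$ that is $\delta$-optimal for $U^{\eta,f}(t_0, m_\lambda)$. For $i \in \{1, 2, \lambda\}$, let $m_i^\cdot$ denote the strong solution of the stochastic Fokker--Planck equation \eqref{feedbackdynamics} driven by $\alpha$ with initial condition $m_i$. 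Since \eqref{feedbackdynamics} is affine in $m$ once $\alpha$ is fixed, uniqueness of strong solutions forces $m_\lambda^t = \lambda m_1^t + (1-\lambda) m_2^t$ almost surely for every $t \in [t_0, T]$.

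Next I would compare the three cost functionals $J^{\infty,\eta,f}(t_0, m_i, \alpha)$ term by term. Since the running cost $\int L(x, \alpha_t(x)) \, m(dx)$ is linear in $m$, this contribution satisfies the exact identity
\[
\int L(x, \alpha_t(x)) \, m_\lambda^t(dx) = \lambda \int L(x, \alpha_t(x)) \, m_1^t(dx) + (1-\lambda) \int L(x, \alpha_t(x)) \, m_2^t(dx).
\]
For the $F^\eta$ and $G^\eta$ terms, I would first check that these inherit the $\tv$-semiconcavity of $F$ and $G$ with the \emph{same} constants, using the fact that $m \mapsto m * \rho_\eta$ is a contraction in total variation. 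Applying the resulting semiconcavity to $m_\lambda^t = \lambda m_1^t + (1-\lambda) m_2^t$ and integrating in time then yields
\[
J^{\infty,\eta,f}(t_0, m_\lambda, \alpha) \geq \lambda J^{\infty,\eta,f}(t_0, m_1, \alpha) + (1-\lambda) J^{\infty,\eta,f}(t_0, m_2, \alpha) - C \lambda(1-\lambda) \, \E \Big[ \int_{t_0}^T \|m_1^t - m_2^t\|_{\tv}^2 \, dt + \|m_1^T - m_2^T\|_{\tv}^2 \Big].
\]

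The main technical step is to bound $\|m_1^t - m_2^t\|_{\tv}$ uniformly by $\|m_1 - m_2\|_{\tv}$ almost surely; this is the place where the linear FPE structure is essential, and where I would spend the most care. I plan to implement it through a synchronous coupling: on an enlarged probability space carrying $W, W^0, B$ together with $\xi^i \sim m_i$ that are optimally coupled so that $\bP(\xi^1 \neq \xi^2)$ realizes the minimal-coupling value associated with $\|m_1 - m_2\|_{\tv}$, I would solve \eqref{dyn:Xeps:alpha} using the same $(\alpha, W, W^0, B)$ but starting from $\xi^1$ and $\xi^2$. Because $\alpha(t, \cdot)$ is Lipschitz uniformly in $(t, \omega^0)$ and $\sigma$ is $C^2$, pathwise uniqueness forces $X_t^1 = X_t^2$ for all $t$ whenever $\xi^1 = \xi^2$. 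Since $m_i^t$ is the conditional law of $X_t^i$ given $W^0$, the conditional joint law of $(X_t^1, X_t^2)$ given $W^0$ is a coupling of $m_1^t$ and $m_2^t$, and the characterization of $\tv$ via optimal couplings yields $\|m_1^t - m_2^t\|_{\tv} \leq \|m_1 - m_2\|_{\tv}$ a.s.

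Combining these ingredients with $U^{\eta,f}(t_0, m_\lambda) + \delta \geq J^{\infty,\eta,f}(t_0, m_\lambda, \alpha)$ and $J^{\infty,\eta,f}(t_0, m_i, \alpha) \geq U^{\eta,f}(t_0, m_i)$, and letting $\delta \to 0$, I obtain the desired $\tv$-semiconcavity of $U^\eta$ with a constant depending only on the data (and in particular uniform in $\eta \in (0, 1]$). To transfer the estimate to $U$, I would appeal to Lemma \ref{lem.epstozero}, which gives $\|U^\eta - U\|_\infty \leq C\sqrt{\eta}$; sending $\eta \to 0^+$ in the pointwise semiconcavity inequality yields the same bound for $U$, with the same constant.
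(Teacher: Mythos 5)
Your proof is correct and follows the same overall architecture as the paper's: pick a $\delta$-optimal bounded feedback control for the convex combination (via the feedback equivalence $U^\eta=U^{\eta,f}$, valid for $\eta>0$), exploit linearity of the controlled Fokker--Planck flow and of the running cost, apply the $\tv$-semiconcavity of $F^\eta,G^\eta$ (correctly noting that convolution with $\rho_\eta$ is a $\tv$-contraction, a point the paper leaves implicit), and pass to the limit $\delta\to 0$, then $\eta\to 0$ for $U$. The one place where you genuinely diverge is the key stability estimate $\|m^1_t-m^2_t\|_{\tv}\le\|m^1_0-m^2_0\|_{\tv}$ a.s. The paper proves this (Lemma \ref{lemma: stability_mubar}) by first removing the common noise through the shift $\bar m_t=(\mathrm{Id}-\sigma^0 W^0_t+\sigma^0W^0_{t_0})_{\#}m_t$, so that $\bar m$ solves a Fokker--Planck equation with random coefficients, and then arguing by duality: testing against the solution of the backward Kolmogorov equation and invoking the maximum principle. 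You instead use a synchronous coupling: a maximal coupling of the initial conditions, independent of $(W,W^0,B)$, pushed through the same SDE, so that pathwise uniqueness forces the trajectories to agree on $\{\xi^1=\xi^2\}$ and the coupling characterization of $\tv$ gives the contraction for the conditional laws. Both routes work; yours is more probabilistic and avoids the regularity discussion for the dual parabolic PDE, at the cost of two identifications you should state explicitly: (i) the pair $(\xi^1,\xi^2)$ must be taken independent of the noises, and (ii) the conditional law of $X^i_t$ given $W^0$ must be identified with the strong solution $m^i_t$ of \eqref{feedbackdynamics}, which requires the uniqueness statement for that equation (the paper's citation of \cite{COGHI2019259}). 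Also note that your coupling gives the bound for each fixed $t$ almost surely, which suffices here since the estimate is only used under a time integral and at $t=T$, whereas the paper's lemma asserts an almost-sure bound on the supremum over $t$.
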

Before proving Proposition \ref{prop: TV_semiconc}, we need to introduce a useful change of variable that allows us to deal with the common noise $W^0$ (see, for instance, the proofs of Lemma 4.1 in \cite{cdjs2023} and Proposition 6.1 in \cite{djs2023}). In particular, we are going to work with the feedback formulation, and instead of working process $m$ defined by \eqref{feedbackdynamics}, we are going to work with 
\begin{equation}\label{eqn: rel_mubar_mu}
	\ov{m}_t = (\text{Id} - \sigma^0 W_t^0 + \sigma^0 W_{t_0}^0)_{\#}m_t,\quad t_0\leq t\leq T.
\end{equation}
We note that (see the proof of Lemma 4.1 in \cite{cdjs2023}) if $m$ satisfies \eqref{feedbackdynamics}, then $\ov{m}$ satisfies the following Fokker-Planck equation with random coefficients:
\begin{equation}\label{eqn: fp_rand_coeff}
    \begin{cases}
        \partial_t \bar{m}_t = \eta \Delta \bar{m}_t + \sum_{i,j = 1}^d D^2_{x_ix_j}\big( A_{ij} (x + W_t - W_t^0) \bar{m}_t \big)- \div \left (\alpha_t(x + W_t - W_t^0)\bar{m}_t \right),\quad t_0\leq t\leq T, \\
        \quad \bar{m}_{t_0} = m_0.
    \end{cases}
\end{equation}

\begin{lem}\label{lemma: stability_mubar}
	Let Assumption \ref{assump.maindegen} hold, and let us fix $t_0\in [0,T]$, $m_0$, $m_0' \in \cP(\T^d)$, and $\alpha \in \sA_{t_0}^f$. Let $m,m'$ be defined by the dynamics \eqref{feedbackdynamics}, with initial conditions $m_{t_0} = m_0$, $m'_{t_0} = m_0'$. Then we have
	\begin{equation*}
		\Big\| \sup_{t_0 \leq t \leq T} \| m_t - m_t' \|_{\tv} \Big\|_{\linf(\Omega^{0})} \leq \|m_0 - m_0'\|_{\tv}.
	\end{equation*}
\end{lem}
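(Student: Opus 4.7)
First, note that the change of variables \eqref{eqn: rel_mubar_mu} is a pushforward by the (common, random) translation $x\mapsto x-\sigma^0 W_t^0+\sigma^0 W_{t_0}^0$, which preserves the total variation norm. Hence $\|m_t-m_t'\|_{\tv}=\|\bar{m}_t-\bar{m}_t'\|_{\tv}$ pathwise, and it is enough to bound $\|\bar{m}_t-\bar{m}_t'\|_{\tv}$. The advantage is that $\bar{m}_t$ and $\bar{m}_t'$ both solve the linear Fokker--Planck equation \eqref{eqn: fp_rand_coeff}, whose coefficients depend on $(t,x,\omega^0)$ through $W_t-W_t^0$ and on the (common) control $\alpha_t$, but \emph{not} on the measure itself.

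By linearity, the signed measure $\bar{\mu}_t:=\bar{m}_t-\bar{m}_t'$ is itself a (distributional) solution of \eqref{eqn: fp_rand_coeff} starting from $\bar{\mu}_0=m_0-m_0'$. The strategy is then the Hahn decomposition trick: $\bar{\mu}_0$ has zero total mass, so writing $\bar{\mu}_0=\bar{\mu}_0^+-\bar{\mu}_0^-$ we have $\bar{\mu}_0^+(\T^d)=\bar{\mu}_0^-(\T^d)=c$ with $c=\tfrac{1}{2}\|m_0-m_0'\|_{\tv}$. If $c=0$ there is nothing to prove; otherwise set $\nu_0:=\bar{\mu}_0^+/c$ and $\nu_0':=\bar{\mu}_0^-/c$, which are probability measures, and let $\bar{\nu}_t, \bar{\nu}_t'$ be the solutions of \eqref{eqn: fp_rand_coeff} starting from these. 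By linearity (together with pathwise uniqueness for \eqref{eqn: fp_rand_coeff}, which holds since $\eta>0$ and the coefficients are smooth and bounded) we get $\bar{\mu}_t=c(\bar{\nu}_t-\bar{\nu}_t')$ pathwise.

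The key point is that $\bar{\nu}_t$ and $\bar{\nu}_t'$ remain probability measures almost surely for every $t \in [t_0,T]$. This follows from their probabilistic representation as the conditional laws, given $(W,W^0)$ and the control $\alpha$, of the SDE
\[
  dY_t=\alpha_t(Y_t+W_t-W_t^0)\,dt+\sigma(Y_t+W_t-W_t^0)\,d\hat W_t+\sqrt{2\eta}\,d\hat B_t,\qquad Y_{t_0}\sim\nu_0,
\]
where $\hat W,\hat B$ are auxiliary Brownian motions independent of $(W,W^0)$. Since $\alpha$ is bounded and Lipschitz in $x$, $\sigma$ is $C^2$, and the diffusion is strictly elliptic thanks to $\eta>0$, this SDE has a unique strong solution, and its conditional law satisfies \eqref{eqn: fp_rand_coeff} and coincides with $\bar{\nu}_t$ by uniqueness. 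In particular $\bar{\nu}_t,\bar{\nu}_t'$ are probability measures almost surely, so $\|\bar{\nu}_t-\bar{\nu}_t'\|_{\tv}\leq 2$ almost surely.

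Combining these ingredients,
\[
  \|m_t-m_t'\|_{\tv}=\|\bar{\mu}_t\|_{\tv}=c\,\|\bar{\nu}_t-\bar{\nu}_t'\|_{\tv}\leq 2c=\|m_0-m_0'\|_{\tv},
\]
with the estimate holding simultaneously for all $t\in[t_0,T]$, $\bP^0$-almost surely, by continuity in $t$. The main technical point to be careful about is the pathwise uniqueness and positivity preservation for \eqref{eqn: fp_rand_coeff}; these are standard under our assumptions but should be invoked explicitly, e.g.\ by the representation via the SDE above, or equivalently by a dual argument applied to a smooth bounded test function and the corresponding backward random parabolic equation.
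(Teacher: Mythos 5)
Your proof is correct, and it follows the paper's reduction exactly up to the key step: both arguments pass to $\bar m_t=(\mathrm{Id}-\sigma^0W^0_t+\sigma^0W^0_{t_0})_{\#}m_t$, observe that total variation is invariant under this (bijective) pushforward, and then reduce to showing that the linear Fokker--Planck equation \eqref{eqn: fp_rand_coeff} with frozen $\omega^0$ is a $\tv$-contraction. Where you diverge is in how that contraction is proved. The paper argues by duality: it tests $\bar m^1_{t_1}-\bar m^2_{t_1}$ against $\phi$ with $\|\phi\|_\infty\le 1$, solves the backward Kolmogorov equation for $\Phi$ with terminal data $\phi$, and invokes the parabolic maximum principle $\|\Phi\|_\infty\le\|\phi\|_\infty$. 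You instead work on the forward side: Hahn-decompose the initial signed measure, normalize the two parts into probability measures, and use positivity and mass conservation of the forward flow (via the SDE representation with frozen $\omega^0$) to conclude $\|\bar\mu_t\|_{\tv}\le 2c$. These two arguments are precise duals of one another -- the maximum principle for the backward equation is equivalent to positivity/mass preservation for the forward one -- so neither is more general here; the duality route has the minor advantage of needing only well-posedness of a scalar backward parabolic PDE (no decomposition or normalization bookkeeping), while your route makes the probabilistic mechanism behind the contraction more transparent. Both correctly exploit that the equation is \emph{linear} in the measure, which, as Remark \ref{rmk.semiconcavity} notes, is exactly the point that would fail for $m$-dependent dynamics. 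Your closing caveat about pathwise uniqueness and positivity preservation is well placed; under Assumption \ref{assump.maindegen} with $\eta>0$ and $\alpha\in\sA^f_{t_0}$ these are indeed standard, and the paper implicitly relies on the analogous well-posedness of the dual backward equation.
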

\begin{proof}
    Let $\ov{m}$ be defined by \eqref{eqn: rel_mubar_mu} and $\ov{m}'$ be defined analogously but with $m'$ replacing $m$. Clearly $\|\ov{m}_t - \ov{m}'_t\|_{\tv} = \|m_t - m'_t\|_{\tv}$, so it suffices to prove the estimate with $\ov{m}$ and $\ov{m}'$. But as discussed above, for almost every $\omega^0$, $t \mapsto \ov{m}_t(\omega^0)$ is the unique weak solution of the Fokker-Planck equation \eqref{eqn: fp_rand_coeff}. We claim that, for each fixed $\omega^0$, this equation generates a contraction in the $\tv$ norm. Indeed, suppose that we have $m^1$ and $m^2$ solving
    \begin{align*}
        \partial_t m^i = \sum_{j,k} D_{x_j x_k}( A_{jk}(t,x) m^i) - \text{div}(m^i B(t,x)), \quad t_0 \leq t \leq T, \quad m^i_{t_0} = m^i_0
    \end{align*}
    for $i = 1,2$,
    with $A(t,x) : [t_0,T] \times \T^d \to \text{Sym}(\R^{d \times d})$ Lipschitz in $x$ and uniformly non-degenerate and $B(t,x) : [t_0,T] \times \T^d \to \R^d$ Lipschitz in $x$. Then, by duality we have, for each $\phi : \T^d \to \R$ with $\|\phi\|_{\infty} \leq 1$ and any $t_1 \in (t_0,T]$, 
    \begin{align} \label{duality}
        \int \phi d(m_{t_1}^1 - m_{t_1}^2) = \int \Phi(t_0,\cdot) d(m^1_0- m^2_0), 
    \end{align}
    where $\Phi : [t_0,t_1] \times \T^d \to \R$ solves 
    \begin{align*}
        - \partial_t \Phi - \tr ( A D^2 \Phi) - B \cdot D \Phi = 0, \quad t_0 \leq t \leq t_1, \quad \Phi(t_1,x) = \phi(x).
    \end{align*}
    By the maximum principle $\|\Phi \|_{\infty} \leq \|\phi\|_{\infty}$, and together with \eqref{duality} this easily implies that 
    \begin{align} \label{dettvest}
        \|m_{t_1}^1 - m_{t_1}^2\|_{\tv} \leq \|m_{0}^1 - m_0^2\|_\tv.
    \end{align}  
    Coming back to $\ov{m}$ and $\ov{m}'$, we apply the estimate \eqref{dettvest} to conclude that for almost ever $\omega^0$, 
    \begin{align*}
        \sup_{t_0 \leq t \leq T} \|\ov{m}_{t}(\omega^0) - \ov{m}'_t(\omega^0)\|_{\tv} \leq \| m_{0} - m_0'\|_{\tv},
    \end{align*}
    which completes the proof.
    \end{proof}

\begin{proof}[Proof of Proposition \ref{prop: TV_semiconc}]
For any $\lambda\in[0,1]$ and $m_0,m'_0 \in\sP(\T^d)$, let us set $m_0^\lambda:=\lambda m_0 + (1-\lambda)m'_0$. Let us fix $\delta>0$. By Lemma \ref{lem.boundedcontrols}, we can choose $\alpha \in \sA_{f}^{t_0}$ to be a $\delta$-optimal feedback control for the problem starting from $(t_0, m^\lambda_0)$ which is bounded by some $R > 0$ depending only on the data. We denote by $m$ and $m'$ the solutions of \eqref{feedbackdynamics} with initial conditions $m_{t_0} = m_0$ and $m_{t_0}' = m_0'$. Notice that by linearity $m_{\lambda} = \lambda m + (1-\lambda)m'$. Then, by the $\tv$-semiconcavity of $F$ (hence $F^{\eta}$) and $G$ (hence $G^{\eta}$), we have
\begin{align*}
	U^\eta(t_0,m_0^{\lambda}) &\geq \E^{\bP^0}\left[\int_{t_0}^T \Big( \int_{\T^d} L\big(x,\alpha_t(x)\big) m_t^{\lambda}(dx) + F^{\eta} \big(m^{\lambda}_t \big) \Big)dt + G^{\eta}(m^{\lambda}_T)\right] - \delta\\
	& \geq  \lambda \E^{\bP^0}\left[\int_{t_0}^T \Big(\int_{\T^d} \Big( L(x,\alpha_t(x)) dm_t(x) + F^{\eta}(m_t) \Big) dt + G^{\eta}(m_T)\right] \\
	&\quad + (1 - \lambda) \E^{\bP^0}\left[\int_{t_0}^T \Big( \int_{\T^d} L(x,\alpha_t(x)) dm_t'(x) + F^{\eta}(m_t') \Big) dt + G^{\eta}(m_T')\right] \\
	& \quad - \E^{\bP^0}\left[\int_{t_0}^T C_L\frac{\lambda(1-\lambda)}{2}\norm{m_t - m'_t}_\tv dt  + C_G\frac{\lambda(1-\lambda)}{2}\norm{m_T - m'_T}_\tv\right] - \delta\\
	&\geq \lambda U^\eta (t_0,m) + (1-\lambda) U^\eta(t_0,m')\\
	&\quad - \E^{\bP^0}\left[\int_{t_0}^T C_L\frac{\lambda(1-\lambda)}{2}\norm{m_t - m'_t}_\tv dt 
	+ C_G\frac{\lambda(1-\lambda)}{2}\norm{m_T - m'_T}_\tv\right] - \delta.
\end{align*}
Applying Lemma \ref{lemma: stability_mubar} and then sending $\delta \to 0$ completes the proof.
\end{proof}
\begin{rmk} \label{rmk.semiconcavity}
   Note that, unlike \cite{ddj2023}, we work with $\tv$-semiconcavity rather than $\bd_1$-semiconcavity. This is because in order to obtain a $\bd_1$-semiconcavity estimate, we would need to use a stability estimate like in Lemma \ref{lemma: stability_mubar} for $\delta$-optimal feedback controls, but with respect to $\bd_1$ rather than $\tv$. But such an estimate would depend on the Lipchitz constant of $\alpha$, and one would expect that since $U$ is only $\bd_1$-Lipschitz and (formally) the optimal feedback control is given by $\alpha_t(x) = - D_p H(x, D_m U(t,m_t,x))$, the Lipschitz constant of the optimal feedback $\alpha$ should blow up as $\eta \to 0$. Thus while our $\tv$-semiconcavity estimate does not depend on $\eta$ (and this is important in Section \ref{sec:hardinequalityconditional}), we do not see a way to get a $\bd_1$-semiconcavity bound independent of $\eta$. Luckily, the $\tv$-semiconcavity is sufficient for our purposes.
\end{rmk}
\section{The easy inequality} \label{sec:easy}
The goal of this section is to obtain the lower bound for the inequality stated in Theorem \ref{thm.main}.
To this end, we will first prove the relation above for the \textit{regularized} value functions $V^{N,\eta}$ and $U^\eta$ introduced in Section \ref{sec: reg_values}. Then, by taking $\eta \to 0^+$ we will retrieve $V^{N}(t,\bx) \leq U(t,m_{\bx}^N) + Cr_{N,d}$. A key point in this procedure is the fact that all the constants involved in our estimate are independent of $\eta>0$.\\

Let us fix $\eta \in (0,1]$ and let us recall that $V^{N,\eta}$ is a classical solution of \eqref{hjbn_eps}.
To obtain the lower bound in Theorem \ref{thm.main} for the \textit{regularized} value functions, we follow the argument used to achieve a similar result in \cite[Subsection 3.2]{cdjs2023} and \cite[Section 6]{ddj2023}. This technique is based on the introduction of an \textit{integrated value function} $\hat{V}^{N,\eta}$ defined by \eqref{hatvndef} (see also the proof of Proposition 3.1 in \cite{cm2020}), which is used as an intermediate comparison term and for which the error can be quantified through the results in \cite{FournierGuillin}.

\begin{lem} \label{lem.vhatsubol}
    The function $\hat{V}^{N,\eta}$ is $\cC^{1,2}$ on $[0,T) \times \cP(\T^d)$, and there is a constant $C$ depending only on the data such that the following is satisfied in a classical sense: 
    \begin{align}
         \begin{cases}
        \ds - \partial_t \hat{V}^{N,\eta} - \eta \int_{\T^d} \tr\big(D_{xm} \hat{V}^{N,\eta}(t,m,x) \big) m(dx) - \int_{\T^d} \tr\big(A(x) D_{xm} \hat{V}^{N,\eta}(t,m,x) \big)m(dx) 
         \vspace{.1cm} \\ \ds 
        \quad - A_0 \com \hat{V}^{N,\eta}  + \int_{\T^d} H\big(x, D_m \hat{V}^{N,\eta}(t,m,x)\big) m(dx) \leq  F^{\eta}(m) + C r_{N,d}, \quad  (t,m) \in [0,T) \times \cP(\T^d), \vspace{.1cm} \\
        \ds \hat{V}^{N,\eta}(T,m) \leq  G^{\eta}(m) + C r_{N,d} \quad m \in \cP(\T^d).
    \end{cases}
    \end{align}
\end{lem}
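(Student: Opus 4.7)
The plan is to integrate equation \eqref{hjbn_eps} against the product measure $m^{\otimes N}$ and match each term with its counterpart in the mean-field-type HJB operator appearing in the statement, using the symmetry of $V^{N,\eta}$ in its spatial variables, the convexity of $H$ in $p$, and the Fournier--Guillin estimate $\int_{(\T^d)^N} \bd_1(m_{\bx}^N, m)\, m^{\otimes N}(d\bx) \leq C r_{N,d}$.

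\textbf{Step 1 (regularity and derivative identification).} Under Assumption \ref{assump.maindegen} and $\eta>0$, the equation \eqref{hjbn_eps} is uniformly parabolic with bounded continuous data $F^\eta, G^\eta$, and, since $\rho_\eta$ is smooth, the data are in fact smooth in $\bx$; classical Schauder theory then gives $V^{N,\eta}\in \mathcal{C}^{1,2}$ in $(t,\bx)$. (If necessary, we may first mollify $F$ and $G$ in the measure variable to obtain $V^{N,\eta}$ smooth enough in $\bx$ to differentiate twice, apply the argument below, and then pass to the limit in the regularization using the qualitative continuity of the involved quantities and the fact that all constants will depend only on the data.) By direct differentiation, using that $V^{N,\eta}(t,\cdot)$ is symmetric in $(x^1,\dots,x^N)$, one obtains
\begin{align*}
    D_m \hat{V}^{N,\eta}(t,m,y) &= N \int_{(\T^d)^{N-1}} D_{x^1} V^{N,\eta}(t,y,x^2,\dots,x^N)\, m^{\otimes (N-1)}(d\bx^{-1}), \\
    D_{xm} \hat{V}^{N,\eta}(t,m,y) &= N \int_{(\T^d)^{N-1}} D^2_{x^1 x^1} V^{N,\eta}(t,y,x^2,\dots,x^N)\, m^{\otimes (N-1)}(d\bx^{-1}), \\
    D^2_{mm} \hat{V}^{N,\eta}(t,m,y,y') &= N(N-1) \int_{(\T^d)^{N-2}} D^2_{x^1 x^2} V^{N,\eta}(t,y,y',x^3,\dots)\, m^{\otimes (N-2)}(d\bx^{-1,-2}).
\end{align*}

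\textbf{Step 2 (integrate \eqref{hjbn_eps}).} Integrating \eqref{hjbn_eps} against $m^{\otimes N}$ and using symmetry, the three second-order contributions become
\begin{align*}
    \sum_{i=1}^N \int \eta \Delta_{x^i} V^{N,\eta}\, m^{\otimes N} = \eta \int_{\T^d} \tr(D_{xm}\hat{V}^{N,\eta}(t,m,y))\,m(dy),
\end{align*}
and similarly $\sum_i \int \tr(A(x^i) D^2_{x^i x^i} V^{N,\eta}) m^{\otimes N} = \int \tr(A(y) D_{xm}\hat{V}^{N,\eta}) m$. The common-noise term splits into the diagonal part, which by the formula above equals $A_0\int \tr(D_{xm}\hat{V}^{N,\eta})m$, and the off-diagonal part, which by the formula for $D^2_{mm}$ equals $A_0 \iint \tr(D^2_{mm}\hat{V}^{N,\eta}(t,m,y,y'))m(dy)m(dy')$; together these reconstruct precisely $A_0 \com \hat{V}^{N,\eta}$.

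\textbf{Step 3 (Hamiltonian via Jensen).} This is the only place where an inequality rather than an equality arises, and is the key step. Since $D^2_{pp} H \geq C_H^{-1} I$, the map $p \mapsto H(x,p)$ is convex, so by Jensen's inequality applied to the probability measure $m^{\otimes(N-1)}(d\bx^{-1})$,
\begin{align*}
  H\bigl(y, D_m \hat{V}^{N,\eta}(t,m,y)\bigr) \leq \int_{(\T^d)^{N-1}} H\bigl(y, N D_{x^1} V^{N,\eta}(t,y,x^2,\dots)\bigr) m^{\otimes(N-1)}(d\bx^{-1}).
\end{align*}
Integrating in $y$ against $m$ and invoking symmetry yields
\begin{align*}
    \int_{\T^d} H(y, D_m \hat{V}^{N,\eta}(t,m,y))\, m(dy) \leq \frac{1}{N} \sum_{i=1}^N \int H(x^i, N D_{x^i} V^{N,\eta})\, m^{\otimes N}(d\bx).
\end{align*}

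\textbf{Step 4 (source and terminal terms via Fournier--Guillin).} Since $F^\eta$ and $G^\eta$ are $\bd_1$-Lipschitz with the same constants $C_F, C_G$ as $F, G$ (independently of $\eta$), the standard Fournier--Guillin bound gives
\begin{align*}
    \int F^\eta(m_{\bx}^N)\, m^{\otimes N}(d\bx) \leq F^\eta(m) + C_F \int \bd_1(m_{\bx}^N, m)\, m^{\otimes N}(d\bx) \leq F^\eta(m) + C r_{N,d},
\end{align*}
and the analogous estimate at time $T$. Combining Steps 2, 3 and 4 rearranges the integrated \eqref{hjbn_eps} into exactly the stated differential inequality, and the terminal bound follows immediately.

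The only genuine difficulty is Step 3, where the sign in Jensen's inequality must align with the sign needed for a subsolution-type estimate; this works precisely because $H(x,\cdot)$ is convex and because the desired error term is one-sided. Everything else is either a direct computation exploiting the symmetry of $V^{N,\eta}$ or an application of Fournier--Guillin.
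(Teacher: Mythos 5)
Your proposal is correct and follows essentially the same route as the paper: integrate \eqref{hjbn_eps} against $m^{\otimes N}$ using the symmetry of $V^{N,\eta}$ to identify the second-order terms with $D_{xm}\hat{V}^{N,\eta}$ and $\com\hat{V}^{N,\eta}$, apply Jensen's inequality via the convexity of $H$ in $p$ for the Hamiltonian, and use the Fournier--Guillin estimate to replace $\int F^{\eta}(m^N_{\bx})\,m^{\otimes N}(d\bx)$ by $F^{\eta}(m)+Cr_{N,d}$ (and likewise at time $T$). No gaps.
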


\begin{proof}
    The smoothness of $\hat{V}^{N,\eta}$ follows from the smoothness of $V^{N,\eta}$. Explicit computation shows that $\hat{V}^{N,\eta}$ satisfies 
    \begin{equation}\label{eqn: v_hat_eqn_exact} 
    \begin{cases} 
    \ds - \partial_t \hat{V}^{N,\eta} - \eta \int_{\T^d} \tr\big(D_{xm} \hat{V}^{N,\eta}(t,m,x) \big) m(dx) - \int_{\T^d} \tr\big(A(x) D_{xm} \hat{V}^{N,\eta}(t,m,x) \big)m(dx) 
         \vspace{.1cm} \\  \ds  
        \quad - A_0 \com \hat{V}^{N,\eta}
   + \frac{1}{N} \int_{(\T^d)^N}\sum_{i = 1}^N H(x^i, ND_{x^i} V^{N,\eta})\prod_{i=1}^N m(dx^i) 
   \\ \ds \qquad = \hat{F}^{\eta}(m) \coloneqq \int_{(\T^d)^N} F^{\eta}(m_{\bx}^N) m^{\otimes N}(d\bx), \quad(t,m) \in [0,T) \times \sP(\T^d), \vspace{.1cm} \\
   \ds \hat{V}^{N,\eta}(T,m) = \hat{G}^{\eta}(m):= \int_{(\T^d)^N} G^{\eta}(m_{\bx}^N)\prod_{i=1}^N m(dx^i), \quad m\in\sP(\T^d).
    \end{cases}
\end{equation}
By convexity in $p$ of $H$, we have
\begin{align*}
	\int_{\T^d} H\big(x,D_m \hat{V}^{N,\eta}(t,m,x) \big) m (dx)\leq \frac{1}{N} \int_{(\T^d)^N}\sum_{i = 1}^N H(x^i, ND_{x^i} V^{N,\eta})\prod_{i=1}^N m(dx^i),
\end{align*}
and so combining this with \eqref{eqn: v_hat_eqn_exact} we get
\begin{align*}
	&- \partial_t \hat{V}^{N,\eta} - \eta \int_{\T^d} \tr\big(D_{xm} \hat{V}^{N,\eta}(t,m,x) \big) m(dx) - \int_{\T^d} \tr\big(A(x) D_{xm} \hat{V}^{N,\eta}(t,m,x) \big)m(dx) 
         \vspace{.1cm} \\  &\qquad \ds  
        \quad - A_0 \com \hat{V}^{N,\eta}
   + \int_{\T^d} H\big(x, D_m \hat{V}^{N,\eta}(t,m,x) \big)m(dx)  \leq  \hat{F}^{\eta}(m).
\end{align*}
To conclude, we apply the results of \cite{FournierGuillin} to get 
\begin{align*}
   | \hat{F}^{\eta}(m) - F^{\eta}(m) | = |\int_{(\T^d)^N} F^{\eta}(m_{\bx}^N) m^{\otimes N}(d\bx) - F^{\eta}(m) | \leq C_F \int_{(\T^d)^N} \bd_1(m_{\bx}^N,m) m^{\otimes N}(d\bx) \leq Cr_{N,d},
\end{align*}
and likewise for $|\hat{G}^{\eta}(m) - G^{\eta}(m)|$.
\end{proof}

We now combine the sub-solution property for $\hat{V}^{N,\eta}$ outlined above with the definition of $U^{\eta}$ to obtain:
\begin{lem}\label{lemma: easy_ineq_Vhat}
	Let Assumption \ref{assump.maindegen} hold. Then there exists a constant $C>0$, depending only on the data, such that $\hat{V}^{N,\eta}(t,m)\leq U^\eta(t, m) + C r_{N,d}$ for any $(t,m)$ in $[0,T]\times\sP(\T^d)$.
\end{lem}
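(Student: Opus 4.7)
The plan is a verification-style argument: run $N$ i.i.d.\ copies of the mean field state process under a near-optimal feedback control, apply It\^o's formula to $V^{N,\eta}$, and use the PDE \eqref{hjbn_eps} together with the Legendre duality $H(x,p) + p\cdot a \geq -L(x,a)$ to bound the drift from below. Quantitative propagation of chaos (Fournier--Guillin) then converts the particle cost into the mean field cost, at the price $Cr_{N,d}$.

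Fix $(t_0,m_0)$ and $\delta>0$. By Lemma \ref{prop.closedloopequiv}, pick a bounded Lipschitz feedback $\alpha\in\sA_{t_0}^f$ which is $\delta$-optimal for $U^{\eta,f}(t_0,m_0)=U^{\eta}(t_0,m_0)$. On a probability space carrying $W^0$, independent Brownian motions $(W^i,B^i)_{i=1,\dots,N}$, and independent $m_0$-distributed initial conditions $\xi^1,\dots,\xi^N$, solve
\begin{equation*}
dX^i_t = \alpha_t(X^i_t)\,dt + \sigma(X^i_t)\,dW^i_t + \sigma_0\,dW^0_t + \sqrt{2\eta}\,dB^i_t, \qquad X^i_{t_0}=\xi^i.
\end{equation*}
Conditionally on $\sF^{W^0}$, the processes $X^i$ are i.i.d.\ with conditional law $m_t=\sL^{0,m_0}(X^i_t)$, and $(m_t)$ is the strong solution of the stochastic Fokker--Planck equation \eqref{feedbackdynamics} driven by $\alpha$.

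Apply It\^o's formula to the smooth $V^{N,\eta}(t,\bX_t)$ and use the equation \eqref{hjbn_eps}: the drift reduces to
\begin{equation*}
\sum_{i=1}^N D_{x^i}V^{N,\eta}\cdot\alpha_t(X^i_t) + \frac{1}{N}\sum_{i=1}^N H\bigl(X^i_t,\,ND_{x^i}V^{N,\eta}\bigr) - F^{\eta}(m^N_{\bX_t}),
\end{equation*}
which by Legendre duality is bounded below by $-\tfrac1N\sum_i L(X^i_t,\alpha_t(X^i_t)) - F^{\eta}(m^N_{\bX_t})$. Integrating on $[t_0,T]$, using the terminal condition $V^{N,\eta}(T,\bx)=G^{\eta}(m^N_{\bx})$, taking expectation, and noting that $\E[V^{N,\eta}(t_0,\bX_{t_0})]=\hat V^{N,\eta}(t_0,m_0)$ yields
\begin{equation*}
\hat V^{N,\eta}(t_0,m_0) \leq \E\!\left[\int_{t_0}^{T}\Bigl(\tfrac1N\sum_{i=1}^N L(X^i_t,\alpha_t(X^i_t)) + F^{\eta}(m^N_{\bX_t})\Bigr)dt + G^{\eta}(m^N_{\bX_T})\right].
\end{equation*}

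To conclude, I will compare the right-hand side with $J^{\infty,\eta,f}(t_0,m_0,\alpha)$. The running $L$-term is exactly equal to $\E\int_{t_0}^T\!\!\int L(x,\alpha_t(x))\,m_t(dx)\,dt$ by conditioning on $\sF^{W^0}$ and using that $X^i_t$ has conditional law $m_t$. For the $F^{\eta}$ and $G^{\eta}$ terms, conditioning on $\sF^{W^0}$ gives i.i.d.\ samples from $m_t$, so the Fournier--Guillin bound $\E[\bd_1(m^N_{\bX_t},m_t)\mid\sF^{W^0}]\leq Cr_{N,d}$ (already invoked in the proof of Lemma \ref{lem.vhatsubol}) combined with the $\bd_1$-Lipschitz property of $F^{\eta},G^{\eta}$ produces an error of size $Cr_{N,d}$. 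Altogether,
\begin{equation*}
\hat V^{N,\eta}(t_0,m_0) \leq J^{\infty,\eta,f}(t_0,m_0,\alpha) + Cr_{N,d} \leq U^{\eta}(t_0,m_0) + Cr_{N,d} + \delta,
\end{equation*}
and sending $\delta\to0$ finishes the proof. No real obstacle arises: the It\^o computation is standard since $V^{N,\eta}$ is classical, the Legendre inequality is pointwise, and the only quantitative ingredient needed is the conditional Fournier--Guillin bound, whose application requires no new work beyond Lemma \ref{lem.vhatsubol}.
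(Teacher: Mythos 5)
Your argument is correct, but it is organized differently from the paper's. The paper first shows (Lemma \ref{lem.vhatsubol}) that $\hat V^{N,\eta}$ is a classical subsolution, up to an error $Cr_{N,d}$, of the infinite-dimensional HJB equation: it computes the exact equation satisfied by $\hat V^{N,\eta}$ on $[0,T]\times\cP(\T^d)$, passes the Hamiltonian inside the $m^{\otimes N}$-integral by Jensen's inequality (convexity of $H$ in $p$), and compares $\int F^\eta(m^N_{\bx})\,m^{\otimes N}(d\bx)$ with $F^\eta(m)$ via Fournier--Guillin; Lemma \ref{lemma: easy_ineq_Vhat} is then a verification argument against an \emph{arbitrary} open-loop control $\alpha\in\ov{\sA}_{t_0,m_0}$. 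You instead stay entirely at the finite-dimensional level: It\^o's formula for the classical solution $V^{N,\eta}$ along $N$ conditionally i.i.d.\ particles driven by a near-optimal \emph{feedback} control, Legendre duality in place of Jensen, and a conditional (given $W^0$) Fournier--Guillin bound. Your route avoids establishing the $\cC^{1,2}$ regularity of $\hat V^{N,\eta}$ on $\cP(\T^d)$ and the infinite-dimensional verification step, but it pays for this by invoking the feedback equivalence $U^\eta=U^{\eta,f}$ (Lemma \ref{prop.closedloopequiv}), which is only available for $\eta>0$ and rests on the approximation results of \cite{Djete:COCV}; the paper's proof of the present lemma does not use feedback controls at all. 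Since the section fixes $\eta\in(0,1]$, this reliance is harmless here, and both proofs ultimately consume the same two quantitative ingredients (duality/convexity of $H$ and Fournier--Guillin at rate $r_{N,d}$), so the resulting constants are of the same quality.
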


\begin{proof}
Fix $(t_0,m)\in[0,T)\times\sP(\T^d)$, and let us consider an arbitrary control $\alpha \in \ov{\sA}_{t_0,m_0}$. By a standard verification argument exactly as in \cite[Proposition 3.7]{cdjs2023}, Lemma \ref{lem.vhatsubol} implies that
\begin{align*}
	\hat{V}^{N,\eta}(t_0,m)& \leq \E^{\ov{\bP}^{\infty, t_0, m_0}}\left[\int_{t_0}^T \Big( L\big(X_t, \alpha_t \big) + F^{\eta}(\sL^{0,m_0}(X_{t}^{t_0,\alpha})) + Cr_{N,d} \Big)dt + G^{\eta}(\sL^{0,m_0}(X_{T}^{t_0,\alpha}))\right] + Cr_{N,d}.
\end{align*}
Taking an infimum over $\alpha \in \ov{\sA}_{t_0,m_0}$ completes the proof.
\end{proof}

Since we want to use $\hat{V}^{N,\eta}(t, m^N_\bx)$ to approximate $V^\eta(t,\bx)$, we need to quantify the distance between these two functions. This is done in the following lemma. 
\begin{lem}\label{lemma: approx v_vhat}
	Let Assumption \ref{assump.maindegen} hold. There exists a constant $C>0$, depending only on the data, such that $\lvert \hat{V}^{N,\eta}(t,m^N_\bx) - V^{N,\eta}(t,\bx)\rvert \leq C r_{N,d}$  for any $\bx \in(\T^d)^N$ and $t\in[0,T]$.
\end{lem}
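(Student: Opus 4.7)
The plan is to exploit the symmetry of $V^{N,\eta}$ together with the gradient estimate \eqref{Lip:VNeps} and a standard Fournier--Guillin-type rate for the empirical measure. The key observation is that because the running and terminal costs in \eqref{vnepsdef} depend on $\bX$ only through $m_{\bX}^N$, and the driving noises $(B^i, W^i)_{i=1}^N$ in \eqref{nplayerdynamics_eps} are i.i.d., the value function $V^{N,\eta}(t,\cdot)$ is invariant under permutation of its $N$ spatial arguments. Granted this, the proof should consist of a single coupling computation.

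Given $\bx \in (\T^d)^N$, I would introduce on some auxiliary probability space random variables $\bY = (Y^1,\dots,Y^N)$ which are i.i.d.\ with common law $m_{\bx}^N$, so that by definition
\[
\hat{V}^{N,\eta}(t,m_{\bx}^N) - V^{N,\eta}(t,\bx) = \E\bigl[\, V^{N,\eta}(t,\bY) - V^{N,\eta}(t,\bx)\, \bigr].
\]
Using symmetry, $V^{N,\eta}(t,\bx) = V^{N,\eta}(t,x^{\sigma(1)},\dots,x^{\sigma(N)})$ for every permutation $\sigma$. Combining this with \eqref{Lip:VNeps} and optimizing over $\sigma$ pointwise on the sample space, I obtain
\[
\bigl|V^{N,\eta}(t,\bY) - V^{N,\eta}(t,\bx)\bigr| \;\leq\; \inf_{\sigma} \frac{C}{N}\sum_{i=1}^N \lvert Y^i - x^{\sigma(i)}\rvert \;=\; C\,\bd_1(m_{\bY}^N, m_{\bx}^N),
\]
since the minimum of $\frac{1}{N}\sum_i|Y^i - x^{\sigma(i)}|$ over permutations coincides with the $1$-Wasserstein distance between the two empirical measures on $\T^d$.

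To finish, I would take expectation and apply the rate of convergence of an empirical measure to its common law established in \cite{FournierGuillin}: since the $Y^i$ are i.i.d.\ with law $m_{\bx}^N$ on the compact space $\T^d$, one has $\E[\bd_1(m_{\bY}^N, m_{\bx}^N)] \leq C\, r_{N,d}$ with $r_{N,d}$ as in \eqref{rnddef}, yielding
\[
\bigl|\hat{V}^{N,\eta}(t,m_{\bx}^N) - V^{N,\eta}(t,\bx)\bigr| \leq C\, r_{N,d}.
\]
There is no real obstacle here: the computation is routine. The only points worth flagging are (i) that the Lipschitz constant in \eqref{Lip:VNeps} supplied by Proposition~\ref{prop: bound_der_Veps} depends only on the data and, in particular, is independent of $\eta$, which is precisely what makes the estimate useful for the vanishing viscosity limit $\eta \to 0^+$ performed later; and (ii) that the constant in the Fournier--Guillin estimate depends only on $d$ (and the diameter of $\T^d$), so the final bound inherits the same uniformity.
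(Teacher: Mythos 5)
Your argument is correct and is essentially the paper's own proof: the paper simply defers to the final part of the proof of Proposition 6.2 in \cite{ddj2023}, which is exactly this coupling of $\bx$ with an i.i.d.\ sample $\bY$ from $m_{\bx}^N$, combined with the symmetry of $V^{N,\eta}$, the Lipschitz bound \eqref{Lip:VNeps}, the identification of the optimal permutation matching with $\bd_1(m_{\bY}^N,m_{\bx}^N)$, and the rate from \cite{FournierGuillin}. Your remarks on the $\eta$-uniformity of the constants are also apt and consistent with how the lemma is used later.
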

\begin{proof}
The claim is obtained as in the final part of the proof of Proposition 6.2 in \cite{ddj2023}, by exploiting the Lipschitz property \eqref{Lip:VNeps} stated in Proposition \ref{prop: bound_der_Veps}.
\end{proof}
Finally, we conclude with the main result of the section.

\begin{prop} \label{prop.easy}
	Let Assumption \ref{assump.maindegen} hold. Then there is a constant $C$ depending only on the data such that $V^N(t,\bx)\leq U(t, m^N_\bx) + C r_{N,d} $.
\end{prop}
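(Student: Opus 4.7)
The plan is to chain together the three preparatory lemmas established in this section together with the vanishing viscosity estimate from Lemma \ref{lem.epstozero}, and then send the viscosity parameter to zero. The key observation is that all the constants appearing in Lemmas \ref{lemma: easy_ineq_Vhat}, \ref{lemma: approx v_vhat}, and \ref{lem.epstozero} are independent of $\eta$, which is precisely what allows the limit $\eta \to 0^+$ to be taken.

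More concretely, I would fix $\eta \in (0,1]$ together with $(t,\bx) \in [0,T] \times (\T^d)^N$ and use the triangle inequality to write
\begin{align*}
V^N(t,\bx) - U(t,m^N_{\bx}) &= \bigl(V^N(t,\bx) - V^{N,\eta}(t,\bx)\bigr) + \bigl(V^{N,\eta}(t,\bx) - \hat V^{N,\eta}(t,m^N_{\bx})\bigr) \\
&\quad + \bigl(\hat V^{N,\eta}(t,m^N_{\bx}) - U^{\eta}(t,m^N_{\bx})\bigr) + \bigl(U^{\eta}(t,m^N_{\bx}) - U(t,m^N_{\bx})\bigr).
\end{align*}
The first and fourth terms are each bounded by $C\sqrt{\eta}$ by Lemma \ref{lem.epstozero}, the second by $C r_{N,d}$ by Lemma \ref{lemma: approx v_vhat}, and the third by $C r_{N,d}$ by Lemma \ref{lemma: easy_ineq_Vhat}. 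Summing yields
\begin{equation*}
V^N(t,\bx) \leq U(t,m^N_{\bx}) + C r_{N,d} + C\sqrt{\eta}
\end{equation*}
with $C$ depending only on the data.

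Since the left-hand side is independent of $\eta$ and the right-hand side is valid for every $\eta \in (0,1]$, I would send $\eta \to 0^+$ to conclude. There is no obstacle to speak of here; this is a short bookkeeping argument that merely assembles the pieces already forged in the previous lemmas. The only subtlety worth noting is the role of the vanishing viscosity regularization: the subsolution/verification argument producing Lemma \ref{lemma: easy_ineq_Vhat} was carried out at the level of $V^{N,\eta}$ rather than $V^N$ because it requires the classical smoothness of $V^{N,\eta}$ (guaranteed by $\eta > 0$) in order to apply It\^o's formula and the verification principle. Thus the $\eta$-regularization is not just a convenience but the mechanism that makes the comparison argument rigorous, and Lemma \ref{lem.epstozero} is what allows us to recover the statement for the original problem.
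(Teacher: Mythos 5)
Your proof is correct and follows exactly the paper's own argument: combine Lemmas \ref{lemma: easy_ineq_Vhat} and \ref{lemma: approx v_vhat} to get $V^{N,\eta}\leq U^{\eta}+Cr_{N,d}$ with $C$ independent of $\eta$, then use Lemma \ref{lem.epstozero} and let $\eta\to 0^+$. The only difference is that you spell out the triangle-inequality bookkeeping that the paper leaves implicit.
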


\begin{proof}
By combining Lemmas \ref{lemma: easy_ineq_Vhat} and \ref{lemma: approx v_vhat}, we find that there is a constant $C$ depending only on the data such that $V^{N,\eta}(t,\bx)\leq U^\eta(t, m^N_\bx) + C r_{N,d} $. In light of Lemma \ref{lem.epstozero}, we can conclude.
\end{proof}

\section{The hard inequality}

\label{sec:hardinequalityconditional}

For most of this section, we will assume that the value function $U$ has some additional regularity properties. In particular, we assume that 
\begin{align} \label{assump.extrareg}
U \in \text{Lip}\big([0,T] \times \cP(\T^d) ; W^{-2,\infty} \big).
\end{align}
When \eqref{assump.extrareg} holds we write $\lip(U ; W^{-2,\infty})$ for the smallest constant $C$ such that 
\begin{align*}
    |U(t,m) - U(t,m') | \leq C \|m - m'\|_{-2,\infty}
\end{align*}
for all $t \in [0,T]$, $m,m' \in \cP(\T^d)$. We will simplify the exposition by making the following convention.
\begin{convention}
    Throughout Section \ref{sec:hardinequalityconditional}, the constant $C$ will be allowed to change from line to line but will depend only on the data (in the sense of Convention \ref{conv.data}), unless otherwise noted. Dependence of $C$ on additional parameters will be made explicit by writing $C(...)$. In other words, the statement ``$A \leq C(D) B$" will mean that there exists a constant $C$ depending only on the data (in the sense of Convention \ref{conv.data}) and on $D$ such that such that $A \leq CB$. Crucially, we will not allow $C$ to depend on $\lip(U; W^{-2,\infty})$, and instead will specify explicitly the dependence of all our estimates on $\lip(U; W^{-2,\infty})$.
\end{convention}

The goal of Subsection \ref{sec:hardinequalityconditional} is to prove the following proposition, which shows that if \eqref{assump.extrareg} holds in addition to Assumption \ref{assump.maindegen}, then we have the desired convergence result, and with a constant that depends explicitly on $\lip(U; W^{-2,\infty})$. 

\begin{prop} 
    \label{prop.mainextrareg}
    Let Assumption \ref{assump.maindegen} and \eqref{assump.extrareg} hold. Then there is a constant $C$ such that
    \begin{align*}
        U(t,m_{\bx}) \leq V^N(t,\bx) + C \big(1 + \lip(U; W^{-2,\infty}) \big)^{1-1/(2s^* + 1)} N^{-1/(2s^*  + 1)}, 
    \end{align*}
    for all $(t,\bx) \in [0,T] \times (\T^d)^N$, where 
    \begin{align*}
        s^* = \begin{cases}
            d/2 + 3 & \text{if $d$ is even} \\
            d/2 + 5/2 & \text{if $d$ is odd}.
            \end{cases}
    \end{align*}
    If $A$ is constant, then the dependence on $\lip(U; W^{-2,\infty})$ can be removed, i.e. there is a constant $C$ such that
    \begin{align*}
        U(t,m_{\bx}) \leq V^N(t,\bx) + C  N^{-1/(2s^*  + 1)}.
    \end{align*}
\end{prop}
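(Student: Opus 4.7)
The plan is to follow the regularization strategy of \cite{ddj2023} adapted to the common-noise setting via the change of variables $\hat U(t,z,m) := U(t,(\mathrm{Id}+z)_{\#}m)$ and $\hat V^N(t,z,\bx) := V^N(t,x^1+z,\dots,x^N+z)$, which converts the infinite-dimensional common-noise operator $\com$ into the finite-dimensional Laplacian $\Delta_z$; formally $\hat U$ then solves \eqref{hjbz} and $\hat V^N$ solves \eqref{hjbhatn}. I would then regularize in three steps, $\hat U \to \hat U^{\delta} \to \hat U^{\delta,\eps} \to \hat U^{\delta,\eps,\lambda}$: mollify in $m$ (upgrading $\bd_1$-Lipschitz to $H^{-s^*}$-Lipschitz), sup-convolve in $(z,m)$ against the Hilbert structure of $\R^d\times H^{-s^*}$ (creating $C^{1,1}$-type regularity in the measure variable), and finally mix with Lebesgue measure so that extrema are attained at measures with full support. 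The index $s^*$ is chosen minimally so that $H^{s^*}\hookrightarrow W^{2,\infty}$; this Sobolev threshold, combined with the explicit dual formula \eqref{dualidentity} and the bound \eqref{c2sbound}, is what dictates the value of $s^*$ in the final rate.

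Three quantitative estimates then drive the argument. First, a second-order regularity bound of the form $\|D^2_{mm}\hat U^{\delta,\eps,\lambda}\|_\infty \lesssim \eps^{-1}\delta^{-2s^*}$, obtained from \eqref{dmmbound} and the sup-convolution, and refined in its $\delta$-dependence via Lemma \ref{lem.linfbound}. Second, an approximation bound
\[
|\hat U^{\delta,\eps,\lambda}(t,z,m) - \hat U(t,z,m)| \leq C(\delta + \lambda) + C\eps\bigl(1+\lip(U;W^{-2,\infty})\bigr)^{\!2},
\]
which uses Proposition \ref{prop: d1_lip} and Proposition \ref{prop: TV_semiconc} to control the mollification error and the Sobolev embedding $W^{-2,\infty}\hookrightarrow H^{-s^*}$ to control the sup-convolution error. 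Third, an approximate-subsolution property for \eqref{hjbz}: at a point $(t_0,z_0,m_0)$ with $m_0 \geq c\leb$ where $\hat U^{\delta,\eps,\lambda}$ is touched from above by a smooth test function $\Psi$, Lemma \ref{lem.regfromtouching} converts the $W^{-2,\infty}$-Lipschitz assumption on $U$ into an a priori $L^\infty$ bound on $D_{xm}\Psi(t_0,z_0,m_0,\cdot)$. This is exactly what is needed to treat the full Hamiltonian \eqref{fullham} as globally Lipschitz in $(p,q)$ when $\sigma$ is non-constant, and then a commutator estimate analogous to Lemma \ref{lem.commutator} controls the error from commuting $\rho_\delta$ with $H$ and with $\tr(A(x)D_{xm}\,\cdot)$, yielding an overall subsolution error of order $C(1+\lip(U;W^{-2,\infty}))(\delta+\eps+\lambda)$. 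In the case where $\sigma$ is constant, step three needs no a priori $W^{-2,\infty}$-bound on $U$, so the $\lip(U;W^{-2,\infty})$ dependence disappears.

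Finally I would transfer this to the finite-dimensional function $W^N(t,z,\bx) := \hat U^{\delta,\eps,\lambda}(t,z,m_{\bx}^N)$. An It\^o-type computation shows that substituting $m_{\bx}^N$ for $m$ in \eqref{hjbz} introduces the empirical-measure correction \eqref{errorterm}, controlled by the first estimate to order $1/(N\eps\delta^{2s^*})$. Because $W^N$ is only Lipschitz in $\bx$, the subsolution property must be transferred to $W^N$ viscosity-wise rather than pointwise, which is the \emph{main obstacle}: I would follow the Jensen-lemma-based route announced in Proposition \ref{prop.comparison}, producing test functions that simultaneously touch $W^N$ and, modulo an arbitrarily small perturbation, touch $\hat U^{\delta,\eps,\lambda}$ at nearby measures, so as to activate its subsolution property without losing any of the parameter dependence. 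A comparison principle between $W^N$ and $\hat V^N$ (using the approximation bound at $t=T$) then yields, at $z=0$,
\[
U(t,m_{\bx}^N) \leq V^N(t,\bx) + C\bigl(1+\lip(U;W^{-2,\infty})\bigr)(\delta+\eps+\lambda) + C\eps\lip(U;W^{-2,\infty})^{2} + \frac{C}{N\eps\delta^{2s^*}}.
\]
Optimizing $\delta,\eps,\lambda$ as appropriate powers of $N$ and $1+\lip(U;W^{-2,\infty})$ produces the exponent $1/(2s^*+1)$ and the prefactor $(1+\lip(U;W^{-2,\infty}))^{1-1/(2s^*+1)}$; in the constant-$\sigma$ case the prefactor collapses to a pure constant, giving the sharper statement.
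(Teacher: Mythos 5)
Your architecture coincides with the paper's: the change of variables $\hat{U},\hat{V}^N$ trading $\com$ for $\Delta_z$, the three transformations $\hat{U}\to\hat{U}^{\delta}\to\hat{U}^{\delta,\eps}\to\hat{U}^{\delta,\eps,\lambda}$, the use of the touching lemma to get an a priori bound on $D_{xm}$ of test functions so that the full Hamiltonian \eqref{fullham} can be treated as Lipschitz, the commutator estimate of Lemma \ref{lem.commutator}, the Jensen-lemma transfer of the subsolution property to $(t,z,\bx)\mapsto\hat{U}^{\delta,\eps,\lambda}(t,z,m_{\bx}^N)$, and a final comparison with $\hat{V}^N$. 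The gap is quantitative: two of your three driving estimates are mis-stated, and with the numbers you actually wrote down the optimization does not reach the exponent $1/(2s^*+1)$. The $C^{1,1}$ bound produced by the sup-convolution is $\big[\hat{U}^{\delta,\eps}\big]_{C^{1,1}(H^{-s^*})}\leq C/\eps$ with no $\delta^{-2s^*}$ factor (the semiconcavity constant $\delta^{-2s^*}$ of $\hat{U}^{\delta}$ enters only through the admissibility constraint $\eps\lesssim\delta^{2s^*}$), so the empirical-measure correction is $C/(N\eps)$, not $C/(N\eps\delta^{2s^*})$. In the opposite direction, the sup-convolution and subsolution errors cannot be $C\eps(1+\lip(U;W^{-2,\infty}))^2$ and $C(1+\lip)(\delta+\eps+\lambda)$: the relevant Lipschitz constant is the one with respect to the norm used in the sup-convolution, namely $\lip(\hat{U}^{\delta};H^{-s^*})\leq C\delta^{-(s^*-1)}$, and the $W^{-2,\infty}$-Lipschitz hypothesis on $U$ cannot replace it because the embedding runs the wrong way ($\|\mu\|_{-s^*}\leq C\|\mu\|_{-2,\infty}$, so $W^{-2,\infty}$-Lipschitz is the weaker property — this is exactly why the mollification $U\to U^{\delta}$ is needed). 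The correct errors are $C\eps\delta^{-2(s^*-1)}$ for the approximation and $C(1+\lip)(\delta+\lambda+\eps\delta^{-(2s^*-1)})$ for the subsolution, the latter through the bound $\|m_{\eps}-m_0\|_{\infty}\leq C\eps\delta^{-(2s^*-1)}$ on the sup-convolution optimizer.

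Concretely, optimizing the error you propose,
\begin{align*}
(1+L)(\delta+\eps+\lambda)+\eps L^2+\frac{C}{N\eps\delta^{2s^*}},\qquad L=\lip(U;W^{-2,\infty}),
\end{align*}
forces $\eps\asymp(1+L)^{-1}N^{-1/2}\delta^{-s^*}$ and then $\delta\asymp N^{-1/(2s^*+2)}$, yielding the rate $(1+L)\,N^{-1/(2s^*+2)}$, which is strictly weaker than the claimed $(1+L)^{1-1/(2s^*+1)}N^{-1/(2s^*+1)}$ in both exponent and prefactor. With the corrected bookkeeping, $\frac{C}{N\eps}+C(1+L)\big(\delta+\lambda+\eps\delta^{-(2s^*-1)}\big)$, the three-way balance gives $\delta=\big((1+L)N\big)^{-1/(2s^*+1)}$ and the stated bound. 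You should also record the constraint $\lambda\gtrsim\eps\delta^{-(2s^*-1)}$ required for the Lebesgue-mixing step; it is compatible with this choice of parameters. Your explanation of the constant-$A$ improvement (no need for the $W^{-2,\infty}$ bound) is essentially right, though the mechanism in the paper is that the commutator $m_t^{\delta}-\rho_{\delta}*m_t$ vanishes identically and the diffusion error term in the sup-convolution step becomes non-positive by the explicit duality representation of $\frac{\delta\hat{U}^{\delta,\eps}}{\delta m}$.
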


Before moving on, let us see how Proposition \ref{prop.mainextrareg} implies Theorem \ref{thm.main}.

\begin{proof}[Proof of Theorem \ref{thm.main}]
    Let $U^{\eta}$, $V^{N,\eta}$ be as defined in Section \ref{sec: reg_values}. By combining Proposition \ref{prop: bound_der_Veps}, Lemma \ref{lem.epstozero}, and Proposition \ref{prop.mainextrareg}, we find that 
    \begin{align*}
        U - V^N \leq C\sqrt{\eta} + U^{\eta} - V^{N,\eta} &\leq C \sqrt{\eta} + C \Big(1 + \lip(U^{\eta}; W^{-2,\infty}) \Big)^{1 -1/(2s^* + 1)} N^{-1/(2s^* + 1)} 
        \\
        &\leq C \sqrt{\eta} + C \eta^{-(2s^*)/(2s^*+1)} N^{-1/(2s^* + 1)}.
    \end{align*}
    Choose
   $
        \eta = N^{-2/(6s^* + 1)}
$
    to get the bound $U \leq V^N + C N^{-1/(6s^* + 1)}$, and recall the definition of $s^*$ to complete the proof in the degenerate case. The lower bound follows from Proposition \ref{prop.easy}. Obviously, the improved bound when $A$ is constant or $U \in \lip(U ; W^{-2,\infty})$ follows immediately from Proposition \ref{prop.mainextrareg}, and this completes the proof.
\end{proof}

We now fix a smooth approximation to the identity $(\rho_{\delta})_{\delta > 0}$ on $\T^d$, and we define
\begin{align} \label{def.udelta}
    U^{\delta}(t,m) = U(t,m * \rho_{\delta}),
\end{align}
We then employ the change of variables from \cite{bayraktar2023} and \cite{djs2023}, and study the map $\hat{U}^{\delta} : [0,T] \times \R^d \times \cP(\T^d) \to \R$, given by 
\begin{align} \label{def.udeltahat}
    \hat{U}^{\delta}(t,z,m) = U^{\delta}(t,m^z), \quad m^z \coloneqq (\text{Id} + z)_{\#} m. 
\end{align}
Notice that $\hat{U}^{\delta}(t,z,m) = \hat{U}(t,z, \rho_{\delta}*m)$ since $\rho_{\delta} * (\text{Id} + z)_{\#} m  = (\text{Id} + z)_{\#} \rho_{\delta} *m$.
Next, we employ a ``sup-convolution", defining for each $\epsilon > 0$ the function $\hat{U}^{\delta, \epsilon}(t,z,q) : [0,T] \times \R^d \times H^{-s^*} \to \R$ by
\begin{align} \label{def.udeltaepshat}
    \hat{U}^{\delta, \epsilon}(t,z,q) = \sup_{z'\in \R^d,m' \in \cP(\T^d)} \Big\{\hat{U}^{\delta}(t,z',m') - \frac{1}{2\epsilon}\big(|z - z'|^2 + \|q - m'\|_{-s^*}^2 \big) \Big\}, 
\end{align}
where 
\begin{align} \label{sdef}
    s^* = \begin{cases}
        d/2 + 3 & \text{if $d$ is even}
        \\
        d/2 + 5/2 & \text{if $d$ is odd}.
    \end{cases}
\end{align}
In other words, we choose $s^*$ to be the smallest integer which is strictly larger than $d/2 + 2$. The reason for this choice are (i) we need $s^* > d/2 + 1$ to make use of \eqref{dmmbound}, (ii) we need $s^* > d/2 + 2$ to make use of \eqref{dxmcont}, and (iii) we want to choose an integer so that we have the explicit formula \eqref{dualidentity} and in particular the bound \eqref{c2sbound}.
Finally, as in \cite{ddj2023}, we find it necessary to employ a final transformation which ``forces" a lower bound on the density of the measure argument, replacing $\hat{U}^{\delta, \epsilon}$ by $\hat{U}^{\delta, \epsilon, \lambda}$ for $\lambda \in (0,1)$, which is given by
\begin{align}  \label{def.udeltaepshatlambda}
    \hat{U}^{\delta, \eps, \lambda}(t,z,q) = \hat{U}^{\delta, \epsilon}(t,z,\lambda \leb + (1-\lambda)q).
\end{align}

\subsection{Analysis of the various transformations of $U$}

We are now going to analyze the functions $U^{\delta}$, $\hat{U}^{\delta}$, $\hat{U}^{\delta, \eps}$, and $\hat{U}^{\delta, \epsilon, \lambda}$ defined above. For convenience, we also define $\hat{U}(t,z,m) = U(t,m^z)$. The following elementary lemma, whose proof we omit, will be useful in establishing some regularity properties of $\hat{U}^{\delta}$. 

\begin{lem} \label{lem.mollification}
For any $s \in \N$, we have
\begin{align*}
    \bd_1(m * \rho_{\delta},m' * \rho_{\delta}) \leq C(s)\delta^{-(s-1)}\|m - m' \|_{-s}, \quad \|m * \rho_{\delta}-m' * \rho_{\delta}\|_{\tv} \leq C(s) \delta^{-s}\|m - m'\|_{-s}
\end{align*}
for any $m,m' \in \cP(\T^d)$. In addition, we have $ \bd_1(m, m * \rho_{\delta}) \leq C \delta$
for any $\delta > 0$ and $m \in \cP(\T^d)$.
\end{lem}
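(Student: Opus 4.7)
My plan is to prove all three estimates by a single duality argument, combined with the transfer-of-derivatives identity for convolutions. The central observation is that for any test function $\phi$ and measures $m, m'$,
\begin{align*}
\int_{\T^d} \phi \, d(\rho_\delta * m - \rho_\delta * m') = \langle \phi * \tilde\rho_\delta, m - m' \rangle,
\end{align*}
where $\tilde\rho_\delta(z) = \rho_\delta(-z)$. Hence each of the first two estimates reduces to bounding $\|\phi * \tilde\rho_\delta\|_{H^s}$ in terms of a norm on $\phi$ (Lipschitz or $L^\infty$), and then invoking the $H^{-s}$--$H^s$ duality pairing.

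For the Wasserstein bound, I would take the supremum in the definition of $\bd_1$ over $1$-Lipschitz $\phi$ with $\int \phi \, dx = 0$ (which leaves the sup unchanged and bounds $\|\phi\|_\infty$ by the diameter of $\T^d$). For each multi-index $\alpha$ with $|\alpha| \geq 1$, I would write $D^\alpha(\phi * \tilde\rho_\delta) = (D_i\phi) * D^{\alpha - e_i}\tilde\rho_\delta$ for some coordinate $i$, and then use Young's convolution inequality $\|f * g\|_{L^2} \leq \|f\|_{L^2}\|g\|_{L^1}$ together with $\|D_i\phi\|_{L^2} \leq 1$ and the standard mollifier scaling $\|D^\beta\tilde\rho_\delta\|_{L^1} \leq C(\beta)\delta^{-|\beta|}$. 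This yields $\|D^\alpha(\phi * \tilde\rho_\delta)\|_{L^2} \leq C(s)\delta^{-(|\alpha|-1)}$ for $1 \leq |\alpha| \leq s$, while the $|\alpha|=0$ term is uniformly bounded. Summing and using $\delta \leq 1$ gives $\|\phi * \tilde\rho_\delta\|_{H^s} \leq C(s)\delta^{-(s-1)}$, which completes the first estimate after duality.

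For the total variation bound, the argument is structurally identical but with $\phi$ only bounded in $L^\infty$ (not Lipschitz). Since we cannot transfer any derivative onto $\phi$, I would instead put every derivative onto the mollifier: $D^\alpha(\phi * \tilde\rho_\delta) = \phi * D^\alpha\tilde\rho_\delta$, and apply Young's inequality using $\|\phi\|_{L^2} \leq \|\phi\|_\infty \leq 1$. This gives the weaker $\|\phi * \tilde\rho_\delta\|_{H^s} \leq C(s) \delta^{-s}$, which after duality produces the extra factor of $\delta^{-1}$ compared with the Wasserstein bound, as expected.

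The third claim is a direct calculation that does not require duality: for any $1$-Lipschitz $\phi$,
\begin{align*}
|\phi(y) - (\phi * \tilde\rho_\delta)(y)| \leq \int_{\T^d} |\phi(y) - \phi(y - z)| \rho_\delta(z)\, dz \leq \int_{\T^d} |z|\rho_\delta(z)\, dz \leq C\delta,
\end{align*}
since $\rho_\delta$ is supported in a ball of radius on the order of $\delta$. Integrating against $m$ and taking the supremum over $\phi$ gives $\bd_1(m, m * \rho_\delta) \leq C\delta$. I do not anticipate any genuine obstacle here; the only care needed is bookkeeping the scaling of the $L^1$ norms of derivatives of $\rho_\delta$ and choosing which side of Young's inequality to apply the $L^1$ norm on so as to obtain the sharp power of $\delta$.
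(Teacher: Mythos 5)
Your argument is correct, and since the paper explicitly omits the proof of this lemma, your duality computation supplies exactly the standard argument the authors have in mind: pass the mollifier onto the test function, leave one derivative on $\phi$ in the Lipschitz case (gaining one power of $\delta$) and none in the $L^\infty$ case, and bound $\|\phi*\tilde\rho_\delta\|_{H^s}$ via Young's inequality and the scaling $\|D^\beta\rho_\delta\|_{L^1}\leq C\delta^{-|\beta|}$. The only bookkeeping points worth making explicit are the ones you already flag: subtracting the mean of $\phi$ to control the $|\alpha|=0$ term in the $\bd_1$ bound (legitimate since $m-m'$ annihilates constants), and the harmless restriction to $\delta\leq 1$ so that the lower-order terms are dominated by $C(s)\delta^{-(s-1)}$.
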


We now establish some regularity properties of $U^{\delta}$. In what follows, we will use $\lip$ and $\semi$ to denote Lipschitz and semiconcavity bounds with respect to various metrics, for example $\lip(U^{\delta}; \bd_1)$ will denote the smallest constant $C$ such that 
\begin{align*}
    |U^{\delta}(t,m) - U^{\delta}(t,m')| \leq C \bd_1(m,m')
\end{align*}
holds for each $t,m,m'$, whereas $\semi(U^{\delta}; \tv)$ will denote the smallest constant $C$ such that 
\begin{align*}
    U(t,\lambda m + (1-\lambda) m') \geq \lambda U(t,m) + (1-\lambda) U(m') - \frac{C}{2} \lambda(1-\lambda) \|m - m'\|_{\tv}.
\end{align*}

\begin{lem} \label{lem.udelta}
    Let Assumption \ref{assump.maindegen} hold. For any $s \in \N$, $\delta > 0$, we have $U^{\delta} \in \text{Lip}\big([0,T] \times \cP(\T^d) ; H^{-s}\big)$, 
   and moreover $U^\delta$ is $H^{-s}$-semiconcave in $m$, with the estimates
    \begin{align}
        \lip(U^{\delta};H^{-s}) \leq C(s) \delta^{-(s-1)} , \quad \semi(U^{\delta}; H^{-s}) \leq C(s) \delta^{-2s}.
    \end{align}
\end{lem}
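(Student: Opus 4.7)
The plan is to leverage three ingredients: the $\bd_1$-Lipschitz and $\tv$-semiconcavity bounds already established for $U$ (Proposition \ref{prop: d1_lip} and Proposition \ref{prop: TV_semiconc}), the elementary smoothing estimates of Lemma \ref{lem.mollification}, and the crucial observation that convolution is linear in $m$, so that it commutes with convex combinations. I would also use a standard $1/2$-Hölder or Lipschitz-in-time estimate for $U$, which follows from the dynamic programming principle and the boundedness of near-optimal controls from Lemma \ref{lem.boundedcontrols}; this transfers immediately to $U^\delta$ since $U^\delta(\cdot,m) = U(\cdot, m*\rho_\delta)$ at a fixed $m$.

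For the Lipschitz bound in $m$, I would simply chain the two estimates: for $m,m' \in \cP(\T^d)$,
\[
|U^\delta(t,m)-U^\delta(t,m')| = |U(t,m*\rho_\delta) - U(t,m'*\rho_\delta)| \leq \lip(U;\bd_1)\,\bd_1(m*\rho_\delta, m'*\rho_\delta),
\]
and then apply the first inequality of Lemma \ref{lem.mollification} to obtain the bound $C(s)\delta^{-(s-1)}\|m-m'\|_{-s}$. The Lipschitz constant in $\bd_1$ is controlled by the data thanks to Proposition \ref{prop: d1_lip}.

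For the semiconcavity bound, the key observation is that
\[
(\lambda m + (1-\lambda)m') * \rho_\delta = \lambda(m*\rho_\delta) + (1-\lambda)(m'*\rho_\delta),
\]
so that $\tv$-semiconcavity of $U$ (Proposition \ref{prop: TV_semiconc}) applied to the mollified measures yields
\[
U^\delta(t, \lambda m + (1-\lambda)m') \geq \lambda U^\delta(t,m) + (1-\lambda)U^\delta(t,m') - \tfrac{C}{2}\lambda(1-\lambda)\|m*\rho_\delta - m'*\rho_\delta\|_{\tv}^2.
\]
I would then finish with the second estimate of Lemma \ref{lem.mollification}, namely $\|m*\rho_\delta - m'*\rho_\delta\|_{\tv} \leq C(s)\delta^{-s}\|m-m'\|_{-s}$, giving the semiconcavity constant $C(s)\delta^{-2s}$.

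I do not anticipate any real obstacle: the entire lemma is a ``black-box'' consequence of three previously established facts, and the only mild subtlety is making sure to invoke $\tv$-semiconcavity rather than $\bd_1$-semiconcavity, which is why the mollification produces a $\delta^{-2s}$ blow-up (from $\delta^{-s}$ squared) rather than a $\delta^{-2(s-1)}$ blow-up. This is consistent with Remark \ref{rmk.semiconcavity}, which explains why $\tv$-semiconcavity is the correct notion to carry through the regularization scheme.
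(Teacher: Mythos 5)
Your proposal is correct and follows essentially the same route as the paper: the Lipschitz bound is obtained by chaining the $\bd_1$-Lipschitz estimate for $U$ with the first inequality of Lemma \ref{lem.mollification}, and the semiconcavity bound uses the linearity of convolution together with the $\tv$-semiconcavity of $U$ and the second inequality of Lemma \ref{lem.mollification}, with the $\delta^{-2s}$ blow-up arising exactly as you describe from squaring the $\tv$ estimate.
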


\begin{proof}
    For the Lipschitz bound, we use the $\bd_1$-Lipschitz bound for $U$ from Proposition \ref{prop: d1_lip} and the estimates in Lemma \ref{lem.mollification} to estimate 
    \begin{align*}
        |U^{\delta}(t,m) - U^{\delta}(t,m')| &= |U(t,m * \rho_{\delta}) - U(t,m' * \rho_{\delta})| \\ 
       &\leq \lip(U; \bd_1) \bd_1(m * \rho_{\delta}, m' * \rho_{\delta}) \leq C(s) \delta^{- (s-1)} \|m - m'\|_{-s}. 
    \end{align*}
    For the semiconcavity bound, we fix $t \in [0,T]$ and $m,m' \in \cP(\T^d)$. We set $m^{\lambda} = \lambda m + (1-\lambda)m'$, and use the $\tv$-semiconcavity bound from Proposition \ref{prop: TV_semiconc} together with Lemma \ref{lem.mollification} to get 
    \begin{align*}
       U^{\delta}(t,m^{\lambda}) &= U(t, m^{\lambda} * \rho_{\delta}) = U(t, \lambda m^1 * \rho_{\delta} + (1 - \lambda) \rho_{\delta} * m^0) \\
       &\geq \lambda U(t, m^1 * \rho_{\delta}) + (1- \lambda) U(t,m^0 * \rho_{\delta}) - \frac{\semi(U; \tv)}{2} \lambda(1-\lambda) \|m^1 * \rho_{\delta} - m^2 * \rho_{\delta}\|_{\tv}^2 \\
        &\geq \lambda U^{\delta}(t,m^0) + (1- \lambda) U^{\delta}(t,m^1) - C(s) \lambda(1- \lambda) \delta^{-2s} \|m^1 - m^2\|_{-s}^2. 
   \end{align*}
\end{proof}

The next technical lemma will be used to establish a joint semiconcavity estimate for $\hat{U}^{\delta}$.

\begin{lem} \label{lem.liftsemiconcavity}
    Let $s \in \N$ with $s > d/2 + 1$, and let $\Phi(m) : \cP(\T^d) \to \R$ be a function which is $H^{-s}$-semiconcave and $H^{-s-1}$-Lipschitz. Define $\hat{\Phi} : \R^d \times \cP(\T^d) \to \R$ by the formula $\hat{\Phi}(z,m) = \Phi(m^z)$. Then $\hat{\Phi}$ satisfies the semiconcavity estimate 
    \begin{align} \label{jointsemiconcavity}
        \hat{\Phi}(z_{\lambda},m_{\lambda}) &\geq (1-\lambda) \hat{\Phi}(z,m) + \lambda \hat{\Phi}(z',m') \nonumber \\ &\quad- C(s) \big(\semi(\Phi; H^{-s}) + \lip(\Phi; H^{-s-1}) \big) \lambda(1-\lambda) \big(|z - z'|^2 + \|m - m'\|_{-s}^2 \big),
    \end{align}
    for each $z,z' \in \R^d$, $m,m' \in \cP(\T^d)$, and $\lambda \in (0,1)$, where $(z_{\lambda}, m_{\lambda}) = \lambda(z',m') + (1-\lambda)(z,m)$. 
\end{lem}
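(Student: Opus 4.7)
The plan is to introduce two intermediate measures to bridge $(m_\lambda)^{z_\lambda}$ and the target convex combination. The key observation is that since pushforward by a translation is linear in the measure,
\[
(m_\lambda)^{z_\lambda} = ((1-\lambda)m + \lambda m')^{z_\lambda} = (1-\lambda) m^{z_\lambda} + \lambda (m')^{z_\lambda} =: \tilde{\nu},
\]
which is an honest convex combination in $\cP(\T^d)$. I would then set $\tilde{\nu}' := (1-\lambda) m^{z} + \lambda (m')^{z'}$ — also a convex combination, but with the ``correct'' $z$-arguments on each term — and write $\hat{\Phi}(z_\lambda, m_\lambda) - (1-\lambda)\hat{\Phi}(z,m) - \lambda \hat{\Phi}(z',m') = \big[\Phi(\tilde{\nu}) - \Phi(\tilde{\nu}')\big] + \big[\Phi(\tilde{\nu}') - (1-\lambda)\Phi(m^z) - \lambda\Phi((m')^{z'})\big]$.

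For the second bracket I would apply the $H^{-s}$-semiconcavity of $\Phi$ directly to the convex combination $\tilde{\nu}'$. The only input needed is a bound on $\|m^z - (m')^{z'}\|_{-s}$, which follows from the triangle inequality together with the translation-invariance identity $\|m^z - (m')^z\|_{-s} = \|m - m'\|_{-s}$ (since $\phi \mapsto \phi(\cdot + z)$ is an isometry on $H^s(\T^d)$), and the estimate $\|(m')^z - (m')^{z'}\|_{-s} \leq C(s)|z-z'|$ coming from the Sobolev embedding $H^s \hookrightarrow C^1$ (which requires $s > d/2 + 1$, a hypothesis here). Squaring yields $\|m^z - (m')^{z'}\|_{-s}^2 \leq 2\|m-m'\|_{-s}^2 + C(s)|z-z'|^2$, which produces the $\semi(\Phi; H^{-s})$ contribution on the right-hand side of \eqref{jointsemiconcavity}.

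The main obstacle is the first bracket, which must be controlled using only the $H^{-s-1}$-Lipschitz property; the subtlety is to extract a factor of $\lambda(1-\lambda)$ and a quadratic dependence on $|z-z'|$ from what naively looks like a first-order difference. For $\phi \in H^{s+1}$, I would use the fundamental theorem of calculus to write
\[
\int \phi \, d(\tilde{\nu} - \tilde{\nu}') = (1-\lambda)\!\int\! [\phi(x+z_\lambda) - \phi(x+z)] m(dx) + \lambda\!\int\! [\phi(x+z_\lambda) - \phi(x+z')] m'(dx),
\]
and exploit the identities $z_\lambda - z = \lambda(z'-z)$ and $z_\lambda - z' = -(1-\lambda)(z'-z)$ to factor out $\lambda(1-\lambda)(z'-z)$. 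The remaining integral takes the form $\int_0^1 \big[\int \nabla\phi(x+\xi_1(t)) m(dx) - \int \nabla\phi(x + \xi_2(t)) m'(dx)\big] dt$ with $|\xi_1(t) - \xi_2(t)| = (1-t)|z-z'|$. Splitting this integral by adding and subtracting $\int \nabla\phi(x+\xi_2(t)) m(dx)$, one piece is bounded by $\|D^2\phi\|_\infty (1-t)|z-z'| \leq C(s)\|\phi\|_{H^{s+1}}|z-z'|$ via the embedding $H^{s+1} \hookrightarrow C^2$, and the other by $\|\nabla\phi\|_{H^s}\|m-m'\|_{-s} \leq C(s)\|\phi\|_{H^{s+1}}\|m-m'\|_{-s}$. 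Together with Young's inequality $|z-z'| \cdot \|m-m'\|_{-s} \leq \tfrac12(|z-z'|^2 + \|m-m'\|_{-s}^2)$, this yields
\[
\|\tilde{\nu} - \tilde{\nu}'\|_{-s-1} \leq C(s)\lambda(1-\lambda)\big(|z-z'|^2 + \|m-m'\|_{-s}^2\big),
\]
so that $|\Phi(\tilde{\nu}) - \Phi(\tilde{\nu}')| \leq C(s)\lip(\Phi; H^{-s-1})\lambda(1-\lambda)(|z-z'|^2 + \|m-m'\|_{-s}^2)$. Combining the two brackets gives \eqref{jointsemiconcavity}.
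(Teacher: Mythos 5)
Your proof is correct, and it takes a genuinely different route from the paper's. The paper first reduces to the case where $\Phi$ is $\cC^2$ and $m,m'$ have smooth densities bounded below, computes $\frac{d^2}{d\lambda^2}\hat{\Phi}(z_\lambda,m_\lambda)$ explicitly, absorbs the cross terms in $(z'-z)$ and $(m'-m)$ into the quadratic form of $\frac{\delta^2\Phi}{\delta m^2}$ via the correction $\phi=-Dm\cdot(z'-z)$ and an integration by parts, bounds the leftover terms by $\lip(\Phi;W^{-2,\infty})$ and $\lip(\Phi;H^{-s-1})$, integrates the differential inequality, and finally removes the smoothness assumptions by a Fej\'er-kernel mollification that preserves the relevant constants. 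Your argument is purely metric: it needs no differentiability of $\Phi$, no density assumption on $m,m'$, and no mollification step. The two structural inputs you exploit — linearity of $m\mapsto m^z$ together with the isometry of translations on $H^{\pm s}$ (handling the second bracket by a direct application of $H^{-s}$-semiconcavity at the convex combination $\tilde\nu'$), and the second-order cancellation giving $\|\tilde\nu-\tilde\nu'\|_{-s-1}\leq C(s)\lambda(1-\lambda)(|z-z'|^2+\|m-m'\|_{-s}^2)$ (handling the first bracket by the $H^{-s-1}$-Lipschitz bound) — are exactly where the paper's proof spends its effort via the second-derivative computation, but you reach the same constants, $C(s)(\semi(\Phi;H^{-s})+\lip(\Phi;H^{-s-1}))$, more directly. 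The only hypotheses you invoke, $H^s\hookrightarrow C^1$ and $H^{s+1}\hookrightarrow C^2$, both follow from $s>d/2+1$, so the argument is complete as stated; it is shorter and arguably more robust than the paper's, at the cost of being specific to translations (whereas the paper's differential computation would adapt to more general families of maps).
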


\begin{proof}
    First, assume that $\Phi$ is $\cC^2$. Fix $m,m' \in \cP(\T^d)$ with smooth densities which are bounded below. Moreover fix $z,z' \in \R^d$. For $\lambda \in [0,1]$, define $(z_{\lambda},m_{\lambda}) = (1- \lambda) (z, m) + \lambda (z',m')$.
    Our goal will be to find an upper bound for $      \frac{d^2}{d \lambda^2} \hat{\Phi}(z_{\lambda}, m_{\lambda}) \Big|$. 
    We begin by computing
    \begin{align*}
        \frac{d}{d\lambda} \hat{\Phi}(z_{\lambda}, m_{\lambda}) &= \frac{d}{d\lambda} \Phi(m_{\lambda}^{z_{\lambda}}) \\
        &= \int_{\T^d} \frac{\delta \Phi}{\delta m}(m_{\lambda}^{z_{\lambda}}, x + z_{\lambda}) d(m' - m)(x) + \int_{\T^d} D_m\Phi(m_{\lambda}^{z_{\lambda}}, x + z_{\lambda}) \cdot (z' - z) dm_{\lambda}(x), 
    \end{align*}
    which allows us to find 
    \begin{align*}
        \frac{d^2}{d \lambda^2} \hat{\Phi}(z_{\lambda}, m_{\lambda}) \Big|_{\lambda = 0} &= \int_{\T^d} \int_{\T^d} \frac{\delta^2 \Phi}{\delta m^2} (m^z, x+ z, y+z) d(m'-m)^{\otimes 2}(x,y) \\
        &\quad + \int_{\T^d} \int_{\T^d} (z'-z)^T D_x D_y \frac{\delta^2 \Phi}{\delta m^2} (m^z, x+ z, y+ z) (z'-z) dm^{\otimes 2}(x,y) \\
        &\quad + \int_{\T^d} \int_{\T^d} D_y \frac{\delta^2 \Phi}{\delta m^2} (m^z, x + z, y + z) \cdot (z'-z) dm(y) d(m' - m)(x) \\
        &\quad+ \int_{\T^d} \int_{\T^d} D_{x} \frac{\delta^2 \Phi}{\delta m^2} (m^z, x + z, y + z) \cdot (z' - z) dm(x) d(m' - m)(y) \\
        &\quad+ \int_{\T^d} (z'-z)^T D^2_{xx} \frac{\delta \Phi}{\delta m}(m^z, x+ z) (z' - z) dm(x) \\ & \quad+ 2 \int_{\T^d} D_x \frac{ \delta \Phi}{\delta m} (m^z,x+z) \cdot (z'-z) d(m'-m)(x).
    \end{align*}
    Now, if we set $\phi = - D (m) \cdot (z' - z)$, we can use integration by parts to rewrite the above as
    \begin{align} \label{dlambda2}
        \frac{d^2}{d \lambda^2} \hat{\Phi}(z_{\lambda}, m_{\lambda}) \Big|_{\lambda = 0} 
        &= \int_{\T^d} \int_{\T^d} \frac{\delta^2 \Phi}{\delta m^2}(m^z, x + z , y + z) (m'  - m + \phi)(x) (m'  - m + \phi)(y)
         \nonumber \\
        & \quad + \int_{\T^d} (z_1 - z_0)^T D^2_{xx} \frac{\delta \Phi}{\delta m}
        (m_0^{z_0}, x+ z_0) (z_1 - z_0) dm_0(x) \nonumber \\ &\quad + 2 \int_{\T^d} D_x \frac{\delta \Phi}{\delta m}(m_0^{z_0},x+z_0) \cdot (z_1 - z_0) d(m_1 - m_0)(x).
    \end{align}
    Now, since $\Phi$ is $H^{-s}$ semiconcave with constant $\semi(\Phi ; H^{-s})$, and $s > d/2 + 1$, we have
    \begin{align*}
     \int_{\T^d} \int_{\T^d}& \frac{\delta^2 \Phi}{\delta m^2}(m^z, x + z , y + z) (m' + \phi - m)(x) (m' + \phi - m)(y) 
    \\ & = \int_{\T^d} \int_{\T^d} \frac{\delta^2 \Phi}{\delta m^2}(m^z, x , y) \big( (m' + \phi)^z - m^z \big)(x) \big( (m' + \phi)^z - m^z \big)(y) 
     \\
     &\leq \semi(\Phi; H^{-s}) \|(m' + \phi)^z - m^z \|_{-s}^2 =  \semi(\Phi; H^{-s}) \|m' + \phi - m \|_{-s}^2 \\
        &\leq C \semi(\Phi; H^{-s}) \big(\|\phi\|_{-s}^2 + \|m_1 - m_0\|_{-s}^2 \big) \\
        &\leq C \semi(\Phi; H^{-s}) \big(|z_1 - z_0|^2 + \|m_1 - m_0\|_{-s}^2 \big), 
    \end{align*}
    where we used the fact that $m$ is bounded from below and in the last line we used the fact that $s > d/2 + 1$ to conclude that 
    \begin{align*}
        \| \phi \|_{-s} &= \sup_{\|f\|_s \leq 1} \int_{\T^d} f (- D m \cdot (z' -z )) dx   \leq \sup_{\|f\|_{W^{1,\infty}} \leq 1} \int Df \cdot (z'-z) dm \leq C |z' - z|. 
    \end{align*}
    We note also that 
    \begin{align*}
        \int_{\T^d} (z_1 - z_0)^T D^2_{xx} 
        \frac{\delta \Phi}{\delta m}(m^z, x+ z) (z'-z) dm(x) \leq C \lip(\Phi; W^{-2,\infty}) |z'-z|^2,
        \end{align*}
         and 
    \begin{align*}
       \int_{\T^d} D_x \frac{\delta \Phi}{\delta m}(m^z,x+z) \cdot (z'-z) d(m'-m)(x) &\leq C \|m'-m\|_{-s} |z'-z| \sup_m \|\frac{\delta \Phi}{\delta m}(m,\cdot)\|_{H^{s+1}} \\
       &\leq C \|m' - m\|_{-s} |z' - z| \lip(\Phi; H^{-s-1}).
    \end{align*}
    Coming back to \eqref{dlambda2}, we find 
    \begin{align} \label{scdifferential}
        \frac{d^2}{d \lambda^2} \hat{\Phi}(z_{\lambda}, m_{\lambda}) \Big|_{\lambda = 0} & \leq C \Big(\semi(\Phi; H^{-s}) + \lip(\Phi; W^{-2,\infty}) + \lip(\Phi; H^{-s-1}) \Big)\big(\|m' - m\|_{-s}^2 + |z'-z|^2 \big) \nonumber \\
        &\leq C \Big(\semi(\Phi; H^{-s}) + \lip(\Phi; H^{-s-1}) \Big)\big(\|m' - m\|_{-s}^2 + |z'-z|^2 \big)
    \end{align}
    with the last inequality using the fact that $s > d+1$. Obviously, the same upper bound holds also for $\lambda \in [0,1]$. Integrating the inequality \eqref{scdifferential} shows that \eqref{jointsemiconcavity} holds whenever $\Phi$ is $\cC^2$ and $m,m'$ are smooth and bounded below. By a density argument, we get that \eqref{jointsemiconcavity} holds for all $z,z',m,m'$ whenever $\Phi$ is $\cC^2$. To extend to the case that $\Phi$ is not $\cC^2$, we claim that there exists a sequence of $\cC^2$ functions $\Phi^n(m) : \cP(\T^d) \to \R$ with the same $H^{-(s-1)}$-Lipschitz and $H^{-s}$-semiconcavity constants as $\Phi$ such that
    $\Phi^n \to \Phi$ uniformly as $n \to \infty$.
    This allows to extend to non-smooth $\Phi$. To construct such a mollification scheme, one can use the approximation argument of Lemma 4.1. in \cite{djs2023} (see also \cite{Cecchin:Delarue:CPDE}) once we have noticed that taking convolution with the Fejer kernel is not only Lipschitz with respect to $\bd_1$ but also with respect to $\norm{ \cdot}_{s}$ for any $s \in \R$, as can easily be seen using the equivalent norm $||| \phi |||^2_s = \sum_{k \in \Z^d} (1 + |k|^s)^2 |\hat{\phi}^k|^2$, defined in terms of the Fourier coefficients $\hat{\phi}^k$ of $\phi$. We omit the details.
\end{proof}

We now establish the key properties of $\hat{U}^{\delta}$.

\begin{lem} \label{lem.udeltahatreg}
    For each $s \in \N$ with $s > d/2 + 1$, $\delta > 0$, we have $      \hat{U}^{\delta} \in \text{Lip}\big([0,T] \times \R^d \times \cP(\T^d) ; H^{-s}\big)$,
    and moreover $\hat{U}^{\delta}$ is semiconcave in $(z,m)$, uniformly in $t$. More precisely, we have
    \begin{align*}
        |\hat{U}^{\delta}(t,z,m) - \hat{U}^{\delta}(t,z',m')| \leq C(s)\big(|z - z'| + \delta^{-(s-1)} \|m - m'\|_{-s} \big)
    \end{align*}
    and 
    \begin{align*}
        \hat{U}^{\delta}(t,z_{\lambda}, m_{\lambda}) \geq (1 - \lambda) \hat{U}^{\delta}(t,z,m) + \lambda \hat{U}^{\delta}(t,z',m') - C(s) \delta^{-2s} \lambda(1- \lambda) \big(|z-z'|^2 + \|m-m'\|_{-s}^2 \big)
    \end{align*}
    for all $t \in [0,T]$, $z,z' \in \R^d$, $m,m' \in \cP(\T^d)$, where $(z_{\lambda},m_{\lambda}) = (1- \lambda) (z,m) + \lambda (z',m')$. Finally, we have 
    \begin{align*}
       |\hat{U}^{\delta}(t,z,m) - \hat{U}(t,z,m)| \leq C \delta
    \end{align*}
    for each $(t,z,m) \in [0,T] \times \R^d \times \cP(\T^d)$.
   \end{lem}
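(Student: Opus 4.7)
The three estimates follow by combining Lemma \ref{lem.udelta}, Lemma \ref{lem.liftsemiconcavity}, and Lemma \ref{lem.mollification} with two elementary translation-invariance observations: for any $z \in \R^d$, the pushforward $m \mapsto m^z$ preserves both the $H^{-s}$ seminorm and the $\bd_1$ distance, i.e.\ $\|m^z - (m')^z\|_{-s} = \|m - m'\|_{-s}$ and $\bd_1(m^z, (m')^z) = \bd_1(m,m')$. The first follows because $\int \phi \, d(m^z - (m')^z) = \int \phi(\cdot + z)\, d(m - m')$ and $\|\phi(\cdot + z)\|_s = \|\phi\|_s$ on the torus; the second is analogous with Lipschitz test functions.

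For the Lipschitz bound, I would split
\begin{align*}
|\hat{U}^{\delta}(t,z,m) - \hat{U}^{\delta}(t,z',m')| \leq |\hat{U}^{\delta}(t,z,m) - \hat{U}^{\delta}(t,z,m')| + |\hat{U}^{\delta}(t,z,m') - \hat{U}^{\delta}(t,z',m')|.
\end{align*}
The first piece equals $|U^{\delta}(t,m^z) - U^{\delta}(t,(m')^z)|$, which by Lemma \ref{lem.udelta} and translation invariance is bounded by $C(s)\delta^{-(s-1)}\|m^z - (m')^z\|_{-s} = C(s)\delta^{-(s-1)}\|m - m'\|_{-s}$. For the second piece, $U^{\delta}$ inherits the $\bd_1$-Lipschitz bound of $U$ from Proposition \ref{prop: d1_lip} (via Lemma \ref{lem.mollification}, since convolution with $\rho_\delta$ is a $\bd_1$-contraction). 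Using $\bd_1((m')^z, (m')^{z'}) \leq |z - z'|$ yields the desired $C|z-z'|$ term.

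For the joint semiconcavity, I would apply Lemma \ref{lem.liftsemiconcavity} to $\Phi = U^{\delta}(t, \cdot)$. Lemma \ref{lem.udelta} supplies $\semi(\Phi; H^{-s}) \leq C(s)\delta^{-2s}$ and $\lip(\Phi; H^{-(s+1)}) \leq C(s)\delta^{-s}$. For $\delta \in (0,1]$, the sum is dominated by $C(s)\delta^{-2s}$, giving
\begin{align*}
\hat{U}^{\delta}(t,z_{\lambda},m_{\lambda}) \geq (1-\lambda)\hat{U}^{\delta}(t,z,m) + \lambda\hat{U}^{\delta}(t,z',m') - C(s)\delta^{-2s}\lambda(1-\lambda)\bigl(|z-z'|^2 + \|m-m'\|_{-s}^2\bigr),
\end{align*}
as required. (If $\delta > 1$, the estimate is of course trivial up to a harmless change of constant.)

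Finally, for the convergence estimate, write
\begin{align*}
|\hat{U}^{\delta}(t,z,m) - \hat{U}(t,z,m)| = |U(t, m^z * \rho_{\delta}) - U(t, m^z)| \leq \lip(U; \bd_1)\, \bd_1(m^z * \rho_\delta, m^z) \leq C\delta,
\end{align*}
using the $\bd_1$-Lipschitz bound from Proposition \ref{prop: d1_lip} together with the last statement of Lemma \ref{lem.mollification}. The main obstacle here is essentially bookkeeping: once one recognizes the translation invariance of $\|\cdot\|_{-s}$ and $\bd_1$, all three estimates reduce to direct applications of the previously established lemmas.
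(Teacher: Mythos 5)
Your proposal is correct and follows essentially the same route as the paper: the Lipschitz bound via the triangle inequality, translation invariance, and Lemma \ref{lem.mollification}; the joint semiconcavity by feeding the bounds of Lemma \ref{lem.udelta} into Lemma \ref{lem.liftsemiconcavity}; and the $C\delta$ estimate from the $\bd_1$-Lipschitz bound on $U$ together with $\bd_1(m*\rho_\delta,m)\leq C\delta$. The only cosmetic difference is that you apply the $H^{-s}$-Lipschitz bound of Lemma \ref{lem.udelta} directly to the measure increment, while the paper stays in $\bd_1$ and converts to $\|\cdot\|_{-s}$ at the last step; both are equivalent.
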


\begin{proof}
    For the Lipschitz bound, we use Lemma \ref{lem.mollification} to estimate
    \begin{align*}
        |\hat{U}^{\delta}(t,z,m) - \hat{U}^{\delta}(t,z',m')| &= |U^{\delta}(t,m^z) - U^{\delta}(t,(m')^{z'})|  =  |U(t, \rho_{\delta} * m^z) - U(t, \rho_{\delta} * (m')^{z'})|  \\
        &\leq C \big(|t-t'| + \bd_1(\rho_{\delta} * m^z, \rho_{\delta} * (m')^{z'})  \big) \\
        &\leq C\big(|t-t'| + |z - z'| + \bd_1(\rho_{\delta} * m, \rho_{\delta} * m') \big) \\
        &\leq C(s)\big(|t-t'| + |z - z'| + \delta^{-(s-1)} \| m - m'\|_{-s} \big).
    \end{align*}
     The semiconcavity bound follows from combining Lemmas \ref{lem.udelta} and \ref{lem.liftsemiconcavity}. Finally, to prove the last bound in the statement of the lemma, we again use Lemma \ref{lem.mollification} to get 
    \begin{align*}
        |\hat{U}^{\delta}(t,z,m) - \hat{U}^{\delta}(t,z,m)| \leq C \bd_1( (m * \rho_{\delta})^z, m^z) = C \bd_1(m * \rho_{\delta}, m) \leq C \delta.
    \end{align*}
    \end{proof}
    Our next task is to show that $\hat{U}^{\delta}$ is almost a subsolution of \eqref{hjbz}. For this, we need to specify an appropriate set of test functions. We define $\cC_{\text{p}}^{1,2,2} = \cC_{\text{p}}^{1,2,2}((0,T) \times \R^d \times \cP(\T^d))$ as the set of functions $\Phi(t,z,m) : (0,T) \times \R^d \times \cP(\T^d) \to \R$ such that $\partial_t \Phi$, $D_z \Phi$, $D^2_{zz} \Phi$, $D_m \Phi$, $D_{xm} \Phi$ exist and are continuous. Given $\Psi : [0,T] \times \R^d \times \cP(\T^d) \to \R$, we say that $\Phi$ touches $\Psi$ from above at $(t_0,z_0,m_0)$ if we have 
    \begin{align*}
        \Phi(t_0,z_0,m_0) - \Psi(t_0,z_0,m_0) = \sup_{(t,z,m) \in (0,T) \times \R^d \times \cP(\T^d)} \Big(\Phi(t,z,m) - \Psi(t,z,m) \Big).
    \end{align*}

\begin{lem} \label{lem.udeltahatsubsol} There is a constant $C$ (depending only on the data) with the following property: if $\Phi \in \cC_{\text{p}}^{1,2,2}$ touches $\hat{U}^{\delta}$ from above at a point $(t_0,z_0,m_0) \in (0,T) \times \R^d \times \cP(\T^d)$ such that $m_0 \geq c \leb$ for some $c > 0$, then we have
    \begin{align} \label{subsoldelta}
        &- \partial_t \Phi(t_0,z_0,m_0) - A_0 \Delta_z \Phi(t_0,z_0,m_0) - \int_{\T^d}\tr\big( \hat{A}(x_0,z_0) D_{xm} \Phi(t_0,z_0,m_0,x) \big) m_0(dx) \nonumber  \\
        &\qquad  + \int_{\T^d} \hat{H}\big(x,z_0,D_m \Phi(t_0,z_0,m_0,x)\big) m_0(dx) \leq \hat{F}(z_0, m_0) +  C\big(1 + \lip(U; W^{-2,\infty}) \big) \delta.
    \end{align}
    If $A$ is constant, then the dependence on $\lip(U; W^{-2,\infty})$ can be removed, i.e. the right-hand side of \eqref{subsoldelta} can be replaced by $C\delta$.
\end{lem}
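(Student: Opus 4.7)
The plan is to transfer the viscosity subsolution property of $\hat{U}$ (inherited from that of $U$ via the change of variables $\mu \mapsto \mu^z$) to $\hat{U}^\delta$. Setting $\mu_0 := m_0 * \rho_\delta$, the idea is: from the test function $\Phi$, build a test function $\Psi \in \cC_{\text{p}}^{1,2,2}$ that touches $\hat{U}$ from above at $(t_0, z_0, \mu_0)$; apply the subsolution inequality for $\hat{U}$ at that point (valid since $\mu_0 \geq c\,\leb$ because $m_0 \geq c\,\leb$); and translate the resulting inequality into one for $\Phi$ at $(t_0, z_0, m_0)$ by controlling two commutators between convolution with $\rho_\delta$ and the coefficients $H$ and $A$.

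The first step is the construction of $\Psi$. Near $\mu_0$, I would set $\Psi(t, z, \mu) := \Phi(t, z, m(\mu))$, where $m(\mu)$ is a right inverse of convolution by $\rho_\delta$, i.e.\ $m(\mu) * \rho_\delta = \mu$. Choosing $\rho_\delta$ with non-vanishing Fourier coefficients, Fourier-side deconvolution gives $m(\mu) = m_0 + R_\delta(\mu - \mu_0)$ on a neighborhood of $\mu_0$, and the hypothesis $m_0 \geq c\,\leb$ ensures $m(\mu)$ remains a probability measure there. Since $m(\mu) * \rho_\delta = \mu$ we obtain $\Psi(t, z, \mu) = \Phi(t, z, m(\mu)) \geq \hat{U}^\delta(t, z, m(\mu)) = \hat{U}(t, z, \mu)$, with equality at $(t_0, z_0, \mu_0)$; extending $\Psi$ smoothly outside the neighborhood preserves the touching-from-above property. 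The construction also yields, at the touching point, the key identities $\partial_t \Psi = \partial_t \Phi$, $\Delta_z \Psi = \Delta_z \Phi$, $D_m \Phi = D_m \Psi * \rho_\delta$, and $D_{xm}\Phi = D_{xm}\Psi * \rho_\delta$.

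The second step is to apply the subsolution property of $\hat{U}$ to $\Psi$ at $(t_0, z_0, \mu_0)$ and substitute the above identities. For the Hamiltonian, Jensen's inequality (using convexity of $H$ in $p$) combined with $|H(x, p) - H(y, p)| \leq C_H (1 + |p|) |x - y|$ and $|x - y| \lesssim \delta$ on $\supp \rho_\delta$ gives
\[
\int \hat{H}(x, z_0, D_m \Phi(x))\, m_0(dx) - \int \hat{H}(y, z_0, D_m \Psi(y))\, \mu_0(dy) \leq C \delta \big(1 + \|D_m \Psi\|_\infty\big),
\]
and Lemma \ref{lem.regfromtouching} applied to $\Psi$ touching the $\bd_1$-Lipschitz function $\hat{U}(t_0, z_0, \cdot)$ bounds $\|D_m \Psi\|_\infty \leq C$. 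For the diffusion term, a direct linear computation using $D_{xm}\Phi = D_{xm}\Psi * \rho_\delta$, the symmetry of $\rho_\delta$, and the Lipschitz continuity of $A$ yields
\[
\int \tr\bigl(\hat{A}(x, z_0) D_{xm}\Phi(x)\bigr)\, m_0(dx) - \int \tr\bigl(\hat{A}(y, z_0) D_{xm}\Psi(y)\bigr)\, \mu_0(dy) \leq C \delta \|D_{xm}\Psi\|_\infty,
\]
and again Lemma \ref{lem.regfromtouching} gives $\|D_{xm}\Psi\|_\infty \leq \lip(U; W^{-2,\infty})$; when $A$ is constant, this commutator vanishes identically, producing the sharper bound. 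Finally, $|\hat{F}(z_0, \mu_0) - \hat{F}(z_0, m_0)| \leq C_F \bd_1(\mu_0, m_0) \leq C \delta$ by $\bd_1$-Lipschitz continuity of $F$. Summing the four estimates gives the claimed inequality.

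The main obstacle is the rigorous construction of the test function $\Psi$: deconvolution by $\rho_\delta$ is generically unbounded on signed measures, so one must either choose $\rho_\delta$ carefully (e.g., a self-convolution of a standard mollifier whose Fourier coefficients are strictly positive) to make the deconvolution well-behaved in a neighborhood of $\mu_0$, or bypass the explicit construction via a doubling-of-variables argument producing approximate test functions for $\hat{U}$ from those for $\hat{U}^\delta$. A secondary delicate point is the diffusion commutator in the non-constant $A$ case, which is where the $W^{-2,\infty}$-Lipschitz bound on $U$ is essential (cf.\ the commutator estimate of Lemma \ref{lem.commutator}) and motivates the vanishing viscosity procedure described in the introduction.
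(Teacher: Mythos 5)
Your plan diverges substantially from the paper's, and it contains a genuine gap at its very first step: the construction of the test function $\Psi$ by deconvolution does not work. Deconvolution by $\rho_{\delta}$ is an unbounded operation whose Fourier multipliers $1/\hat{\rho}_{\delta}(k)$ grow super-polynomially for any smooth mollifier, so the map $\mu \mapsto m(\mu) = m_0 + R_{\delta}(\mu - \mu_0)$ is not defined on any neighborhood of $\mu_0$ in the topologies relevant here (neither $\bd_1$ nor $H^{-s}$), and there is therefore nothing to ``extend smoothly outside the neighborhood.'' Equally seriously, the identities you need, $D_m\Phi = D_m\Psi * \rho_{\delta}$ and $D_{xm}\Phi = D_{xm}\Psi * \rho_{\delta}$, require $D_m\Phi(t_0,z_0,m_0,\cdot)$ to lie in the range of convolution with $\rho_{\delta}$, which fails for a generic test function $\Phi \in \cC^{1,2,2}_{\text{p}}$; so $\Psi$ would not be an admissible test function even at the formal level. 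You flag this yourself as ``the main obstacle,'' but the alternative you gesture at (a doubling-of-variables argument) is not carried out, and the whole scheme additionally presupposes that $\hat{U}$ is a viscosity subsolution of \eqref{hjbz} with respect to the class $\cC^{1,2,2}_{\text{p}}$ --- a fact the paper never establishes and deliberately avoids needing.

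The paper's actual proof sidesteps deconvolution entirely by working at the level of trajectories rather than test functions. It takes the candidate optimal feedback $\alpha^0(x) = -D_p\hat{H}(x,z_0,D_m\Phi(t_0,z_0,m_0,x))$, runs the Fokker--Planck flow $m_t$ from $m_0$, and compares it with the flow $m_t^{\delta}$ started from $\rho_{\delta}*m_0$ and driven by the mollified feedback $\alpha_t^{\delta} = \rho_{\delta}*(\alpha^0 m_t)/(\rho_{\delta}*m_t)$. The touching property of $\Phi$ plus the dynamic programming inequality for $\hat{U}$ (Lemma \ref{lem.dppinequality}) then yield the subsolution inequality after dividing by $t-t_0$ and letting $t \to t_0^+$. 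The commutator you place on the coefficients, $\int \tr(\hat{A}\,D_{xm}\Phi)\,dm_0 - \int \tr(\hat{A}\,D_{xm}\Psi)\,d\mu_0$, appears in the paper instead as the trajectory-level commutator $\|m_t^{\delta} - \rho_{\delta}*m_t\|_{-2,\infty}$, which is what Lemma \ref{lem.commutator} controls; this is also where $\lip(U;W^{-2,\infty})$ enters, and the constant-$A$ improvement comes from $m_t^{\delta} = \rho_{\delta}*m_t$ holding exactly in that case. Your treatment of the Hamiltonian term via Jensen's inequality and the $\bd_1$-Lipschitz bound on $D_m\Psi$ is in the right spirit (the paper does the analogous estimate for $L$ along the mollified feedback), but it cannot be salvaged without first replacing the deconvolution step by a construction that actually exists.
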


We recall the notation $\hat{U}(t,z,m) = U(t,m^z) = U(t,(\text{Id} + z)_{\#} m)$,
and start with a preliminary lemma, which gives a certain dynamic programming inequality for $\hat{U}$.

\begin{lem} \label{lem.dppinequality}
    Suppose that $(t_0,z_0,m_0) \in [0,T) \times \R^d \times \cP(\T^d)$ are given, and that $h \in [t_0 - T, T]$. Suppose further that we have an $\R^d$-valued random field $(\alpha_t(x))_{t_0 \leq t \leq T}$ which is uniformly bounded and Lipschitz in $x$, defined on the canonical space $\Omega^0$ introduced above, and adapted to the filtration $\bbF^{0,t_0}$. Then we have 
    \begin{align} \label{dppinequality}
        \hat{U}(t_0,z_0,m_0) &\leq \E^{\bP^{0}}\bigg[ \hat{U}(t_0+h, Z_{t_0+h}, m_{t_0+h}) + \int_{t_0}^T \Big(\int_{\T^d} L(x+Z_t, \alpha_t(x)) m_t(dx) 
        + \hat{F}(Z_t,m_t) \Big) dt \bigg],
    \end{align}
    where $\hat{F}$ and $\hat{G}$ are defined by \eqref{hatdefs}, $m_t$ is determined from $\alpha$ via the Fokker-Planck equation with random coefficients
\begin{equation} 
\partial_t m_t = - \div( \alpha_t(x) m_t) + \sum_{i,j} D^2_{x_i x_j}(\hat{A}^{ij}(x,Z_t) m_t) , \quad  \mbox{ in } (t_0,T) \times \T^d, \quad m_{t_0} = m_0,  
\label{eq:FPEafterchangeofvar}
\end{equation}
and $Z$ is given by $Z_t = z_0 + \sigma^0 (W_t^0 - W_{t_0}^0)$.
\end{lem}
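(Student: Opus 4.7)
The strategy is to view the inequality as a restatement of the standard dynamic programming principle for $U$ after the change of variables $\hat{U}(t,z,m) = U(t,m^z)$. Given the feedback-type random field $\alpha$, I will lift it to an open-loop control for the mean field problem in the \emph{unshifted} variable, apply the DPP for $U$ proved in \cite{djete2019mckean} (or its canonical-space version used to define $U$ in Subsection \ref{susec: prob_statement}), and translate back. The main technical point is verifying that the conditional law of the lifted process equals the pushforward $m_t^{Z_t}$ at every time.

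Concretely, enlarge the canonical space $\Omega^{\infty,t_0,m_0^{z_0}}$ if necessary so that $\alpha$ is measurable with respect to $W^0$, and construct a process $Y$ satisfying
\begin{equation*}
dY_t = \alpha_t(Y_t - Z_t)\,dt + \sigma(Y_t)\,dW_t + \sigma^0\,dW_t^0, \quad Y_{t_0} = \xi \sim m_0^{z_0},
\end{equation*}
where $Z_t = z_0 + \sigma^0(W_t^0 - W_{t_0}^0)$. Strong existence and uniqueness follow from the boundedness and Lipschitz continuity of $\alpha$ in $x$ together with the $C^2$-regularity of $\sigma$. The key step is to show that the conditional law $\mu_t := \sL^{0,m_0^{z_0}}(Y_t)$ satisfies $\mu_t = m_t^{Z_t}$ for all $t \in [t_0,T]$, where $m_t$ solves \eqref{eq:FPEafterchangeofvar}. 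To see this, fix $\phi \in C^\infty(\T^d)$ and apply It\^o's formula to $t \mapsto \int_{\T^d} \phi(x + Z_t)\,m_t(dx)$: the deterministic time-evolution of $m_t$ contributes the drift and second-order terms with coefficient $\hat{A}(x,Z_t) = A(x+Z_t)$ after integration by parts, while the It\^o expansion of $\phi(\cdot + Z_t)$ contributes the common-noise Laplacian $A_0 \Delta \phi$ and the stochastic integral $\sigma^0 \int D\phi(x+Z_t)\,m_t(dx)\cdot dW_t^0$, with no cross-variation since $m_t$ is driven only by $W^0$ pathwise. After the substitution $y = x + Z_t$ this is precisely the weak form of the conditional Fokker--Planck equation associated to the SDE for $Y$ (with drift $y \mapsto \alpha_t(y - Z_t)$), so uniqueness for this equation yields $\sL^{0,m_0^{z_0}}(Y_t) = m_t^{Z_t}$.

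With this identification in hand, apply the DPP for $U$ along the control $\beta_t := \alpha_t(Y_t - Z_t) \in \sA_{t_0, m_0^{z_0}}$ to obtain
\begin{equation*}
U(t_0, m_0^{z_0}) \leq \E\bigg[ U\bigl(t_0+h, \sL^{0,m_0^{z_0}}(Y_{t_0+h})\bigr) + \int_{t_0}^{t_0+h}\Bigl(L(Y_t, \beta_t) + F\bigl(\sL^{0,m_0^{z_0}}(Y_t)\bigr)\Bigr) dt \bigg].
\end{equation*}
Substituting $\sL^{0,m_0^{z_0}}(Y_t) = m_t^{Z_t}$ converts $U(t_0+h,\cdot)$ into $\hat{U}(t_0+h, Z_{t_0+h}, m_{t_0+h})$ and $F(m_t^{Z_t})$ into $\hat{F}(Z_t,m_t)$. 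For the running cost, condition on $W^0$: since $Y_t - Z_t$ has conditional law $m_t$ and $\alpha$ is $W^0$-measurable,
\begin{equation*}
\E\bigl[L(Y_t, \alpha_t(Y_t - Z_t)) \,\bigm|\, W^0 \bigr] = \int_{\T^d} L\bigl(y + Z_t, \alpha_t(y)\bigr)\,m_t(dy).
\end{equation*}
Since the right-hand side now depends only on $W^0$, the expectation under $\ov{\bP}^{\infty,t_0,m_0^{z_0}}$ agrees with the one under $\bP^0$, and the identity $\hat{U}(t_0,z_0,m_0) = U(t_0, m_0^{z_0})$ yields the claim. The main obstacle is the identification $\mu_t = m_t^{Z_t}$; once that is in place, everything else is bookkeeping through the definitions in \eqref{hatdefs}.
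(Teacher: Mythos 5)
Your proposal is correct and follows essentially the same route as the paper: both reduce the claim to the dynamic programming principle for $U$ from \cite{djete2019mckean} applied to the feedback control transported by $Z_t$, combined with the observation that the solution of \eqref{eq:FPEafterchangeofvar} is the $(\mathrm{Id}-Z_t)$-pushforward of the conditional law of the controlled state. The only cosmetic difference is that you verify this identification at the level of the SDE for a representative particle via It\^o's formula and uniqueness of the conditional Fokker--Planck equation, whereas the paper states it directly at the level of the two Fokker--Planck equations.
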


\begin{proof}
Let $(t_0,z_0, m_0) \in [0,T) \times \cP(\T^d)$, and $\alpha$ be as in the statement of the lemma, and define $\alpha'_t(x) = \alpha_t(x - Z_t)$. Let $m_t'$ be the solution of
    \begin{align} \label{feedbackdynamicsnoeta}
    &dm'_t = \Big(  \sum_{i,j} D^2_{x_i x_j}(A_{ij}(x) m_t') + A_0 \Delta m_t' - \text{div}(m_t' \alpha_t') \Big) dt \nonumber  \\
    & \qquad \qquad \qquad  - \sigma^0 Dm'_t \cdot dW_t^0, \quad t_0 \leq t \leq T, \quad m'_{t_0} = m'_0 \coloneqq (\text{Id} - z_0)_{\#}m_0.
\end{align}
Notice that $m_t = (\text{Id} - Z_t)_{\#} m_t'$ satisfies \eqref{eq:FPEafterchangeofvar}.
Thus, from the the dynamic programming principle in \cite{djete2019mckean}, we have
\begin{align*}
     \hat{U}(t_0,z_0,m_0) &=  U(t_0,m_0') \leq \E^{0}\bigg[ U(t_0 +h, m'_{t_0 +h}) + \int_{t_0}^T \Big(\int_{\T^d} L(x, \alpha'_t(x)) m_t'(dx) 
        + F(m'_t) \Big) dt  \bigg] 
        \\
    &= \E^{0}\bigg[ \hat{U}(t_0 +h, Z_{t_0 + h}, m_{t_0 +h}) + \int_{t_0}^T \Big(\int_{\T^d} L(x + Z_t, \alpha_t(x)) m_t(dx) 
        + \hat{F}(Z_t,m_t) \Big) dt \bigg].
    \end{align*}
\end{proof}

\begin{proof}[Proof of Lemma \ref{lem.udeltahatsubsol}]
  For simplicity in this argument we write $\bP$ for $\bP^{0}$, $\E$ for $\E^0$, etc.  We fix $(t_0,z_0, m_0) \in [0,T] \times \R^d \times \mathcal{P}(\T^d)$ and $\Phi : [0,T] \times \R^d \times \mathcal{P}(\T^d)  \rightarrow \R$  a smooth test function touching $\hat{U}^{\delta}$ from above at $(t_0, z_0, m_0)$ (see the remark before Lemma \ref{lem.udeltahatsubsol}). Let us define 
$$\alpha^0(x) := -D_p \hat{H} \bigl(x,z_0,D_m \Phi(t_0,z_0,m_0,x) \bigr) = -D_p H \bigl(x+z_0,D_m \Phi(t_0,z_0,m_0,x) \bigr) $$
so that, by definition of $H$,
\begin{align}  \label{alphazeroprop}
-D_m\Phi(t_0,z_0,m_0,x) \cdot \alpha^0(x) - L \bigl( x+z_0, &\alpha^0(x) \bigr) = \hat{H} \bigl( x, z_0, D_m\Phi(t_0,z_0,m_0,x) \bigr).
\end{align}
We set $Z_t = z_0 + \sigma^0(W_t^0 - W_{t_0}^0)$, and let $m_t$ denote (stochastic) solution to
$$\partial_t m_t = - \div( \alpha^0 m_t) + \sum_{i,j} D^2_{x_i x_j} \bigl( A^{ij}(x+Z_t)m_t), \quad \mbox{ in } (t_0,T) \times \T^d,\quad m(t_0) = m_0. $$
Since $\norm{\rho_{\delta}*m_t - \rho_{\delta}*m_{t_0}}_{L^{\infty}} \leq C \bd_1(m_t,m_{t_0})$ for some $C>0$ depending on $\delta >0$ and $t \mapsto m_t \in (\mathcal{P}(\T^d),d_1)$ is clearly continuous, we get for $t -t_0$ small enough 
\begin{equation}
    \rho_{\delta}*m_t \geq 1/2 \inf_{x \in \T^d} \rho_{\delta}*m_{t_0} \geq 1/2 \inf_{x \in \T^d} m_{t_0} \geq c/2. 
\label{eq:lowerboundrhodeltam}
\end{equation}
Since we are interested in the behaviour near $t_0$, we can always assume that $t-t_0$ is small enough so that \eqref{eq:lowerboundrhodeltam} holds.

Then we can define
$$ \alpha_t^{\delta} := \frac{\rho_{\delta} * (\alpha^0 m_t)}{\rho_{\delta} * m_t} $$
and $(m_t^{\delta})_{t \geq t_0}$ the solution to
$$ \partial_t m_t^{\delta} = - \div( \alpha_t^{\delta} m_t^{\delta} ) + \sum_{i,j} D^2_{x_i x_j} \bigl( \hat{A}^{ij}(x, Z_t) m_t^{\delta} \bigr) =0, \quad m^{\delta}_{t_0} = m^{\delta}_0 \coloneqq \rho_{\delta} * m_0. $$
Notice that we would have $m_t^{\delta} = \rho_{\delta} * m_t$ if $A$ was constant. Since $\Phi$ touches $\hat{U}^{\delta}$ from above at $(t_0,m_0,z_0)$ we have, for all $t \geq t_0$, almost-surely,
$$\hat{U}(t,Z_t,\rho_{\delta}*m_t) - \Phi(t,Z_t,m_t) \leq \hat{U}(t_0,z_0,\rho_{\delta}*m_0) -\Phi(t_0,z_0,m_0)$$
and therefore
\begin{align*}
     &\Phi(t_0,z_0,m_0) - \E\big[\Phi(t,Z_t,m_t)\big] \\
     &\quad\leq \hat{U}(t_0,z_0,\rho_{\delta} * m_0) - \E\big[\hat{U}(t,Z_t, m_t^{\delta}) \big] + \lip(\hat{U}, W^{-2,\infty}) \norm{ m_t^{\delta}- \rho_{\delta}*m_t}_{-2,\infty}
    \\
    &\quad\leq \E\bigg[\int_{t_0}^t \Big(\int_{\T^d} L(x + Z_t, \alpha_t^{\delta}(x) ) m_t^{\delta}(dx) + \hat{F}(Z_t,m_t^{\delta}) \Big) dt \bigg] + \lip(\hat{U}, W^{-2,\infty}) \norm{ m_t^{\delta}- \rho_{\delta}*m_t}_{-2,\infty},
\end{align*}
with the second inequality coming from Lemma \ref{lem.dppinequality}.
Dividing by $t-t_0$ and letting $t \rightarrow t_0^+$, we get
\begin{align*}
0 &\leq  \int_{\T^d} L \bigl(x + z_0, \alpha_{t_0}^{\delta}(x)  \bigr) dm^{\delta}_0 (x) + \hat{F}(z_0, m^{\delta}_0) + \partial_t \Phi(t_0,z_0,m_0) + \int_{\T^d} \alpha^0(x)\cdot D_m\Phi(t_0,z_0,m_0,x) dm_0(x) \\
&\qquad + A_0 \Delta_z \Phi(t_0,z_0,m_0) +  \int_{\T^d} \tr (\hat{A} (x, z_0) D_x D_m \Phi(t_0,z_0,m_0,x) ) dm_0(x) 
\\
&\qquad + \lip(U,W^{-2,\infty}) \limsup_{t \rightarrow t_0^+} \E \bigg[ \frac{\norm{ m_t^{\delta} - \rho_{\delta} * m_t}_{-2,\infty}}{t-t_0} \bigg].
\end{align*}
Following the argument of \cite{djs2023} Lemma 5.7, with Jensen's inequality we have
\begin{align*} 
\int_{\T^d} L(x+z_0,\alpha_{t_0}^{\delta}(x)) dm_0^{\delta}(x) &\leq \int_{\T^d} L(x+z_0,\alpha^0(x))dm_0(x) + C \delta (\norm{ \alpha_0}_{L^{\infty}} +1) \\
&\leq \int_{\T^d} L(x+z_0,\alpha^0(x))dm_0(x)+  C \delta (\lip(U; \bd_1)+1), 
\end{align*}
where the second line follows from the definition of $\alpha^0$ and our assumptions on $H$. Recalling \eqref{alphazeroprop}, we deduce that
\begin{align} 
\notag &- \partial_t \Phi(t_0,z_0,m_0)  - \int_{\T^d} \tr \bigl(\hat{A} (x,z_0)D_x D_m \Phi(t_0,z_0,m_0,x) \bigr) dm_0(x) - A_0 \Delta_z \Phi(t_0,z_0,m_0)  \\
&\qquad + \int_{\T^d} \hat{H} \bigl(x,z_0, D_m \Phi(t_0,z_0,m_0,x) \bigr) dm_0(x) \nonumber \\
&\leq \hat{F}(z_0,m_0) + C \delta + \lip(U,W^{-2,\infty})  \limsup_{t \rightarrow t_0^+} \E \bigg[ \frac{\norm{ m_t^{\delta} - \rho_{\delta} * m_t}_{-2,\infty}}{t-t_0} \bigg]. 
\label{eq:subsolforUdeltaMay1st}
\end{align}
Since Fatou's Lemma gives
$$\limsup_{t \rightarrow t_0^+} \E \bigg[ \frac{\norm{ m_t^{\delta} - \rho_{\delta} * m_t}_{-2}}{t-t_0} \bigg] \leq \E \bigg[ \limsup_{t \rightarrow t_0^+} \frac{\norm{ m_t^{\delta} - \rho_{\delta} * m_t}_{-2,\infty}}{t-t_0} \bigg], $$
we can conclude using the lemma below. In the case that $A$ is constant, the improvement comes form the fact that in this case $m_t^{\delta} = \rho_{\delta} * m_t$, as noted above.
\end{proof}

\begin{lem} \label{lem.commutator}
    With the notation of the proof above, there is some $C>0$ independent of $\alpha^0$ and $\delta$ such that
    $$ \limsup_{t \rightarrow t_0^+} \frac{\norm{ m_t^{\delta} - \rho_{\delta} * m_t}_{-2,\infty}}{t-t_0} \leq C \delta, \quad \text{a.s.} $$
\end{lem}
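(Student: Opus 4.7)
The plan is to test $m_t^\delta - \rho_\delta * m_t$ against arbitrary $\phi \in W^{2,\infty}(\T^d)$ with $\|\phi\|_{2,\infty} \le 1$ and bound the time derivative of $f(t) := \int \phi \, d(m_t^\delta - \rho_\delta * m_t)$ uniformly in such $\phi$. Since $m_{t_0}^\delta = \rho_\delta * m_{t_0}$ we have $f(t_0)=0$, and the duality characterization $\|m_t^\delta - \rho_\delta * m_t\|_{-2,\infty} = \sup_{\|\phi\|_{2,\infty}\le1} f(t)$ shows it is enough to establish
$\limsup_{t\to t_0^+} |f'(t)| \le C\delta$
with constants independent of $\phi$; integrating then yields $|f(t)| \le (C\delta + o(1))(t-t_0)$, which gives the claim.

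Using the Fokker--Planck equations for $m_t$ and $m_t^\delta$ and integrating by parts (against $\phi$ and $\rho_\delta * \phi$ respectively) I would write $f'(t) = I_1(t) + I_2(t)$, with
\begin{align*}
I_1(t) &= \int \alpha_t^\delta \cdot D\phi \, dm_t^\delta - \int \alpha^0 \cdot D(\rho_\delta * \phi)\, dm_t, \\
I_2(t) &= \int \tr\bigl(A(x+Z_t) D^2 \phi\bigr) dm_t^\delta - \int \tr\bigl(A(x+Z_t) D^2(\rho_\delta*\phi)\bigr) dm_t.
\end{align*}
The algebraic input for $I_1$ is the defining identity $\alpha_t^\delta (\rho_\delta * m_t) = \rho_\delta * (\alpha^0 m_t)$, which together with the symmetry of $\rho_\delta$ gives
$\int \alpha^0 \cdot D(\rho_\delta*\phi) dm_t = \int \rho_\delta*(\alpha^0 m_t)\cdot D\phi \, dx = \int \alpha_t^\delta \cdot D\phi \, d(\rho_\delta*m_t),$
so $I_1(t) = \int \alpha_t^\delta \cdot D\phi\, d(m_t^\delta - \rho_\delta*m_t)$. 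Using $\|\alpha_t^\delta\|_\infty \le \|\alpha^0\|_\infty \le C$ (independent of $\delta$) and the TV-continuity of both $t \mapsto m_t^\delta$ and $t \mapsto \rho_\delta*m_t$ (they solve linear parabolic equations with smooth pathwise coefficients and coincide at $t_0$), I get $|I_1(t)| \le C\|m_t^\delta - \rho_\delta*m_t\|_\tv \to 0$ uniformly in $\phi$.

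For $I_2$, split $I_2(t) = J(t) + \mathcal{K}(t)$ with $J(t) = \int \tr(A(x+Z_t) D^2\phi)\, d(m_t^\delta - \rho_\delta*m_t)$, which vanishes uniformly as above, and the genuine commutator
\[
\mathcal{K}(t) = \int \tr\bigl(A(x+Z_t) D^2\phi(x)\bigr) d(\rho_\delta*m_t)(x) - \int \tr\bigl(A(y+Z_t) D^2(\rho_\delta*\phi)(y)\bigr) dm_t(y).
\]
Expanding both convolutions with the substitution $\zeta = x-y$ and the symmetry of $\rho_\delta$ reduces this to
\[
\mathcal{K}(t) = \int_{\T^d}\!\int_{\T^d} \tr\Bigl(\bigl[A(y+\zeta+Z_t) - A(y+Z_t)\bigr] D^2\phi(y+\zeta)\Bigr) \rho_\delta(\zeta)\, d\zeta \, dm_t(y),
\]
and the Lipschitz bound on $A$ together with $\int |\zeta|\rho_\delta(\zeta)d\zeta \le C\delta$ yields $|\mathcal{K}(t)| \le C\delta \|\phi\|_{2,\infty} \le C\delta$, uniformly in $t$ and $\omega^0$ (since the Lipschitz constant of $A$ does not depend on $Z_t$). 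Combining, $|f'(t)| \le C\delta + o(1)$ uniformly in $\phi$, finishing the proof.

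The main obstacle is making the uniformity in $\phi$ of the two $o(1)$ convergences rigorous, which ultimately rests on strong ($\tv$ or $L^1$) continuity in $t$ of the pathwise flows $m_t^\delta$ and $\rho_\delta*m_t$ coming from the parabolicity of their defining equations. The only subtlety beyond that is the ``miraculous'' cancellation in $I_1$, which is built into the very definition of $\alpha_t^\delta$ as a ratio of mollifications.
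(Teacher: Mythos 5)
Your argument is correct, but it takes a genuinely different route from the paper, and in fact a shorter one. The paper argues by duality with the \emph{backward} Kolmogorov equation: it evolves the test function $\phi$ backwards to $\varphi_s$ along the operator $\alpha^0\cdot D + \tr(A_sD^2\cdot)$, so the commutator term comes out as $\delta\,\mathrm{Lip}(A)\int_{t_0}^t\norm{D^2\varphi_s}_{L^\infty}ds$, and the bulk of the paper's proof is then devoted to showing $\limsup_{t\to t_0^+}(t-t_0)^{-1}\int_{t_0}^t\norm{D^2\varphi_s}_{L^\infty}ds\leq C$ with $C$ independent of $\alpha^0$, via second-order differentiation of the stochastic flow, Gr\"onwall, and a delicate check that the $\norm{D\alpha^0}_{L^\infty}$-dependent exponentials tend to $1$. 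By differentiating $f(s)=\int\phi\,d(m_s^\delta-\rho_\delta*m_s)$ \emph{forward} in time against the fixed $\phi$, your commutator $\mathcal K$ carries $D^2\phi$ (bounded by $1$ by assumption) instead of $D^2\varphi_s$, and the change of variables $\zeta=x-y$ together with $\mathrm{Lip}(A)$ gives $|\mathcal K(s)|\leq C\delta$ immediately; the entire flow-estimate machinery is bypassed. The cancellation in $I_1$ via $\alpha_s^\delta(\rho_\delta*m_s)=\rho_\delta*(\alpha^0m_s)$ is exactly the structural identity the construction of $\alpha^\delta$ is designed to produce, and your use of it is correct.

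The one step you should firm up is the uniform-in-$\phi$ vanishing of $I_1$ and $J$. For $J$ the integrand $\tr(A(\cdot+Z_s)D^2\phi)$ is only $L^\infty$ (since $\phi\in W^{2,\infty}$), so you genuinely need $\norm{m_s^\delta-\rho_\delta*m_s}_{\tv}\to 0$ as $s\to t_0^+$, not merely $\bd_1$-convergence as the paper uses for its analogous term. This is true, but not because of ``parabolicity'': the equation for $m^\delta$ may be degenerate (that is the point of the paper). The correct reason is that both $m_s^\delta$ and $\rho_\delta*m_s$ start from the common smooth density $\rho_\delta*m_0$, and pathwise the stochastic flow with Lipschitz coefficients is a diffeomorphism converging to the identity in $C^1$, so both densities converge to $\rho_\delta*m_0$ in $L^1$ as $s\to t_0^+$ (with a $\delta$-dependent rate, which is harmless since $\delta$ is fixed in this limit). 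For $I_1$ you could alternatively avoid TV altogether by noting that $\alpha_s^\delta\cdot D\phi$ is $C^1$ with a $\delta$-dependent bound uniform in $\phi$, and using $\bd_1$ as the paper does.
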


\begin{proof}
For notational simplicity, we write $A_t(x) := A(x+Z_t)$.
We are going to argue by duality. For fixed $t >t_0$ let $\phi \in C^2$ with $\norm{\phi}_{2,\infty} \leq 1$ and let $\varphi :[t_0,t] \times \T^d \rightarrow \R  $ be the (classical) solution to
$$ -\partial_s \varphi_s - \alpha^0 \cdot D \varphi_s - \tr (A_s D^2 \varphi_s) = 0, \mbox{ in } (t_0,t) \times \T^d; \quad \varphi_t = \phi.$$
The equation is linear with regular coefficients and therefore, for all $s \in [t_0,t]$, $\varphi_s$ belongs to $C^2(\T^d)$ and
\begin{equation} 
\sup_{s \in [t_0,t]} \norm{ \varphi_s}_{C^2} \leq C
\label{eq:C2estimatevarphis}
\end{equation}
for some $C>0$ independent of $t \in [t_0,T]$ and $\phi$ (exact estimates are given in the proof of the next lemma).
Differentiating $s \mapsto \int_{\T^d} \varphi_s(x) dm_s^{\delta}(x)$, using the equations satisfied by $\varphi$ and $m^{\delta}$ and integrating back in $s$ we find 
$$\int_{\T^d} \phi(x) dm_t^{\delta}(x) = \int_{\T^d} \varphi_{t_0}(x) dm^{\delta}_{t_0} + \int_{t_0}^t \int_{\T^d} (\alpha_s^{\delta}(x) - \alpha^0(x)) \cdot D \varphi_s(x) dm_s^{\delta}(x)ds. $$
Convoluting by $\rho_{\delta}$ the equation for $m$ and using the definition of $\alpha^{\delta}$ we have that $\rho_{\delta} * m_t$ satisfies
$$\partial_s \rho_{\delta} * m_s + \div ( \alpha_{s}^{\delta} \rho_{\delta} * m_s) - \sum_{i,j}D^2_{x_i x_j}(\rho_{\delta}*(A_s^{ij}m_s)) = 0,   $$
and, proceeding as before we get
\begin{align*}
    \int_{\T^d} \phi(x) d\rho_{\delta}*m_t(x) &= \int_{\T^d} \varphi_{t_0}(x) d\rho_{\delta}*m_0(x) + \int_{t_0}^t \int_{\T^d} (\alpha^{\delta}_s(x) - \alpha^0(x)) \cdot D \varphi_s(x) d \rho_{\delta} * m_s(x)ds \\
    &-\int_{t_0}^t \int_{\T^d} \tr \bigl( D^2 \varphi_s (x) (A_s(x) m_s * \rho_{\delta}(x) - \rho_{\delta} * (A_s
     m_s)(x)) \bigr) dxds.
\end{align*}
As a consequence, recalling that $m^{\delta}_{t_0} = \rho_{\delta} * m_0$,
\begin{align*}
    \int_{\T^d} \phi(x) d (m_t^{\delta} - \rho_{\delta} * m_t)(x) &= \int_{t_0}^t \int_{\T^d} (\alpha_s^{\delta}(x) - \alpha^0(x)) \cdot D \varphi_s(x) d(m_s^{\delta} - m_s * \rho_{\delta})(x) ds \\
    &+ \int_{t_0}^t \int_{\T^d} \tr( D^2 \varphi_s (x) (A_s(x) m_s * \rho_{\delta}(x) - \rho_{\delta} * (A_s m_s)(x))) dxds.
\end{align*} 
For the first term we argue that $x \mapsto (\alpha_s^{\delta}(x) - \alpha^0(x)) \cdot D \varphi_s(x)$ is bounded in $C^1$ with bound uniform in $s \in [t_0,t]$ and independent of $\phi$ with $\norm{\phi}_{2,\infty}\leq 1$. This is a consequence of the definition of $\alpha^0$, the lower bound \eqref{eq:lowerboundrhodeltam} on $\rho^{\delta} * m_t$ and the regularity of $\varphi$ (recall \eqref{eq:C2estimatevarphis}). As a consequence,
 $$\Bigl| \int_{t_0}^t \int_{\T^d} (\alpha_s^{\delta}(x) - \alpha^0(x)) \cdot D \varphi_s(x) d(m_s^{\delta} - m_s * \rho_{\delta})(x) ds \Bigr| \leq C \int_{t_0}^t d_1(m_s^{\delta} , m_s*\rho_{\delta}) ds,  $$
 for some $C>0$ independent of $\phi$ with $\norm{\phi}_{2,\infty} \leq 1$. Since classical arguments (using the stochastic characteristics for instance) show that
 $$ \lim_{s \rightarrow t_0^+} d_1(m_s^{\delta}, m_s*\rho_{\delta}) \leq \lim_{s \rightarrow t_0^+} d_1(m_s^{\delta}, m_{t_0}^{\delta}) + d_1(m_{t_0}^{\delta}, m_s* \rho_{\delta}) =0,$$
we can use the mean-value theorem to conclude that
$$ \lim_{t \rightarrow t_0^+} \frac{1}{t-t_0} \sup_{ \norm{\phi}_{C^2} \leq 1} \Bigl| \int_{t_0}^t \int_{\T^d} (\alpha_s^{\delta}(x) - \alpha^0(x)) \cdot D \varphi_s(x) d(m_s^{\delta} - m_s * \rho_{\delta})(x) ds \Bigr| = 0.$$

For the second term, we have
\begin{align*}
    & \Bigl|\int_{t_0}^t \int_{\T^d} \tr \Bigl( D^2 \varphi_s (x) (A_s(x) m_s * \rho_{\delta}(x) - \rho_{\delta} * (A_s m_s)(x)) \Bigr) dx ds \Bigr|\\
    & \leq \int_{t_0}^t \norm{ D^2 \varphi_s}_{L^{\infty}} \int_{\T^d} | A_s(x) m_s * \rho_{\delta}(x) - \rho_{\delta} * (A_s m_s)(x) | dx ds\\
    &\leq \delta Lip(A) \int_{t_0}^t \norm{ D^2 \varphi_s}_{L^{\infty}} ds.
\end{align*}
The idea is that, for $t$ close to $t_0$, $\norm{ D^2 \varphi_s}_{L^{\infty}} \sim \norm{ D^2 \phi}_{L^{\infty}} \leq 1  $ and therefore we want to show that
\begin{equation} 
\limsup_{t \rightarrow  t_0^+} \frac{1}{t-t_0} \sup_{ \norm{\phi}_{C^2} \leq 1} \int_{t_0}^t \norm{ D^2 \varphi_s}_{L^{\infty}} ds \leq C  
\label{eq:integratedestimateMay1st}
\end{equation}
for some $C$ independent of $\alpha^0$. For fixed $t$ and $\phi$, Feynman-Kac formula gives
$\varphi_s(x) = \E \bigl[ \phi(X_t^{s,x}) \bigr] $
where $(X_u^{s,x})_{s \leq u \leq t}$ is solution to
$$ dX_u^{s,x} = \alpha^0(X_u^{s,x})du + \sqrt{2} \sigma_u(X_u^{s,x})dB_u, \quad u \geq s, \quad X_s^{s,x} = x,$$
where $(B_u)_{u \geq 0}$ is a standard Brownian motion (independent of $(W^0)$) and $\sigma_u ^T \sigma_u = A_u$.
As a consequence, 
$$ D \varphi_s(x) = \E \bigl[ D X_t^{s,x} D \phi(X_t^{s,x}) \bigr], \quad D^2 \varphi_s(x) = \E \bigl[ D^{2} X_t^{s,x} D \phi(X_t^{s,x}) + D X_t^{s,x} D X_t^{s,x} D^2 \phi(X_t^{s,x}) \bigr] $$
and, using that $\norm{\phi}_{C^2} \leq 1$,
\begin{equation} 
\| D^2 \varphi_s(\cdot) \|_{L^{\infty}} \leq \sup_{x} \E \bigl[ |D^2 X_t^{s,x} | \bigr] + \sup_{x} \E \bigl[ |DX_t^{s,x}|^2 \bigr]. 
\label{eq:upperboundhessianvarphis}
\end{equation}
Since
$$ d D X_u^{s,x} = D \alpha^0(X_u^{s,x}) D X_u^{s,x} du + \sqrt{2} D \sigma_u(X_u^{s,x})D X_u^{s,x} dB_u, \quad DX_s^{s,x} = I_d,$$
we have, for $\beta = 2,4,$
\begin{align*}
    d|D X_u^{s,x}|^{\beta} \leq  \beta |D X_u^{s,x}|^{\beta -2} D X_u^{s,x} d D X_u^{s,x} +  \beta (\beta-1)|D  \sigma_u(X_u^{s,x})|^2 |D X_u^{s,x}|^{\beta} du
\end{align*}
and
\begin{align*}
    d \E\bigl[ |D X_u^{s,x}|^{\beta} \bigr] \leq \beta \norm{ D \alpha^{0}}_{L^{\infty}}\E\bigl[ |D X_u^{s,x}|^{\beta} \bigr] du + C \E\bigl[ |D X_u^{s,x}|^{\beta} \bigr] du.
\end{align*}
Grönwall's lemma therefore leads to
\begin{equation} 
\sup_{x}\E\bigl[ |D X_u^{s,x}|^{\beta} \bigr] \leq C e^{( u-s) \norm{D \alpha^0}_{L^{\infty}}}. 
\label{eq:gradestimateMay1st}
\end{equation}
We go on with the second order derivative of $ x \mapsto X_t^{s,x}$. We have (differentiating, to alleviate notation, as if $d$ were equal to $1$)
\begin{align*}
    dD^2 X_u^{s,x} &= D^2 \alpha^0(X_u^{s,x}) D X_u^{s,x} DX_u^{s,x} du + D \alpha^0(X_u^{s,x}) D^2 X_u^{s,x} du \\
    &+ \sqrt{2} D^2 \sigma_u(X_u^{s,x}) D X_u^{s,x} D X_u^{s,x} dB_u + \sqrt{2} D \sigma_u(X_u^{s,x}) D^2 X_u^{s,x} dB_u, \quad D^2 X_s^{s,x} = 0. 
\end{align*}
And therefore,
\begin{align*} 
d \E \bigl[ |D^2 X_u^{s,x}|^2 \bigr]  &\leq 2 \norm{ D^2 \alpha^0}_{L^{\infty}}  \E \bigl[ |D^2 X_t^{s,x}| | D X_u^{s,x}|^2 \bigr]du \\
&+ 2\norm{ D \alpha^0}_{L^{\infty}}  \E \bigl[ |D^2 X_u^{s,x}|^2 \bigr] du \\
& + 2 \norm{ D^2 \sigma_u}^2_{L^{\infty}} \E \bigl[ |DX_u^{s,x}|^4\bigr]du + 2 \norm{ D \sigma_u}^2_{L^{\infty}} \E \bigl[ |D^2X_u^{s,x}|^2 ] du.
\end{align*}
Using Young's inequality and Grönwall's Lemma we get, thanks to \eqref{eq:gradestimateMay1st}, for some $C_0$ possibly depending on the derivatives of $\alpha^0$ and changing from line to line,
\begin{align*}
\E \bigl[ |D^2 X_t^{s,x}|^2 \bigr] \leq C_0e^{C_0(t-s)} \int_{s}^t\E \bigl[ |DX_u^{s,x}|^4\bigr]du \leq C_0 \int_{s}^t e^{C_0(t-s + u-s)} du
\end{align*}
But 
\begin{align*} 
 \lim_{t \rightarrow t_0^+} &\frac{1}{t-t_0} \int_{t_0}^t \int_{s}^t e^{C_0(t-s+u-s)} du ds  = \lim_{t \rightarrow t_0^+} \frac{1}{t-t_0} \int_{t_0}^t e^{C_0(t-s)} \frac{1}{C_0} (e^{C_0(t-s)}-1) ds  \\ 
&= \lim_{t \rightarrow t_0^+} \frac{1}{t-t_0} \bigl( \frac{1}{2 C_0^2} (e^{2C_0(t-t_0)} -1) - \frac{1}{C_0^2} (e^{C_0(t-t_0)} - 1) \bigr)=0.
\end{align*}
Similarly, using \eqref{eq:gradestimateMay1st},
\begin{align*} 
\frac{1}{t-t_0}  \int_{t_0}^t \sup_{x} \E \bigl[ | D X_t^{s,x}|^2 \bigr] &\leq \frac{C}{t-t_0} \int_{t_0}^t e^{(t-s) \norm{D \alpha^0}_{L^{\infty}}} ds \\
& \rightarrow^{t \rightarrow 0^+} C
\end{align*}
but, here, $C$ does not depend on $\alpha^0$. Recalling \eqref{eq:upperboundhessianvarphis} and observing that $\sup_{x} \E \bigl[ | D^2 X_t^{s,x}| \bigr] \leq 1 + \sup_{x} \E \bigl[ | D^2 X_t^{s,x}|^2 \bigr]$ we deduce \eqref{eq:integratedestimateMay1st}. To conclude there is $C>0$ independent of $\alpha^0$ such that
$$ \limsup_{t \rightarrow t_0^+} \frac{\norm{m_t^{\delta} - \rho_{\delta}*m_t }_{-2}}{t-t_0} \leq C \delta. $$
Getting back to \eqref{eq:subsolforUdeltaMay1st} this concludes the proof of the Lemma.
\end{proof}

We now turn our attention to $\hat{U}^{\delta, \eps}$. We have the following lemma concerning the regularity of $\hat{U}^{\delta,\eps}$.

\begin{lem} \label{lem.ovureg}
    There is a constant $C$ such that for all $\epsilon < \frac{\delta^{2s^*}}{C} $, we have 
    \begin{enumerate}
        \item $\sup_{t,z,m} |\hat{U}^{\delta,\epsilon}(t,z,m) - \hat{U}^{\delta}(t,z,m)| \leq C \epsilon \delta^{-2(s^*-1)}$, 
        \item $\R^d \times \cP(\T^d) \ni (z,m) \mapsto \hat{U}^{\delta, \epsilon}(t,z,m)$ is $C^{1,1}$ with respect to $H^{-s}$, uniformly in $t$, with the bound
        \begin{align*}
            \sup_t \big[ \hat{U}^{\delta, \epsilon}(t,\cdot,\cdot) \big]_{C^{1,1}(\R^d \times H^{-s^*})} \leq \frac{C}{\epsilon},
        \end{align*}
        \item for fixed $t$, the derivatives of $\hat{U}^{\delta, \epsilon}$ in $(z,m)$ are given by 
        \begin{align*}
            D_z \hat{U}^{\delta, \epsilon}(t,z,m) = \frac{1}{\epsilon} (z_{\epsilon} - z_{0}), \quad \frac{\delta \hat{U}^{\delta,\epsilon}}{\delta m}(t,z,m,\cdot) = \frac{1}{\epsilon}(m_{\epsilon} - m)^*,
        \end{align*}
        where $(z_{\eps}, m_{\eps}) \in \R^d \times \cP(\T^d)$ is the unique optimizer in the optimization problem defining $\hat{U}^{\delta, \eps}(t,z,m)$. Moreover, $D_z \hat{U}^{\delta, \epsilon}$ and $\frac{\delta \hat{U}^{\delta,\epsilon}}{\delta m}$ are both jointly continuous in $(t,z,m)$ and we have $|z_{\eps} - z| \leq C \eps$, with $C$ independent of $\delta, \eps$, and $(t,z,m)$. 
        \item $\hat{U}^{\delta,\eps} \in \text{Lip}\big([0,T] \times \T^d \times \cP(\T^d); H^{-s^*} \big)$, and
        \begin{align*}
            |\hat{U}^{\delta, \epsilon}(t,z,m) - \hat{U}^{\delta, \epsilon}(t,z',m')| \leq C \big(\delta^{-(s^*-1)} \|m - m'\|_{-s^*} + |z -z'| \big)
        \end{align*}
        for all $t \in [0,T]$, $z,z' \in \R^d$, $m,m' \in \cP(\T^d)$.
    \end{enumerate}
\end{lem}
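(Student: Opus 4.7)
The plan is to carry out a Lasry--Lions sup-convolution analysis in the product Hilbert space $\R^d \times H^{-s^*}$, exploiting the joint semiconcavity and Lipschitz bounds for $\hat{U}^\delta$ from Lemma \ref{lem.udeltahatreg}. The first step is to establish existence, uniqueness, and joint continuity of the maximizer $(z_\eps, m_\eps) \in \R^d \times \cP(\T^d)$ in \eqref{def.udeltaepshat}. Existence follows from compactness of $\cP(\T^d)$ in $H^{-s^*}$ (since $s^* > d/2$), continuity of $\hat{U}^\delta$, and coercivity in $z'$ coming from the quadratic penalty. For uniqueness, Lemma \ref{lem.udeltahatreg} yields joint $C\delta^{-2s^*}$-semiconcavity of $\hat{U}^\delta$ in $(z,m)$ with respect to the norm of $\R^d \times H^{-s^*}$, so the penalized functional is $(1/\eps - C\delta^{-2s^*})$-strongly concave in $(z', m')$, which is strictly positive under $\eps < \delta^{2s^*}/C$. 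Joint continuity of $(t, z, m) \mapsto (z_\eps, m_\eps)$ then follows from a standard compactness-plus-uniqueness argument.

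Next, I would deduce the bounds $|z_\eps - z| \leq C\eps$ (with $C$ independent of $\delta$) and $\|m_\eps - m\|_{-s^*} \leq C\eps \delta^{-(s^*-1)}$ by comparing the optimal value to the suboptimal test points $(z, m_\eps)$ and $(z_\eps, m)$ separately. For instance, testing at $(z, m_\eps)$ gives $\hat{U}^{\delta, \eps}(t, z, m) \geq \hat{U}^\delta(t, z, m_\eps) - \tfrac{1}{2\eps}\|m-m_\eps\|_{-s^*}^2$, and combined with the identity $\hat{U}^{\delta, \eps}(t, z, m) = \hat{U}^\delta(t, z_\eps, m_\eps) - \tfrac{1}{2\eps}(|z-z_\eps|^2 + \|m-m_\eps\|_{-s^*}^2)$ at the optimum, this yields $\tfrac{1}{2\eps}|z-z_\eps|^2 \leq \hat{U}^\delta(t, z_\eps, m_\eps) - \hat{U}^\delta(t, z, m_\eps) \leq C|z-z_\eps|$ by the $C$-Lipschitz bound of $\hat{U}^\delta$ in $z$, hence $|z_\eps - z| \leq 2C\eps$. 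The analogous one-coordinate comparison at $(z_\eps, m)$ combined with the $C\delta^{-(s^*-1)}$-Lipschitz bound in $m$ gives the bound on $\|m_\eps - m\|_{-s^*}$. Claim (1) then follows from $\hat{U}^{\delta, \eps}(t, z, m) - \hat{U}^\delta(t, z, m) \leq \hat{U}^\delta(t, z_\eps, m_\eps) - \hat{U}^\delta(t, z, m) \leq C|z-z_\eps| + C\delta^{-(s^*-1)}\|m-m_\eps\|_{-s^*} \leq C\eps\delta^{-2(s^*-1)}$.

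For the $C^{1,1}$ regularity in (2), I would combine semiconvexity (automatic from the sup-convolution structure) with semiconcavity (inherited from $\hat{U}^\delta$). The identity
\begin{align*}
\hat{U}^{\delta, \eps}(t, z, m) + \tfrac{1}{2\eps}\big(|z|^2 + \|m\|_{-s^*}^2\big) = \sup_{z', m'}\Big\{ \hat{U}^\delta(t, z', m') + \tfrac{1}{\eps}\big(\langle z, z'\rangle + \langle m, m'\rangle_{-s^*}\big) - \tfrac{1}{2\eps}\big(|z'|^2 + \|m'\|_{-s^*}^2\big)\Big\}
\end{align*}
presents the left-hand side as a supremum of functions affine in $(z, m) \in \R^d \times H^{-s^*}$, hence convex, so $\hat{U}^{\delta, \eps}$ is $(1/\eps)$-semiconvex. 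Semiconcavity of $\hat{U}^{\delta, \eps}$ with constant $\lesssim \delta^{-2s^*} \leq 1/\eps$ (under $\eps < \delta^{2s^*}/C$) can be verified directly by choosing convex combinations of the optimizers at the two endpoints as test points at the midpoint, then using the parallelogram identity together with the semiconcavity of $\hat{U}^\delta$ and a Young inequality. Being both $(1/\eps)$-semiconvex and $(C/\eps)$-semiconcave on $\R^d \times H^{-s^*}$ yields the claimed $C^{1,1}$ bound. The derivative formulas in (3) follow from an envelope (Danskin) argument: uniqueness and continuity of $(z_\eps, m_\eps)$ let me differentiate through the sup, and the functional $\Phi(z, m; z', m') = \hat{U}^\delta(t, z', m') - \frac{1}{2\eps}(|z-z'|^2 + \|m-m'\|_{-s^*}^2)$ has partial derivatives $\partial_z \Phi = \frac{1}{\eps}(z'-z)$ and $\frac{\delta \Phi}{\delta m}(\cdot; z', m') = \frac{1}{\eps}(m'-m)^*$, so evaluating at $(z_\eps, m_\eps)$ gives the stated formulas. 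The normalization \eqref{normalization} is automatic because pairing $m_\eps - m$ with the constant $1 \in H^{s^*}$ gives $\int (m_\eps - m)^*\,dx = (m_\eps - m)(1) = 0$, and continuity of the derivatives follows from continuity of the maximizer.

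Finally, for the Lipschitz estimate (4), the $t$-Lipschitz constant transfers from $\hat{U}^\delta$ since sup-convolutions preserve Lipschitz constants in external parameters. The $z$-Lipschitz constant of $\hat{U}^{\delta, \eps}$, independent of $\eps$ and $\delta$, follows from the translation-invariance of the $z$-penalty: translating a near-optimizer of $\hat{U}^{\delta, \eps}(t, z_1, m)$ by $z_2 - z_1$ produces an admissible test point for $\hat{U}^{\delta, \eps}(t, z_2, m)$, and the resulting change is controlled by the $C$-Lipschitz bound of $\hat{U}^\delta$ in $z$. The $m$-Lipschitz constant $C\delta^{-(s^*-1)}$ follows from the derivative formula of (3) together with the bound $\|m_\eps - m\|_{-s^*} \leq C\eps \delta^{-(s^*-1)}$: the $H^{s^*}$-norm of the linear derivative is $\|m_\eps - m\|_{-s^*}/\eps \leq C\delta^{-(s^*-1)}$, which, by the mean-value form of the linear derivative, is the Lipschitz constant with respect to $\|\cdot\|_{-s^*}$. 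The main obstacle is the careful tracking of the dependence of all constants on $\delta$ and $\eps$, particularly to ensure that the $z$-Lipschitz constant in (4) and the bound $|z_\eps - z| \leq C\eps$ in (3) are truly $\delta$-independent, which forces the one-coordinate-at-a-time comparisons described above rather than the joint comparison with $(z, m)$ that would pick up an extra $\delta^{-(s^*-1)}$ factor.
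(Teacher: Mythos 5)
Your proposal is correct and follows the same route as the paper, which simply defers to the sup-convolution analysis of Proposition 4.3 in \cite{ddj2023}; you have in effect supplied the omitted details, correctly adapted to the product space $\R^d \times H^{-s^*}$. In particular, your one-coordinate-at-a-time comparisons (testing at $(z,m_\eps)$ and $(z_\eps,m)$ separately) and the translation-invariance trick for the $z$-Lipschitz bound are exactly what is needed to keep the constants in $|z_\eps - z|\leq C\eps$ and in the $z$-Lipschitz estimate of point (4) independent of $\delta$, which is the only genuinely new issue relative to the cited result.
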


\begin{proof}
    The proof is almost identical to that of \cite[Proposition 4.3]{ddj2023}. We omit the details.
\end{proof}

The following lemma gives a finer property of the optimization problem defining $\hat{U}^{\delta, \eps}(t,z,m)$.

\begin{lem} \label{lem.linfbound}
  There is a constant $C$ such that for any $0 < \epsilon < \frac{1}{C} \delta^{2s^*}$, and any $(t_0,z_0,m_0) \in [0,T] \times \R^d \times \cP(\T^d)$ satisfying $m_0 \geq C \epsilon \delta^{-(2s^* -1)} \leb$, there is a unique minimizer $(z_{\epsilon}, m_{\epsilon})$ for the problem defining $\hat{U}^{\delta,\epsilon}(t_0,z_0,m_0)$, which satisfies
    \begin{align} \label{mepsbound}
        \|m_{\epsilon} - m_0 \|_{\infty} \leq C\epsilon \delta^{-(2s^*  -1)}, 
    \end{align}
    and moreover we have
    \begin{align} \label{w2inflip}
         \norm{\frac{\delta \hat{U}^{\delta, \epsilon }}{\delta m}(t_0,z_0,m_0, \cdot)}_{1,\infty} \leq \lip(U; W^{-1,\infty}), \quad  \norm{\frac{\delta \hat{U}^{\delta, \epsilon }}{\delta m}(t_0,z_0,m_0, \cdot)}_{2,\infty} \leq \lip(U; W^{-2,\infty}). 
    \end{align}
\end{lem}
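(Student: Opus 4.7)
Uniqueness of the optimizer $(z_\eps, m_\eps)$ for $\eps < \delta^{2s^*}/C$ follows from strict concavity: Lemma~\ref{lem.udeltahatreg} gives joint semiconcavity of $\hat{U}^\delta$ in $(z,m)$ with constant $C\delta^{-2s^*}$, which the quadratic penalty dominates when $\eps$ is small enough. The plan is first to prove \eqref{mepsbound}, then to use the resulting strict positivity of $m_\eps$ to deduce \eqref{w2inflip} via Lemma~\ref{lem.regfromtouching}.

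For \eqref{mepsbound}, the essential feature is that $\hat{U}^\delta(t,z,m) = U(t, \rho_\delta \ast m^z)$ sees $m$ only through the smoothed pushforward $\rho_\delta \ast m^z$. I would argue by approximation: replace $U$ by a sequence $U^n$ which is $\cC^2$ on $H^{-s^*}$ (constructed as in \cite[Lemma 4.1]{djs2023}) while preserving the $\bd_1$-Lipschitz constant uniformly in $n$, and pass to the limit at the end. For such $U^n$, the chain rule yields the explicit formula
\[
\tfrac{\delta \hat{U}^{n,\delta}}{\delta m}(t,z,m,x) = \bigl( u^n_m(t, \rho_\delta \ast m^z, \cdot) \ast \rho_\delta \bigr)(x+z) - \text{const.},
\]
where $u^n_m = \delta U^n/\delta m$, whose $\bd_1$-Lipschitz property yields $\|u^n_m\|_\infty + \|D_y u^n_m\|_\infty \leq C$. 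Shifting one derivative from $\rho_\delta$ onto $u^n_m$ in each convolution (namely, $\|D^\alpha(u^n_m \ast \rho_\delta)\|_\infty \leq \|D_y u^n_m\|_\infty \|D^{\alpha-1}\rho_\delta\|_{L^1} \leq C \delta^{-(|\alpha|-1)}$ for $|\alpha| \geq 1$) gives the crucial bound $\|\tfrac{\delta \hat{U}^{n,\delta}}{\delta m}(t,z,m,\cdot)\|_{C^{2s^*}} \leq C\delta^{-(2s^*-1)}$. Extending the sup-convolution problem to signed measures of unit mass and introducing a single scalar Lagrange multiplier for the mass constraint (with the non-negativity constraint verified a posteriori from the $L^\infty$ bound together with the positivity of $m_0$), first-order optimality yields
\[
(m^n_\eps - m_0)^* = \eps\, \tfrac{\delta \hat{U}^{n,\delta}}{\delta m}(t_0, z^n_\eps, m^n_\eps, \cdot);
\]
the Lagrange multiplier vanishes because the right-hand side has mean zero by the normalization convention and so does $(m^n_\eps - m_0)^*$ (indeed, $\int (m^n_\eps - m_0)\,dy = 0$ gives the vanishing of the zero Fourier mode of $(m^n_\eps - m_0)^*$ via \eqref{dualidentity}). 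Hence $\|(m^n_\eps - m_0)^*\|_{C^{2s^*}} \leq C\eps\delta^{-(2s^*-1)}$, and applying \eqref{dualidentity} together with \eqref{c2sbound} gives $\|m^n_\eps - m_0\|_\infty \leq C\|(m^n_\eps - m_0)^*\|_{C^{2s^*}} \leq C\eps\delta^{-(2s^*-1)}$. Passing $n \to \infty$ establishes \eqref{mepsbound}.

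For \eqref{w2inflip}, the bound \eqref{mepsbound} combined with the hypothesis $m_0 \geq C\eps\delta^{-(2s^*-1)}\leb$ yields $m_\eps \geq c\,\leb$ for some $c > 0$ (upon choosing the constant in the hypothesis appropriately). By the sup-convolution structure, $\hat{U}^\delta(t_0, z_\eps, \cdot)$ is touched from above at $m_\eps$ by the quadratic $m \mapsto C_0 + \tfrac{1}{2\eps}\|m - m_0\|_{-s^*}^2$, whose linear derivative at $m_\eps$ equals $\tfrac{1}{\eps}(m_\eps - m_0)^*$ modulo the normalization constant. The $W^{-k,\infty}$-Lipschitz constant of $U$ transfers to $\hat{U}^\delta(t_0, z_\eps, \cdot)$ since both translation by $z_\eps$ and convolution with $\rho_\delta$ preserve the $W^{k,\infty}$ norm of test functions by duality. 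Lemma~\ref{lem.regfromtouching}(2) then gives $\|\tfrac{1}{\eps}(m_\eps - m_0)^*\|_{k,\infty} \leq \lip(U; W^{-k,\infty})$, which by Lemma~\ref{lem.ovureg}(3) is precisely \eqref{w2inflip}. The main technical obstacle is the approximation step in \eqref{mepsbound} together with the precise bookkeeping of $\delta$-powers in the convolution bounds; the one-power improvement from $\delta^{-2s^*}$ to $\delta^{-(2s^*-1)}$ is essential for the sharper rate in Theorem~\ref{thm.main} and rests crucially on the Lipschitz continuity of $u^n_m$ in $y$, which is inherited from the $\bd_1$-Lipschitz property of $U$.
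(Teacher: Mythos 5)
Your argument is correct and is essentially the paper's proof: mollify $U$ so the linear derivative exists, write $\frac{\delta \hat{U}^{n,\delta}}{\delta m}$ as a convolution with $\rho_\delta$ and move one derivative onto $\frac{\delta U^n}{\delta m}$ to get the $C^{2s^*}$-bound of order $\delta^{-(2s^*-1)}$ (the paper's \eqref{rbound}), obtain the first-order identity $m_\eps - m_0 = \eps\bigl(\frac{\delta \hat{U}^{n,\delta}}{\delta m}\bigr)^*$, convert it into an $L^\infty$ bound via \eqref{dualidentity}--\eqref{c2sbound}, and check positivity of $m_\eps$ a posteriori using the lower bound on $m_0$; \eqref{w2inflip} then follows from the touching property exactly as you describe.

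The one step you do differently is how the first-order condition is justified. The paper never relaxes the constraint set: it defines the map $\Psi(z,m)=\bigl(z_0+\eps D_z\Phi(z,m),\, m_0+\eps(\frac{\delta\Phi}{\delta m}(z,m,\cdot))^*\bigr)$, shows that the lower bound on $m_0$ forces $\Psi$ to map $\R^d\times\cP(\T^d)$ into itself, produces a fixed point by Schauder, and identifies it with the unique maximizer by strict concavity. You instead extend the sup-convolution to signed distributions of unit mass and use a Lagrange multiplier (correctly shown to vanish, since both $(m_\eps-m_0)^*$ and the normalized linear derivative have zero mean). This works, but it carries two obligations the paper's route avoids: you must extend $\hat{U}^{n,\delta}$ (hence $U^n$) to a neighborhood of $\cP(\T^d)$ in the unit-mass affine subspace of $H^{-s^*}$ with the same concavity modulus, and you must verify that the relaxed maximizer exists and is an interior point so that the unconstrained stationarity condition applies. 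Both points are salvageable because the mollified approximations of \cite{djs2023} are genuinely defined on $H^{-s}$ and the penalized functional is coercive and weakly upper semicontinuous on the closed affine subspace, but they should be stated; as written, "extending the sup-convolution problem to signed measures" glosses over the fact that $U$ itself is only defined on $\cP(\T^d)$.
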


\begin{proof} 
    As in the proof of \cite[Proposition 4.4]{ddj2023}, we can can first mollify $U$ to produce a sequence $U^n$ which are smooth in $m$ for each fixed $t$, satisfy the same estimates as $U$, and converge uniformly to $U$. This means in particular that the functions $\hat{U}^{n,\delta}$, defined like $\hat{U}^{\delta}$ but with $U^n$ replacing $U$, are smooth and satisfy all the same estimates as $\hat{U}^{\delta}$. Then if we can obtain the analogue of \eqref{mepsbound} for the functions $\hat{U}^{n, \delta, \eps}$ defined like $\hat{U}^{\delta, \eps}$ but with $U^n$ replacing $U$, we can send $n \to \infty$ to obtain the corresponding estimates for $\hat{U}^{\delta, \eps}$. For simplicity, we avoid repeating this mollification argument in detail, and only give the proof under the additional assumption that $U$ is smooth in $m$ for each fixed $t$.
    
    We start with proving \eqref{mepsbound}. For notational simplicity we fix $t \in [0,T]$, and we define $\Phi(z,m) : \R^d \to \cP(\T^d)$ by the formula $\Phi(z,m) = \hat{U}^{\delta}(t,z,m)$. Our goal is to show that there is a constant $C$ such that if $\eps < \frac{1}{C} \delta^{2s^*}$, then the minimizer $(z_{\epsilon}, m_{\epsilon})$ of the optimization problem 
    \begin{align*}
        \sup_{z \in \R^d, m \in \cP(\T^d) } \Big\{ \Phi(z,m) - \frac{1}{2\eps} \Big(|z - z_0|^2 + \|m -m_0\|_{-s}^2 \Big) \Big\}
    \end{align*}
    satisfies the bound \eqref{mepsbound}.
    Because we are assuming that $U$ is smooth, we have an explicit formula
    \begin{align*}
        \frac{\delta \Phi }{\delta m}(z,m,\cdot) = \rho_{\delta} * \frac{\delta \hat{U}}{\delta m}(t,z,m * \rho_{\delta},\cdot), 
    \end{align*}
    from which we can deduce the bound
    \begin{align} \label{rbound}
        \norm{ \frac{\delta \Phi}{\delta m}(z,m,\cdot) }_{C^s} \leq C(s) \delta^{s-1}
    \end{align}
    for each $s > 1$. Now, we observe that by the semiconcavity bound on $\hat{U}^{\delta}$ from Lemma \ref{lem.udeltahatreg}, we can choose $C$ large enough that if $\epsilon < \frac{1}{C} \delta^{2s}$, then the map
    \begin{align*}
       \R^d \times \cP(\T^d) \ni  (z,m) \mapsto \Phi(z,m) - \frac{1}{2\epsilon} \big(|z - z_0|^2 + |m - m_0|^2 \big)
    \end{align*}
    is strictly concave (with respect to the Hilbertain structure on $\R^d \times H^{-s}$), which means that the optimization problem 
    \begin{align} \label{optimization}
        \sup_{z,m} \Big\{\Phi(z,m) - \frac{1}{2\eps}  \big(|z - z_0|^2 + \|m - m_0\|_{-s}^2 \big) \Big\}
    \end{align}
    admits exactly one solution. Moreover, again by strict concavity, if the system of first-order conditions
    \begin{align} \label{zmsystem}
        \begin{cases}
            z -z_0 = \epsilon D_z \Phi(z, m) , \\
            m - m_0 = \epsilon \Big( \frac{\delta \Phi}{\delta m}(z,m,\cdot) \Big)^*,
        \end{cases}
    \end{align}
    admits a solution $(z_{\epsilon}, m_{\epsilon}) \in \R^d \times \cP(\T^d)$, then $(z_{\epsilon}, m_{\epsilon})$ must be the unique optimizer for the optimization problem \eqref{optimization} (see Step 3 in the proof of \cite[Proposition 4.4]{ddj2023} for more details). We note that here $\big( \frac{\delta \Phi}{\delta m}(z,m,\cdot) \big)^* \in H^{-s^*}$ is the dual of  $\frac{\delta \Phi}{\delta m}(z,m,\cdot) \in H^{s^*}$.

    We now argue that if $m_0$ satisfies the lower bound from the statement of the lemma, then we can in fact solve \eqref{zmsystem}. For this, we define a map
    \begin{align*}
        \Psi : \R^d \times \cP(\T^d) \to \R^d \times H^{-s}, \quad \Psi(z,m) = \Big(z_0 + \epsilon D_z \Phi(z,m), \, m_0 + \epsilon \Big( \frac{\delta \Phi}{\delta m}(z,m,\cdot) \Big)^*\Big). 
    \end{align*}
    First, notice that by convention
    \begin{align*}
        \langle \Big( \frac{\delta \Phi}{\delta m}(z,m,\cdot) \Big)^*, 1 \rangle_{-s,s} = \int \frac{\delta \Phi}{\delta m}(z,m,x) dx = 0, 
    \end{align*}
    and moreover using \eqref{rbound} and \eqref{c2sbound}, we have 
    \begin{align} \label{linfbound}
        \norm{ \Big( \frac{\delta \Phi}{\delta m}(z,m,\cdot) \Big)^*}_{\linf} \leq C \norm{ \frac{\delta\Phi}{\delta m}(z,m,\cdot)}_{C^{2s^*}} \leq C \delta^{2s^* - 1}. 
    \end{align}
    Thus, taking $C$ larger if necessary, we find that if $m_0 \geq C \epsilon \delta^{-(2s^*  - 1)}$, then in fact $m_0 + \eps \big( \frac{\delta \Phi}{\delta m}(z,m,\cdot) \big)^*$ is a positive measure with total mass 1, i.e. a probability measure, so that
    \begin{align*}
        \Psi(z,m) \in \R^d \times \cP(\T^d), \quad \text{for each $(z,m) \in \R^d \times \cP(\T^d)$}. 
    \end{align*}
    Because we have assumed that $U$, hence $\Phi$, is smooth, we see that $\Psi$ is continuous, and so by Schauder's fixed point theorem it has a fixed point $(z_{\eps}, m_{\eps})$, which by \eqref{linfbound} must satisfy 
    \begin{align} \label{mndeltaepsbound}
        \|m_{\eps} - m_0\|_{\linf} \leq C \epsilon \delta^{-(2s^* - 1)}. 
    \end{align}
    Since by design $(z_{\eps}, m_{\eps})$ solves the system \eqref{zmsystem}, we conclude by our earlier discussion that $(z_{\eps}, m_{\eps})$ is the unique optimizer for the problem \eqref{optimization}. Moreover, we have the identity
    \begin{align*}
        \frac{\delta \Phi }{\delta m}(z_{\eps} ,m_{\eps} ,\cdot) = \frac{1}{\eps} (m_{\eps} - m_0)^*, 
    \end{align*}
    from which we infer 
    \begin{align} \label{w2infbound}
     \norm{ \frac{1}{\eps} (m_{\eps} - m_0)^*}_{1,\infty} \leq \lip(\Phi; W^{-1,\infty}) \leq \lip(U; W^{-1,\infty}), \nonumber  \\ 
        \norm{ \frac{1}{\eps}
        (m_{\eps} - m_0)^*}_{2,\infty} \leq \lip(\Phi; W^{2,\infty}) \leq \lip(U; W^{-2,\infty}), 
    \end{align}
    which yields \eqref{w2inflip}. This completes the proof.
\end{proof}

Next, we need to check the extent to which $\hat{U}^{\delta, \epsilon}$ is a subsolution to the relevant PDE. 

\begin{lem} \label{lem.uhatdeltaepssubsol}
There is a constant $C$ with the following property: if $\Phi(t,z,q) : (0,T) \times \R^d \times H^{-s^*} \to \R$ is $C^{1,2,1}$ (with regularity in $q$ understood in the Hilbertian sense) and 
\begin{align*}
    \hat{U}^{\delta, \eps}(t_0,z_0,m_0) - \Phi(t_0,z_0,m_0) = \sup_{(t,z,q) \in [0,T] \times \R^d \times H^{-s}} \Big( \hat{U}^{\delta, \eps}(t,z,q) - \Phi(t,z,q)  \Big) 
\end{align*}
for some $(t_0,z_0,m_0) \in (0,T) \times \R^d \times \cP(\T^d)$ satisfying $m_0 \geq C \eps \delta^{-(2s^*- 1)}$, then
     \begin{align} \label{subsoldeltaeps}
        &- \partial_t \Phi(t_0,z_0,m_0) - A_0 \Delta_z \Phi(t_0,z_0,m_0) - \int_{\T^d}\tr\big( \hat{A}(x_0,z) D_{xm} \Phi(t_0,z_0,m_0,x) \big) m_0(dx)  \nonumber \\
        &\qquad  + \int_{\T^d} \hat{H}\big(x,z_0,D_m \Phi(t_0,z_0,m_0,x)\big) m_0(dx) \leq \hat{F}(z_0,m_0) + C ( 1 + \lip(U ; W^{-2,\infty}) )\big(\delta + \epsilon \delta^{-(2s^* - 1)} \big).
     \end{align}
     If $A$ is constant, then the dependence on $\lip(U; W^{-2,\infty})$ can be removed, i.e. the right-hand side of \eqref{subsoldeltaeps} can be replaced by $C\big(\delta + \epsilon \delta^{-(2s^* - 1)} \big)$.
    
\end{lem}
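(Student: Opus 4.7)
The plan is to transfer the almost-subsolution property of $\hat{U}^{\delta}$ (Lemma \ref{lem.udeltahatsubsol}) to $\hat{U}^{\delta,\eps}$ by exploiting the sup-convolution structure. Let $(z_\eps, m_\eps)$ be the unique optimizer in the sup-convolution defining $\hat{U}^{\delta,\eps}(t_0, z_0, m_0)$; by Lemma \ref{lem.linfbound}, provided $C$ is chosen large enough in the lower bound $m_0 \geq C \eps \delta^{-(2s^*-1)} \leb$, one has $|z_0 - z_\eps| \leq C \eps$, $\|m_\eps - m_0\|_\infty \leq C \eps \delta^{-(2s^*-1)}$, and $m_\eps \geq c \leb$ for some $c > 0$. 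I would then introduce the shifted test function
\[
\tilde{\Phi}(t,z,q) := \Phi\bigl(t,\, z + z_0 - z_\eps,\, q + m_0 - m_\eps\bigr) + \tfrac{1}{2\eps}\bigl(|z_0 - z_\eps|^2 + \|m_0 - m_\eps\|_{-s^*}^2\bigr)
\]
on $(0,T) \times \R^d \times H^{-s^*}$. Applying the definition of the sup-convolution at the shifted arguments, one checks that $\hat{U}^{\delta}(t,z,m) \leq \tilde{\Phi}(t,z,m)$ on $(0,T)\times\R^d\times\cP(\T^d)$ with equality at $(t_0, z_\eps, m_\eps)$, and since the shift is by constants the derivatives of $\tilde{\Phi}$ at $(t_0, z_\eps, m_\eps)$ coincide with those of $\Phi$ at $(t_0, z_0, m_0)$.

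Next, apply Lemma \ref{lem.udeltahatsubsol} to $\tilde{\Phi}$ at $(t_0, z_\eps, m_\eps)$ to obtain the subsolution inequality with $(z_\eps, m_\eps)$ in place of $(z_0, m_0)$ in the coefficients $\hat A(x,\cdot)$, $\hat H(x,\cdot,\cdot)$, and the integrating measure. To recover the inequality at $(t_0, z_0, m_0)$, I would control the shift discrepancies using Lemma \ref{lem.regfromtouching}: since $\hat{U}^{\delta}$ is $\bd_1$-Lipschitz with a data constant (by Lemma \ref{lem.udeltahatreg}) and $W^{-2,\infty}$-Lipschitz in $m$ with constant $\lip(U; W^{-2,\infty})$ (inherited from $U$ by translation-invariance and the non-expansiveness of convolution with a probability kernel), the touching at $m_\eps \geq c\leb$ gives both $\|D_m \Phi(t_0, z_0, m_0, \cdot)\|_\infty \leq C$ and $\|\tfrac{\delta \Phi}{\delta m}(t_0, z_0, m_0, \cdot)\|_{2,\infty} \leq \lip(U; W^{-2,\infty})$, hence also $\|D_{xm} \Phi\|_\infty \leq \lip(U; W^{-2,\infty})$. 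Combined with $|z_\eps - z_0| \leq C\eps$, $\|m_\eps - m_0\|_{\tv} \leq C\eps \delta^{-(2s^*-1)}$, and the Lipschitz estimates on $\hat A$, $\hat H$ (with $|p|$ bounded), and $\hat F$ from Assumption \ref{assump.maindegen}, the total additional error is bounded by $C(1 + \lip(U; W^{-2,\infty}))(\delta + \eps \delta^{-(2s^*-1)})$, proving \eqref{subsoldeltaeps}.

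For the improvement when $A$ is constant, Lemma \ref{lem.udeltahatsubsol} already yields a base error $C\delta$ free of $\lip(U; W^{-2,\infty})$. The only shift-discrepancy in the argument above that used $\|D_{xm}\Phi\|_\infty$ is $\int \tr(A\, D_{xm} \Phi)(m_\eps - m_0)(dx)$; here I would exploit the first-order optimality in the sup-convolution, which by Lemma \ref{lem.ovureg}(3) and the equality of derivatives at the touching point gives $m_\eps - m_0 = \eps\, \phi^*$ in $H^{-s^*}$, with $\phi := \tfrac{\delta \Phi}{\delta m}(t_0, z_0, m_0, \cdot) \in H^{s^*}$. Using $\phi^*(\psi) = \langle \phi, \psi \rangle_{s^*}$, the commutativity of the constant matrix $A$ with differentiation, and integration by parts on $\T^d$,
\[
\int_{\T^d} \tr\bigl(A\, D^2 \phi\bigr)\, d(m_\eps - m_0) = \eps\, \langle \phi,\, \tr(A\, D^2 \phi) \rangle_{s^*} = -\eps \sum_{|\alpha| \leq s^*} \int_{\T^d} (D D^\alpha \phi)^{T} A\, (D D^\alpha \phi)\, dx \leq 0,
\]
since $A$ is symmetric positive semi-definite. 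This discrepancy therefore has a favorable sign and may be dropped, while the remaining $\hat H$- and $\hat F$-discrepancies depend only on $\|D_m \Phi\|_\infty \leq C$ (data). This delivers the sharpened error $C(\delta + \eps \delta^{-(2s^*-1)})$.

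The hard part of this plan is the last step: recognizing and justifying the dissipative identity that makes the $D_{xm}$-discrepancy non-positive when $A$ is constant. Without this sign, the only available bound on $\|D_{xm}\Phi\|_\infty$ is $\lip(U; W^{-2,\infty})$, and the improvement in the constant-$A$ regime would be lost.
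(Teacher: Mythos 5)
Your proposal is correct and follows essentially the same route as the paper: shift the test function by the optimizer displacement $(z_0-z_\eps, m_0-m_\eps)$ so that it touches $\hat{U}^{\delta}$ at $(t_0,z_\eps,m_\eps)$, apply Lemma \ref{lem.udeltahatsubsol} there, and control the resulting coefficient/measure discrepancies via Lemmas \ref{lem.ovureg} and \ref{lem.linfbound}. Your treatment of the constant-$A$ case — using $m_\eps-m_0=\eps\,\phi^*$ and the integration-by-parts identity $\langle\phi,\tr(AD^2\phi)\rangle_{s^*}\leq 0$ — is exactly the paper's argument, with the dissipativity (which the paper delegates to a citation) worked out explicitly.
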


\begin{proof}
    We let $C$ be large enough that the conclusion of Lemma \ref{lem.linfbound} holds, and we suppose that a $\Phi$ touches $\hat{U}^{\delta, \eps}$ from above at a point $(t_0,m_0,z_0) \in (0,T) \times \T^d \times \cP(\T^d)$ such that $m_0 \geq C \epsilon  \delta^{-(2s^*- 1)} $. Now let $(z_{\delta, \eps}, m_{\delta, \eps}) \in \T^d \times \cP(\T^d)$ be a point where the supremum 
    \begin{align*}
        \sup_{z \in \R^d,m \in \cP(\T^d)} \bigg\{\hat{U}^{\delta}(t_0,z,m) - \frac{1}{\eps} \big(|z - z_0|^2 + \|m - m_0\|_{-s^*}^2 \big) \bigg\}
    \end{align*}
    is achieved. Standard arguments from the theory of viscosity solution show that the function $\Psi(t,z,m) = \Phi(t, z_0 - z_{\delta, \eps} + z, m_0 - m_{\delta, \eps} + m)$ (here we use that $\Phi$ is defined on $H^{-s}$) touches $\hat{U}^{\delta}$ from above at $(t_0,z_{\delta, \eps}, m_{\delta, \eps})$. By Lemma \ref{lem.udeltahatsubsol} (which applies because $s^* > d/2 + 2$ implies that the restriction of a function in $C^{1,2,1}((0,T) \times \R^d \times H^{-s^*})$ is also in $\cC^{1,2,2}_p((0,T) \times \R^d \times \cP(\T^d))$), it follows that 
    \begin{align}
    &- \partial_t \Phi(t_0,z_0,m_0) - A_0 \Delta_z \Phi(t_0, z_0,m_0) - \int_{\T^d} \tr\big( \hat{A}(x,z_{\delta, \eps}) D_{xm} \Phi(t_0,z_0,m_0) )m_{\delta, \eps}(dx) \nonumber  \\ &\qquad \qquad + \int_{\T^d} \hat{H}\Big(x,z_{\delta, \eps}, D_m \Phi(t_0,z_0,m_0,x) \Big) m_{\delta, \eps}(dx) \nonumber \\
    &=- \partial_t \Psi(t_0,z_{\delta, \eps},m_{\delta, \eps}) - A_0 \Delta_z \Psi(t_0, z_{\delta, \eps},m_{\delta, \eps}) - \int_{\T^d} \tr\big( \hat{A}(x,z_{\delta, \eps}) D_{xm} \Psi(t_0,z_{\delta, \eps},m_{\delta, \eps}) )m_{\delta, \eps}(dx) \nonumber  \\
    &\qquad \qquad + \int_{\T^d} \hat{H}\Big(x,z_{\delta, \eps}, D_m \Psi(t_0,z_{\delta, \eps},m_{\delta, \eps},x) \Big) m_{\delta, \eps}(dx)
       \nonumber  \leq \hat{F}(z_{\delta, \eps}, m_{\delta, \eps}) + C(1 + \lip(U; W^{-2,\infty}))\delta. 
    \end{align}
   We deduce that 
    \begin{align} \label{errorterms}
     &- \partial_t \Phi(t_0,z_0,m_0) - A_0 \Delta_z \Phi(t_0, z_0,m_0) - \int_{\T^d} \tr\big( \hat{A}(x,z_0) D_{xm} \Phi(t_0,z_0,m_0) )m_{0}(dx) \nonumber  \\ &\qquad  + \int_{\T^d} \hat{H}\Big(x,z_0,m_0, D_m \Phi(t_0,z_0,m_0,x) \Big) m_0(dx) \leq \hat{F}(z_{0},m_0) + C(1 + \lip(U; W^{-2,\infty}))\delta
    \nonumber  \\
     &\qquad + I + II + III, 
     \end{align}
     where 
     \begin{align*}
         I &= \int_{\T^d} \tr\big(\hat{A}(x,z_{\delta, \eps}) D_{xm} \Phi(t_0,m_0,z_0,x) \big) m_{\delta, \eps}(dx) - \int_{\T^d} \tr\big(\hat{A}(x,z_0) D_{xm} \Phi(t_0,m_0,z_0,x) \big) m_0(dx), \\
         II &= \int_{\T^d} \hat{H}\Big(x,z_0, D_m \Phi(t_0,z_0,m_0,x) \Big) m_0(dx) - \int_{\T^d} \hat{H}\Big(x,z_{\delta, \eps}, D_m \Phi(t_0,z_0,m_0,x) \Big) m_{\delta, \eps}(dx), \\
         III &= \hat{F}(z_{\delta, \eps}, m_{\delta, \eps}) - \hat{F}(z_0,m_0).
     \end{align*}
     To bound $I$, Lemmas \ref{lem.ovureg} and \ref{lem.linfbound} to get
     \begin{align} \label{Icomp}
         I &\leq C \|D_{xm} \Phi(t_0,m_0,z_0,\cdot)\|_{\linf} \big( |z_{\delta, \eps} - z_0|  + \|m_{\delta, \eps} - m_0\|_{\linf} \big) \leq C  \epsilon \delta^{-(2s^* - 1)} (1 + \text{Lip}(U; W^{-2,\infty})). 
     \end{align}
     For the second error term, we use Assumption \ref{assump.maindegen} and the fact that $$ R \coloneqq \|D_m \Phi(t_0,z_0,m_0,\cdot)\|_{\infty} \leq \lip(U; W^{-1,\infty})$$ by Lemma \ref{lem.linfbound} to find a constant $C$ depending only on the data such that $\hat{H}$ is uniformly Lipschitz and bounded on $\T^d \times \R^d \times B_R$, and thus
     \begin{align} \label{IIcomp}
         II &\leq C \big(|z_{\delta, \eps} - z_0| + \int_{\T^d} \hat{H}\Big(x,z_0, D_m \Phi(t_0,z_0,m_0,x),m_0 \Big) (m_0 - m_{\delta, \eps} )(dx) \nonumber  \\
         &\leq C |z_{\delta, \eps} - z_0| + C \Big(1 + \norm{\hat{H}}_{\linf(\T^d \times \R^d \times B_{R}  )} \Big)\|m_{\delta, \eps} - m_0\|_{\infty} \nonumber \\ 
         &\leq C \eps \delta^{-(2s^* - 1)}.
     \end{align}
     Similarly, Lemmas \ref{lem.ovureg} and \ref{lem.linfbound} easily give 
     \begin{align} \label{IIIcomp}
         III \leq C \big( |z_{\delta, \eps} - z_{\delta}| + \bd_1(m_{\delta, \eps},m_0) \big) \leq C \eps \delta^{-(2s^* - 1)}.
     \end{align}
     By combining \eqref{errorterms}, \eqref{Icomp}, \eqref{IIcomp} and \eqref{IIIcomp}, we obtain \eqref{subsoldeltaeps}. To see the improvement when $A$ is constant, we use the representation of $\frac{\delta \hat{U}^{\delta, \eps}}{\delta m}$ from Lemma \ref{lem.ovureg} to see that 
     \begin{align*}
         I &= \int_{\T^d} \tr \big(A D^2_{xx} \frac{\delta \Phi}{\delta m}(t_0,m_0,z_0,x) \big) (m_{\delta,\eps} - m_0)(dx) = \frac{1}{\eps} \int_{\T^d}  \tr \big(A D^2_{xx} (m_{\delta, \eps} -m_0)^* \big) (m_{\delta,\eps} - m_0)(dx) 
         \\
         &= \frac{1}{\eps} \langle \tr( A D^2_{xx} (m_{\delta, \eps} - m_0)^*, m_{\delta, \eps} - m_0 \rangle_s \leq 0, 
     \end{align*}
     where the last inequality follows from the non-positivity of the operator 
     $\phi \mapsto \tr(A D^2_{xx} \phi)$ on $H^s$, which is an easy generalization of Lemma 5.5 in \cite{cdjs2023}.
\end{proof}

Finally, we are ready to study the function $\hat{U}^{\delta,\epsilon,\lambda} : [0,T] \times \R^d \times \cP(\T^d) \to \R$ defined for $\delta, \epsilon > 0$, $\lambda \in (0,1)$ by \eqref{def.udeltaepshatlambda}.

\begin{lem} \label{lem.udeltaepslambdareg} 
There is a constant $C$ such that for $\epsilon < \frac{\delta^{2s}}{C}$, we have
\begin{enumerate}
    \item $\sup_{t,z,m} |\hat{U}^{\delta, \eps, \lambda}(t,z,m)- \hat{U}(t,z,m)| \leq C \big(\delta + \lambda + \epsilon \delta^{-2(s^*-1)}\big)$, 
    \item $\R^d \times \cP(\T^d) \ni (z,m) \mapsto \hat{U}^{\delta,\eps, \lambda}(t,z,m)$ is $C^{1,1}$ with respect to $H^{-s^*}$, uniformly in $t$, with 
    \begin{align*}
        \sup_t \big[\hat{U}^{\delta, \eps,\lambda}(t,\cdot,\cdot) \big]_{C^{1,1}(\R^d \times H^{-s^*})} \leq \frac{C}{\eps},
    \end{align*}
    \item $\hat{U}^{\delta,\eps,\lambda} \in \text{Lip}\big([0,T] \times \R^d \times \cP(\T^d); H^{-s^*} \big)$, and
    \begin{align*}
        |\hat{U}^{\delta, \eps, \lambda}(t,z,m) - \hat{U}^{\delta, \eps, \lambda}(t,z',m
        )| \leq C \big(\delta^{-(s^*-1)} \|m - m'\|_{-s^*} + |z-z'| \big)
    \end{align*}
    for each $t\in [0,T]$, $z,z' \in \R^d$, $m,m' \in \cP(\T^d)$.
\end{enumerate}
\end{lem}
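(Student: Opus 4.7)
The plan is to view the passage $\hat U^{\delta,\eps} \to \hat U^{\delta,\eps,\lambda}$ as a harmless affine perturbation, so that properties (2) and (3) transfer mechanically from Lemma \ref{lem.ovureg}, while (1) is obtained by chaining the error bounds already proved for each link $\hat U \to \hat U^{\delta} \to \hat U^{\delta,\eps} \to \hat U^{\delta,\eps,\lambda}$. Throughout, I will set $m^\lambda \coloneqq \lambda\leb + (1-\lambda)m$ and denote by $T_\lambda(z,q) = (z, \lambda\leb + (1-\lambda) q)$ the affine map on $\R^d \times H^{-s^*}$, so that $\hat U^{\delta,\eps,\lambda}(t,z,m) = \hat U^{\delta,\eps}(t, T_\lambda(z,m))$.

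For (1), I would decompose
\[
\hat U^{\delta,\eps,\lambda}(t,z,m) - \hat U(t,z,m) = \bigl[\hat U^{\delta,\eps}(t,z,m^\lambda) - \hat U^{\delta}(t,z,m^\lambda)\bigr] + \bigl[\hat U^{\delta}(t,z,m^\lambda) - \hat U(t,z,m^\lambda)\bigr] + \bigl[\hat U(t,z,m^\lambda) - \hat U(t,z,m)\bigr].
\]
The first bracket is at most $C\eps\delta^{-2(s^*-1)}$ by Lemma \ref{lem.ovureg}(1); the second is at most $C\delta$ by the last estimate of Lemma \ref{lem.udeltahatreg}. For the third, I would use that $\hat U(t,z,m) = U(t,m^z)$ inherits $\bd_1$-Lipschitz continuity in $m$ from Proposition \ref{prop: d1_lip}, together with $\bd_1(m^\lambda,m) \leq \lambda\,\bd_1(\leb,m) \leq C\lambda$, to bound it by $C\lambda$. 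Adding the three contributions yields the claimed estimate $C(\delta + \lambda + \eps\delta^{-2(s^*-1)})$.

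For (2) and (3), the key observation is that $T_\lambda$ is $1$-Lipschitz on $\R^d \times H^{-s^*}$, since $\|T_\lambda(z,q) - T_\lambda(z',q')\|^2 = |z-z'|^2 + (1-\lambda)^2 \|q-q'\|_{-s^*}^2$. Thus the $C^{1,1}(\R^d \times H^{-s^*})$-seminorm bound of $C/\eps$ for $\hat U^{\delta,\eps}$ from Lemma \ref{lem.ovureg}(2) passes to $\hat U^{\delta,\eps,\lambda}$ with the same constant, giving (2). Likewise, applying the Lipschitz estimate of Lemma \ref{lem.ovureg}(4) to the pair $T_\lambda(z,m), T_\lambda(z',m')$ and using $(1-\lambda) \leq 1$ yields (3); joint continuity/Lipschitzness in $t$ is inherited in the same way.

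No step presents a genuine obstacle: the entire argument is assembled from Lemmas \ref{lem.udeltahatreg}, \ref{lem.ovureg} and the $\bd_1$-Lipschitz bound of Proposition \ref{prop: d1_lip}. The only subtle point is to recognize, in (1), that one must compare $\hat U^{\delta,\eps,\lambda}(t,z,m)$ with $\hat U(t,z,m^\lambda)$ (not directly with $\hat U(t,z,m)$) when invoking the $O(\eps\delta^{-2(s^*-1)})$ and $O(\delta)$ bounds, and then pay the price $O(\lambda)$ separately via the robust $\bd_1$-Lipschitz continuity of $\hat U$.
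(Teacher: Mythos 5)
Your proposal is correct and follows essentially the same route as the paper: the paper also proves (1) by telescoping through $\hat U^{\delta,\eps}(t,z,m_\lambda)\to\hat U^{\delta}(t,z,m_\lambda)\to\hat U^{\delta}(t,z,m)\to\hat U(t,z,m)$ using Lemmas \ref{lem.ovureg} and \ref{lem.udeltahatreg} plus a $\bd_1$-Lipschitz bound to absorb the $O(\lambda)$ term, and dismisses (2) and (3) as immediate consequences of Lemma \ref{lem.ovureg}, which is exactly your affine-composition observation. The only (inessential) difference is that you peel off the $\lambda$-perturbation at the level of $\hat U$ rather than $\hat U^{\delta}$; both work since each is $\bd_1$-Lipschitz with constant depending only on the data.
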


\begin{proof}
 For (1), we fix $(t,z,m)$ and set $m_{\lambda} = \lambda \leb+ (1-\lambda) m$, and then we simply note that by Lemmas \ref{lem.udeltahatreg} and \ref{lem.ovureg}, we have
 \begin{align*}
     |\hat{U}^{\delta, \eps, \lambda}(t,z,m)& - \hat{U}^{\delta}(t,z,m)| = |\hat{U}^{\delta, \eps}(t,z, m_{\lambda} ) -  \hat{U}^{\delta}(t,z,m)| \\
     &\leq |\hat{U}^{\delta, \eps}(t,z, m_{\lambda} ) -  \hat{U}^{\delta}(t,z,m_{\lambda})| + |\hat{U}^{\delta}(t,z, m_{\lambda} ) -  \hat{U}^{\delta}(t,z,m)| \\& \leq C\epsilon \delta^{-2(s^*-1)} + \lip(\hat{U}^{\delta} ; W^{-1,\infty})\bd_1(m_{\lambda}, m) \\
     &\leq C\big(\delta^{-(s^*-1)} \|m - m'\|_{-s^*} + |z-z'| \big).
 \end{align*}
 Then using the bound on $|\hat{U}^{\delta} - \hat{U}|$ from Lemma \ref{lem.udeltahatreg} completes the proof of (1).
 Points (2) and (3) are easy consequences of the regularity of $\hat{U}^{\delta, \eps}$ from Lemma \ref{lem.ovureg}.
\end{proof}

Finally, we estimate the amount by which $\hat{U}^{\delta, \eps, \lambda}$ fails to be a subsolution of \eqref{hjbz}.

\begin{lem} \label{lem.udeltaepslambdasubsol}
There is a constant $C > 0$ with the following property: for each each $\delta > 0$, $0 < \epsilon < \frac{1}{C} \delta^{2s^*}$, and $1/2  > \lambda > C \epsilon \delta^{-(2s^*  -1)}$, and any function $\Phi \in C^{1,2,1}((0,T) \times \R^d \times H^{-s^*})$ such that 
\begin{align*}
    \hat{U}^{\delta, \eps, \lambda}(t_0,z_0,m_0) - \Phi(t_0,z_0,m_0) = \sup_{t,z,q \in [0,T] \times \R^d \times H^{-s^*}} \Big( \hat{U}^{\delta, \eps, \lambda}(t,z,q) - \Phi(t,z,q)\Big)
\end{align*}
for some $(t_0,z_0,m_0) \in [0,T] \times \T^d \times \cP(\T^d)$, we have
\begin{align} \label{subsoludeltaepslambda}
         &- \partial_t \Phi(t_0,z_0,m_0) - A_0 \Delta_z \Phi(t_0,z_0,m_0) - \int_{\T^d}\tr\big( \hat{A}(x_0,z) D_{xm} \Phi(t_0,z_0,m_0,x) \big) m_0(dx) \nonumber  \\
        &\qquad  + \int_{\T^d} \hat{H}\big(x,z_0,D_m \Phi(t_0,z_0,m_0,x)\big) m_0(dx)\leq \hat{F}(z_0,m_0) + C \big(1 + \lip(U; W^{-2,\infty}) \big)\Big(\delta + \lambda + \epsilon \delta^{-(2s^* - 1)} \Big).
    \end{align}
    If $A$ is constant, then the dependence on $\lip(U; W^{-2,\infty})$ can be removed, i.e. the right-hand side of \eqref{subsoludeltaepslambda} can be replaced by $C\big(\delta + \lambda + \epsilon \delta^{-(2s^* - 1)}\big) $.
\end{lem}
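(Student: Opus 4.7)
The plan is to reduce to Lemma~\ref{lem.uhatdeltaepssubsol} via an affine change of variable in the measure argument. Set $m_0^{\lambda} = \lambda \leb + (1-\lambda) m_0$, which by construction satisfies $m_0^{\lambda} \geq \lambda \leb \geq C\eps \delta^{-(2s^*-1)}\leb$ under the lower bound on $\lambda$. Given a test function $\Phi \in C^{1,2,1}((0,T) \times \R^d \times H^{-s^*})$ touching $\hat{U}^{\delta,\eps,\lambda}$ from above at $(t_0,z_0,m_0)$, I would define
\[
\Psi(t,z,q) \coloneqq \Phi\Bigl(t,z,\frac{q-\lambda \leb}{1-\lambda}\Bigr),
\]
which is again in $C^{1,2,1}((0,T) \times \R^d \times H^{-s^*})$. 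A direct substitution shows that for any $(t,z,q) \in (0,T) \times \R^d \times H^{-s^*}$, writing $\tilde q = (q-\lambda\leb)/(1-\lambda)$,
\[
\Psi(t,z,q) - \hat{U}^{\delta,\eps}(t,z,q) = \Phi(t,z,\tilde q) - \hat{U}^{\delta,\eps,\lambda}(t,z,\tilde q),
\]
so $\Psi$ touches $\hat{U}^{\delta,\eps}$ from above at $(t_0,z_0,m_0^\lambda)$. Since $m_0^\lambda$ satisfies the required lower bound, Lemma~\ref{lem.uhatdeltaepssubsol} applies to $\Psi$ at $(t_0,z_0,m_0^\lambda)$.

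\textbf{Relating the derivatives.} From the definition of $\Psi$, one reads off $\partial_t \Psi = \partial_t \Phi$ and $\Delta_z \Psi = \Delta_z \Phi$ at the relevant points, and by the chain rule for linear derivatives
\[
\frac{\delta \Psi}{\delta m}(t_0,z_0,m_0^\lambda,x) = \frac{1}{1-\lambda}\,\frac{\delta \Phi}{\delta m}(t_0,z_0,m_0,x)
\]
(after the normalization \eqref{normalization}), so $D_m \Psi = (1-\lambda)^{-1} D_m \Phi$ and $D_{xm} \Psi = (1-\lambda)^{-1} D_{xm} \Phi$ at those points. Substituting into the subsolution inequality from Lemma~\ref{lem.uhatdeltaepssubsol} gives
\begin{align*}
&-\partial_t \Phi(t_0,z_0,m_0) - A_0 \Delta_z \Phi(t_0,z_0,m_0) - \tfrac{1}{1-\lambda}\int_{\T^d}\tr\bigl(\hat{A}(x,z_0) D_{xm}\Phi(t_0,z_0,m_0,x)\bigr)m_0^\lambda(dx)\\
&\qquad + \int_{\T^d} \hat{H}\bigl(x,z_0,\tfrac{1}{1-\lambda}D_m\Phi(t_0,z_0,m_0,x)\bigr)m_0^\lambda(dx) \leq \hat{F}(z_0,m_0^\lambda) + C\bigl(1+\lip(U;W^{-2,\infty})\bigr)\bigl(\delta + \eps\delta^{-(2s^*-1)}\bigr).
\end{align*}
The target inequality \eqref{subsoludeltaepslambda} differs from this in three ways: the measure $m_0^\lambda$ replaces $m_0$, there is an extra $(1-\lambda)^{-1}$ in front of $D_{xm}\Phi$ and inside $\hat{H}$, and $\hat{F}$ is evaluated at $m_0^\lambda$ rather than $m_0$.

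\textbf{Controlling the three errors.} Since $\Psi$ touches $\hat{U}^{\delta,\eps}$ from above at $(t_0,z_0,m_0^\lambda)$ and $\hat{U}^{\delta,\eps}$ is $\cC^1$ on $H^{-s^*}$, a first-order matching identifies $\frac{\delta \Psi}{\delta m}(t_0,z_0,m_0^\lambda,\cdot) = \frac{\delta \hat{U}^{\delta,\eps}}{\delta m}(t_0,z_0,m_0^\lambda,\cdot) = (m_\eps - m_0^\lambda)^*/\eps$, whose $W^{1,\infty}$ and $W^{2,\infty}$ norms are controlled by $\lip(U;W^{-1,\infty})$ and $\lip(U;W^{-2,\infty})$ respectively, by Lemma~\ref{lem.linfbound}. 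Consequently $\|D_m\Phi(t_0,z_0,m_0,\cdot)\|_\infty$ and $\|D_{xm}\Phi(t_0,z_0,m_0,\cdot)\|_\infty$ are bounded by $C$ and $C\lip(U;W^{-2,\infty})$ respectively. Using $\lambda \leq 1/2$, the trace discrepancy is
\[
\Bigl|\int \tr(\hat{A} D_{xm}\Phi)\,dm_0 - \tfrac{1}{1-\lambda}\int \tr(\hat{A} D_{xm}\Phi)\,dm_0^\lambda\Bigr| = \tfrac{\lambda}{1-\lambda}\Bigl|\int \tr(\hat{A} D_{xm}\Phi)\,d\leb\Bigr| \leq C\lambda \lip(U;W^{-2,\infty}),
\]
the Hamiltonian discrepancy splits as $\hat{H}(\cdot,z_0,(1-\lambda)^{-1}D_m\Phi)-\hat{H}(\cdot,z_0,D_m\Phi)$ against $m_0^\lambda$ plus $\hat{H}(\cdot,z_0,D_m\Phi)$ against $m_0^\lambda - m_0 = \lambda(\leb - m_0)$, and both pieces are bounded by $C\lambda$ using the local Lipschitz and boundedness of $\hat{H}$ on the range of $D_m\Phi$ given by Assumption~\ref{assump.maindegen}, and the $\bd_1$-Lipschitz bound on $\hat{F}$ gives $|\hat{F}(z_0,m_0^\lambda) - \hat{F}(z_0,m_0)| \leq C\lambda$. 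Combining yields the stated bound $C(1+\lip(U;W^{-2,\infty}))(\delta + \lambda + \eps\delta^{-(2s^*-1)})$.

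\textbf{The improvement when $A$ is constant.} In this case the error from the trace term vanishes identically: $\int \tr(A D_{xm}\Phi(t_0,z_0,m_0,\cdot))\,d\leb = \sum_{i,j} A_{ij}\int \partial_{x_i}(D_m\Phi)^j(t_0,z_0,m_0,x)\,dx = 0$ by integration by parts on the torus, so the $\lip(U;W^{-2,\infty})$ contribution disappears from the trace estimate; combined with the sharper bound in Lemma~\ref{lem.uhatdeltaepssubsol} for constant $A$, the right-hand side reduces to $C(\delta + \lambda + \eps\delta^{-(2s^*-1)})$. The only real obstacle in carrying this out is the bookkeeping in the first-order identification of $\frac{\delta\Psi}{\delta m}$ with $\frac{\delta \hat{U}^{\delta,\eps}}{\delta m}$ at the touching point, which is what allows us to import the $W^{2,\infty}$ bound of Lemma~\ref{lem.linfbound} into the control of the test function $\Phi$.
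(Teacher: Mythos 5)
Your proposal is correct and follows essentially the same route as the paper's proof: the same affine change of variable $\Psi(t,z,q)=\Phi(t,z,(q-\lambda\leb)/(1-\lambda))$ reducing to Lemma~\ref{lem.uhatdeltaepssubsol} at $(t_0,z_0,\lambda\leb+(1-\lambda)m_0)$, the same three discrepancy terms bounded via the first-order identification of $\frac{\delta\Psi}{\delta m}$ with $\frac{\delta\hat{U}^{\delta,\eps}}{\delta m}$ at the touching point and Lemma~\ref{lem.linfbound}, and the same integration-by-parts observation for the constant-$A$ improvement. No gaps.
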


\begin{proof}
   Choose $C$ large enough that the conclusion of Lemma \ref{lem.uhatdeltaepssubsol} holds, and let $\delta, \eps, \lambda$, $\Phi$, $(t_0,z_0,m_0)$ be as in the statement of the lemma. Define
    \begin{align*}
        \Phi^{\lambda}(t,z,m) = \Phi\Big(t,z, m^{\lambda}\Big), \quad m^{\lambda} = \frac{m}{1- \lambda} - \frac{\lambda}{1-\lambda} \leb.
    \end{align*}
    Notice that $m^{\lambda}$ may not be a probability measure, but this is not a problem since $\Phi$ is defined on $[0,T] \times \R^d \times H^{-s^*}$.
    Then $\Phi^{\lambda}$ touches $\hat{U}^{\delta, \eps}$ from above at $(t_0,z_0, (m_0)_{\lambda})$, where for $m \in \cP(\T^d)$, we have defined
    \begin{align*}
        m_{\lambda} = \lambda \leb +(1-\lambda) m.
    \end{align*}
    The key point here is that $m_0 = (m_0^{\lambda})_{\lambda} = \big((m_0)_{\lambda}\big)^{\lambda}$. 
    Notice that
    \begin{align*}
        \partial_t \Phi^{\lambda}(t,z,m) = \partial_t \Phi(t,z,m^{\lambda}), \,\,
        D^2_{zz} \Phi^{\lambda}(t,z,m) = D^2_{zz} \Phi(t,z,m^{\lambda}), \,\, D_m \Phi^{\lambda}(t,z,m,\cdot) = \frac{1}{1-\lambda} D_m \Phi(t,z,m^{\lambda},\cdot),
    \end{align*}
    From Lemma \ref{lem.uhatdeltaepssubsol}, we get 
    \begin{align*}
        &- \partial_t \Phi^{\lambda}(t_0,z_0,(m_0)_{\lambda}) - A_0 \Delta_z \Phi^{\lambda}(t_0,z_0,(m_0)_{\lambda})  - \int_{\T^d} \tr\big( \hat{A}(x,z_0) D_{xm} \Phi^{\lambda}(t_0,z_0,(m_0)_{\lambda},x) \big) (m_0)_{\lambda} (dx)  \\
        &\quad \qquad + \int_{\T^d} \hat{H}\big(x,z_0,D_m \Phi^{\lambda}(t_0,z_0,(m_0)_{\lambda},x)\big) (m_0)_{\lambda}(dx) \\
        & \leq \hat{F}(z_{0}, (m_0)_{\lambda}) + C \big(1 + \lip(U; W^{-2,\infty}) \big)\Big(\delta  + \epsilon \delta^{-(2s^* - 1)} \Big), 
    \end{align*}
    and so 
    \begin{align*}
        &- \partial_t \Phi(t_0,z_0,m_0) - A_0 \Delta_z \Phi(t_0,z_0,m_0)  - \int_{\T^d} \tr\big( \hat{A}(x,z_0) D_{xm} \Phi(t_0,z_0,m_0,x) \big) m_0(dx)  \\
        &\qquad \qquad + \int_{\T^d} \hat{H}\big(x,z_0,D_m \Phi(t_0,z_0,m_0,x) \big) m_0(dx) \\
        &= - \partial_t \Phi^{\lambda}(t_0,z_0,(m_0)_{\lambda}) - A_0 \Delta_z \Phi^{\lambda}(t_0,z_0,(m_0)_{\lambda})  - (1-\lambda) \int_{\T^d} \tr\big( \hat{A}(x,z_0) D_{xm} \Phi^{\lambda}(t_0,z_0,(m_0)_{\lambda},x) \big) m_0(dx)  \\
        &\qquad \qquad + \int_{\T^d} \hat{H}\big(x,z_0, (1-\lambda) D_m \Phi^{\lambda}(t_0,z_0,m_{\lambda},x) \big) m_0(dx)
        \\
        &\qquad \leq \hat{F}(z_0,m_0) +  C \big(1 + \lip(U; W^{-2,\infty}) \big)\Big(\delta  + \epsilon \delta^{-(2s^* - 1)} \Big) + I + II + III,
    \end{align*}
    where 
    \begin{align*}
        &I = -\lambda \int_{\T^d} \tr\big( \hat{A}(x,z_0) D_{xm} \Phi^{\lambda}(t_0,z_0,(m_0)_{\lambda},x) \big) \leb(dx), \\
        &II =\int_{\T^d} \hat{H}\big(x,z_0,D_m \Phi^{\lambda}(t_0,z_0,(m_0)_{\lambda},x), m_0 \big) m_{0}(dx)
        \\
        &\qquad-\int_{\T^d} \hat{H}\big(x,z_0, (1-\lambda) D_m \Phi^{\lambda}(t_0,z_0,(m_0)_{\lambda},x), (m_0)_{\lambda} \big) (m_0)_{\lambda}(dx), \\
        &III = \hat{F}(z_0,(m_0)_{\lambda}) - \hat{F}(z_0,m_0).
    \end{align*}
    To bound $I$, we note that in fact since $\Phi^{\lambda}$ touches $\hat{U}^{\delta, \eps}$ from above at $(t_0,z_0,(m_0)_{\lambda})$, and $\hat{U}^{\delta, \eps}$ is smooth in $m$, we have that 
    \begin{align*}
       D_{xm} \Phi^{\lambda}(t_0,z_0,(m_0)_{\lambda},\cdot) = D_{xm} \hat{U}^{\delta, \eps}(t_0,z_0,(m_0)_{\lambda},\cdot), 
    \end{align*}
    where we used that $m_{\lambda}$ has full support. Then, we have by Lemma \ref{lem.linfbound} that $\|D_{xm} \Phi(t_0,z_0,m_0, \cdot)\| \leq C \lip(U; W^{-2,\infty})$. Thus we easily get
    \begin{align*}
        I \leq C \lambda \lip(U; W^{-2,\infty}).
    \end{align*}
    A similar argument gives $II \leq C\lambda$ and $III \leq C \lambda$.
    To get the improved bound when $A$ is constant, we used the improved bound from Lemma \ref{lem.uhatdeltaepssubsol} and notice that when $A$ is constant $I = 0$ by integration by parts. This completes the proof.
\end{proof}

We now give a Lemma which explains that we can understand the subsolution property in Lemma \ref{lem.uhatdeltaepssubsol} in a pointwise sense.

\begin{lem} \label{lem.subsolpointwise}
    Let $\delta, \epsilon$, $\lambda$ and $C$ be as given in the statement of Lemma \ref{lem.udeltaepslambdasubsol}, and let $(t_0,z_0,m_0) \in [0,T] \times \R^d \times \cP(\T^d)$ be a point such that $m_0 \geq C \epsilon \delta^{2s^* - 1}$. Suppose that $a \in \R$, $p \in \R^d$, and $q \in \text{Sym}(\R^{d\times d})$ are such that
    \begin{align*}
        \hat{U}^{\delta, \eps, \lambda}(t,z,m_0) \leq \hat{U}^{\delta, \eps, \lambda}(t_0,z_0,m_0) + a (t-t_0) + p\cdot (z - z_0) + \frac{1}{2} (z-z_0)^T q (z-z_0) + o(|t- t_0| + |z-z_0|^2). 
    \end{align*}
    Then we have 
    \begin{align} \label{subsolpointwise}
        &- a - \tr(q) - \int_{\T^d}\tr\big( \hat{A}(x,z_0) D_{xm} \hat{U}^{\delta, \epsilon, \lambda}(t_0,z_0,m_0,x) \big) m_0(dx) \nonumber \\
        &\qquad  + \int_{\T^d} \hat{H}\big(x,z_0,D_m \hat{U}^{\delta, \epsilon, \lambda}(t_0,z_0,m_0,x)\big) \leq \hat{F}(z_0,m_0) + C \big(1 + \lip(U; W^{-2,\infty}) \big)\Big(\delta + \lambda + \epsilon \delta^{-(2s^*  - 1)} \Big).
    \end{align}
    If $A$ is constant, then the dependence on $\lip(U; W^{-2,\infty})$ can be removed, i.e. the right-hand side of \eqref{subsolpointwise} can be replaced by $C\big(\delta + \lambda + \epsilon \delta^{-(2s^* - 1)} \big) $.
\end{lem}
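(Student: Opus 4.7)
The strategy is to reduce the pointwise inequality \eqref{subsolpointwise} to Lemma \ref{lem.udeltaepslambdasubsol} by exhibiting a $C^{1,2,1}$ test function $\Phi(t,z,\mu)$ on $(0,T) \times \R^d \times H^{-s^*}$ that touches $\hat{U}^{\delta,\eps,\lambda}$ from above at $(t_0,z_0,m_0)$ with $\partial_t \Phi(t_0,z_0,m_0) = a$, $D^2_{zz}\Phi(t_0,z_0,m_0) = q$, and whose $m$-derivatives at that point match $D_m \hat{U}^{\delta,\eps,\lambda}(t_0,z_0,m_0,\cdot)$ and $D_{xm}\hat{U}^{\delta,\eps,\lambda}(t_0,z_0,m_0,\cdot)$. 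Once such a $\Phi$ is in hand, applying Lemma \ref{lem.udeltaepslambdasubsol} reads off \eqref{subsolpointwise}, with the same error bound.

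Write $\phi_0^* \coloneqq \nabla_{-s^*}\hat{U}^{\delta,\eps,\lambda}(t_0,z_0,m_0) \in H^{-s^*}$ for the Hilbertian $m$-gradient (existing by Lemma \ref{lem.udeltaepslambdareg}(2)), and let $C_1 \lesssim 1/\eps$ be the $C^{1,1}(H^{-s^*})$ constant provided there. I would take
\[
\Phi(t,z,\mu) \coloneqq \hat{U}^{\delta,\eps,\lambda}(t_0,z_0,m_0) + a(t-t_0) + p\cdot(z-z_0) + \tfrac{1}{2}(z-z_0)^T q (z-z_0) + \omega(t-t_0,z-z_0) + \langle \phi_0^*, \mu-m_0\rangle_{-s^*} + \tfrac{M}{\eps}\|\mu-m_0\|_{-s^*}^2,
\]
where $M \geq C_1$ is chosen large and $\omega \in C^{1,2}$ is a nonnegative function vanishing to the appropriate order at the origin, large enough to dominate both the little-$o$ error in the hypothesis and the off-diagonal coupling terms described below. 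The target derivatives at $(t_0,z_0,m_0)$ are built in: the quadratic $\mu$-penalty is flat to first order at $\mu = m_0$, so $D_{-s^*}\Phi(t_0,z_0,m_0)$ is exactly the pre-dual $\phi_0 \in H^{s^*}$ of $\phi_0^*$, and by Lemma 2.11 of \cite{ddj2023} this forces $D_m\Phi(t_0,z_0,m_0,\cdot)$ and $D_{xm}\Phi(t_0,z_0,m_0,\cdot)$ to coincide with the corresponding derivatives of $\hat{U}^{\delta,\eps,\lambda}$.

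To verify the global touching property $\hat{U}^{\delta,\eps,\lambda} \leq \Phi$ with equality at $(t_0,z_0,m_0)$, one uses the uniform $C^{1,1}(H^{-s^*})$ estimate to bound $\hat{U}^{\delta,\eps,\lambda}(t,z,\mu) - \hat{U}^{\delta,\eps,\lambda}(t,z,m_0) \leq \langle \nabla_{-s^*}\hat{U}^{\delta,\eps,\lambda}(t,z,m_0), \mu-m_0\rangle_{-s^*} + (C_1/\eps)\|\mu-m_0\|_{-s^*}^2$, combines it with the hypothesis on the slice $\mu = m_0$ (absorbing the $o(\cdot)$ term into $\omega$), and uses Young's inequality to control the discrepancy $\langle \nabla_{-s^*}\hat{U}^{\delta,\eps,\lambda}(t,z,m_0) - \phi_0^*, \mu-m_0\rangle_{-s^*}$ by a combination of quadratic penalties in $\mu$ (absorbed by enlarging $M$) and in $(t-t_0,z-z_0)$ (absorbed by enlarging $\omega$). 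Global behavior at infinity in $\mu$ is handled by the quadratic growth of the penalty together with the uniform upper bound on $\hat{U}^{\delta,\eps,\lambda}$.

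\textbf{Main obstacle.} The delicate part is controlling the size of $\|\nabla_{-s^*}\hat{U}^{\delta,\eps,\lambda}(t,z,m_0) - \phi_0^*\|_{-s^*}$ as $(t,z) \to (t_0,z_0)$. The $C^{1,1}$ estimate in $(z,m)$ gives a Lipschitz bound in $z$ (with constant $\lesssim 1/\eps$), but in $t$ we only know continuous dependence, and the resulting modulus must be absorbed into $\omega$ in a way compatible with the requirements $\omega \in C^{1,2}$ and vanishing first-order-in-$t$, second-order-in-$z$ derivatives at the origin. A smooth radially symmetric majorant of the form $\omega(s,w) = \eta(s^2 + |w|^2)$ with $\eta$ chosen to dominate the modulus on a suitable scale should suffice; the remaining verification is routine.
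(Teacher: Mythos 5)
Your reduction to Lemma \ref{lem.udeltaepslambdasubsol} via an explicit test function is the same strategy as the paper's, and most of the ingredients (the decomposition through the slice $\mu=m_0$, the $C^{1,1}$ bound in $\mu$, matching the Hilbertian gradient at the base point) are correct. The gap is exactly at the point you flag as the ``main obstacle'' and then dismiss as routine, and it is not routine: it breaks your separated-variables ansatz. After writing $\langle \nabla_{-s^*}\hat{U}^{\delta,\eps,\lambda}(t,z,m_0)-\phi_0^*,\mu-m_0\rangle_{-s^*}\le \bigl(\tfrac{C}{\eps}|z-z_0|+\theta(|t-t_0|)\bigr)\|\mu-m_0\|_{-s^*}$, where $\theta$ is merely a modulus of continuity (Lemma \ref{lem.ovureg} gives only \emph{joint continuity} of $\nabla_{-s^*}\hat{U}^{\delta,\eps}$ in $t$, no quantitative rate), Young's inequality turns the time part into $\tfrac{1}{2\kappa}\theta(|t-t_0|)^2+\tfrac{\kappa}{2}\|\mu-m_0\|_{-s^*}^2$. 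The second piece is absorbable by enlarging $M$, but the first must be dominated by $\omega(t-t_0,\cdot)$; since $\omega\ge 0$, $\omega(0,0)=0$ and $\partial_t\omega(0,0)=0$ force $\omega(s,0)=o(|s|)$, you would need $\theta(r)^2=o(r)$, i.e.\ $\theta(r)=o(\sqrt{r})$ (or at least $\theta(r)=O(\sqrt r)$ if you allow a vanishing perturbation of $a$ and take $\kappa\to\infty$), and no such H\"older-$1/2$ regularity in time of the gradient is available. Your radial majorant $\eta(s^2+|w|^2)$ is even more restrictive, being $o(s^2)$ on the time axis. By contrast, the $z$-cross-term genuinely is handled by Young: take $\kappa$ large, pay $\tfrac{C^2}{2\eps^2\kappa}|z-z_0|^2$, and send this Hessian perturbation to zero at the end — note this also means you cannot literally have $D^2_{zz}\Phi(t_0,z_0,m_0)=q$; the paper works with $q+\gamma I_{d\times d}$ and lets $\gamma\to 0$.

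The paper's proof avoids the problem by never separating the time variable from the measure variable: it observes that $\theta(|t-t_0|)\,\|\mu-m_0\|_{-s^*}\le \theta(|t-t_0|)\bigl(|t-t_0|+\|\mu-m_0\|_{-s^*}\bigr)$ is a \emph{joint} $o\bigl(|t-t_0|+|z-z_0|^2+\|\mu-m_0\|_{-s^*}\bigr)$, so the entire expansion becomes a one-sided jet with a single little-$o$ remainder in the gauge $|t-t_0|+|z-z_0|^2+\|\mu-m_0\|_{-s^*}$; the classical jet-to-test-function construction (Barles, p.~18, valid in Hilbert spaces) then supplies a $C^{1,2,1}$ function touching from above with the prescribed derivatives, because its majorant is a function of the gauge \emph{jointly} — precisely the kind of term your ansatz $\omega(t-t_0,z-z_0)+\tfrac{M}{\eps}\|\mu-m_0\|_{-s^*}^2$ excludes. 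To repair your argument, replace the Young step for the time part by adding to $\Phi$ a term $\chi\bigl(|t-t_0|+\|\mu-m_0\|_{-s^*}\bigr)$ with $\chi$ increasing, $\chi(r)=o(r)$, $\chi'(0)=0$, which is exactly what the cited classical lemma builds; at that point you have reproduced the paper's proof.
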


\begin{rmk}
    Recall that by Lemma \ref{lem.ovureg} and the relationship between smoothness on $H^{-s^*}$ and $\cP(\T^d)$ discussed in section \ref{sec: notation}, the derivatives $D_m \hat{U}^{\delta, \eps, \lambda}$ and $D_{xm} \hat{U}^{\delta, \eps, \lambda}$ appearing in \eqref{subsolpointwise} exist in a classical sense.
\end{rmk}

\begin{proof}[Proof of Lemma \ref{lem.subsolpointwise}]
We fix a small parameter $\gamma > 0$, and we use Lemma \ref{lem.ovureg} (in particular the $C^{1,1}$ bounds on $\hat{U}^{\delta, \eps}$ and the joint continuity of the derivatives $D_z \hat{U}^{\delta, \eps}$ and $\frac{\delta \hat{U}^{\delta, \eps}}{\delta m}$) to obtain
    \begin{align*}
        \hat{U}^{\delta, \eps, \lambda}&(t,z,m) = \hat{U}^{\delta, \eps, \lambda}(t_0,z_0,m_0) + \hat{U}^{\delta, \eps, \lambda}(t,z,m_0) - \hat{U}^{\delta, \eps, \lambda}(t_0,z_0,m_0) + \hat{U}^{\delta, \eps, \lambda}(t,z,m) - 
        \hat{U}^{\delta, \eps, \lambda}(t,z,m_0) \\
        &\leq \hat{U}^{\delta, \eps, \lambda}(t_0,z_0,m_0) + a (t-t_0) + p \cdot(z-z_0) 
      + \frac{1}{2} (z - z_0)^T q (z-z_0)  \\ &\qquad  + o\big(|t-t_0| + |z-z_0|^2\big)
         + \langle \nabla_{-s^*} \hat{U}^{\delta, \eps, \lambda} (t,z,m_0), m - m_0 \rangle_{-s^*} + o\big(\|m - m_0\|_{-s^*}\big) \\
         &\leq \hat{U}^{\delta, \eps, \lambda}(t_0,z_0,m_0) + a (t-t_0) + p \cdot(z-z_0) 
      + \frac{1}{2} (z - z_0)^T q (z-z_0)  \\ &\qquad  + o\big(|t-t_0| + |z-z_0|^2 + \|m -m_0\|_{-s^*} \big)
      + \langle \nabla_{-s^*} \hat{U}^{\delta, \eps, \lambda} (t_0,z_0,m_0), m - m_0 \rangle_{-s^*} \\
         &\qquad + C |z-z_0| \|m-m_0\|_{-s^*} + o\big(|t-t_0| + |z-z_0|^2 + \|m - m_0\|_{-s^*}\big)
         \\ 
         &\leq \hat{U}^{\delta, \eps, \lambda}(t_0,z_0,m_0) + a(t-t_0) + p\cdot(z-z_0)  + \frac{1}{2} (z - z_0)^T q (z-z_0)  \\
        &\qquad
         + \langle \nabla_{-s^*} \hat{U}^{\delta, \eps, \lambda} (t_0,z_0,m_0), m - m_0 \rangle_{-s^*}  + \frac{\gamma}{2} |z-z_0|^2 + o\big(|t-t_0| + |z-z_0|^2 + \|m - m_0\|_{-s^*}\big).
    \end{align*}
    By classical arguments from the theory of viscosity solutions (see e.g. page 18 of \cite{Barles1994} for the proof in finite dimensions, which works also in Hilbert spaces) it follows that there exists a smooth function $\Phi(t,z,q) : [0,T] \times \R^d \times H^{-s^*} \to \R$
     such that $\Phi$ touches $\hat{U}^{\delta, \eps}$ from above at $(t_0,z_0,m_0)$, and 
    \begin{align*}
        &\partial_t \Phi(t_0,z_0,m_0) = a , \,\, D_z \Phi(t_0,z_0,m_0) = p, \\ &D^2_{zz} \Phi(t_0,z_0,m_0) = q + \gamma I_{d \times d},  \,\, \nabla_{-s^*} \Phi(t_0,z_0,m_0) = \nabla_{-s^*} \hat{U}^{\delta, \eps}(t_0,z_0,m_0).
    \end{align*}
    Applying Lemma \ref{lem.uhatdeltaepssubsol} to $\Phi$, we find that 
    \begin{align*}
        &- a - \tr(q) - d\gamma + \int_{\T^d}\tr\big( \hat{A}(x,z_0) D_{xm} \hat{U}^{\delta, \epsilon}(t_0,z_0,m_0,x) \big) m_0(dx)  \\
        &\qquad  + \int_{\T^d} \hat{H}\big(x,z_0,D_m \hat{U}^{\delta, \epsilon}(t_0,z_0,m_0,x)\big) \leq  \hat{F}(z_0,m_0) + C \big(1 + \lip(U; W^{-2,\infty}) \big)\Big(\delta + \lambda + \epsilon \delta^{-(2s +  - 1)} \Big), 
    \end{align*}
    and we send $\gamma$ to get the desired bound. Lemma \ref{lem.uhatdeltaepssubsol} also gives the improved bound when $A$ is constant, which completes the proof.
\end{proof}

\subsection{The projection argument}

Define $\hat{U}^{\delta, \eps, \lambda,N} : [0,T] \times \R^d \times (\T^d)^N \to \R$ by the formula
\begin{align}
\label{projectiondef}
\hat{U}^{\delta, \eps, \lambda, N}(t,z,\bx) = \hat{U}^{\delta, \eps, \lambda}(t,z,m_{\bx}^N). 
\end{align}
We are now going to compare $\hat{U}^{\delta, \eps, \lambda, N}$ to the function $\hat{V}^N : [0,T] \times \R^d \times (\T^d)^N \to \R$ given by 
\begin{align}
    \label{vnhatdef}
    \hat{V}^N(t,z,\bx) = V^N(t, \bx^z), \quad \bx^z = (x^1 + z, ...,x^N + z). 
\end{align}

We will need to know that $\hat{V}^N$ solves \eqref{hjbhatn}: 

\begin{lem} \label{lem.hatvneqn}
The function $\hat{V}^N$ is the unique viscosity solution to \eqref{hjbhatn}.
\end{lem}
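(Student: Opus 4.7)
The plan is to realize $\hat{V}^N$ as the value function of a standard finite-dimensional stochastic control problem on $\R^d \times (\T^d)^N$ and then invoke classical viscosity solution theory; this is precisely the change-of-variable trick that motivates the definition of $\hat{U}$ in the introduction, only now at the level of the $N$-particle problem rather than the mean field limit.

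Fix $(t_0, z_0, \bx_0) \in [0,T] \times \R^d \times (\T^d)^N$ and $\bm\alpha \in \sA^N_{t_0}$, and let $\bX$ solve \eqref{nplayerdynamics} with $\bX_{t_0} = \bx_0^{z_0}$. Setting $Z_t := z_0 + \sigma^0(W_t^0 - W_{t_0}^0) \in \R^d$ and $Y_t^i := X_t^i - Z_t \in \T^d$ (read modulo $\Z^d$), we have $(Z,\bY)_{t_0} = (z_0, \bx_0)$ and, by the $\Z^d$-periodicity of $\sigma$,
\begin{align*}
    dY_t^i = \alpha_t^i \, dt + \sigma(Y_t^i + Z_t) \, dW_t^i, \qquad dZ_t = \sigma^0 \, dW_t^0.
\end{align*}
Since $m_{\bX_t}^N = (\mathrm{Id} + Z_t)_\# m_{\bY_t}^N$, the cost $J^N(t_0, \bx_0^{z_0}, \bm\alpha)$ can be rewritten purely in terms of $(Z, \bY)$, with running cost $\frac{1}{N}\sum_i L(Y_t^i + Z_t, \alpha_t^i) + \hat{F}(Z_t, m_{\bY_t}^N)$ and terminal cost $\hat{G}(Z_T, m_{\bY_T}^N)$. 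Taking the infimum over $\bm\alpha$ shows that $\hat{V}^N(t_0, z_0, \bx_0) = V^N(t_0, \bx_0^{z_0})$ is the value function of this reformulated control problem.

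The associated Bellman equation is precisely $\widehat{\text{HJB}}_N$: the diffusion of $Z$ produces the term $-A_0 \Delta_z \hat{V}^N$, the diffusion of each $Y^i$ produces $-\tr(\hat{A}(x^i,z) D^2_{x^ix^i}\hat{V}^N)$, and optimizing over the drift yields the Hamiltonian term $\frac{1}{N}\sum_i \hat{H}(x^i, z, N D_{x^i}\hat{V}^N)$. Under Assumption \ref{assump.maindegen} this is a standard finite-dimensional stochastic control problem with continuous, appropriately regular data, so by the dynamic programming principle (see \cite{flemingsoner}), $\hat{V}^N$ is a bounded continuous viscosity solution of $\widehat{\text{HJB}}_N$ with the correct terminal condition. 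For uniqueness, I would observe that $\hat{V}^N$ is $\Z^d$-periodic in $z$ (since $\bx^{z+k} = \bx^z$ in $(\T^d)^N$ for every $k \in \Z^d$), and that $\hat{A}, \hat{H}, \hat{F}, \hat{G}$ are periodic in $z$ as well, so the equation descends to the compact domain $[0,T] \times \T^d \times (\T^d)^N$, on which the classical comparison principle for second-order parabolic HJB equations applies. The only subtle point is that $H$ is merely locally Lipschitz in $p$, which is handled as usual by the a priori Lipschitz bound on $\hat{V}^N$ (inherited from $V^N$ through the Lipschitz estimate \eqref{Lip:VNeps} and Lemma \ref{lem.epstozero}) together with a truncation of the Hamiltonian outside the relevant range. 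The main—and essentially only nontrivial—obstacle is the careful measurability bookkeeping in the bijection $(Z,\bY) \leftrightarrow \bX$ between admissible controls for the two formulations; once that is in place the viscosity solution property and uniqueness are classical.
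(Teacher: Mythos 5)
Your proposal is correct, but it takes a different (and longer) route than the paper. The paper's proof is a one-line direct verification at the PDE level: since $(t,\bx)\mapsto V^N(t,\bx)$ solves \eqref{hjbn} and the map $(z,\bx)\mapsto \bx^z$ is a smooth change of variables, the chain rule identities $D_{x^i}\hat{V}^N(t,z,\bx)=D_{x^i}V^N(t,\bx^z)$, $D^2_{x^ix^i}\hat{V}^N=D^2_{x^ix^i}V^N(\cdot,\bx^z)$ and $\Delta_z\hat{V}^N=\sum_{i,j}\tr\big(D^2_{x^ix^j}V^N(\cdot,\bx^z)\big)$ convert each term of \eqref{hjbn} into the corresponding term of \eqref{hjbhatn} (this transfer is legitimate for viscosity solutions because the change of variables is a smooth diffeomorphism acting on the test functions). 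You instead perform the change of variables at the level of the control problem, setting $Y^i_t=X^i_t-Z_t$ with $Z_t=z_0+\sigma^0(W^0_t-W^0_{t_0})$, identify $\hat{V}^N$ as the value function of the transformed problem, and then re-derive the Bellman equation; your computation of the new dynamics and the absence of cross second-order terms (since $Z$ is driven by $W^0$ alone and each $Y^i$ by $W^i$ alone) is right, and the resulting generator does produce exactly \eqref{hjbhatn}. What the paper's route buys is brevity and no need to re-establish a dynamic programming principle or worry about the control bijection; what your route buys is that it sidesteps any discussion of how viscosity solutions transform under diffeomorphisms and makes transparent why the common-noise cross terms collapse into $A_0\Delta_z$. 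Your remarks on uniqueness (periodicity in $z$, reduction to the compact domain, truncation of $H$ using the a priori gradient bound) are consistent with what the paper implicitly relies on in calling $\hat{V}^N$ \emph{the} unique viscosity solution.
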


\begin{proof}
    It is a simple exercise to check that since $V^N$ solves \eqref{hjbn}, $\hat{V}^N$ solves \eqref{hjbhatn}.
\end{proof}

\begin{prop} \label{prop.comparison}
There is a constant $C$ such that for each $\delta > 0$, $\epsilon < \frac{1}{C} \delta^{2s^*}$, and $1/2 \geq \lambda > C \epsilon \delta^{-(2s^* -1)}$, we have 
\begin{align} \label{projectionest}
    \hat{U}^{\delta, \epsilon, \lambda,N}(t,z,\bx) \leq \hat{V}^N(t,z,\bx) + \frac{C}{N \epsilon} + C \big(1 + \lip(U; W^{-2,\infty}) \big)\Big(\delta + \lambda + \epsilon \delta^{-(2s^*  - 1)} \Big).
\end{align}
If $A$ is constant, then the dependence on $\lip(U; W^{-2,\infty})$ can be removed, i.e. the right-hand side of \eqref{subsoludeltaepslambda} can be replaced by $\hat{V}^N + C/(N\eps) + C\big(\delta + \lambda + \epsilon \delta^{-(2s^* - 1)} \big) $.
\end{prop}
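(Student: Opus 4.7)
Since $\hat V^N$ is the unique viscosity solution of \eqref{hjbhatn} by Lemma~\ref{lem.hatvneqn}, my plan is to establish that $\hat U^{\delta,\eps,\lambda,N}$ is a viscosity subsolution of \eqref{hjbhatn} with the right-hand side $\hat F(z,m_{\bx}^N)$ replaced by $\hat F(z,m_{\bx}^N) + E$ and with terminal condition bounded from above by $\hat G(z,m_{\bx}^N) + E$, where
\[
E := \frac{C}{N\eps} + C\bigl(1 + \lip(U;W^{-2,\infty})\bigr)\bigl(\delta + \lambda + \eps\delta^{-(2s^*-1)}\bigr);
\]
then \eqref{projectionest} will follow from the standard comparison principle applied between the classical solution $\hat V^N$ and the viscosity subsolution $\hat U^{\delta,\eps,\lambda,N}$ of the finite-dimensional HJB equation \eqref{hjbhatn}. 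The terminal bound is immediate from the estimate on $|\hat U^{\delta,\eps,\lambda} - \hat U|$ in Lemma~\ref{lem.udeltaepslambdareg} combined with $\hat U(T,z,m) = \hat G(z,m)$, and the sharpened statement when $A$ is constant will propagate from the corresponding sharpening in Lemma~\ref{lem.subsolpointwise}.

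To derive the subsolution inequality, let a smooth test function $\Phi$ touch $\hat U^{\delta,\eps,\lambda,N}$ from above at an interior point $(t_0,z_0,\bx_0)$. The key chain-rule identities, valid at points of second $\bx$-differentiability of $\hat U^{\delta,\eps,\lambda,N}$, are
\begin{align*}
ND_{x^i}\hat U^{\delta,\eps,\lambda,N}(t,z,\bx) &= D_m \hat U^{\delta,\eps,\lambda}(t,z,m_{\bx}^N,x^i),\\
ND^2_{x^i x^i}\hat U^{\delta,\eps,\lambda,N}(t,z,\bx) &= D_{xm}\hat U^{\delta,\eps,\lambda}(t,z,m_{\bx}^N,x^i) + \tfrac{1}{N}D^2_{mm}\hat U^{\delta,\eps,\lambda}(t,z,m_{\bx}^N,x^i,x^i).
\end{align*}
Summing the second identity against $\tr(\hat A(x^i,z)\,\cdot\,)$ over $i$ converts the $N$-particle diffusion term into the measure-theoretic integral $\int \tr\bigl(\hat A(x,z)D_{xm}\hat U^{\delta,\eps,\lambda}(t,z,m_{\bx}^N,x)\bigr)\,dm_{\bx}^N(x)$ plus a residual of size $C/(N\eps)$; the residual bound uses that $\|D^2_{mm}\hat U^{\delta,\eps,\lambda}\|_\infty \leq C/\eps$, which follows from the $C^{1,1}(H^{-s^*})$ estimate in Lemma~\ref{lem.udeltaepslambdareg} together with \eqref{dmmbound}. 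The first identity simultaneously converts the Hamiltonian term. Since the effective measure argument $m_0^\lambda := (1-\lambda)m_{\bx_0}^N + \lambda\leb$ satisfies the lower bound $m_0^\lambda \geq \lambda\leb > C\eps\delta^{-(2s^*-1)}\leb$ thanks to the standing assumptions on $(\delta,\eps,\lambda)$, Lemma~\ref{lem.subsolpointwise} applied at $(t_0,z_0,m_0^\lambda)$ supplies the pointwise subsolution inequality for $\hat U^{\delta,\eps,\lambda}$ needed to close the argument.

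The main obstacle is that $\hat U^{\delta,\eps,\lambda}$ is only $C^{1,1}$ on $H^{-s^*}$, so the second $\bx$-derivative of $\hat U^{\delta,\eps,\lambda,N}$ need not exist classically at the given touching point $(t_0,z_0,\bx_0)$, and the chain-rule identities above are formal there. To handle this, I plan to invoke a finite-dimensional version of Jensen's lemma (as in Lemma~A.3 of the Crandall--Ishii--Lions User's Guide). After adding a small strictly concave quadratic perturbation in $(t,z,\bx)$ to $\Phi$ to arrange that the maximum of $\hat U^{\delta,\eps,\lambda,N} - \Phi$ at $(t_0,z_0,\bx_0)$ is strict, and using that $\hat U^{\delta,\eps,\lambda,N}$ is $C^{1,1}$ in $\bx$ (since $\bx \mapsto m_{\bx}^N$ is smooth and $\hat U^{\delta,\eps,\lambda}$ is $C^{1,1}$ on $H^{-s^*}$) and hence twice differentiable almost everywhere by Alexandrov's theorem, Jensen's lemma produces, for each small $\rho > 0$, a point $(t_\rho,z_\rho,\bx_\rho)$ within $\rho$ of $(t_0,z_0,\bx_0)$ and an affine perturbation $\Phi_\rho$ of $\Phi$ (vanishing as $\rho \to 0$) such that $\hat U^{\delta,\eps,\lambda,N}$ is twice differentiable at $(t_\rho,z_\rho,\bx_\rho)$ and $\Phi_\rho$ touches $\hat U^{\delta,\eps,\lambda,N}$ from above there. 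At such a point the chain-rule identities above hold pointwise, and combining them with Lemma~\ref{lem.subsolpointwise} yields the required subsolution inequality for $\Phi_\rho$ at $(t_\rho,z_\rho,\bx_\rho)$, with error at most $E$. Passing $\rho \to 0^+$ and using continuity of the derivatives of $\Phi$ recovers the inequality for $\Phi$ at $(t_0,z_0,\bx_0)$, completing the proof.
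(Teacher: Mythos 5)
Your proposal is correct and follows essentially the same route as the paper: establish that $\hat{U}^{\delta,\eps,\lambda,N}$ is a viscosity subsolution of \eqref{hjbhatn} up to the error $E$ via the chain-rule identities for empirical projections (with the $C/(N\eps)$ residual coming from the $C^{1,1}(H^{-s^*})$ bound), handle the failure of pointwise second differentiability by Jensen's lemma to locate nearby touching points where the identities hold, invoke the pointwise subsolution property of Lemma \ref{lem.subsolpointwise}, and conclude by comparison together with the terminal estimate from Lemma \ref{lem.udeltaepslambdareg}. The only cosmetic differences are that the paper perturbs only in $\bx$ (keeping $(t_0,z_0)$ fixed) and cites \cite[Lemma 5.4]{ddj2023} for the explicit full-measure set of twice-differentiability rather than Alexandrov's theorem.
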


\begin{proof}
     Let $C$ be as in the statement of Lemma \ref{lem.udeltaepslambdasubsol}, and let $\delta > 0$, $\eps < \frac{\delta^{2s}}{C(s)}$, $\frac{1}{2} > \lambda > C \eps \delta^{-(2s^* - 1)}$ be given. Our goal is to show that $\hat{U}^{\delta, \eps, \lambda,N}$ is a viscosity subsolution to \eqref{hjbhatn} up to a certain error term. To this end, we suppose that a smooth function $\phi(t,z,\bx) : (0,T) \times \R^d \times (\T^d)^N \to \R$ touches $\hat{U}^{\delta, \eps, \lambda,N}$ from above at a point $(t_0, z_0, \bx_0)$, with $\bx_0 = (x_0^1,...,x_0^N)$. Obviously, it follows that $(a,p,q)$ given by
    \begin{align*}
        a = \partial_t \phi(t_0,z_0,\bx_0), \,\, p = D_z \phi(t_0,z_0,\bx_0), \,\, q = D^2_{zz} \phi(t_0,z_0,\bx_0)
    \end{align*}
    satisfy the hypotheses of Lemma \ref{lem.subsolpointwise} (at the point $(t_0,z_0,m_{\bx_0}^N)$), and so we get 
     \begin{align} \label{phisubsol}
     &- \partial_t \phi(t_0,z_0,\bx_0) - A_0 \Delta_z \phi(t_0,z_0,\bx_0) - \frac{1}{N} \sum_{i = 1}^N \tr\big( \hat{A}(x_0^i,z_0) D_{xm} \hat{U}^{\delta, \epsilon, \lambda}(t_0,z_0,m_{\bx_0}^N,x_0^i) \big) \nonumber \\
        &\qquad \qquad + \frac{1}{N} \sum_{i = 1}^N \hat{H}\big(x^i_0,z_0,ND_{x^i} \phi(t_0,z_0,\bx_0)\big) \nonumber \\
        &= - \partial_t \phi(t_0,z_0,\bx_0) - A_0 \Delta_z \phi(t_0,z_0,\bx_0) - \frac{1}{N} \sum_{i = 1}^N \tr\big( \hat{A}(x_0^i,z_0) D_{xm} \hat{U}^{\delta, \epsilon, \lambda}(t_0,z_0,m_{\bx_0}^N,x_0^i) \big) \nonumber \\
        &\qquad \qquad + \frac{1}{N} \sum_{i = 1}^N \hat{H}\big(x^i_0,z_0,ND_{x^i} \hat{U}^{\delta, \eps, \lambda, N} (t_0,z_0,\bx_0)\big) \nonumber \\
        &= - \partial_t \phi(t_0,z_0,\bx_0) - A_0 \Delta_z \phi(t_0,z_0,\bx_0) - \int_{\T^d}\tr\big( \hat{A}(x,z_0) D_{xm} \hat{U}^{\delta, \epsilon, \lambda}(t_0,z_0,m_{\bx_0}^N,x) \big) m_{\bx_0}^N(dx) \nonumber \\
        &\qquad \qquad + \int_{\T^d} \hat{H}\big(x,z_0,D_m \hat{U}^{\delta, \epsilon, \lambda}(t_0,z_0,m_{\bx_0}^N,x)\big) m_{\bx_0}^N (dx)
        \\
        &\leq \hat{F}(z_0,m_{\bx_0}^N) + C \big(1 + \lip(U; W^{-2,\infty}) \big)\Big(\delta + \lambda + \epsilon \delta^{-(2s^*  - 1)} \Big).
    \end{align}
    To complete the proof, we must compare the matrix $D_{xm} \hat{U}^{\delta, \eps, \lambda}(t_0,z_0,m_{\bx_0}^N, x_0^i)$ to the matrix $D^2_{x^ix^i} \phi(t_0,z_0,\bx_0)$. To this end, we note that by (the proof of) \cite[Lemma 5.4]{ddj2023}, there is a set $A_{t_0,z_0} \subset (\T^d)^N$ of full measure on which the partial derivative 
    \begin{align*}
        \T^d \ni x \mapsto D_m \hat{U}^{\eps, \delta, \lambda}(t_0,z_0, \frac{1}{N} \sum_{j \neq i} \delta_{x^j} + \frac{1}{N} x, x^i)
    \end{align*}
    exists for each $\bx = (x^1,...,x^N) \in A_{t_0,z_0}$ and each $i = 1,...,N$. Moreover, $\bx \mapsto \hat{U}^{\eps, \delta, \lambda,N}(t_0,z_0,\bx)$ is twice differentiable at each $\bx \in A_{t_0,z_0}$, and satisfies 
    \begin{align*}
        D^2_{x^ix^i} \hat{U}^{\eps, \delta, \lambda,N}(t_0,z_0,\bx) = \frac{1}{N} D_{xm} \hat{U}^{\delta, \eps, \lambda}(t_0,z_0,m_{\bx}^N,x^i) + \frac{1}{N^2} R^{N,i}(\bx_0), 
    \end{align*}
    where $R_{t_0,z_0}^{N,i}(\bx) : A_{t_0,z_0} \to \R^{d \times d}$ satisfies 
    \begin{align*}
        \|R^{N,i}\|_{\infty} \leq C(s) \Big[\hat{U}^{\eps, \delta, \lambda}(t_0,z_0,\cdot) \Big]_{C^{1,1}(H^{-s^*})} \leq C(s)/\eps. 
    \end{align*}
    Notice that if the point $(t_0,z_0,\bx_0)$ were such that $\bx_0 \in A_{t_0,z_0}$, then we could immediately conclude that 
    \begin{align*}
        D^2_{x^ix^i} \phi(t_0,z_0,\bx_0) \geq D^2_{x^ix^i} \hat{U}^{\eps, \delta, \lambda,N}(t_0,z_0,\bx_0) = \frac{1}{N} D_{xm} \hat{U}^{\delta, \eps, \lambda}(t_0,z_0,m_{\bx_0}^N,x_0^i) + \frac{1}{N^2} R^{N,i}(\bx_0), 
    \end{align*}
    and combining this with \eqref{phisubsol} would give 
    \begin{align} \label{Aest}
    &- \partial_t \phi(t_0,z_0,\bx_0) - A_0 \Delta_z \phi(t_0,z_0,\bx_0) - \sum_{i = 1}^N \tr\big( \hat{A}(x_0^i,z_0) D^2_{x^ix^i} \phi(t_0,z_0,\bx_0) \big) \nonumber \\
        &\qquad \qquad + \frac{1}{N} \sum_{i = 1}^N \hat{H}\big(x^i_0,z_0,ND_{x^i} \phi(t_0,z_0,\bx_0) \big) \nonumber \\ \leq 
        &- \partial_t \phi(t_0,z_0,\bx_0) - A_0 \Delta_z \phi(t_0,z_0,\bx_0) - \frac{1}{N} \sum_{i = 1}^N \tr\big( \hat{A}(x_0^i,z_0) D_{xm} \hat{U}^{\delta, \epsilon, \lambda}(t_0,z_0,m_{\bx_0}^N,x_0^i) \big) \nonumber \\
        &\qquad \qquad - \frac{1}{N^2} \sum_{i = 1}^N \tr(\hat{A}(x_0^i,z_0) R^{N,i}_{t_0,z_0}(t,\bx))+ \frac{1}{N} \sum_{i = 1}^N \hat{H}\big(x^i_0,z_0,ND_{x^i} \phi (t_0,z_0,\bx_0)\big) \nonumber \\
        &\leq \hat{F}(z_0,m_{\bx_0}^N) + \frac{C}{N\eps} + C\big(1 + \lip(U; W^{-2,\infty}) \big)\Big(\delta + \lambda + \epsilon \delta^{-(2s^* - 1)} \Big).
        \end{align}
        The problem is that we cannot guarantee that $\bx_0 \in A_{t_0,z_0}$. To get around this technical issue, we apply Jensen's Lemma, see \cite{UsersGuide}, to find a sequence of points $\bx_n \in A_{t_0,z_0}$ with $\bx_n \to \bx_0$ and sequence of vectors $p_n \in (\R^d)^N$ with $|p_n| \to 0$ with the property that the function $\phi^n(\bx) : (\T^d)^n \rightarrow \R$ given by $          \phi^n(\bx) = \phi(t_0,z_0,\bx) + p_n \cdot \bx$
        satisfy 
        \begin{align*}
            \text{argmax} \bigg( \bx \mapsto \hat{U}^{\delta, \eps, \lambda}(t_0,z_0,\bx) - \phi^n(\bx) \bigg) = \bx_n. 
        \end{align*}
        Thus we deduce that 
        \begin{align} \label{compn}
        D^2_{x^ix^i} \phi(t_0,z_0,\bx_n) & = D^2_{x^ix^i} \phi^n(\bx_n) \geq  \frac{1}{N} D_{xm} \hat{U}^{\delta, \eps, \lambda}(t_0,z_0,m_{\bx_n}^N,x_n^i) + \frac{1}{N^2} R^{N,i}(\bx_n) \nonumber  \\
        &\geq \frac{1}{N} D_{xm} \hat{U}^{\delta, \eps, \lambda}(t_0,z_0,m_{\bx_n}^N,x_n^i) - C/N^2 I_{d \times d}.
    \end{align}
    Recall that $q \mapsto \hat{U}^{\delta, \eps, \lambda}(t_0,z_0,q)$ is $C^{1,1}$ on $H^{-s^*}$, which, as noted above in \eqref{dxmcont}, means that the map $(x,m) \mapsto D_{xm} \hat{U}^{\delta, \eps, \lambda}(t_0,z_0,m,x)$ is jointly continuous, so that we can pass to the limit in \eqref{compn} and get
    \begin{align*}
        D^2_{x^ix^i} \phi(t_0,z_0,\bx_0) \geq \frac{1}{N} D_{xm} \hat{U}^{\delta, \eps, \lambda}(t_0,z_0,m_{\bx_0}^N,x_0^i) - C/N^2 I_{d \times d}, 
    \end{align*}
    which, by following reasoning in \eqref{Aest}, can be combined with \eqref{phisubsol} to establish 
    \begin{align} \label{phisubsol2}
    &- \partial_t \phi(t_0,z_0,\bx_0) - A_0 \Delta_z \phi(t_0,z_0,\bx_0) - \sum_{i = 1}^N \tr\big( \hat{A}(x_0^i,z_0) D^2_{x^ix^i} \phi(t_0,z_0,\bx_0) \big) \nonumber \\
        &\qquad \qquad + \frac{1}{N} \sum_{i = 1}^N \hat{H}\big(x^i_0,z_0,ND_{x^i} \phi(t_0,z_0,\bx_0)\big) \nonumber \\ 
        &\leq \hat{F}(z_0,m_{\bx_0}^N) + \frac{C}{N\eps} + C \big(1 + \lip(U; W^{-2,\infty}) \big)\Big(\delta + \lambda + \epsilon \delta^{-(2s^* - 1)} \Big),
        \end{align}
        i.e. we have shown that $\hat{U}^{\delta, \eps, \lambda, N}$ is a subsolution of \eqref{hjbhatn} up to an error of order $\frac{C}{N\eps} + C \big(1 + \lip(U; W^{-2,\infty}) \big)\Big(\delta + \lambda + \epsilon \delta^{-(2s^* - 1)}\Big )$.
        Finally, we note that by Lemma \ref{lem.udeltaepslambdareg}, we have
        \begin{align*}
            \| \hat{V}^N(T, \cdot, \cdot) - \hat{U}^{\delta, \eps, \lambda, N}(T,\cdot, \cdot) \|_{\infty} \leq C \Big(\delta + \lambda + \eps \delta^{-2(s^*-1)} \Big),
        \end{align*}
        and so an application of the comparison principle for viscosity (sub/super)solutions of \eqref{hjbhatn} completes the proof. Of course, the improved bound when $A$ is constant comes from the improved bound in Lemma \ref{lem.subsolpointwise} in this case.
\end{proof}

Finally we can complete the proof of convergence under the additional regularity condition \eqref{assump.extrareg}. 

\begin{proof}[Proof of Proposition \ref{prop.mainextrareg}]
We apply Proposition \ref{prop.comparison}, and set $\lambda = \min\{1/2, \,\ C \eps \delta^{- (2s^*  - 1)} \}$ (taking $C$ slightly larger if necessary) to find that there is a constant $C> 0$ such that for each $\delta > 0$, we have 
\begin{align*}
    \hat{U}^{N} \leq  \hat{V}^N + \frac{C}{N\eps} + C \big(1 + \lip(U; W^{-2,\infty})\big) \big(\delta + \eps \delta^{-(2s^*- 1)}\big)
\end{align*}
for any $\eps < \frac{1}{C} \delta^{2s^* }$ (where again we enlarge $C$ if necessary). We now choose 
\begin{align*}
    \delta = \Big(N \big(1 + \lip(U; W^{-2,\infty}) \Big) \Big)^{-1/(2s^*  + 1)}, \quad \eps = \Big(C (1 + \lip(U; W^{-2,\infty})) \delta \Big)^{-1}
\end{align*}
to get 
\begin{align*}
    \hat{U}^{N} &\leq  \hat{V}^N + C \big(1 + \lip(U; W^{-2,\infty}) \big)^{1-1/(2s^* + 1)} N^{-1/(2s^*  + 1)}.
\end{align*}
It is also clear from Proposition \ref{prop.comparison} that the dependence on $\lip(U; W^{-2,\infty})$ can be removed when $A$ is constant.

\end{proof}

\section{The case of zero idiosyncratic noise} \label{sec: zeronoise}

When there is no idiosyncratic noise, we can actually sharpen the results of Theorem \ref{thm.main}.

\subsection{The hard inequality without idiosyncratic noise.}

When $\sigma = 0$, it turns out that the ``hard inequality" is trivial.


\begin{prop} \label{prop.zeronoise}
    Under Assumption \ref{assump.maindegen} and assuming as well that $\sigma=0$ (i.e. there is no idiosyncratic noise) we have, 
    $$ U(t, m^N_{\bx}) \leq V^{N}(t, \bx), \quad \forall \,\, (t,\bx) \in [0,T] \times (\T^d)^N.$$
\end{prop}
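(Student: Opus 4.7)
The plan is to produce, for any $\epsilon > 0$, a control $\alpha \in \sA_{t_0, m_{\bx_0}^N}$ such that $J^{\infty}(t_0, m_{\bx_0}^N, \alpha) \leq V^N(t_0, \bx_0) + \epsilon$, from which the claim follows by sending $\epsilon \to 0$. The key structural feature of the $\sigma = 0$ setting is that a Lipschitz symmetric feedback $\bm a(t, \bx) = (a^1, \ldots, a^N)$ produces an $N$-particle state process $\bX$ which is a measurable function of $W^0$ alone (since the idiosyncratic diffusion disappears and only $\sigma^0 dW^0$ remains), and whose components stay identically equal whenever they start from the same point. This structure lets me lift one particle trajectory to the mean-field problem on the canonical space so that the cost functionals coincide exactly.

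First I would select such an $\bm a$: take $\bm a = \bm a_\eta$, the smooth optimal feedback for the regularized problem $V^{N, \eta}$, given by $a^i_\eta(t, \bx) := -D_p H(x^i, N D_{x^i} V^{N, \eta}(t, \bx))$. By Proposition \ref{prop: bound_der_Veps}, $\bm a_\eta$ is uniformly bounded in $\eta$ and Lipschitz in $\bx$, and by permutation symmetry of $V^{N, \eta}$ it satisfies $a^i_\eta(t, \bx) = a^j_\eta(t, \bx)$ whenever $x^i = x^j$. A direct stability estimate (in the spirit of Lemma \ref{lem.epstozero}, using the uniform bound on $\bm a_\eta$ and the $\bd_1$-Lipschitz continuity of $F, G$) gives $J^N(t_0, \bx_0, \bm a_\eta) \leq V^N(t_0, \bx_0) + C\sqrt{\eta}$, so it suffices to take $\eta = \eta(\epsilon)$ small. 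Next, on the canonical space $\Omega^{\infty, t_0}$, I let $\bX^{\bx_0}$ be the unique strong solution of $d X^{i, \bx_0}_t = a^i(t, \bX^{\bx_0}_t) dt + \sigma^0 dW^0_t$ with $X^{i, \bx_0}_{t_0} = x_0^i$, which is $\sigma(W^0)$-measurable. Picking a Borel map $\phi : \T^d \to \{1, \ldots, N\}$ with $x_0^{\phi(x)} = x$ on $\supp(m_{\bx_0}^N)$, I set $I := \phi(\xi)$, $\alpha_t := a^I(t, \bX^{\bx_0}_t)$, and $X_t := X^{I, \bx_0}_t$. Then $\alpha$ is bounded and $\bbF^{\infty, t_0, m_{\bx_0}^N}$-progressively measurable, hence belongs to $\sA_{t_0, m_{\bx_0}^N}$, and $(X, \alpha)$ satisfies $X_{t_0} = \xi$ and $dX_t = \alpha_t dt + \sigma^0 dW^0_t$.

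The crucial identity is
\begin{align*}
\sL^{0, m_{\bx_0}^N}(X_t) = m_{\bX^{\bx_0}_t}^N \quad \text{a.s.,}
\end{align*}
which holds because (i) on the canonical space $\xi \perp W^0$, hence $I \perp W^0$; (ii) $\bX^{\bx_0}_t$ is $\sigma(W^0)$-measurable; and (iii) by the symmetry of $\bm a$ and pathwise uniqueness, $X^{i, \bx_0}_t = X^{j, \bx_0}_t$ for every $t$ whenever $x_0^i = x_0^j$, so grouping the decomposition $\sL(X_t | W^0) = \sum_{i=1}^N \bP(I = i) \delta_{X^{i, \bx_0}_t}$ by the distinct values in $\{x_0^1, \ldots, x_0^N\}$ and using $\bP(\xi = y) = |\{i : x_0^i = y\}|/N$ identifies the conditional law with the empirical measure. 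The same grouping argument yields $\E[L(X_t, \alpha_t)] = \frac{1}{N}\sum_i \E[L(X^{i, \bx_0}_t, a^i(t, \bX^{\bx_0}_t))]$, so $J^{\infty}(t_0, m_{\bx_0}^N, \alpha) = J^N(t_0, \bx_0, \bm a) \leq V^N(t_0, \bx_0) + \epsilon$. The main obstacle is the quantitative feedback-reduction step: $\bm a_\eta$ is optimal for $V^{N, \eta}$, which has both an extra idiosyncratic noise $\sqrt{2\eta}\, dB^i$ and the mollified costs $F^\eta, G^\eta$, so one must bound by $O(\sqrt\eta)$ the gap between its realized cost there and in the $\sigma = 0$ dynamics with the original $F, G$; this relies on the uniform boundedness of $\bm a_\eta$ together with the $\bd_1$-Lipschitz regularity of $F$ and $G$.
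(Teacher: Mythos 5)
Your lifting construction is correct and is essentially the paper's argument: use the initial condition $\xi$ to select which of the $N$ trajectories to follow, observe that the selected trajectory is $\sigma(W^0)$-measurable so that $\sL^{0,m_0}(X_t)=m^N_{\bX_t}$, and conclude that the mean-field cost of the lifted control equals the $N$-particle cost. Your treatment of repeated atoms via the permutation symmetry of the feedback is a clean alternative to the paper's "distinct points plus density" step.

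However, there is a genuine gap in the step you yourself flag as the main obstacle. You need the cost of the feedback $\bm a_\eta$ \emph{in the $\sigma=0$ dynamics} to be within $C\sqrt{\eta}$ of $V^N$. What you actually know is $J^{N,\eta}(t_0,\bx_0,\bm a_\eta)=V^{N,\eta}(t_0,\bx_0)\leq V^N(t_0,\bx_0)+C\sqrt{\eta}$, i.e. optimality in the \emph{regularized} dynamics. Passing from $J^{N,\eta}(\bm a_\eta)$ to $J^{N}(\bm a_\eta)$ requires comparing two trajectories driven by the \emph{same feedback} but different noises, and the resulting Gronwall estimate carries a factor $\exp(C\,\mathrm{Lip}(\bm a_\eta)T)$. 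By \eqref{eq:decay:der:2:eps:N} the Lipschitz constant of $\bm a_\eta(t,\bx)=-D_pH(x^i,ND_{x^i}V^{N,\eta})$ blows up like $1/\eta$ (indeed $N/\eta$ as a map on $(\T^d)^N$), so the comparison is not $O(\sqrt{\eta})$; uniform \emph{boundedness} of $\bm a_\eta$ and $\bd_1$-Lipschitzness of $F,G$ do not control the divergence of the two state processes. (A PDE version of the same obstruction: the viscosity error $\eta\sum_i\Delta_{x^i}V^{N,\eta}$ is only $O(1)$ under the available bound $\|D^2_{x^ix^i}V^{N,\eta}\|_\infty\leq C/(N\eta)$.) Note that Lemma \ref{lem.epstozero} avoids this because it compares the two dynamics under a fixed \emph{open-loop} control, where the trajectories differ pathwise by exactly $\sqrt{2\eta}(B_t-B_{t_0})$. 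If you instead freeze $\bm a_\eta$ along its optimal trajectory to make it open-loop, the resulting control depends on the $B^i$'s and the $\sigma(W^0)$-measurability underlying your conditional-law identity is lost. The paper sidesteps all of this: since $\sigma=0$, one first checks that the infimum defining $V^N$ may be restricted to controls adapted to the filtration of $W^0$ alone (condition on the independent noises $(W^i,B^i)$ and note that each frozen realization yields an admissible $W^0$-adapted control with the conditional cost), and then lifts an arbitrary such near-optimal open-loop control exactly as you do. Replacing your feedback-reduction step by this conditioning argument repairs the proof.
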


\begin{proof}
Let us fix $(t_0, \bx_0) \in [0,T] \times (\T^d)^N$, and assume for the moment that $x_0^1,...,x_0^N$ are distinct. We note that because $\sigma = 0$, it is easy to check that the infimum \eqref{vndef} can be restricted to the set of controls $\bm \alpha = (\alpha_t^1, \dots, \alpha_t^N)_{t_0 \leq t \leq T} \in \sA_{t_0}^N$ adapted to the filtration generated by $(W_t^0 - W_{t_0}^0)_{t_0\leq t \leq T}$. We fix such a control $\bm \alpha$, and let $(X_t^1, \dots, X_t^N)$ be the associated processes, solutions to
$$ dX_t^{i} = \alpha_t^{i} dt + \sigma^0 dW_t^0, \quad X_{t_0}^{i} = x_0^{i}.$$
The goal is to build a control $\alpha$ for the mean field problem such that 
\begin{align*}
    J^{\infty}(t_0,m_{\bx_0}^N,\alpha) \leq J^N(t_0,\bx_0,\bm \alpha).
\end{align*}
To this end, we note that because $\bm \alpha$ is progressively measurable with respect to the filtration generated by $(W^0_t - W_{t_0}^0)$, it admits a version which takes the form 
\begin{align*}
    \alpha_t^i = \hat{\alpha}^i(t,W^0_{[0,t]})
\end{align*}
for some non-anticipative functionals $\hat{\alpha}^i : [t_0,T] \times \cC^{t_0} \to \R^d$. 

Now we recall the canonical space $(\Omega^{\infty, t_0}, \bbF^{\infty, t_0,m_0}, \bP^{\infty,t_0,m_0})$ on which the mean field problem is defined, see Subsection \ref{susec: prob_statement}. We embed the controls $\alpha^i$ into the limiting problem by defining $\bm \alpha^{\infty} = (\alpha_t^{\infty,1},...,\alpha_t^{\infty,N})$ and $\bm{X}^{\infty} = (X_t^{\infty,1},...,X_t^{\infty,N}) $ via
\begin{align*}
    \alpha_t^{\infty,i}(\xi,\omega, \omega^0) = \hat{\alpha}_t^i(\omega^0_{[0,t]}), \quad
    X_t^{\infty,i} = x_0^i + \int_{t_0}^t \alpha_s^{\infty,i} ds + \sigma^0(W_t^0 - W_{t_0}^i).
\end{align*}
We define a control $\alpha \in \sA_{t_0,m_0}$ via 
\begin{align*}
    \alpha^{\infty}_t(\xi,\omega,\omega^0) = \sum_{i = 1}^N 1_{\xi = x_0^i} \alpha_t^{\infty,i}.
\end{align*}
Finally, we let $X$ be the state process determined by $\alpha^{\infty}$, which is determined by the dynamics \eqref{dynamicsinf}. It is easy to check that 
$\cL^{0,m_0}(X_t) = \frac{1}{N} \sum_{i = 1}^N \delta_{X_t^{\infty,i}}$, and thus 
\begin{align*}
    J^{\infty}(t_0,m_{\bx_0}^N,\alpha) = \E^{\bP^{\infty,t_0,m_0}} \bigg[ \int_{t_0}^T \frac{1}{N} \sum_{i=1}^N L(X_t^{\infty, i}, \alpha_t^{\infty, i})dt + \int_{t_0}^T F( m^N_{\bX_t^{\infty}}) dt + G ( m_{\bX_T^{\infty}}^N)  \bigg] \\
    = \E \bigg[ \int_{t_0}^T \frac{1}{N} \sum_{i=1}^N L(X_t^{i}, \alpha_t^{\infty, i})dt + \int_{t_0}^T F( m_{\bX_t}^N) dt + G ( m_{\bX_T}^N)  \bigg] = J^N(t_0,\bx_0,\bm \alpha).
\end{align*}
Taking infimum over all $\bm \alpha$, we see that $U(t_0,m_{\bx_0}^N) \leq V^N(t_0,\bx_0)$, at least provided that $x_0^1,...,x_0^N$ are distinct. We complete the proof with a density argument.

\end{proof}

\subsection{Purely Deterministic Case}

In this section we show how to sharpen the rates of Theorem \ref{thm.main} in the deterministic setting. To illustrate the specific features of this case we choose the simplest setting, with
\begin{align} \label{assumption.det}
    L(x,a,m) = \frac{1}{2} |a|^2, \quad F = 0, \quad \sigma = \sigma^0 = 0.
\end{align}
On the other hand, since we will not rely on the sup-convolution operation, there is no additional difficulty in taking the state space to be $\R^d$ instead of $\T^d$. As a consequence, the control problems of Section \ref{susec: prob_statement} are describe by value functions $V^N : [0,T] \times (\R^d)^N \to \R$ defined by
$$V^N(t_0, \bx_0) := \inf_{(\alpha_t^{i})_{1 \leq i \leq N}} \frac{1}{N} \sum_{i=1}^N \int_{t_0}^T \frac{1}{2} |\alpha_t^{i}|^2 dt + G ( m_{\bX_T}^N ),$$
where $m^N_{\bx} := \frac{1}{N} \sum_{i=1}^N \delta_{x^{i}}$ for any $ \bx = (x^1, \dots, x^N) \in (\R^d)^N$ and the dynamics of the particles $\bX : =(X^1, \dots, X^N)$ are given by
$$ d X_t^{i} = \alpha_t^{i} dt, \quad t_0 \leq t \leq T, \quad X_{t_0}^{i} = x_0^{i}, \quad 1 \leq i \leq N,$$
with $\alpha^{i} \in L^2([t_0,T]; \R^d)$ for all $1 \leq i \leq N.$
While the mean-field control problem can be written
\begin{equation}
    U(t_0, m_0) := \inf_{(m_t, \alpha_t)}  \int_{t_0}^T \int_{\R^d} \frac{1}{2} |\alpha_t(x)|^2 dm_t(x)dt + G(m_T),
\label{MFvaluefunction}
\end{equation}
where the infimum is taken over the couples $(m;\alpha)$ with $m \in AC ( [t_0,T], \mathcal{P}_2(\R^d))$ (absolutely continuous curves with values in $\mathcal{P}_2(\R^d)$), $\alpha \in L^2(dt \otimes m_t; \R^d)$ satisfying the continuity equation
\begin{equation}
\partial_t m_t + \div(\alpha_t m_t) = 0, \quad \mbox{ in } (t_0,T) \times \R^d, \quad m(t_0) = m_0.
\label{eq:continuityequation}
\end{equation}
We still call this set of controls $\sA_{t_0,m_0}$ as before. In fact, we have the following: 
\begin{lem}
    Under Assumption \ref{assump.maindegen} and \eqref{assumption.det}, the value in \eqref{MFvaluefunction}, over the set of controls defined above, is equal to the value in \eqref{udefcontrol} over the set of controls defined there in subsection \ref{susec: prob_statement}.
\end{lem}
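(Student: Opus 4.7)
Write $U^{s}$ for the value in \eqref{udefcontrol} and $U^{d}$ for the value in \eqref{MFvaluefunction}. The proof consists of two inequalities, both of which exploit the fact that $\sigma=\sigma^{0}=0$ and $F=0$.

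For $U^{d}(t_{0},m_{0})\leq U^{s}(t_{0},m_{0})$: pick any $\alpha\in\sA_{t_{0},m_{0}}$ on $\Omega^{\infty,t_{0}}$, let $X_{t}=\xi+\int_{t_{0}}^{t}\alpha_{s}\,ds$, and introduce the $\sF^{0,t_{0},m_{0}}_{T}$-measurable curve $m^{\omega}_{t}:=\cL^{0,m_{0}}(X_{t})(\omega)$ together with the vector field $v^{\omega}_{t}(x):=\E^{\bP^{\infty,t_{0},m_{0}}}[\alpha_{t}\mid X_{t}=x,\sF^{0,t_{0},m_{0}}_{T}](\omega)$. For $\bP^{\infty,t_{0},m_{0}}$-a.e.\ $\omega$, differentiating $t\mapsto\E[\phi(X_{t})\mid W^{0}]$ for $\phi\in C_{c}^{\infty}(\R^{d})$ and using the tower property shows that $(m^{\omega},v^{\omega})$ solves \eqref{eq:continuityequation} weakly, while Jensen's inequality yields $\int|v^{\omega}_{t}(x)|^{2}m^{\omega}_{t}(dx)\leq\E[|\alpha_{t}|^{2}\mid\sF^{0,t_{0},m_{0}}_{T}](\omega)$. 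Since $F=0$ and $G(\cL^{0,m_{0}}(X_{T}))=G(m^{\omega}_{T})$, we conclude
\[
J^{\infty}(t_{0},m_{0},\alpha)\geq\E\Big[\int_{t_{0}}^{T}\!\!\int_{\R^{d}}\tfrac{1}{2}|v^{\omega}_{t}|^{2}dm^{\omega}_{t}\,dt+G(m^{\omega}_{T})\Big]\geq U^{d}(t_{0},m_{0}),
\]
because, $\omega$ by $\omega$, the pair $(m^{\omega},v^{\omega})$ is admissible for the deterministic problem.

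For the reverse inequality $U^{s}(t_{0},m_{0})\leq U^{d}(t_{0},m_{0})$, fix $\delta>0$ and a pair $(m,v)$ satisfying \eqref{eq:continuityequation} whose cost is within $\delta$ of $U^{d}(t_{0},m_{0})$. By Ambrosio's superposition principle (see e.g.\ Ambrosio--Gigli--Savar\'e), there exists $\eta\in\mathcal{P}(C([t_{0},T];\R^{d}))$ concentrated on absolutely continuous curves $\gamma$ satisfying $\dot\gamma_{t}=v_{t}(\gamma_{t})$ for a.e.\ $t$, with $(e_{t})_{\#}\eta=m_{t}$ and the energy identity $\int\!\int_{t_{0}}^{T}|\dot\gamma_{t}|^{2}dt\,\eta(d\gamma)=\int_{t_{0}}^{T}\!\int|v_{t}|^{2}dm_{t}\,dt$. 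Disintegrate $\eta=\int_{\R^{d}}\eta_{x}\,m_{0}(dx)$, with $\eta_{x}$ supported on paths starting at $x$. Use the independence of $\xi$, $W$, $W^{0}$ on $\Omega^{\infty,t_{0}}$ to manufacture from $W$ a uniform random variable $U\in[0,1]$ independent of $(\xi,W^{0})$, and a jointly measurable parametrization $(\xi,u)\mapsto\Gamma(\xi,u)\in C([t_{0},T];\R^{d})$ whose conditional law given $\xi=x$ is $\eta_{x}$. Setting $X_{t}:=\Gamma(\xi,U)_{t}$ and $\alpha_{t}$ equal to a progressively measurable version of $\dot\Gamma_{t}$ (obtained as the a.e.\ limit of dyadic difference quotients of $\Gamma$), one verifies that $\alpha\in\sA_{t_{0},m_{0}}$, that $X_{t}=\xi+\int_{t_{0}}^{t}\alpha_{s}\,ds$, and that $X$ is independent of $W^{0}$, so that $\cL^{0,m_{0}}(X_{T})=\cL(X_{T})=m_{T}$ deterministically. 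The energy identity yields
\[
J^{\infty}(t_{0},m_{0},\alpha)=\E\!\int_{t_{0}}^{T}\tfrac{1}{2}|\alpha_{t}|^{2}dt+G(m_{T})=\int\!\!\int_{t_{0}}^{T}\tfrac{1}{2}|\dot\gamma_{t}|^{2}dt\,\eta(d\gamma)+G(m_{T})\leq U^{d}(t_{0},m_{0})+\delta,
\]
and sending $\delta\to 0$ completes the proof.

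\textbf{Main obstacle.} The delicate step is the construction carried out in the second inequality: given only an $L^{2}$-velocity field $v$, one must produce an admissible open-loop control on the prescribed canonical space $\Omega^{\infty,t_{0}}$ whose state process has the right marginals. The two ingredients to handle carefully are (i) Ambrosio's superposition theorem (to pass from the continuity-equation formulation to a probability measure on characteristic curves) and (ii) a measurable selection using the auxiliary randomness provided by the idiosyncratic Brownian motion $W$, which is unused in the dynamics but guarantees enough extra measurability on $\Omega^{\infty,t_{0}}$ to disintegrate $\eta$ and realize it as the law of an $\bbF^{\infty,t_{0},m_{0}}$-adapted process independent of $W^{0}$. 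Progressive measurability of $\alpha_{t}=\dot X_{t}$ is then a routine consequence of working with a measurable version of the pathwise derivative of an absolutely continuous process.
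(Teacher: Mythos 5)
Your first inequality (that any admissible open-loop control dominates the deterministic value) is sound: conditioning on $\sF^{0,t_0,m_0}_T$, projecting the control onto the state via $v^{\omega}_t(x)=\E[\alpha_t\mid X_t=x,\sF^{0,t_0,m_0}_T]$, and using Jensen is exactly the mimicking argument; the paper outsources this step to \cite[Thm. 8.3]{Lacker_superposition} but your hands-on version works, modulo routine joint-measurability of $v$.

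The second inequality, however, has a genuine gap precisely at the point you flag as delicate. You need the randomization variable $U$ to be such that $\alpha_t=\dot\Gamma(\xi,U)_t$ is $\bbF^{\infty,t_0,m_0}$-progressively measurable, which forces $U$ to be $\sF^{\infty,t_0,m_0}_{t_0}$-measurable. But on the canonical space the Brownian motions start at $t_0$, so by Blumenthal's $0$--$1$ law the germ $\sigma$-field $\sF^{\infty,t_0,m_0}_{t_0+}$ coincides with $\sigma(\xi)$ up to null sets: there is \emph{no} nontrivial randomness available at time $t_0$ that is independent of $\xi$. Any $U$ "manufactured from $W$" is measurable only with respect to $\sF_{t_0+\epsilon}$ for some $\epsilon>0$, and then $\alpha_t$ fails to be adapted on $[t_0,t_0+\epsilon)$. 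This is not a technicality one can wave away; it is exactly the obstruction that makes the equivalence of strong and weak formulations nontrivial. The standard repairs are (i) an approximation: set $\alpha=0$ on $[t_0,t_0+\epsilon]$, extract $U$ from $W$ on that interval, run a time-compressed copy of the control on $[t_0+\epsilon,T]$, and let $\epsilon\to0$ using the continuity of the cost; or (ii) the paper's route: use the superposition principle only to produce a \emph{weak} solution of the control problem with the same cost, and then invoke the equality of the weak and strong values from \cite[Thm. 3.1]{DjetePossamaiTan}, whose proof internalizes an approximation of type (i). Apart from this, your two main ingredients (superposition and mimicking) are the same as the paper's; the difference is that the paper never attempts to realize the superposition measure directly as a strong control on the prescribed filtered space, which is the step where your argument breaks.
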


\begin{proof}
    Given a couple $(m;\alpha)$ solution to the Fokker Planck equation, by the superposition principle 
    \cite[Thm. 1.3]{Lacker_superposition} there exist a weak solution $(\Omega, \mathbb{F}, \mathbb{P}, W,X)$ to $dX_t =\alpha(t,X_t)dt$ (viewed as an It\^o SDE)  associated to the vector field $\alpha$ and the initial condition $m_0$, where $W$ is an $\mathbb{F}$-Brownian motion. Thus it gives rise to a weak solution $(\Omega, \mathbb{F}, \mathbb{P}, W, X, \alpha)$ of the control problem, as defined e.g. in \cite[Def 8.1]{Lacker_superposition} 
    (with $\mathbb{G} = \{ \emptyset, \Omega\}$ therein) 
    \footnote{We prefer not to present the weak formulation of the control problem because it is invoked here only, and in the rest of the paper we employ the strong formulation.}
    ,
    via $\alpha_t = \alpha_t(X_t)$. 
    Conversely, given a weak solution of the control problem there exists a feedback control $\alpha(t,x)$, measurable in $x$, which mimics the marginal distribution and has a lower cost, thanks to \cite[Thm. 8.3]{Lacker_superposition}.
    Finally, notice that the infimum over weak controls equals the infimum over strong controls (as defined in \S \ref{susec: prob_statement}) by \cite[Thm. 3.1]{DjetePossamaiTan}.
\end{proof}

Notice that, importantly, controls in the strong open-loop formulation are in the form $\hat{\alpha}(t,\xi, \omega)$, i.e. functions of the initial condition but also of a Brownian motion which is not present in the dynamics. This is necessary to randomize and permits to split the mass. Considering controls as functions $\hat{\alpha}(t,\xi)$ of the initial condition only does not in general give the same infimum, as shown in Corollary \ref{cor:Utilde} below.


\subsubsection{Hopf-Lax formula and first convergence result.}

We can express the value functions in terms of Hopf-Lax transform of the terminal cost. 
\begin{lem}
    Assume that $G : \mathcal{P}_2(\R^d) \rightarrow \R$ is bounded from below. For every $(t, m) \in [0,T] \times \mathcal{P}_2(\R^d),$
\begin{equation}
U(t,m) = \inf_{\nu \in \mathcal{P}_2(\R^d)} \bigl \{ G(\nu) + \frac{1}{2(T-t)}\bd_2^2(m,\nu) \bigr \},
\label{eq:HLforU}
\end{equation}
where $\bd_2$ is the $2$-Wasserstein distance. Similarly, for every $(t, \bx) \in [0,T] \times (\R^d)^N$,
$$V^N(t, \bx) = \inf_{ \by \in (\R^d)^N} \bigl \{ G^N(\by) + \frac{1}{2(T-t)N} \sum_{i=1}^N |x^{i} - y^{i}|^2 \bigr \},$$
where $G^N(\bx) := G(m^N_{\bx}).$
\end{lem}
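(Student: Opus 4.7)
The plan is to prove both identities by a two-stage minimization: first fix the terminal state (a point $\by$ in the $N$-particle case, a measure $\nu$ in the mean field case), compute the optimal action among trajectories that connect the given initial state to this prescribed terminal state, and then minimize over terminal states. In both cases the inner optimization admits an explicit value.

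For $V^N$, once $\by = (y^1,\dots,y^N) \in (\R^d)^N$ is prescribed the cost $\frac{1}{N}\sum_i \int_t^T \tfrac12 |\alpha_s^i|^2 ds$ and the dynamics $\dot X^i_s = \alpha^i_s$, $X^i_t = x^i$, $X^i_T = y^i$, decouple over indices. Each one-particle subproblem is the classical Dirichlet problem for the kinetic action: by Jensen's inequality (or equivalently the Euler--Lagrange equation $\dot \alpha^i = 0$) the minimum equals $|y^i - x^i|^2/(2(T-t))$ and is uniquely attained by the constant control $\alpha^i_s = (y^i - x^i)/(T-t)$. Summing and taking the infimum over $\by$ yields the claimed formula.

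For $U$, I rewrite the control problem as
\begin{equation*}
U(t,m) = \inf_{\nu \in \mathcal{P}_2(\R^d)} \Bigl\{ G(\nu) + I(t,m,\nu) \Bigr\}, \quad I(t,m,\nu) := \inf \int_t^T \tfrac{1}{2}\int_{\R^d} |\alpha_s(x)|^2\, m_s(dx)\, ds,
\end{equation*}
where the inner infimum runs over admissible couples $(m_s, \alpha_s)$ with $m \in AC([t,T];\mathcal{P}_2(\R^d))$, $\alpha \in L^2(ds \otimes m_s)$, satisfying \eqref{eq:continuityequation} with $m_t = m$ and $m_T = \nu$ (that every feasible control in the definition of $U$ can be packaged in this form, and conversely that every such couple arises from a feasible control, has already been recorded in the lemma preceding this statement). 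A straightforward time rescaling $r \in [0,1] \mapsto s = t + r(T-t)$ applied to the Benamou--Brenier dynamic characterization of the $2$-Wasserstein distance (see Chapter 8 of Ambrosio--Gigli--Savar\'e) identifies
\begin{equation*}
I(t,m,\nu) = \frac{1}{2(T-t)} \bd_2^2(m,\nu),
\end{equation*}
and substituting gives \eqref{eq:HLforU}.

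The main obstacle is the rigorous identification $I(t,m,\nu) = \bd_2^2(m,\nu)/(2(T-t))$. The lower bound follows by applying the superposition principle (\cite[Thm.~1.3]{Lacker_superposition}) to any admissible couple $(m_s, \alpha_s)$: this produces a probability measure on $C([t,T];\R^d)$ under which almost every trajectory $\gamma$ is absolutely continuous with $\int_t^T |\dot\gamma_s|^2 ds \le \int_t^T |\alpha_s(\gamma_s)|^2 ds$, and the map $\gamma \mapsto (\gamma_t, \gamma_T)$ pushes it forward to a coupling of $m$ and $\nu$ whose $L^2$ cost dominates $\bd_2^2(m,\nu)$ after applying Jensen's inequality to each trajectory. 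The upper bound is constructive via McCann's displacement interpolation: when an optimal transport map $T_\nu$ exists one takes $m_s = ((1 - (s-t)/(T-t))\mathrm{Id} + ((s-t)/(T-t)) T_\nu)_\# m$ with the corresponding affine velocity field, and in the general case one interpolates an optimal Kantorovich plan by the same formula. Both inequalities are classical and can be cited.
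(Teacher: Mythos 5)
Your proof is correct and follows exactly the route the paper indicates (the paper omits the proof, remarking only that the first identity is a consequence of the Benamou--Brenier formulation of quadratic optimal transport): the $N$-particle formula by pointwise minimization of the decoupled kinetic actions, and the mean field formula by conditioning on the terminal measure and identifying the inner infimum with the time-rescaled Benamou--Brenier functional. The supporting details you supply (superposition plus Jensen for the lower bound, displacement interpolation for the upper bound) are the standard ones and are correctly cited.
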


\begin{proof}
    These results are well-known (see for instance \cite{Lionsvideo} for the first statement) and we omit the proof, noting only that the first statement is a consequence of the Benamou-Brenier formulation of optimal transport with quadratic cost.
\end{proof}

For every $\by \in (\R^d)^N$ we can permute the coordinates without changing the value of $G^N( \by)$. Therefore we immediately see that 
\begin{equation}
V^N(t, \bx) = \inf_{\by \in (\R^d)^N} \bigl \{ G^N( \by) + \frac{1}{2(T-t)}\bd_2^2(m_{\bx}^N, m_{\by}^N) \bigr \}, 
\label{eq:21Aout15:26}
\end{equation}
and we deduce the following result.

\begin{prop}
Assume that $G : \mathcal{P}_2(\R^d) \rightarrow \R$ is bounded from below. For every $(t,\bx) \in [0,T] \times (\R^d)^N$ it holds
\begin{equation}
U(t, m_{\bx}^N) \leq V^N(t, \bx).
\label{eq:oneinequalitywithoutidnoise}
\end{equation}
Assume as well that $G$ satisfies, for some $C_G>0$,
\begin{equation} 
\bigl| G(m^2) - G(m^1) \bigr| \leq C_G \bigl( 1+ M_2(m^2) + M_2(m^1) \bigr) \bd_2(m^1,m^2), \quad \forall m^1,m^2 \in \mathcal{P}_2(\R^d),
\label{eq:GLip21Aout}
\end{equation}
where we use the notation $M_2 (\nu) := \Bigl( \int_{\R^d} |x|^2 d\nu(x) \Bigr)^{1/2}$. Then, for every $(t,\bx^N) \in [0,T] \times (\R^d)^N$ such that $m^N_{\bx^N} \rightarrow m$ in $\mathcal{P}_2(\R^d)$ it holds
$$ \lim_{ N \rightarrow +\infty} V^N(t, \bx^{N}) = U(t, m).$$
\label{prop:firstconvergence}
\end{prop}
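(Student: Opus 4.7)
The plan is to leverage the two Hopf-Lax representations directly. For the inequality \eqref{eq:oneinequalitywithoutidnoise}, observe that the formula \eqref{eq:21Aout15:26} expresses $V^N(t,\bx)$ as an infimum of $G(m_{\by}^N)+\frac{1}{2(T-t)}\bd_2^2(m_{\bx}^N,m_{\by}^N)$ taken over measures of the form $m_{\by}^N$ with $\by\in(\R^d)^N$. Since every such empirical measure belongs to $\mathcal{P}_2(\R^d)$, and the cost function is the same one appearing in \eqref{eq:HLforU}, the set of competitors for $V^N(t,\bx)$ is a subset of the set of competitors for $U(t,m_{\bx}^N)$. Taking infimum over the larger set can only decrease the value, which yields $U(t,m_{\bx}^N)\leq V^N(t,\bx)$.

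For the limit statement, I will prove $U(t,m)\leq\liminf_N V^N(t,\bx^N)$ and $\limsup_N V^N(t,\bx^N)\leq U(t,m)$ separately. The lower bound is immediate from \eqref{eq:oneinequalitywithoutidnoise} once we establish that $\nu\mapsto U(t,\nu)$ is continuous on $\mathcal{P}_2(\R^d)$: indeed, from \eqref{eq:HLforU} and the Lipschitz-type bound \eqref{eq:GLip21Aout} on $G$, a short computation shows that for every $m^1,m^2\in\mathcal{P}_2(\R^d)$
\[
|U(t,m^1)-U(t,m^2)|\leq C\bigl(1+M_2(m^1)+M_2(m^2)\bigr)\bd_2(m^1,m^2),
\]
because the minimizers $\nu^i$ in \eqref{eq:HLforU} satisfy a uniform bound on $M_2(\nu^i)$ in terms of $M_2(m^i)$ and $\inf G$. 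Since $\bd_2(m_{\bx^N}^N,m)\to 0$ forces boundedness of $M_2(m_{\bx^N}^N)$, continuity gives $U(t,m_{\bx^N}^N)\to U(t,m)$, hence $U(t,m)\leq\liminf_N V^N(t,\bx^N)$ by \eqref{eq:oneinequalitywithoutidnoise}.

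For the upper bound, fix $\eps>0$ and choose $\nu\in\mathcal{P}_2(\R^d)$ which is $\eps$-optimal in \eqref{eq:HLforU}, i.e.\ $G(\nu)+\frac{1}{2(T-t)}\bd_2^2(m,\nu)\leq U(t,m)+\eps$. The key step is to produce, for each $N$, a vector $\by^N\in(\R^d)^N$ such that $\bd_2(m_{\by^N}^N,\nu)\to 0$ as $N\to\infty$; this is a classical approximation result (any $\mathcal{P}_2$ measure is the $\bd_2$-limit of $N$-point empirical measures, by quantization or by sampling i.i.d.\ and invoking the Varadarajan theorem together with uniform integrability of $|x|^2$). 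Plugging $\by^N$ into \eqref{eq:21Aout15:26} and using the triangle inequality,
\[
V^N(t,\bx^N)\leq G(m_{\by^N}^N)+\frac{1}{2(T-t)}\bd_2^2(m_{\bx^N}^N,m_{\by^N}^N),
\]
we pass to the limit: $G(m_{\by^N}^N)\to G(\nu)$ by \eqref{eq:GLip21Aout} (using the uniform bound $\sup_N M_2(m_{\by^N}^N)<\infty$ inherited from the convergence $\bd_2(m_{\by^N}^N,\nu)\to 0$), while $\bd_2(m_{\bx^N}^N,m_{\by^N}^N)\to\bd_2(m,\nu)$ by triangle inequality. This yields $\limsup_N V^N(t,\bx^N)\leq G(\nu)+\frac{1}{2(T-t)}\bd_2^2(m,\nu)\leq U(t,m)+\eps$, and letting $\eps\to 0$ concludes. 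The only mild obstacle is the $\mathcal{P}_2$-approximation of $\nu$ by empirical measures, which is standard but must be invoked explicitly since convergence in $\bd_2$ (rather than merely weak convergence) is needed to control the quadratic cost.
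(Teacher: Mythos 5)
Your proof is correct and follows essentially the same route as the paper's: the inequality $U(t,m_{\bx}^N)\leq V^N(t,\bx)$ comes from the inclusion of the competitor sets in the two Hopf--Lax formulas, the $\limsup$ bound from an $\eps$-optimal $\nu$ approximated in $\bd_2$ by empirical measures, and the $\liminf$ bound from the continuity of $U(t,\cdot)$ on $\mathcal{P}_2(\R^d)$. The only (harmless) difference is that you establish this continuity directly from the Hopf--Lax formula together with a moment bound on near-minimizers (which indeed requires the lower bound on $G$, as you note), whereas the paper delegates it to a cited lemma on inf-convolutions.
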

\begin{proof}
    The first part of the statement follows directly from the representation formula \eqref{eq:HLforU} and \eqref{eq:21Aout15:26}. For the second part we fix $(t,m) \in [0,T] \times \mathcal{P}_2(\R^d)$. For some $\epsilon >0$ we let $\nu \in \mathcal{P}_2(\R^d)$ be such that
    $$ U(t,m) \geq G(\nu) + \frac{1}{2(T-t)} \bd_2^2( \nu,m) - \epsilon.$$
Now we take a sequence $(\by^N) \in (\R^d)^N$ such that
$$ \lim_{N \rightarrow +\infty} \bd_2( m^N_{\by^N}, \nu) =0.$$
Given the representation formula \eqref{eq:21Aout15:26} for $V^N$ we have
$$ V^N(t, \bx^N) \leq G( m _{\by^N}^N ) + \frac{1}{2(T-t)} \bd_2^2( m_{\by^N}^N, m_{\bx^N}^N). $$
Therefore, by continuity of $G$ we have
$$ \limsup_{N \rightarrow +\infty} V^N(t, \bx^N) \leq G(\nu) + \frac{1}{2(T-t)} \bd_2^2(\nu, m) \leq U(t,m) + \epsilon. $$
Since this is valid for any $\epsilon >0$ we have
\begin{equation} 
\limsup_{N \rightarrow +\infty} V^N(t, \bx^N) \leq U(t,m).
\label{eq:limsup21Aout}
\end{equation}
Notice that this inequality only requires the continuity of $G$ and not the quantitative regularity condition \eqref{eq:GLip21Aout}. On the other hand, condition \eqref{eq:GLip21Aout} ensures that $U(t,\cdot)$ is continuous over $\mathcal{P}_2(\R^d)$ for every $t \in [0,T]$, see \cite{Calder} Lemma 8.5 for the relevant property of the inf-convolution. Since, for all $N \geq 1$ we have
$$ V^N(t,\bx^N) \geq U(t, m_{\bx^N}^N)$$
we can deduce from the continuity of $U(t, \cdot)$ that
$$ \liminf_{N \rightarrow +\infty} V^N(t, \bx^N) \geq U(t,m)$$
which concludes the proof of the proposition.
\end{proof}

\subsubsection{On the choice of admissible controls}
\label{sec:onthechoiceofcontrols}
In this setting with degenerate idiosyncratic noise, the set of admissible controls plays an important role. To wit, define $U^f$ ($f$ standing for feedback), the feedback value function by
    $$U^f(t_0,m_0) = \inf_{ \alpha \in L^1_t(Lip_x)} \frac{1}{2} \int_{t_0}^T \int_{\R^d} |\alpha_t(x)|^2 dm_t(x)dt + G(m_T), $$
where the infimum is taken over the feedback controls in $L^1([0,T] \times \R^d, \R^d)$ which are Lipschitz in space and $(m_t)_{t \geq 0}$ solves
$$\partial_t m_t + \div(\alpha_t m_t) =0, \mbox{ in }(t_0,T) \times \R^d, \quad m_{t_0}= m_0.$$
Notice that this is the type of controls considered in \cite{Fornasier2014}. Define accordingly $V^{f,N}$ to be the infimum of the $N$-particle problem, when the controls take the form
$$ \alpha_t^{i} = \alpha^{i}(t,X_t^1, \dots, X_t^N)$$
for some Lipschitz maps $\alpha^i(t,\cdot).$
Now take $ \bx_0 = (x_0^1, \dots,x_0^N) \in (\R^d)^N$ and $\alpha_t(\cdot)$ an admissible control for $U^f(t_0, m_{\bx_0}^n)$ with corresponding trajectory $(m_t)_{t \geq t_0}$. Given the regularity of the control, we easily check that
$$m_t = \frac{1}{N} \sum_{i=1}^N \delta_{X_t^{i}},$$
where, for all $1 \leq i \leq N$, $X_t^{i}$ is the solution to the ODE
$$dX_t^{i} = \alpha_t(X_t^{i}) dt, \quad X_{t_0}^{i} = x_0^{i}$$
and the corresponding cost is
$$ \frac{1}{2} \int_{t_0}^T \frac{1}{N}\sum_{i=1}^N | \alpha_t(X_t^{i})|^2dt + G \bigl( \frac{1}{N} \sum_{i=1}^N \delta_{X_T^{i}} \bigr).$$
We can give the same control $\alpha$ to each particle in the $N$-particle control problem and, therefore, we have the inequality
\be 
\label{VfN<Vf}
V^{f,N}(t_0, \bx_0)  \leq  U^f(t_0, m^N_{\bx_0}), 
\ee
giving the opposite inequality of Proposition \ref{prop:firstconvergence}.

Notice that in general $U^f(t_0,m_0) \neq U (t_0,m_0)$, as shown in Corollary \ref{cor.feedback}.

\subsubsection{Convergence rate}

Although one inequality is clear, we would like to find a bound for the other side, i.e.~ we look for a bound of the form
$$V^N(t, \bx) \leq U(t, m_{\bx}^n) + \epsilon_n$$
for some $\epsilon_n \rightarrow 0.$ For any $N \in \mathbb{N}$ and $\nu \in \mathcal{P}_2(\R^d)$,  we define the following error
$$\epsilon_{N}(\nu) :=  \inf_{\by \in (\R^d)^N} \bd_2(m_{\by}^N,\nu).$$

\begin{lem} \label{prop.detbound}
    Assume that $G$ is $\bd_1$-Lipschitz continuous with constant $L_G$. Then, for all $(t, \bx) \in [0,T) \times (\R^d)^N$ there is $\bar{\nu} \in \mathcal{P}_2(\R^d)$ such that 
\begin{equation} 
G( \bar{\nu}) + \frac{1}{2(T-t)} \bd_2^2(m_{\bx}^N, \bar{\nu}) = U(t,m_{\bx}^N)= \min_{ \nu \in \mathcal{P}_2(\R^d)} G(\nu) + \frac{1}{2(T-t)} \bd_2^2 (m_{\bx}^N,\nu),  
\label{eq:defnbarnu13/01}
\end{equation}
and every such $\bar{\nu}$ satisfies
    $$V^N(t, \bx) \leq U(t, m_{\bx}^N) + 4 L_{G} \epsilon_N (\bar{\nu}). $$
    
\label{prop:Quantization}
\end{lem}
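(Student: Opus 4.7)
\textbf{Proof plan for Lemma \ref{prop.detbound}.}

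The plan is to first establish existence of a minimizer $\bar\nu$ in the Hopf--Lax formula, then compare $V^N(t,\bx)$ and $U(t,m_{\bx}^N)$ via their respective Hopf--Lax representations using $\bar\nu$ as an intermediate reference. First, existence of $\bar\nu$ satisfying \eqref{eq:defnbarnu13/01} follows by the direct method: the functional $\nu \mapsto G(\nu) + \frac{1}{2(T-t)}\bd_2^2(m_{\bx}^N,\nu)$ is lower semicontinuous with respect to the narrow topology on $\mathcal{P}_2(\R^d)$ (using that $G$ is $\bd_1$-Lipschitz hence narrowly continuous on bounded sets, and that $\bd_2^2$ is l.s.c. for narrow convergence), and it is coercive in the sense that the quadratic term $\frac{1}{2(T-t)}\bd_2^2(m_{\bx}^N,\nu)$ dominates the at-most-linear growth $|G(\nu)| \leq |G(\delta_0)| + L_G(1 + M_2(\nu))$, so any minimizing sequence has bounded second moments and is tight.

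Next, I would derive an a priori bound on $\bd_2(m_{\bx}^N,\bar\nu)$. By comparing $\bar\nu$ to $m_{\bx}^N$ in \eqref{eq:defnbarnu13/01} and using $\bd_1 \leq \bd_2$ together with the $\bd_1$-Lipschitz property of $G$,
\[
\frac{1}{2(T-t)}\bd_2^2(m_{\bx}^N,\bar\nu) \leq G(m_{\bx}^N) - G(\bar\nu) \leq L_G\, \bd_2(m_{\bx}^N,\bar\nu),
\]
which yields $\bd_2(m_{\bx}^N,\bar\nu) \leq 2L_G(T-t)$.

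The main comparison step is as follows. Fix an arbitrary $\by \in (\R^d)^N$. Using the representation \eqref{eq:21Aout15:26} for $V^N$ and \eqref{eq:defnbarnu13/01} for $U$,
\[
V^N(t,\bx) - U(t,m_{\bx}^N) \leq \big(G(m_{\by}^N) - G(\bar\nu)\big) + \frac{1}{2(T-t)}\big(\bd_2^2(m_{\bx}^N,m_{\by}^N) - \bd_2^2(m_{\bx}^N,\bar\nu)\big).
\]
The first bracket is at most $L_G\,\bd_2(m_{\by}^N,\bar\nu)$ by the $\bd_1$-Lipschitz assumption and $\bd_1\leq \bd_2$. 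For the second, factoring the difference of squares and applying the triangle inequality together with the a priori bound $\bd_2(m_{\bx}^N,\bar\nu)\leq 2L_G(T-t)$ gives
\[
\frac{1}{2(T-t)}\big(\bd_2^2(m_{\bx}^N,m_{\by}^N) - \bd_2^2(m_{\bx}^N,\bar\nu)\big) \leq 2L_G\,\bd_2(m_{\by}^N,\bar\nu) + \frac{1}{2(T-t)}\bd_2^2(m_{\by}^N,\bar\nu).
\]
Taking the infimum over $\by$ so that $\bd_2(m_{\by}^N,\bar\nu)$ approaches $\epsilon_N(\bar\nu)$,
\[
V^N(t,\bx) - U(t,m_{\bx}^N) \leq 3L_G\,\epsilon_N(\bar\nu) + \frac{1}{2(T-t)}\epsilon_N(\bar\nu)^2.
\]

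Finally, the main obstacle is handling the quadratic remainder $\epsilon_N(\bar\nu)^2/(2(T-t))$, which could blow up when $T-t$ is small. I would resolve this by a case split, as in the commented earlier version: alternatively, observe that taking $\by = \bx$ in \eqref{eq:21Aout15:26} directly gives $V^N(t,\bx) \leq G(m_{\bx}^N)$, which combined with \eqref{eq:defnbarnu13/01} and the a priori bound yields $V^N(t,\bx) - U(t,m_{\bx}^N) \leq L_G\,\bd_2(m_{\bx}^N,\bar\nu) \leq 2L_G^2(T-t)$. If $2L_G(T-t) \geq \epsilon_N(\bar\nu)$, the first estimate gives $V^N - U \leq 3L_G\epsilon_N(\bar\nu) + L_G\epsilon_N(\bar\nu) = 4L_G\epsilon_N(\bar\nu)$; otherwise the second gives $V^N - U \leq 2L_G^2(T-t) < L_G\,\epsilon_N(\bar\nu)$. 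In both cases the bound $4L_G\,\epsilon_N(\bar\nu)$ holds, completing the proof.
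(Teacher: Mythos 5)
Your proposal is correct and follows essentially the same route as the paper: existence of $\bar\nu$ by the direct method using the linear growth of $G$ and lower semicontinuity of $\bd_2$, the a priori bound $\bd_2(m_{\bx}^N,\bar\nu)\leq 2L_G(T-t)$ from testing with $\nu=m_{\bx}^N$, the Hopf--Lax comparison yielding $3L_G\epsilon_N(\bar\nu)+\epsilon_N(\bar\nu)^2/(2(T-t))$, and the same case split on whether $2L_G(T-t)\geq\epsilon_N(\bar\nu)$ to absorb the quadratic term. No gaps.
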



\begin{proof}[Proof of Lemma \ref{prop:Quantization}]
    Let us fix $(t, \bx) \in [t_0,T) \times (\R^d)^N$ for some $N \geq 1$. We first justify that there exists $\bar{\nu} \in \mathcal{P}_2(\R^d)$ such that \eqref{eq:defnbarnu13/01} holds. Since $\nu \mapsto G(\nu)$ is globally $\bd_1$-Lipschitz continuous it has at most linear growth in $\int_{\R^d} |x|d\nu(x)$ and therefore $\nu \mapsto G(\nu) + (2(T-t))^{-1} d_2^2( m^N_{\bx}, \nu)$ is bounded from below. Let $(\nu^n)_{n \in \mathbb{N}}$ be a minimizing sequence. For the same reason, we easily have that  $(\int_{\R^d} |x|^2 d\nu^n(x))_{n \in \mathbb{N}}$ is bounded.
 As a consequence, the sequence admits a converging sub-sequence (still denoted $(\nu^n)_{ n \in \mathbb{N}}$) converging in $\mathcal{P}_{1}(\R^d)$ to some $\bar{\nu} \in \mathcal{P}_2(\R^d)$. Since $G$ is $d_1$-Lipschitz continuous and $(\mu^1, \mu^2) \mapsto \bd_2(\mu^1,\mu^2)$ is weakly lower-semi-continuous (when $\mathcal{P}(\R^d)$ is endowed with the usual topology of weak convergence of probability measures, i.e., convergence holds against bounded continuous functions) we deduce that $\bar{\nu}$ is indeed a minimum of $ \mathcal{P}_2(\R^d) \ni \nu \mapsto G(\nu) + \frac{1}{2(T-t)} \bd_2^2(m_{\bx}^N,\nu)$.

Let take such an element $\bar{\nu}$ of $\mathcal{P}_2(\R^d)$ satisfying \eqref{eq:defnbarnu13/01}.  Using the Hopf-Lax formula for $V^N$ and $U$ we have, for every $\by \in (\R^d)^N,$
\begin{align*}
V^N(t, \bx) - U(t, m_{\bx}^N) & \leq G( m_{\by}^N) - G(\bar{\nu}) + \frac{1}{2(T-t)} \bd_2^2( m^N_{\bx}, m^N_{\by}) - \frac{1}{2(T-t)}\bd_2^2( m_{\bx}^n, \bar{\nu}) \\
&\leq L_G \bd_2( m_{\by}^N, \bar{\nu}) + \frac{1}{2(T-t)} \left[   \bd_2(m_{\bx}^n, m_{\by}^N) + \bd_2( m_{\bx}^N, \bar{\nu}) \right] \bd_2( m_{\by}^N, \bar{\nu}) \\
&\leq L_G \bd_2( m_{\by}^N, \bar{\nu}) + \frac{1}{2(T-t)} \left[   2\bd_2(m_{\bx}^N, \bar{\nu}) + \bd_2( m_{\by}^N, \bar{\nu}) \right] \bd_2( m_{\by}^N, \bar{\nu}),
\end{align*}
where we used the $\bd_2$-Lipschitz regularity of $G$ (which is clearly inherited by its $\bd_1$-Lipschitz regularity) as well as the triangular inequality.
By definition of $\bar{\nu}$ we have
$$G( m_{\bx}^n) \geq G(\bar{\nu}) + \frac{1}{2(T-t)}\bd_2^2( m_{\bx}^N, \bar{\nu}),$$
which leads, in view of the Lipschitz regularity of $G$, to
\begin{equation}
\frac{1}{2(T-t)} \bd_2^2( m_{\bx}^N, \bar{\nu}) \leq G(m_{\bx}^N) - G(\bar{\nu}) \leq L_G \bd_2(m_{\bx}^N, \bar{\nu})
\label{eq:21Aout9:46}
\end{equation}
and therefore, it holds
\begin{equation}
\bd_2( m_{x}^N, \bar{\nu}) \leq 2L_G(T-t).
\label{eq:21Aout9:47}
\end{equation}
In particular, this implies that $M_2(\bar{\nu}) \leq M_2( m_{\bx}^N) + 2L_G(T-t).$ 
As a consequence, we have
$$V^N(t , \bx) - U(t, m_{\bx}^N) \leq \Bigl( 3 L_G + \frac{\bd_2(m_{\by}^N, \bar{\nu})}{2(T-t)} \Bigr) \bd_2(m_{\by}^N,\bar{\nu}). $$
Taking the infimum over $\by \in (\R^d)^N$ we get, by definition of $\epsilon_N$, 
\begin{equation}
V^N(t , \bx) - U(t, m_{\bx}^N) \leq \Bigl( 3 L_G + \frac{\epsilon_N ( \bar{\nu} )}{2(T-t)} \Bigr)\epsilon_N ( \bar{\nu}). 
\label{eq:firstestimate21Aout}
\end{equation}
On the other hand, using first the representation formula for $V^N$ and then the definition of $\bar{\nu}$ we have
\begin{align*}
V^N(t, m_{\bx}^N) - U(t, m_{\bx}^N) &\leq G( m_{\bx}^N) - U(t,m_{\bx}^N)  \\
&\leq G( m_{\bx}^N)  - G(\bar{\nu}). 
\end{align*}
Recalling \eqref{eq:21Aout9:46} and \eqref{eq:21Aout9:47} this leads to
\begin{equation} 
V^N(t,m_{\bx}^N) \leq U(t, m_{\bx}^N) +  2L^2_G(T-t).
\label{eq:secondestimate21Aout}
\end{equation}
Now we distinguish between the values of $t \in [0,T]$. If $2L_G(T-t) \geq \epsilon_N (\bar{\nu}) $ we use estimate \eqref{eq:firstestimate21Aout} and otherwise we use estimate \eqref{eq:secondestimate21Aout}. In both cases we find
$$ V^N(t,m_{\bx}^N) \leq U(t, m_{\bx}^N) + 4 L_G \epsilon_N (\bar{\nu}). $$
This concludes the proof of the lemma.
\end{proof}

\begin{lem}
    Assume that $G$ is $\bd_1$-Lipschitz with Lipschitz constant $L_G$. Take $(t,\bx) \in [0,T] \times (\R^d)^N$ and $\bar{\nu} \in \mathcal{P}_2(\R^d)$ satisfying \eqref{eq:defnbarnu13/01}. Then for any $ p \geq 5$, 
    $$ M_p(\bar{\nu}) \leq L_{G}(T-t) + M_p(m_{\bx}^N)$$
where $M_p(\nu) :=  \bigl(\int_{\R^d} |x|^p d\nu(x) \bigr)^{1/p}$ . 
\label{lem:momentestimate13/01}
\end{lem}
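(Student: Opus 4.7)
The strategy is to lift the optimization problem defining $\bar{\nu}$ to the Hilbert space $L^2(\Omega;\R^d)$ over some atomless probability space $(\Omega,\mathcal{F},\P)$, exploit the first-order minimality condition there, and convert the $\bd_1$-Lipschitz bound on $G$ into an $L^\infty$ bound on the optimal transport displacement. Concretely, pick $X$ on $\Omega$ with $\mathcal{L}(X) = m_{\bx}^N$, and let $\bar{Y}\in L^2(\Omega;\R^d)$ realize the optimal coupling, so that $\mathcal{L}(\bar{Y}) = \bar{\nu}$ and $\E|X - \bar{Y}|^2 = \bd_2^2(m_{\bx}^N,\bar{\nu})$. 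Defining $\hat{G}(Y) := G(\mathcal{L}(Y))$, I observe that $\hat{G}$ is law-invariant and, because $G$ is $\bd_1$-Lipschitz with constant $L_G$, satisfies $|\hat{G}(Y_1)-\hat{G}(Y_2)|\leq L_G\E|Y_1-Y_2|$. Combining these two facts with the minimality property \eqref{eq:defnbarnu13/01} shows that $\bar{Y}$ minimizes the lifted functional $Y\mapsto \hat{G}(Y) + \frac{1}{2(T-t)}\E|X-Y|^2$ over all $Y\in L^2(\Omega;\R^d)$.

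Next, I will derive the pointwise bound $\|\bar{Y}-X\|_{L^\infty(\Omega)}\leq L_G(T-t)$. Testing minimality with $Y = \bar{Y}-\epsilon Z$ for arbitrary $Z\in L^2(\Omega;\R^d)$ and $\epsilon>0$, expanding the quadratic term, and using the $L^1$-Lipschitz bound on $\hat{G}$, one obtains
\begin{equation*}
-2\epsilon\,\E[(X-\bar{Y})\cdot Z]-\epsilon^2\E|Z|^2 \leq 2L_G(T-t)\,\epsilon\,\E|Z|.
\end{equation*}
Dividing by $\epsilon$ and letting $\epsilon\to 0^+$, then replacing $Z$ by $-Z$, gives $|\E[(\bar{Y}-X)\cdot Z]|\leq L_G(T-t)\,\E|Z|$ for all $Z\in L^2$.

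The main (but still mild) obstacle is upgrading this dual inequality to a genuine $L^\infty$ bound. For any measurable $A\subset\Omega$ with $\P(A)>0$, I apply the inequality with $Z = \mathbf{1}_A (\bar{Y}-X)/|\bar{Y}-X|$ (with the convention $0/0 = 0$), yielding $\E[\mathbf{1}_A|\bar{Y}-X|]\leq L_G(T-t)\P(A)$. Choosing $A = \{|\bar{Y}-X| > L_G(T-t) + \eta\}$ for arbitrary $\eta>0$ forces $\P(A)=0$, hence $\|\bar{Y}-X\|_{L^\infty}\leq L_G(T-t)$. Equivalently, this is the standard identification $(L^1)^*=L^\infty$ applied to the bounded linear functional $Z\mapsto \E[(\bar{Y}-X)\cdot Z]$.

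Finally, the conclusion follows from Minkowski's inequality: for any $p\geq 1$ (in particular $p\geq 5$),
\begin{equation*}
M_p(\bar{\nu}) = \|\bar{Y}\|_{L^p(\Omega)} \leq \|\bar{Y}-X\|_{L^p(\Omega)} + \|X\|_{L^p(\Omega)} \leq \|\bar{Y}-X\|_{L^\infty(\Omega)} + M_p(m_{\bx}^N) \leq L_G(T-t) + M_p(m_{\bx}^N),
\end{equation*}
which is the claim. Note that the restriction $p\geq 5$ in the statement is not used in the argument itself; it is imposed only because this moment estimate is subsequently fed into a Fournier--Guillin type bound where $p>4$ is required.
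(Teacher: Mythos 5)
Your proposal is correct and follows essentially the same route as the paper: lift to $L^2$ over an atomless space via an optimal coupling, show $\bar{Y}$ minimizes the lifted functional, derive the variational inequality $|\E[(\bar{Y}-X)\cdot Z]|\leq L_G(T-t)\E|Z|$, upgrade to $\|\bar{Y}-X\|_{L^\infty}\leq L_G(T-t)$ by $(L^1)^*=L^\infty$, and conclude with Minkowski. The only (harmless) differences are that you perturb by $\bar{Y}-\epsilon Z$ rather than $\bar{Y}+\epsilon(\bar{Y}-Y)$ and that you make the $L^1$--$L^\infty$ duality step explicit where the paper leaves it implicit.
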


\begin{proof}
    \textit{Step 1.} 
We fix $(t,\bx) \in [0,T) \times (\R^d)^N$ and $\bar{\nu}$ and element of  $\mathcal{P}_2(\R^d)$ satisfying \eqref{eq:defnbarnu13/01}. Let $(\Omega,\mathcal{F}, \mathbb{P})$ be an atomless probability space and $(\bar{Y},X_{\bx}^N)$ a couple of random variables in $L^2(\Omega; \R^d)$ such that $\mathcal{L} \bigl( (\bar{Y}, X_{\bx}^N) \bigr)$ is an optimal transport plan between $\bar{\nu}$ and $m_{\bx}^N$. Then, for any $Y \in L^2(\Omega; \R^d)$ we have, denoting $\bar{G}$ the lift of $G$ to $L^2(\Omega; \R^d)$,
\begin{align*}
    \bar{G}(Y) + \frac{1}{2(T-t)} \E \bigl[ |X_x^N - Y|^2 \bigr] &\geq G( \mathcal{L}(Y)) + \frac{1}{2(T-t)} \bd_2^2(m_{\bx}^N,\mathcal{L}(Y)) \\
    &\geq G( \bar{\nu}) + \frac{1}{2(T-t)} \bd_2^2(m_{\bx}^N,\bar{\nu}) \\
    &= \bar{G}(\bar{Y}) + \frac{1}{2(T-t)} \E \bigl[ |X_{\bx}^N - \bar{Y}|^2 \bigr]
\end{align*}
where we first used the definition of $\bd_2$ as infimum over couplings, next the definition of $\bar{\nu}$ as a minimum in \eqref{eq:defnbarnu13/01} and finally the fact that $\mathcal{L}((X_{\bx}^N,Y))$ is an optimal coupling between $m_{\bx}^N$ and $\bar{\nu}$. This shows that $\bar{Y}$ is a minimum, over $L^2(\Omega;\R^d)$ of 
$$ Y \mapsto \bar{G}(Y) + \frac{1}{2(T-t)} \E \bigl[ |X_{\bx}^N - Y|^2 \bigr].$$

\textit{Step 2.}  The map $\bar{G}$ is obviously Lipschitz continuous with respect to the $L^1$-metric over $(\Omega,\mathcal{F}, \mathbb{P})$ with constant $L_G$. We claim that $\bar{Y} - X_{\bx}^N$ belongs to $L^{\infty}$ and 
\begin{equation} 
\norm{ \bar{Y} - X_{\bx}^N}_{L^{\infty}} \leq L_G(T-t).
\label{eq:Linfty13/01}
\end{equation}
Indeed, for any $Y \in L^2$ we have, by minimaliy of $\bar{Y}$
\begin{align*}
    \frac{1}{2} \E \bigl[ |X_{\bx}^N - \bar{Y}|^2 \bigr] - \frac{1}{2} \E \bigl[ | X_{\bx}^N - Y |^2 \bigr] & \leq (T-t) \bigl( \bar{G}(Y) - \bar{G}(\bar{Y})) \\
    &\leq L_G(T-t) \E \bigl[ |Y - \bar{Y}|].
\end{align*}
And therefore, for all $Y \in L^2$,
$$ \E \bigl[ (\bar{Y} - X_{\bx}^N) \cdot(\bar{Y} - Y) \bigr] \leq L_G(T-t) \E \bigl[ | Y - \bar{Y}| \bigr] + \frac{1}{2} \E \bigl[ |Y - \bar{Y}|^2 \bigr]. $$
For fixed $Y$ and any $\epsilon >0$, considering the above estimate for $\bar{Y} + \epsilon (\bar{Y} - Y)$ and dividing by $\epsilon $ we get
$$ \E \bigl[ (\bar{Y} - X_{\bx}^N) \cdot(\bar{Y} - Y) \bigr] \leq L_G(T-t) \E \bigl[ | Y - \bar{Y}| \bigr] + \frac{\epsilon}{2} \E \bigl[ |Y - \bar{Y}|^2 \bigr]. $$
Letting $\epsilon \rightarrow 0^+$ we deduce that
$$ \E \bigl[ (\bar{Y} - X_{\bx}^N) \cdot(\bar{Y} - Y) \bigr] \leq L_G(T-t) \E \bigl[ | Y - \bar{Y}| \bigr], \quad \mbox{ for all } Y \in L^2. $$
This shows that $(\bar{Y} - X_{\bx}^N)$ belongs to $L^{\infty}$ and \eqref{eq:Linfty13/01} holds. As a consequence, we get
\begin{align*} 
M_p (\bar{\nu}) &= \E \bigl[ | \bar{Y}|^p \bigr]^{1/p} \leq \E \bigl[ |\bar{Y} - X_{\bx}^N|^p \bigr]^{1/p} + \E \bigl[ | X_{\bx}^N|^p \bigr]^{1/p} \\
&\leq L_G(T-t) + M_p(m_{\bx}^N), 
\end{align*}
which concludes the proof of the Lemma.
\end{proof}

As a consequence, by the results of \cite{FournierGuillin}, we have

\begin{prop}
Assume that $G$ is $L_G$-Lipschitz with respect to $\bd_1$. Then, for all $p \geq 5$, there is $C_{p,d}>0$ depending only on $p$ and $d$ such that, for all $(t,\bx) \in [0,T] \times (\R^d)^N$,
\begin{equation} 
V^N(t,m_{\bx}^N) \leq U(t,m_{\bx}^N) +C_{p,d} L_{G} \Bigl( L_{G}(T-t) + M_p(m_{\bx}^N) \Bigr)r_{d,N}  
\label{eq:mainestimate13/01}
\end{equation}
with 
\begin{equation} 
r_{d,N} := \left \{
\begin{array}{ll}
N^{-1/4} & \mbox{ if } d<4 \\
N^{-1/4} \log(1+N)^{1/2} &\mbox{ if } d =4 \\
N^{-1/d} & \mbox{ if } d >4. 
\end{array}
\right.
\label{eq:FG_Rate_p=2}
\end{equation}
\end{prop}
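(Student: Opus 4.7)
The plan is to combine Lemma \ref{prop:Quantization} with the moment estimate from Lemma \ref{lem:momentestimate13/01} and then invoke the empirical convergence rates of Fournier and Guillin \cite{FournierGuillin} to control the quantization error $\epsilon_N(\bar\nu)$.

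First, I would fix $(t,\bx) \in [0,T] \times (\R^d)^N$ and, using Lemma \ref{prop:Quantization}, pick a minimizer $\bar\nu \in \mathcal{P}_2(\R^d)$ of \eqref{eq:defnbarnu13/01} so that
\begin{equation*}
V^N(t,\bx) \leq U(t,m_{\bx}^N) + 4L_G\, \epsilon_N(\bar\nu).
\end{equation*}
By Lemma \ref{lem:momentestimate13/01}, this $\bar\nu$ enjoys the moment bound $M_p(\bar\nu) \leq L_G(T-t) + M_p(m_{\bx}^N)$ for every $p \geq 5$. So it remains to bound $\epsilon_N(\bar\nu)$ in terms of $M_p(\bar\nu)$.

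Next, I would estimate $\epsilon_N(\bar\nu)$ probabilistically. Since $\epsilon_N(\bar\nu)$ is an infimum over all $\by \in (\R^d)^N$ of $\bd_2(m_{\by}^N, \bar\nu)$, it is bounded above by $\E\bigl[\bd_2(m_{\bar{\mathbf Y}}^N, \bar\nu)\bigr]$ for any random sample $\bar{\mathbf Y} = (\bar Y^1,\dots,\bar Y^N)$ of iid draws from $\bar\nu$. The main theorem of \cite{FournierGuillin} then gives
\begin{equation*}
\E\bigl[\bd_2(m_{\bar{\mathbf Y}}^N, \bar\nu)\bigr] \leq C_{p,d}\, M_p(\bar\nu)\, r_{d,N},
\end{equation*}
for any $p > 4$ (with the appropriate logarithmic correction when $d = 4$), where $r_{d,N}$ is as in \eqref{eq:FG_Rate_p=2}. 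This is where the moment assumption $p \geq 5$ enters and where the dimension-dependent rate $r_{d,N}$ appears.

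Finally, combining the two bounds and using Lemma \ref{lem:momentestimate13/01} yields
\begin{equation*}
V^N(t,m_{\bx}^N) \leq U(t,m_{\bx}^N) + 4L_G\, C_{p,d}\bigl(L_G(T-t) + M_p(m_{\bx}^N)\bigr) r_{d,N},
\end{equation*}
which is \eqref{eq:mainestimate13/01} up to a harmless renaming of the constant $C_{p,d}$. The only real subtlety lies in the passage from the deterministic quantization error $\epsilon_N(\bar\nu)$ to the expected empirical $\bd_2$-distance, but this is immediate by definition of infimum. I do not expect any serious obstacle here, since every step is either an application of a lemma already proved or a direct invocation of \cite{FournierGuillin}; the main conceptual point is simply that the Hopf-Lax structure of the deterministic mean-field problem reduces the convergence question to a pure quantization / empirical approximation problem for a single probability measure $\bar\nu$, for which sharp rates are available.
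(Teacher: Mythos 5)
Your proposal is correct and follows essentially the same route as the paper: Lemma \ref{prop:Quantization} plus the moment bound of Lemma \ref{lem:momentestimate13/01}, with the quantization error $\epsilon_N(\bar\nu)$ controlled by choosing $\by$ to realize (at most) the Fournier--Guillin expected empirical $\bd_2$-distance of an i.i.d.\ sample from $\bar\nu$. The only cosmetic difference is that the paper quotes the squared bound $\bd_2^2(m_{\by}^N,\bar\nu)\leq C_{p,d}^2 M_p^2(\bar\nu) r_{d,N}^2$ directly, whereas you pass through the expectation explicitly; both are the same argument.
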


\begin{proof}

A consequence of  Theorem 1 in \cite{FournierGuillin}, is that for every $p \geq 5$ there is a constant $C_{p,d}$ depending only on $p$ and $d$ such that, for every $N \geq 1$ we can find $\by \in (\R^d)^N$ with
$$ \bd_2^2 (m_{\by}^N, \bar{\nu}) \leq C^2_{p,d} M_p^{2} (\bar{\nu}) r_{d,N}^2,  $$
where $r_{d,N}$ is defined in \eqref{eq:FG_Rate_p=2}. And so, we have
$$ \epsilon_N(\bar{\nu}) \leq C_{p,d} M_p (\bar{\nu}) r_{d,N}. $$
Using the conclusions of Lemma \ref{lem:momentestimate13/01}, we get 
$$ \epsilon_N(\bar{\nu}) \leq C_{p,d} \Bigl( L_{G}(T-t) + M_p(m_{\bx}^N) \Bigr) r_{d,N},  $$
which concludes the proof of the Proposition. 
\end{proof}

Now we exploit results on the optimal quantization of the Lebesgue measure over the unit cube $[0,1]^d$ to prove that the bound is optimal.

\begin{prop}[Optimality of the bound]
\label{prop:opt:bound:N}
We take  $G(m) := \bd_1(m, \mathrm{Leb} |_{[0,1]^d})$. To simplify we take $t_0= T- 1/2$. There exists $c_d >0$ and $N_0 \in \mathbb{N}$ such that, for all  $N \geq N_0$ there exists $\bx_N \in ([0,1]^d)^N$ such that
$$V^N(t_0, \bx^N) - U(t_0, m^N_{\bx^N}) \geq c_d N^{-1/d}.$$
\end{prop}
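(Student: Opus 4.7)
The plan is to exploit the explicit Hopf-Lax representations of $U(t_0,\cdot)$ and $V^N(t_0,\cdot)$ together with classical optimal quantization estimates (Graf--Luschgy) for the uniform measure on the unit cube. Set $\mu := \mathrm{Leb}|_{[0,1]^d}$ and recall that with $t_0 = T-1/2$ we have
\begin{equation*}
U(t_0,m) = \inf_{\nu \in \cP_2(\R^d)} \Big\{ \bd_1(\nu, \mu) + \bd_2^2(m,\nu) \Big\}, \quad V^N(t_0,\bx) = \inf_{\by \in (\R^d)^N} \Big\{ \bd_1(m^N_{\by},\mu) + \bd_2^2(m^N_{\bx}, m^N_{\by}) \Big\}.
\end{equation*}
The key qualitative observation is that $U(t_0,\cdot)$ is allowed to transport mass to \emph{any} $\nu$ (in particular to $\mu$ itself, which kills the $\bd_1$-cost), while $V^N(t_0,\bx)$ can only transport $m^N_{\bx}$ to an $N$-atomic target and therefore pays the unavoidable quantization error $\bd_1(m^N_{\by},\mu)$.

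First, by choosing $\nu=\mu$ in the variational problem for $U$, we obtain the trivial upper bound $U(t_0,m) \leq \bd_2^2(m,\mu)$ for every $m \in \cP_2(\R^d)$. Second, since $\bd_2^2(m^N_{\bx},m^N_{\by}) \geq 0$, we have the lower bound
\begin{equation*}
V^N(t_0,\bx) \geq \inf_{\by \in (\R^d)^N} \bd_1(m^N_{\by},\mu) \qquad \text{for all } \bx \in (\R^d)^N.
\end{equation*}
By the classical lower bound in the theory of optimal quantization of probability measures (see e.g.\ Graf--Luschgy, \textit{Foundations of Quantization for Probability Distributions}, Ch.~6), there exists a dimensional constant $c_d > 0$ such that
\begin{equation*}
\inf_{\by \in (\R^d)^N} \bd_1(m^N_{\by}, \mu) \geq c_d N^{-1/d} \qquad \text{for all } N \geq 1.
\end{equation*}

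The third step is to construct a good competitor $\bx^N$. Using Zador's upper bound for quadratic optimal quantization of $\mu$, there exists $C_d > 0$ and, for every $N$, a configuration $\bx^N \in ([0,1]^d)^N$ such that
\begin{equation*}
\bd_2^2(m^N_{\bx^N},\mu) \leq C_d N^{-2/d}.
\end{equation*}
(This is the easy direction in quantization theory: it suffices to partition $[0,1]^d$ into $\lfloor N^{1/d}\rfloor^d$ sub-cubes and place one point at the center of each, adding a few arbitrary points if necessary.) Combining the three inequalities for this $\bx^N$ gives
\begin{equation*}
V^N(t_0,\bx^N) - U(t_0, m^N_{\bx^N}) \geq c_d N^{-1/d} - C_d N^{-2/d}.
\end{equation*}

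Since $2/d > 1/d$, we have $C_d N^{-2/d} \leq \tfrac{c_d}{2} N^{-1/d}$ as soon as $N \geq N_0 := \lceil (2C_d/c_d)^d\rceil$. For such $N$ the right-hand side is at least $(c_d/2) N^{-1/d}$, which proves the claim (after relabeling $c_d/2$ as $c_d$). I do not expect any real obstacle: all the ingredients are classical; the only point worth emphasizing is the asymmetry between the lower quantization bound in $\bd_1$ (used for $V^N$) and the upper quantization bound in $\bd_2^2$ (used for $U$), whose different rates $N^{-1/d}$ and $N^{-2/d}$ are precisely what produce the gap.
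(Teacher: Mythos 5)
Your proposal is correct and follows essentially the same route as the paper's own proof: both use the Hopf--Lax formulas to reduce the claim to the upper bound $U(t_0,m^N_{\bx^N}) \leq \bd_2^2(m^N_{\bx^N},\mu) \leq C_d N^{-2/d}$ for a grid-like configuration and the lower bound $V^N(t_0,\bx) \geq \inf_{\by}\bd_1(m^N_{\by},\mu) \geq c_d N^{-1/d}$ from the Graf--Luschgy quantization theory. The only cosmetic difference is that you absorb the two rates via an explicit threshold $N_0 = \lceil (2C_d/c_d)^d\rceil$, while the paper simply states the two bounds and combines them.
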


\begin{proof}
The value functions read 
\begin{equation} 
U(t_0, m) = \inf_{ \nu \in \mathcal{P}_2(\R^d)} \bigl \{ \bd_1(\nu, \leb|_{[0,1]^d}) + \bd_2^2(m, \nu) \bigr \},
\label{eq:defnU21/01}
\end{equation}
\begin{equation}
V^N(t_0, \bx) = \inf_{ \by \in (\R^d)^N} \bigl \{ \bd_1(m_{\by}^N, \leb|_{[0,1]^d}) + \bd_2^2(m_{\by}^N, m_{\bx}^N) \bigr \}.
\label{eq:defnVN21/01}
\end{equation}
On the one hand, by \eqref{eq:defnU21/01} we have, for all $m \in \mathcal{P}_2(\R^d),$
$$U(t_0,m) \leq \bd_2^2(m, \leb|_{[0,1]^d}).$$
By partitioning the unit cube into $N$ cubes of length roughly equal to $N^{-1/d}$ and putting a point at the center of each of this cube we can easily show that, for some $c_1 >0$ depending only on the dimension $d$, 
$$\forall N \in \mathbb{N}, \exists \bx^N \in ([0,1]^d)^N, \quad \bd_2(m^N_{\bx^N}, \leb|_{[0,1]^d})^2 \leq c_1 N^{-2d}.$$
And therefore, for every $N \in \mathbb{N}^*$, for the same constant $c_1$ and the same point $\bx^N \in ([0,1]^d)^N$ we have 
\begin{equation}
    U(t_0, m^N_{\bx^N}) \leq c_1 N^{-2/d}. 
\label{eq:upperboundU21/01}
\end{equation}
On the other hand, by \eqref{eq:defnVN21/01}, we have
\begin{equation} 
V^N(t_0, \bx) \geq \inf_{ \by \in (\R^d)^N} \bd_1(m_{\by}^N, \leb|_{[0,1]^d}), \quad \forall \bx \in (\R^d)^N. 
\label{eq:comparisonVN21/01}
\end{equation} 
Now, let us denote $v_N := \inf_{ \mu \in \mathcal{P}_{1,N}(\R^d)} d_1(\leb|_{[0,1]^d}, \mu) $ where the infimum is taken over the probability measures $\mu$ over $\R^d$ with a support containing at most $N$ points. It is obvious that 
\begin{equation} 
 \inf_{ \by \in (\R^d)^N} \bd_1 \bigl(  m^N_{\by},\leb|_{[0,1]^d}) \geq v_N. 
\label{eq:21/01}
\end{equation}
Now, combining Lemma 3.5 in \cite{Graf2000} (that gives an equivalent definition of $v_N$) and Theorem 6.2 in the same reference we get  
$$ \inf_{ N \geq 1} N^{1/d} v_N >0.$$
Together with \eqref{eq:21/01} and \eqref{eq:comparisonVN21/01} this implies that there is $c_2 >0$ such that, for every $N \geq 1$ and every $\bx \in (\R^d)^N$ 
\begin{equation}
V^N(t_0, \bx) \geq c_2 N^{-1/d}. 
\label{eq:lowerboundVN21/01}
\end{equation}
To conlcude the proof of the Proposition it is enough to combined  the upper bound \eqref{eq:upperboundU21/01} and the lower bound \eqref{eq:lowerboundVN21/01}.
\end{proof}

We note that the proof that $U(t,m_{\bx}^N) \leq V^N(t,\bx)$ holds equally well in the present setting, and so as a consequence of Proposition \ref{prop:opt:bound:N} we get the following.

\begin{cor} \label{cor.feedback}
    In the setting of the above Proposition, there exists $(t_0,m_0) \in [0,T] \times \cP_2(\R^d)$ such that $U(t_0,m_0) < U^f(t_0,m_0)$.
\end{cor}

\begin{proof}
    Applying \eqref{eq:oneinequalitywithoutidnoise} and \eqref{VfN<Vf} and exploiting the inclusion of the sets of controls, we have for any $t_0$, $N$ and $\bm{x}$
    \[
    U(t_0, m^N_{\bm{x}}) \leq V^N(t_0, m^N_{\bm{x}}) \leq V^{N,f}(t_0, m^N_{\bm{x}}) \leq U^f(t_0, m^N_{\bm{x}}). 
    \]
    Thus if, by contradiction, $U^f(t_0, m^N_{\bm{x}}) \leq U(t_0, m^N_{\bm{x}})$, then we would get $U(t_0, m^N_{\bm{x}}) = V^N(t_0, m^N_{\bm{x}})$ which is in contrast  with Proposition \ref{prop:opt:bound:N}.
\end{proof}

Similarly, denoting by $\tilde{U}(t_0,m_0)$ the infimum over controls $\hat{\alpha}(t,\xi)$ that are measurable functions of time and the initial condition only, we have the following:
\begin{cor} 
\label{cor:Utilde}
    In the setting of the above Proposition, there exists $(t_0,m_0) \in [0,T] \times \cP_2(\R^d)$ such that $U(t_0,m_0) < \tilde{U}(t_0,m_0)$.
\end{cor}
\begin{proof}
    We show that 
    $V^N(t_0, m^N_{\bm{x}}) \leq \tilde{U}(t_0, m^N_{\bm{x}})$ for any $t_0\in [0,T]$ and $\bm{x}\in (\R^d)^N$, so that the conclusion follows as in the previous corollary. 
    Consider a control for the mean field problem $\alpha_t=\hat{\alpha}(t, \xi)$, with corresponding process $X$, for an initial condition 
$\xi= \sum_{i=1}^N x_i 1_{\xi=x_i}$ with law $m^N_{\bm{x}}$. For the $N$ players, starting at $\bm{X}_{t_0}=\bm{x}$, define the controls $\alpha^i_t = \hat{\alpha}(t, x_i)$. Thus we have $\alpha_t = \sum_i \alpha^i_t 1_{\xi=x_i}$ and then $X_t = \sum_i X^i_t 1_{\xi=x^i}$, so that $\mathcal{L}(X_t) = m^N_{\bm{X}_t}$ and we conclude $V^N(t_0,\bm{x})\leq \tilde{U}(t_0, m^N_{\bm{x}})$.
\end{proof}

We remark that in the case of finite dimensional control, without mean-field interaction in the cost, all the formulations are equivalent, basically because the cost becomes a linear functional of the law of the process: $U=U^f=\tilde{U}$.

\subsubsection{Convex Case.}

Under some conditions on $G$ of convexity type we can actually prove equality of the value functions.

\begin{prop}
    Assume that $G : \mathcal{P}_2(\R^d) \rightarrow \R$ is bounded from below, continuous and satisfies, for any probability space $(\Omega, \mathcal{F}, \mathbb{P})$,
\begin{equation} 
G \bigl( \mathcal{L}\bigl( \E \bigl[X |Y \bigr] \bigr) \bigr) \leq G  \bigl( \mathcal{L}(X)\bigr) \quad \forall X,Y \in L^2(\Omega ; \R^d). 
\label{eq:weakconvexity13/01}
\end{equation}
Then, for all $t \in [0,T]$ and all $\bx \in (\R^d)^N$, we have $V^N( t, \bx^N ) = U(t, m_{\bx}^N )$.
\label{prop:equalityifconvexity}
\end{prop}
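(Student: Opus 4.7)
The plan is to use the Hopf-Lax representations for $U$ and $V^N$ and combine them with a conditional-expectation argument that exploits the weak convexity condition \eqref{eq:weakconvexity13/01}. One inequality, $U(t, m_{\bx}^N) \le V^N(t,\bx)$, is already given by Proposition \ref{prop.zeronoise}, so the entire task is the reverse inequality $V^N(t,\bx) \le U(t, m_{\bx}^N)$.

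For the reverse inequality, I would fix $(t,\bx) \in [0,T) \times (\R^d)^N$ (the case $t=T$ being immediate from the terminal condition) and, given $\epsilon > 0$, choose $\bar\nu_\epsilon \in \mathcal{P}_2(\R^d)$ that is $\epsilon$-optimal in the Hopf-Lax formula \eqref{eq:HLforU} for $U(t, m_{\bx}^N)$. Working on the product space $\Omega = \{1,\dots,N\} \times \Omega'$ (with uniform probability on the first factor and an atomless $(\Omega', \mathcal{F}', \mathbb{P}')$) I would set $X(i,\omega') = x^i$, so $\mathcal{L}(X) = m_{\bx}^N$, and construct $Y_\epsilon \in L^2(\Omega; \R^d)$ with $\mathcal{L}(Y_\epsilon) = \bar\nu_\epsilon$ such that $\E[|X - Y_\epsilon|^2] = \bd_2^2(m_{\bx}^N, \bar\nu_\epsilon)$ (possible because the space is atomless and we are on $\mathcal{P}_2$).

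The key step is to define $Z_\epsilon := \E[Y_\epsilon \mid X]$. Since $X$ is a function of the first coordinate only, $Z_\epsilon$ takes the form $Z_\epsilon(i,\omega') = z^i_\epsilon$ for some $\bz_\epsilon = (z^1_\epsilon,\dots,z^N_\epsilon) \in (\R^d)^N$, and therefore $\mathcal{L}(Z_\epsilon) = m_{\bz_\epsilon}^N$. Applying \eqref{eq:weakconvexity13/01} (with the roles of $X$ and $Y$ in that formula played by $Y_\epsilon$ and $X$ respectively) gives $G(m_{\bz_\epsilon}^N) \le G(\bar\nu_\epsilon)$. Meanwhile, since $X$ is $\sigma(X)$-measurable, conditional Jensen yields
\[
\bd_2^2(m_{\bx}^N, m_{\bz_\epsilon}^N) \le \E[|X - Z_\epsilon|^2] = \E[|\E[X - Y_\epsilon \mid X]|^2] \le \E[|X - Y_\epsilon|^2] = \bd_2^2(m_{\bx}^N, \bar\nu_\epsilon).
\]
Plugging $\by = \bz_\epsilon$ into the Hopf-Lax formula for $V^N$ then gives
\[
V^N(t,\bx) \le G(m_{\bz_\epsilon}^N) + \frac{1}{2(T-t)} \bd_2^2(m_{\bx}^N, m_{\bz_\epsilon}^N) \le G(\bar\nu_\epsilon) + \frac{1}{2(T-t)} \bd_2^2(m_{\bx}^N, \bar\nu_\epsilon) \le U(t, m_{\bx}^N) + \epsilon,
\]
and sending $\epsilon \to 0$ finishes the proof.

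There is no real obstacle here: the only point needing minor care is the selection of the coupling $(X, Y_\epsilon)$, which requires an atomless probability space so that any law in $\mathcal{P}_2(\R^d)$ can be realized together with the discrete $X$ in an optimally coupled way. Everything else — writing $Z_\epsilon$ as a point mass measure on $N$ atoms, the Jensen inequality for the Wasserstein coupling, and the application of the hypothesis \eqref{eq:weakconvexity13/01} — is essentially automatic once the setup is in place.
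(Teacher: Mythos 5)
Your proof is correct, and it takes a genuinely different (and more direct) route than the paper's. The paper first shows the monotonicity $V^N(t,\bx^N)\leq V^{nN}(t,\bx^N,\dots,\bx^N)$ for replicated configurations — by taking a general $nN$-point competitor $\by$, conditioning on the particle index $I$ to project it down to an $N$-point empirical measure, and invoking \eqref{eq:weakconvexity13/01} together with the convexity of $x\mapsto|x|^2$ — and then passes to the limit $n\to\infty$ using the qualitative convergence \eqref{eq:limsup21Aout}, which is where the continuity of $G$ enters. You instead perform the projection in one shot at the level of the mean-field problem: starting from an $\epsilon$-optimal $\bar\nu_\epsilon$ in \eqref{eq:HLforU}, you realize an optimal coupling $(X,Y_\epsilon)$ of $(m_{\bx}^N,\bar\nu_\epsilon)$ and condition $Y_\epsilon$ on $X$, which lands you directly on an $N$-point empirical measure $m^N_{\bz_\epsilon}$ that is an admissible competitor in \eqref{eq:21Aout15:26}; the hypothesis \eqref{eq:weakconvexity13/01} decreases the terminal cost and conditional Jensen decreases the quadratic cost. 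The core mechanism (conditional expectation as a cost-decreasing projection onto empirical measures) is the same in both arguments, but your version avoids the doubling/limiting step entirely and, as a by-product, does not actually use the continuity of $G$ — only boundedness from below, which is what the Hopf–Lax formulas require. The only points needing care, which you correctly flag, are the realization of the optimal coupling on a space where $X$ has law $m_{\bx}^N$ (immediate by disintegration of an optimal plan over an atomless factor) and the observation that a $\sigma(X)$-measurable variable pushes $m_{\bx}^N$ forward to another $N$-point empirical measure, which holds even when the $x^i$ are not distinct.
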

Notice that the condition on $G$ is satisfied, for instance, if the lift of $G$ to any $L^2$ space of random variables over an atomless probability space is convex, as a consequence of Jensen's inequality, see Lemma \ref{lem:Jensen} below.
\begin{proof}
Thanks to Proposition \ref{prop:firstconvergence}, it suffices to prove the inequality $V^N(t_0, \bx^N) \leq U(t_0, m^N_{\bx})$. We are going to show that, for all $n \in \mathbb{N}^*$ and all $\bx^N \in (\R^d)^N$ we have
    $$ V^N (t, \bx^N) \leq V^{nN} (t, \bx^N,\dots, \bx^N), $$
where $\bx^N$ is repeated $n$ times in the argument of $V^{nN}$. Let $(y_i^{j})_{1 \leq i \leq N, 1 \leq j \leq n} \in (\R^d)^{nN}$. 
On some probability space, we build two independent random variables $I$ and $J$ uniformly distributed respectively on $\{ 1, \dots, N \}$ and $\{ 1, \dots, n \}$ and we let $ Y := y^{I}_{J}$. We check that
$$ \mathcal{L} \Bigl( \E \Bigl[ Y | I \Bigr] \Bigr) = \frac{1}{N} \sum_{i=1}^N \delta_{ \frac{1}{n} \sum_{j=1}^n y_i^{j} }  $$
and so, by assumption on $G$,
$$ G \Bigl( \frac{1}{N} \sum_{i=1}^N \delta_{ \frac{1}{n} \sum_{j=1}^n y_i^{j} }  \Bigr) \leq G \Bigl( \frac{1}{nN} \sum_{i=1}^N \sum_{j=1}^n \delta_{y_i^j} \Bigr).$$
On the other hand, by convexity of $x \mapsto | x |^2$, we have
$$ \frac{1}{N} \sum_{i=1}^N \bigl| \frac{1}{n}\sum_{j=1}^n y_i^j - x_i \bigr|^2 \leq \frac{1}{nN} \sum_{i=1}^N \sum_{j=1}^n \bigl| y_i^j - x_i \bigr|^2.$$
Combined together we get, for all $(y_i^{j})_{1 \leq i \leq N, 1 \leq j \leq n} \in (\R^d)^{nN}$,
\begin{align*} 
G \Bigl( \frac{1}{N} \sum_{i=1}^N \delta_{ \frac{1}{n} \sum_{j=1}^n y_i^{j} }  \Bigr) &+ \frac{1}{2(T-t)} \frac{1}{N} \sum_{i=1}^N \bigl| \frac{1}{n}\sum_{j=1}^n y_i^j - x_i \bigr|^2  \\
&\leq G \Bigl( \frac{1}{nN} \sum_{i=1}^N \sum_{j=1}^n \delta_{y_i^j} \Bigr) + \frac{1}{2(T-t)}  \frac{1}{nN} \sum_{i=1}^N \sum_{j=1}^n \bigl| y_i^j - x_i \bigr|^2. 
\end{align*}
Since the left-hand side is always greater than $V^N(t, \bx^N)$ we get, taking the infimum over $\by \in (\R^d)^{nN} $, 
$$ V^N(t, \bx^N) \leq V^{nN} (t, \bx^N, \dots, \bx^N).$$
Since $G$ is continuous we have, see \eqref{eq:limsup21Aout},
$$\limsup_{n \rightarrow +\infty} V^{nN}(t,\bx^N, \dots, \bx^N) \leq U(t, m^N_{\bx^N})$$
and we can deduce that, for all $N \geq 1$,
$$ V^N(t, \bx^N) \leq U(t, m^N_{\bx}), \quad \forall t \in [0,T], \forall \bx^N \in (\R^d)^N,$$
which concludes the proof of the proposition.
\end{proof}

We say that the map $G : \mathcal{P}_2(\R^d) \rightarrow \R $ is $L$-convex if, for any atomless probability space $(\Omega, \mathcal{F}, \mathbb{P})$ its lift $\bar{G}$, defined, for all $X \in L^2(\Omega; \R^d)$ by $\bar{G} ( X) = G \bigl( \mathcal{L}(X) \bigr)$, is convex in the usual sense.
\begin{lem}
    Assume that $G : \mathcal{P}_2(\R^d) \rightarrow \R$ is $L$-convex function, then it satisfies the condition \eqref{eq:weakconvexity13/01} for any probability space $(\Omega, \mathcal{F}; \mathbb{P})$.
\label{lem:Jensen}
\end{lem}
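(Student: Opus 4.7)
The plan is to derive this from Jensen's inequality applied to the convex lift $\bar{G}$, by realizing $\E[X|Y]$ as an $L^2$-limit of empirical averages of conditionally i.i.d.\ copies of $X$ that all share $X$'s law. To avoid atomicity issues with $\Omega$ itself, first I would enlarge the probability space to $\tilde{\Omega} := \Omega \times [0,1]^{\mathbb{N}}$ with $\tilde{\bP} := \bP \otimes \mathrm{Leb}_{[0,1]}^{\otimes \mathbb{N}}$, producing a sequence $U_1, U_2, \ldots$ of i.i.d.\ uniform random variables independent of $(X, Y)$. Since $\R^d$ is Polish, the regular conditional distribution $\mu_y$ of $X$ given $Y = y$ exists, and by a standard measurable selection argument there is a Borel map $F : \R^d \times [0,1] \to \R^d$ such that $F(y, U)$ has law $\mu_y$ whenever $U$ is uniform on $[0,1]$. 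Setting $\tilde{X}_k := F(Y, U_k)$ produces a sequence that is, conditionally on $Y$, i.i.d.\ with common law $\mu_Y$, and each $\tilde{X}_k$ has the same unconditional law as $X$ under $\tilde{\bP}$.

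Next, applying convexity of $\bar{G}$ on $L^2(\tilde{\Omega}; \R^d)$ (which is admissible because $\tilde{\Omega}$ is atomless, so $L$-convexity of $G$ applies) together with law-invariance, Jensen's inequality yields, for every $n \geq 1$,
\begin{equation*}
\bar{G}\Big( \frac{1}{n} \sum_{k=1}^n \tilde{X}_k \Big) \leq \frac{1}{n} \sum_{k=1}^n \bar{G}(\tilde{X}_k) = G(\mathcal{L}(X)).
\end{equation*}
Conditionally on $Y$, the $\tilde{X}_k$ are i.i.d.\ with common mean $b(Y) := \int x \, \mu_Y(dx) = \E[X|Y]$, and a direct variance computation exploiting conditional independence gives
\begin{equation*}
\tilde{\E}\Big[ \Big| \frac{1}{n} \sum_{k=1}^n \tilde{X}_k - \E[X|Y] \Big|^2 \Big] = \frac{1}{n} \tilde{\E}\big[ | \tilde{X}_1 - b(Y)|^2 \big] \leq \frac{1}{n} \E[|X|^2] \;\longrightarrow\; 0.
\end{equation*}
Hence the empirical averages converge to $\E[X|Y]$ in $L^2(\tilde{\Omega}; \R^d)$, and consequently their laws converge to $\mathcal{L}(\E[X|Y])$ in $\bd_2$.

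Finally, passing to the limit using the continuity of $G$ on $(\mathcal{P}_2(\R^d), \bd_2)$ (the standing hypothesis in Proposition \ref{prop:equalityifconvexity}, under which this lemma is invoked) gives $G(\mathcal{L}(\E[X|Y])) = \lim_n \bar{G}(n^{-1}\sum_k \tilde{X}_k) \leq G(\mathcal{L}(X))$, which is precisely \eqref{eq:weakconvexity13/01}. The main subtlety is this final limit step: a convex functional on an infinite-dimensional Banach space need not be continuous even when everywhere finite, so one cannot appeal purely to convexity but must invoke continuity, or at least lower semicontinuity with respect to $\bd_2$, of $G$. Under such a hypothesis---natural for law-invariant convex functionals and available in the setting where the lemma is applied---the argument goes through cleanly, since $L^2$ convergence of random variables implies $\bd_2$ convergence of their laws.
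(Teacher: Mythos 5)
Your proof is correct, but it takes a genuinely different route from the paper's. The paper argues via the subdifferential: by Corollary 3.22 of \cite{GangboTudorascu}, the lift $\bar{G}$ admits at $\E[X|Y]$ a subgradient of the special form $\xi(\E[X|Y])$ for a measurable $\xi:\R^d\to\R^d$, and then the subdifferential inequality $\bar{G}(X)-\bar{G}(\E[X|Y])\geq \E[\xi(\E[X|Y])\cdot(X-\E[X|Y])]$ finishes the proof because the right-hand side vanishes by the tower property. You instead realize $\E[X|Y]$ as the $L^2$-limit of empirical averages of conditionally i.i.d.\ copies $\tilde{X}_k$ of $X$ (via noise outsourcing on an independently enriched atomless space), use convexity plus law-invariance to get $\bar{G}(n^{-1}\sum_k\tilde{X}_k)\leq G(\mathcal{L}(X))$ uniformly in $n$, and pass to the limit. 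The trade-off is exactly the one you identify: your route needs $\bd_2$-continuity (or lower semicontinuity) of $G$ to close the limit, which is not among the lemma's stated hypotheses, though it is assumed in Proposition \ref{prop:equalityifconvexity} where the lemma is invoked; the paper's route avoids any explicit continuity assumption but instead leans on the nontrivial structural fact from \cite{GangboTudorascu} that a law-invariant convex lift has a subgradient that is a function of the random variable itself. Your argument is arguably more self-contained and elementary (only the conditional law of large numbers and a measurable selection), and it makes transparent why the inequality is a Jensen-type statement; the paper's is shorter and holds without continuity, at the price of citing a deeper result. Both are valid; just note explicitly, if you adopt your version, that the lemma's statement should then carry the continuity hypothesis (or that you only prove it under the hypotheses of Proposition \ref{prop:equalityifconvexity}).
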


\begin{proof}
    For some probability space $(\Omega, \mathcal{F}, \mathbb{P})$, let $X,Y \in L^2(\Omega; \R^d)$ be two random variables. By Corollary 3.22 in \cite{GangboTudorascu} we can find an element of the sub-differential of $\bar{G}$ at $\E \bigl[ X |Y \bigr]$ (with $\mathbb{E}$ denoting the expectation under $\mathbb{P}$) of the form $\xi \bigl( \E \bigl[ X |Y \bigr] \bigr) $ for some measurable map $\xi : \R^d \rightarrow \R^d$ that is square-integrable with respect to $\mathcal{L}( \E \bigl[X |Y \bigr])$. By definition of the sub-differential of a convex function we have
\begin{equation} 
\bar{G} \bigl( X \bigr) - \bar{G} \bigl ( \E \bigl[ X | Y \bigr] \bigr) \geq \E \Bigl[\xi \bigl( \E \bigl[ X |Y \bigr] \bigr) \cdot \bigl( X - \E \bigl[ X | Y \bigr] \bigr) \Bigr]. 
\label{eq:towardJensen}
\end{equation}
However, by properties of the conditional expectation we have
    \begin{align*}
        \E \Bigl[\xi \bigl( \E \bigl[ X |Y \bigr] \bigr) \cdot \bigl( X - \E \bigl[ X | Y \bigr] \bigr) \Bigr] &= \E \Bigl[ \E \Bigl[    \xi \bigl( \E \bigl[ X |Y \bigr] \bigr) \cdot \bigl( X - \E \bigl[ X | Y \bigr] \bigr) \Big| Y \Bigr] \Bigr] \\
        &= \E \Bigl[   \xi \bigl( \E \bigl[ X |Y \bigr] \bigr) \cdot \bigl( \E \bigl[X |Y \bigr] - \E \bigl[ X | Y \bigr] \bigr)  \Bigr] \Bigr] =0.
    \end{align*}
Getting back to \eqref{eq:towardJensen} and recalling the definition of $\bar{G}$ concludes the proof of the statement.
\end{proof}

\bibliographystyle{alpha}

\bibliography{cdjmreferences}

\end{document}